\def\Aut{\operatorname{Aut}}
\def\sp{\operatorname{span}}
\def\clsp{\overline{\operatorname{span}}}
\def\id{\operatorname{id}}
\def\max{\operatorname{max}}
\def\PI{\operatorname{PIso}}
\def\C{\mathbb{C}}
\def\R{\mathbb{R}}
\def\N{\mathbb{N}}
\def\Z{\mathbb{Z}}
\def\T{\mathbb{T}}
\def\AA{\mathcal{A}}
\def\FF{\mathcal{F}}
\def\KK{\mathcal{K}}
\def\LL{\mathcal{L}}
\def\NN{\mathcal{N}}
\def\OO{\mathcal{O}}
\def\TT{\mathcal{T}}
\newcommand{\inv}{^{-1}}
\newtheorem{thm}{Theorem}[section]
\newtheorem{cor}[thm]{Corollary}
\newtheorem{lemma}[thm]{Lemma}
\newtheorem{prop}[thm]{Proposition}
\theoremstyle{definition}
\newtheorem{definition}[thm]{Definition}
\theoremstyle{remark}
\newtheorem{remark}[thm]{Remark}
\newtheorem{example}[thm]{Example}
\numberwithin{equation}{section}
\newcommand{\thmref}[1]{Theorem~\ref{#1}}
\newcommand{\proref}[1]{Proposition~\ref{#1}}
\newcommand{\lemref}[1]{Lemma~\ref{#1}}
\tikzstyle{vertex}=[circle]
\tikzstyle{goto}=[->,shorten >=1pt,>=stealth,semithick]
\begin{document}

\title[Self-similar actions of groupoids on graphs]{Equilibrium states on operator algebras\\ associated to\\ self-similar actions of groupoids on graphs}
\author{Marcelo Laca}
\address{Marcelo Laca, Department of Mathematics and Statistics\\
University of Victoria\\
Victoria, BC V8W 3P4\\
Canada}
\email{laca@math.uvic.ca}
\author[Iain Raeburn]{Iain Raeburn}
\address{Iain Raeburn, Department of Mathematics and Statistics, University of Otago, PO Box 56, Dunedin 9054, New Zealand}
\email{iraeburn@maths.otago.ac.nz }
\author[Jacqui Ramagge]{Jacqui Ramagge}
\address{Jacqui Ramagge, School of Mathematics and Statistics, University of Sydney,
NSW 2006, Australia.}
\email{Jacqui.Ramagge@sydney.edu.au
}
\author[Michael F. Whittaker]{Michael F. Whittaker}
\address{Michael F. Whittaker, School of Mathematics and Statistics, University of Glasgow, University Gardens, Glasgow Q12 8QW, Scotland}
\email{Mike.Whittaker@glasgow.ac.uk}

\thanks{This research was supported by the Natural Sciences and Engineering Research Council of Canada, the Marsden Fund of the Royal Society of New Zealand, and the Australian Research Council.}

\begin{abstract}
We consider  self-similar actions of groupoids  on the path spaces of finite directed graphs, and construct examples of such self-similar actions using a suitable notion of graph automaton. Self-similar groupoid actions have a Cuntz-Pimsner algebra and a Toeplitz algebra, both of which carry natural dynamics lifted from the gauge actions. We study the equilibrium states (the KMS states) on the resulting dynamical systems. Above a critical inverse temperature, the KMS states on the Toeplitz algebra are parametrised by the traces on the full $C^*$-algebra of the groupoid, and we describe a program for finding such traces. The critical inverse temperature is the logarithm of the spectral radius of the incidence matrix of the graph, and at the critical temperature the KMS states on the Toeplitz algebra factor through states of the Cuntz-Pimsner algebra. Under a verifiable hypothesis on the self-similar action, there is a unique KMS state on the Cuntz-Pimsner algebra. We discuss an explicit method of computing the values of this KMS state, and illustrate with examples. 
\end{abstract}

\maketitle

\section{Introduction}\label{sec:intro}

A self-similar group $(G,X)$ consists of a finite set $X$ and a faithful action of a group $G$ on the set $X^*$ of finite words in $X$, such that:
for each $g \in G$ and $x \in X$, there exists $h \in G$ satisfying
\begin{equation*}\label{self-similarity_intro}
g\cdot(xw)=(g\cdot x)(h \cdot w) \text{ for all } w\in X^*. 
\end{equation*}
Self-similar groups are often defined recursively using data presented in an automaton (see, for example, \cite[Chapter 1]{nek_book} or \S\ref{sec:preamble} below). To each self-similar group $(G,X)$, Nekrashevych associated a $C^*$-algebra $\OO(G,X)$, which is by definition the Cuntz-Pimsner algebra of a Hilbert bimodule over the reduced group algebra $C^*_r(G)$ \cite{nek_jot, nekra}.

We recently studied the Toeplitz algebra $\TT(G,X)$ of this Hilbert bimodule \cite{lrrw}. Both $\TT(G,X)$ and $\OO(G,X)$ carry natural gauge actions of the unit circle, and composing with the exponential map gives actions $\alpha$ of the real line. In \cite{lrrw}, we classified the equilibrium states (the KMS states) of the dynamical systems $(\TT(G,X),\R,\alpha)$ and $(\OO(G,X),\R,\alpha)$. We found a simplex of KMS states on $\TT(G,X)$ at all inverse temperatures larger than a critical value $\ln |X|$, and showed, under a mild hypothesis on $(G,X)$, that there is a single KMS state on $\OO(G,X)$ whose inverse temperature is $\ln |X|$.

Here we consider a new kind of self-similarity involving an action of a groupoid $G$ on the path space $E^*$ of a finite directed graph $E$, which we view as a forest of trees $\{vE^*:v\in E^0\}$. An equation of the form
\[
g\cdot(e\mu)=(g\cdot e)(h\cdot \mu)
\]
for paths $e\mu$ defines an isomorphism $h$ of the subtree $s(e)E^*$ onto $s(g\cdot e)E^*$. We call $h$ a \emph{partial isomorphism} of $E^*$. The partial isomorphisms of $E^*$ form a groupoid $\PI(E^*)$ with unit space $E^0$. A self-similar action of a groupoid $G$ with unit space $E^0$ is then a groupoid homomorphism of $G$ into $\PI(E^*)$. 

Our results are motivated by a construction of Exel and Pardo \cite{EP2}, who studied a family of self-similar actions of groups on path spaces. Their main motivation was to provide a unified theory that accommodates both Nekrashevych's Cuntz-Pimsner algebras and a family of ``Katsura algebras'' \cite{k2} that includes all Kirchberg algebras. We seek a common setting for the analyses of KMS states on self-similar groups in \cite{lrrw} and on the Toeplitz-Cuntz-Krieger algebras of graphs \cite{el, KW, hlrs, hlrs2}.

To each of our self-similar actions $(G,E)$ we associate a Toeplitz algebra $\TT(G,E)$ and a Cuntz-Pimsner algebra $\OO(G,E)$, and study the dynamics arising from the gauge actions of the circle. Above a critical inverse temperature, the KMS states on $\TT(G,E)$ are parametrised by traces on the coefficient algebra, which is the (full) $C^*$-algebra of the groupoid $G$. As similar analyses for graph algebras have consistently shown \cite{EFW, KW, hlrs}, the critical inverse temperature is $\ln \rho(B)$, where $B$ is the vertex matrix of the underlying graph $E$ and $\rho(B)$ is the spectral radius of $B$.  At the critical inverse temperature, and under mild hypotheses, the KMS$_{\ln \rho(B)}$ state  is unique and factors through the Cuntz-Pimsner algebra. Thus our results give those of \cite{lrrw} a distinct graph-theoretic slant. On the other hand, we also show how to compute the values of the KMS$_{\ln \rho(B)}$ state  by counting paths in Moore diagrams, as we did for self-similar groups in \cite[\S8]{lrrw}.

\subsection*{Outline}

After quickly reviewing the process by which automata give rise to self-similar group actions, we describe our self-similar actions in \S\ref{sec:SSGpoids}. We follow closely the analogy between the space $X^*$ of words, which is often viewed as a rooted tree, and the path space $E^*$, which is a forest of rooted trees with the vertices $v\in E^0$ as roots. Our groupoid acts by \emph{partial isomorphisms} of this forest, which are isomorphisms of one rooted tree onto another. These  partial isomorphisms form a groupoid $\PI(E^*)$ with unit space $E^0$ (Proposition~\ref{prop:alltreeisos}). An action of a groupoid with unit space $E^0$ is then simply a homomorphism of $G$ into $\PI(E^*)$, and our definition of self-similarity is formally just the usual one. After a brief discussion of the basic properties, we introduce a notion of $E$-automaton, and show how it gives rise a self-similar groupoid action on $E^*$.

Our treatment and notation are based on the established formalism for self-similar groups, and hence look quite different from that of Exel and Pardo. So we show in an appendix that their data for a group $K$ gives a self-similar action of the groupoid $K\times E^0$. But not all of our self-similar groupoid actions arise this way (see Remark~\ref{EP remark}).

In \S\ref{sec:Toep}, we construct the Toeplitz algebra $\TT(G,E)$ of a self-similar groupoid action $(G,E)$. Our constructions of KMS states follow the strategy developed in \cite{lr,lrr,lrrw}, and are intrinsically representation-theoretic in nature. To construct representations of our algebras, we use presentations of our algebras. Thus we take as our coefficient algebra the full groupoid algebra $C^*(G)$, just as we used the full group algebra in \cite{lrrw} because it is universal for unitary representations of the group. We discuss the universal property of $C^*(G)$ in Proposition~\ref{univC*G}. The presentation of the Toeplitz algebra is given in Proposition~\ref{Toeplitz_repn}: it is generated by a Toeplitz-Cuntz-Krieger family for the graph $E$ and a representation of the groupoid. In Proposition~\ref{Toeplitz_spanning} we identify a convenient spanning family consisting of elements which are analytic for the natural dynamics.

At this point we are ready to begin our analysis of the KMS states. In Proposition~\ref{KMS_algebraic}, we describe how to recognise KMS states in terms of their values on the spanning elements. In Theorem~\ref{Thm:KMS_beta_tau}, we describe the KMS$_\beta$ states on the Toeplitz algebra for $\beta$ above the critical inverse temperature $\ln\rho(B)$, finding that they are parametrised by the traces on the coefficient algebra $C^*(G)$. Thus in \S\ref{sec:traces} we consider the problem of constructing traces on the groupoid algebra $C^*(G)$. Since our groupoids have finite unit space $E^0$, the orbit space for the canonical action of $G$ on $E^0$ is finite, and $C^*(G)$ is the direct sum of $C^*$-algebras of transitive groupoids (see Lemma~\ref{directsum}). For a transitive groupoid we can realise $C^*(G)$ as matrices over the $C^*$-algebra $C^*(G_x)$ of an isotropy group, and then traces on $C^*(G_x)$ give traces on $C^*(G)$ (see Corollary~\ref{tracesonCG}). So there are always at least two traces on each $C^*(G)$. We check that in our special cases (that is, for traditional self-similar groups and for graph algebras), the resulting KMS states are indeed the ones previously found in~\cite{lrrw} and~\cite{hlrs}.

In \S\ref{sec:crit}, we prove that $\TT(G,E)$ always has a KMS$_{\ln\rho(B)}$ state, and that this state factors through $\OO(G,E)$. Under an easily verified hypothesis on $(G,E)$, this is the only KMS$_{\ln\rho(B)}$ state on $\TT(G,E)$ (Theorem~\ref{KMSatcritical}). The proof of uniqueness in particular is analytically quite delicate. We close with a section on examples. Since the formulas for the KMS states in Theorem~\ref{KMSatcritical} are rather intricate, it is comforting that in concrete examples we can compute particular values of these states, getting some strange and rather wild numbers (see Proposition~\ref{wildnos}).

\section{Preamble: Self-similar group actions and automata}\label{sec:preamble}

We begin by reviewing the process by which automata are used to construct self-similar group actions. This process is standard (see \cite[Chapter~1]{nek_book}, for example), but having the details clear will help us later.

Suppose that $X$ is a finite set, and write $X^k$ for the set of $k$-tuples in $X$, 
with~$X^0=\{\ast\}$, and define $X^*:=\bigsqcup_{k\geq 0} X^k$. 
Consider the (undirected) rooted tree $T=T_X$ with vertex set $T^0=X^*$ 
and edge set $T^1=\left\{ \{\mu, \mu x \}\colon \mu\in X^* \text{ and }x\in X\right\}$.
Note that $X^k$ is here identified with the vertices in $T$ at depth $k$ from the root $*$.
From a traditional graph-theoretic perspective, 
an \emph{automorphism} $\alpha$ of $T$ consists of 
a family of bijections $\alpha_k\colon  X^k \to X^k$ for $k\geq 0$ such that for all $\mu,\nu\in X^*$ 
\begin{equation}
\label{edge condition auto}
\{\alpha_k(\mu), \alpha_{k+1}(\nu)\} \in T^1 \Leftrightarrow
\{\mu, \nu\} \in T^1.
\end{equation}
It is sometimes convenient to use an alternative, but equivalent, definition of an automorphism of $T$ that is better suited to properties of self-similar actions. Notice that if $\beta=\{\beta_k\}$ is an automorphism, then each $\{\beta_k(\mu),\beta_{k+1}(\mu x)\}$ is an edge in $T$, and hence $\beta_{k+1}(\mu x)\in \beta_k(\mu)X$. So an automorphism satisfies \eqref{ATdef} below, and the property \eqref{ATdef} is ostensibly weaker. 

\begin{lemma}
\label{self-similar defn of auto}
Suppose $X$ is a finite set and $T$ is the rooted tree with 
vertex set $T^0=X^*$ and 
edge set $T^1=\left\{ \{\mu, \mu x \}\colon \mu\in X^* \text{ and }x\in X\right\}$.
Suppose $\alpha \colon T^0\to T^0$ is a bijection satisfying 
\begin{equation}
\label{ATdef}
\alpha(X^k)= X^k \text{ for all }k, \text{ and }
\alpha(\mu x)\in \alpha(\mu)X \text{ for all $\mu\in X^k$ and $x\in X$.}
\end{equation}
Define $\alpha_k:= \alpha|_{X^k}$. 
Then $\{\alpha_k\}$ is an automorphism $\alpha$ of $T$. The inverse is also an automorphism of $T$, and also satisfies \eqref{ATdef}.
\end{lemma}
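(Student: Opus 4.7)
The plan is to first upgrade the containment in \eqref{ATdef} to an equality $\alpha(\mu X) = \alpha(\mu)X$ for every $\mu$, and then read off both the edge condition and the invertibility statement from this equality.

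First, since $\alpha$ is a bijection with $\alpha(X^k) = X^k$ for every $k$, the restriction $\alpha_k := \alpha|_{X^k}$ is a bijection of $X^k$ onto itself. The key step is to observe that the apparently one-sided inclusion $\alpha(\mu X) \subseteq \alpha(\mu)X$ forced by \eqref{ATdef} is in fact an equality. Indeed, both sides are subsets of $X^{k+1}$ of size $|X|$ (the map $x \mapsto \mu x$ is injective because $X^*$ is a disjoint union of the $X^k$), and $\alpha$ is injective, so $|\alpha(\mu X)| = |X| = |\alpha(\mu)X|$, forcing $\alpha(\mu X) = \alpha(\mu)X$.

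With this in hand, the edge condition \eqref{edge condition auto} is immediate. An edge of $T$ with endpoints in $X^k$ and $X^{k+1}$ has the form $\{\mu,\mu x\}$ for some $\mu\in X^k$ and $x\in X$, so I would assume without loss of generality that $\mu \in X^k$ and $\nu \in X^{k+1}$. Then $\{\mu,\nu\} \in T^1$ iff $\nu \in \mu X$, and applying $\alpha$ and using $\alpha(\mu X) = \alpha(\mu)X$ together with injectivity of $\alpha_{k+1}$ shows this happens iff $\alpha_{k+1}(\nu) \in \alpha_k(\mu)X$, i.e.\ iff $\{\alpha_k(\mu), \alpha_{k+1}(\nu)\} \in T^1$.

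Finally, for $\alpha^{-1}$: it clearly preserves each $X^k$, so it suffices to verify the second half of \eqref{ATdef}. Given $\mu \in X^k$ and $x \in X$, set $\nu := \alpha^{-1}(\mu) \in X^k$. Applying the equality $\alpha(\nu X) = \alpha(\nu)X = \mu X$ proved above, there exists $y \in X$ with $\alpha(\nu y) = \mu x$, so $\alpha^{-1}(\mu x) = \nu y \in \alpha^{-1}(\mu)X$, as required. Hence $\alpha^{-1}$ satisfies \eqref{ATdef} and is therefore an automorphism by the argument of the preceding paragraph. The only mildly delicate point is the counting-plus-injectivity step that promotes the inclusion in \eqref{ATdef} to an equality; everything else is bookkeeping.
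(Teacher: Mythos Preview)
Your proof is correct. The approach is close to the paper's but packaged differently: you first upgrade the inclusion $\alpha(\mu X)\subseteq \alpha(\mu)X$ to an equality by the counting argument $|\alpha(\mu X)|=|X|=|\alpha(\mu)X|$, and then both directions of the edge condition~\eqref{edge condition auto} and the verification of~\eqref{ATdef} for $\alpha^{-1}$ fall out immediately from this single equality. The paper instead argues the two directions of~\eqref{edge condition auto} separately --- the forward direction straight from~\eqref{ATdef}, and the backward direction by writing $\nu=\eta x$ and using uniqueness of the length-$k$ prefix of a word in $X^{k+1}$ to force $\alpha(\eta)=\alpha(\mu)$ --- and then handles $\alpha^{-1}$ by observing that~\eqref{edge condition auto} is manifestly symmetric under inversion, so $\alpha^{-1}$ is a graph automorphism and hence satisfies~\eqref{ATdef} by the remark preceding the lemma. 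Your counting step is a clean way to unify these three pieces; the paper's route avoids the counting but is otherwise no more or less elementary.
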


\begin{proof}
Since $\alpha$ is a bijection and $X^k$ is a finite set, $\alpha(X^k)= X^k$ implies that $\alpha_k$ is a bijection.
We need to show that~\eqref{edge condition auto} is satisfied.
Suppose $\mu\in X^k$ and $\nu\in X^{k+1}$.
If  $\{\mu,\nu\}$ is an edge then $\nu=\mu x$ for some $x\in X$ and 
$\alpha(\nu)=\alpha_{k+1}(\mu x) \in \alpha_k(\mu) X$, 
so that $\{\alpha(\mu),\alpha(\nu)\}$ is an edge. 
Now suppose $\{\alpha(\mu),\alpha(\nu)\}$ is an edge, 
say $\alpha(\nu)=\alpha(\mu)y$ for $y\in X$. 
Since $\nu\in X^{k+1}$ for $k\geq 0$ 
we can write $\nu=\eta x$ for some $\eta\in X^k$ and $x\in X$.
Then $\alpha_{k+1}(\nu)\in\alpha(\eta)X$, $\alpha(\eta)=\alpha(\mu)$, and $\eta=\mu$.
So $\nu=\mu x$ and $\{\mu,\nu\}$ is an edge. Thus we have~\eqref{edge condition auto}, and $\{\alpha_k\}$ is an automorphism of~$T$.

For the assertion about $\alpha^{-1}$, note that \eqref{edge condition auto} implies that this inverse is a graph isomorphism in the usual sense, and hence satisfies the weaker condition \eqref{ATdef} by the comment before the lemma.
\end{proof}

Suppose again that $X$ is a finite set. 
An \emph{automaton over $X$} is a finite set $A$ together with 
a map $(a,x)\mapsto (a\cdot x, a|_x)$  from $A\times X$ to $X\times A$ such that, 
for each fixed $a\in A$, $x\mapsto a\cdot x$ is a bijection.
For $\mu\in X^k$ and $a\in A$ we define $a|_\mu$ inductively 
by $a|_{\mu x} = (a|_\mu)|_x$. 
For $k\geq 0$ we define $f_{a,k}\colon  X^k\to X^k$ by 
\[
f_{a,k}(\mu) = (a\cdot \mu_1)(a|_{\mu_1}\cdot\mu_2)\cdots(a|_{\mu_1\cdots\mu_{k-1}}\cdot\mu_k)
\]
for $\mu=\mu_1\cdots\mu_k\in X^k$.
Then we have
\[
f_{a,k+1}(\mu x) = 
(a\cdot \mu_1)(a|_{\mu_1}\cdot\mu_2)\cdots(a|_{\mu_1\cdots\mu_{k-1}}\cdot\mu_k)
(a|_\mu\cdot x)
= f_{a,k}(\mu)(a|_\mu\cdot x)\in f_{a,k}(\mu) X
\]
from which it follows that $f_a:=\{f_{a,k}\}$ is 
an automorphism of $T_X$ for $a\in A$.

The set of automorphisms of $T_X$ form a group $\Aut T_X$. We let $G_A$  be the subgroup of $\Aut T_X$ generated by $\{f_a\colon a\in A\}$.

Recall that a faithful action of a group $G$ on $X^*$ is \emph{self-similar} if for all $g\in G, x\in X$ there exist $y\in X, h\in G$ such that 
\begin{equation}
\label{selfsimilar action}
g\cdot(xw)=y(h\cdot w) \text{ for all } w\in X^*.
\end{equation}

\begin{prop}
\label{self similar action of G_A}
There is a faithful self-similar action of $G_A$ on $T_X$ such that 
$
f_a\cdot \mu := f_a(\mu)
$
for $a\in A$ and $\mu\in X^*$.
\end{prop}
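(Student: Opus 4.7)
The plan is to define the action as the restriction of the natural action of $\Aut T_X$ on $T^0=X^*$ to the subgroup $G_A$, and then to verify the two required properties: faithfulness and self-similarity. Faithfulness is immediate, since $G_A\le \Aut T_X$ acts on $X^*=T^0$ by genuine set-theoretic bijections; the nontrivial content is self-similarity.

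First I would handle the generators. Given $a\in A$ and $x\in X$, I would compute directly from the formula
\[
f_{a,k+1}(xw_1\cdots w_k)=(a\cdot x)(a|_x\cdot w_1)\big((a|_x)|_{w_1}\cdot w_2\big)\cdots\big((a|_x)|_{w_1\cdots w_{k-1}}\cdot w_k\big),
\]
and then recognise the product after the first letter as $f_{a|_x,k}(w_1\cdots w_k)$. This gives the key identity
\[
f_a\cdot(xw)=(a\cdot x)(f_{a|_x}\cdot w)\quad\text{for all }w\in X^*,
\]
and since $a|_x\in A$ by definition of an automaton, the element $h:=f_{a|_x}$ is again a generator of $G_A$, so in particular lies in $G_A$.

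Next I would promote this from generators to all of $G_A$ by showing that the set
\[
S=\{g\in G_A:\forall x\in X,\ \exists h\in G_A\text{ with }g\cdot(xw)=(g\cdot x)(h\cdot w)\text{ for all }w\in X^*\}
\]
is a subgroup of $G_A$. Closure under products is a direct two-step substitution: if $g_1\cdot(xw)=y_1(h_1\cdot w)$ and $g_2\cdot(y_1 v)=z(k\cdot v)$, then $(g_2g_1)\cdot(xw)=z((kh_1)\cdot w)$ and $kh_1\in G_A$. Closure under inverses comes from setting $x:=g^{-1}\cdot y$ in $g\cdot(xw')=y(h\cdot w')$ and then applying $g^{-1}$ with $w'=h^{-1}\cdot w$ to get $g^{-1}\cdot(yw)=(g^{-1}\cdot y)(h^{-1}\cdot w)$, where $h^{-1}\in G_A$. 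Since $S$ contains the generating set $\{f_a:a\in A\}$ by the previous paragraph, $S=G_A$, completing the verification of self-similarity.

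The only step that requires any real care is the verification on generators, where one must be careful to expand the definition of $f_{a,k}$ and match indices correctly to identify the tail as $f_{a|_x}\cdot w$; the subgroup-closure argument is otherwise a formal manipulation and faithfulness is automatic.
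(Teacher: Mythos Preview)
The proposal is correct and follows essentially the same approach as the paper: verify the self-similarity identity on generators by expanding $f_{a,k+1}(xw)$ and recognising the tail as $f_{a|_x}(w)$, then extend to all of $G_A$ by showing closure under inverses and products. The only organisational difference is that you package the extension step as ``the set $S$ of self-similar elements is a subgroup containing the generators,'' whereas the paper checks inverses of generators explicitly before doing products; the underlying computations are identical.
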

\begin{proof}
We first show that each $f_a$ satisfies condition~\eqref{selfsimilar action}, then that each $f_a^{-1}$ does, and then that the composition of two automorphisms satisfying~\eqref{selfsimilar action} also satisfies~\eqref{selfsimilar action}. 

First consider $f_a$ for $a\in A$. For $x\in X$ and $\mu\in X^k$ 
\begin{align*}
f_a(x\mu) &=
f_{a,k+1}(x\mu) = 
(a\cdot x)(a|_x\cdot\mu_1)(a|_{x\mu_1}\cdot\mu_2)\cdots (a|_{x\mu_1\cdots\mu_{k-1}}\cdot\mu_k) \\
&= (a\cdot x)(a|_x\cdot\mu_1)((a|_x)|_{\mu_1}\cdot\mu_2)\cdots ((a|_x)|_{\mu_1\cdots\mu_{k-1}}\cdot \mu_k) \\
&= (a\cdot x)f_{a|_x}(\mu_1\cdots \mu_k)
\end{align*}
and hence we can take  $y=a\cdot x$ and $h=f_{a|_x}$ in~\eqref{selfsimilar action}.

Now consider the automorphism $f_a^{-1}$ for $a\in A$.
\begin{align*}
f_a^{-1}(x\mu) = y\nu &\Leftrightarrow x\mu = f_a(y\nu) \\
&\Leftrightarrow x\mu = (a\cdot y) f_{a|_y}(\nu) \\
&\Leftrightarrow x=a\cdot y \text{ and } \mu= f_{a|_y}(\nu) \\
&\Leftrightarrow y = f_a^{-1}(x)  \text{ and } \nu=f_{a|_y}^{-1}(\nu) \\
&\Leftrightarrow y = f_a^{-1}(x)  \text{ and } \nu=f_{a|_{f_a^{-1}(x)}}^{-1}(\nu). 
\end{align*}
So $f_a^{-1}(x\mu)= f_a^{-1}(x)f_{a|_{f_a^{-1}(x)}}^{-1}(\nu)$,
and we can take $y = f_a^{-1}(x)$ and $h=f_{a|_{f_a^{-1}(x)}}^{-1}$ in~\eqref{selfsimilar action}.

Finally, suppose $g_i\in G_A$ and for each $x\in X$ we have 
$y_{i,x}\in X$ and $h_{i,x}\in G_A$ such that 
\[
g_i(x\mu)= y_{i,x} h_{i,x}(\mu) \text{ for all } \mu.
\] 
Then for each $\mu\in X^*$
\begin{align*}
g_1g_2(x\mu) & = g_1(y_{2,x} h_{2,x}(\mu)) \\
&= y_{1,y_{2,x}} h_{1,y_{2,x}}(h_{2,x}(\mu)) \\
& = y_{1,y_{2,x}} (h_{1,y_{2,x}}h_{2,x})(\mu). 
\end{align*}
Thus $y= y_{1,y_{2,x}}$ and $h=h_{1,y_{2,x}}h_{2,x}$ satisfy the property in~\eqref{selfsimilar action} for~$g_1g_2$.

Finally, since every element of $G_A=\langle f_a\colon a\in A\rangle$ 
is a product of elements of the form $f_a$ and $f_b^{-1}$ for $a,b\in A$, 
we can  for every $g\in G_A$ 
construct elements $y\in X$ and $h\in G_A$ with the required properties.
\end{proof}

\begin{example}
Suppose $X=\{x,y\}$, and an automaton has alphabet $A=\{a,b,e\}$.  The actions of $a, b\in A$ on $X=\{x,y\}$ satisfy $a\cdot x =y$, $a|_x=b$, $a\cdot y=x$, $a|_y=e$, 
$b\cdot x =x$, $b|_x=a$, $b\cdot y=y$, and $b|_y=e$; $e\in A$ acts and restricts trivially, so $e\cdot x=x$ and $e|_x=e$, and similarly on $y$.
This is typically presented as a recursive definition on~$X^*$ by  
\begin{alignat}{2}
\label{basilica_action}
a \cdot(xw)&= y (b\cdot w) & \qquad   a \cdot (yw)&= x w\quad\text{ and}  \\
b \cdot(xw)&= x (a \cdot w)& \qquad   b \cdot (yw)&= y w\quad\text{for $w\in X^*$.}\notag
\end{alignat}
\end{example}

We illustrate an automaton by drawing a \emph{Moore diagram}.
In a Moore diagram the arrow 
\begin{center}
\begin{tikzpicture}
\node[vertex] (vertexa) at (-2,0)   {$g$};	
\node[vertex] (vertexb) at (0,0)  {$h$}
	edge [<-,>=latex,out=180,in=0,thick] node[auto,swap,pos=0.5]{$\scriptstyle(x,y)$}(vertexa);
\end{tikzpicture}
\end{center}
means that $g\cdot x=y$ and  $g|_x=h$. 
Thus, for example, the automaton representing the generators $A$ for the basilica group 
 is illustrated by the Moore diagram in Figure~\ref{fig:gens Basilica}. The identity $e$ is often not included in the alphabet $A$, 
but is implicit in the recursive definition~\cite[(2.6)]{lrrw}.
\begin{figure}
\begin{tikzpicture}
\node at (0,0) {$\scriptstyle e$};
\node[vertex] (vertexe) at (0,0)   {$\,$}
	edge [<-,>=latex,out=10,in=60,loop,thick] node[right]{$\scriptstyle(x,x)$} (vertexe)
	edge [<-,>=latex,out=350,in=300,loop,thick] node[right]{$\scriptstyle(y,y)$} (vertexe);
\node at (-2,0.75) {$\scriptstyle b$};
\node[vertex] (vertexb) at (-2,0.75)   {$\,$}
	edge [->,>=latex,out=360,in=140,thick] node[auto,xshift=-0.2cm,pos=0.5]{$\scriptstyle(y,y)$} (vertexe);
\node at (-2,-0.75) {$\scriptstyle a$};
\node[vertex] (vertexa) at (-2,-0.75)  {$\,$}
	edge [->,>=latex,out=360,in=220,thick] node[auto,swap,yshift=0.1cm,pos=0.5]{$\scriptstyle(y,x)$} (vertexe)
	edge [->,>=latex,out=140,in=220,thick] node[auto,xshift=0.1cm,pos=0.5]{$\scriptstyle(x,y)$}(vertexb)
	edge [<-,>=latex,out=380,in=340,thick] node[auto,swap,xshift=-0.1cm,pos=0.5]{$\scriptstyle(x,x)$}(vertexb);
\end{tikzpicture}
\caption{The Moore diagram for the generators of the basilica group.}
\label{fig:gens Basilica}
\end{figure}
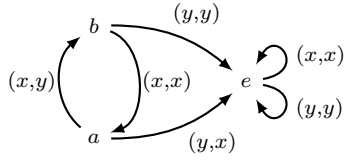

\section{Self-similar groupoids and automata over graphs}\label{sec:SSGpoids}

We now generalise the construction of \S\ref{sec:preamble}, replacing the set $X$ by the set of edges $E^1$ in a finite directed graph $E$. 

Suppose $E=(E^0,E^1,r,s)$ is a directed graph with vertex set $E^0$, edge set $E^1$, and range and source maps $r,s\colon E^1\to E^0$.  
Write 
\[
E^k =\{ \mu=\mu_1\cdots\mu_k \colon \mu_i\in E^1, s(\mu_i)=r(\mu_{i+1})\}
\]
for the set of paths of length $k$ in $E$, keep $E^0$ for the set of vertices, and define $E^*=\bigsqcup_{k\geq 0} E^k$. 
We recover the classical situation of the previous section by taking $E$ to be the graph $(\{\ast\},X,r,s)$ in which  $r(x)=r(y)=s(x)=s(y)=\ast$ for all $x,y\in X=E^1$ and $E^*=X^*$.

The analogue of the tree $T_X$ in our situation is the (undirected) graph $T_E$ with vertex set $T^0=E^*$ and edge set  
\[
T^1=\left\{ \{\mu, \mu e \}\colon \mu\in E^*,e\in E^1, \text{ and } s(\mu)=r(e)\right\}.
\]
The subgraph 
$
vE^*=\{\mu\in E^*\colon r(\mu)=v\} 
$
is a rooted tree with root $v\in E^0$, and $T_E=\bigsqcup_{v\in E^0} vE^*$ is a disjoint union of trees, or \emph{forest}. 
An example of a graph $E$ and the corresponding forest $T_E$ 
is given in Figure~\ref{forest of trees}.

\begin{figure}
\begin{tikzpicture}[scale=1.0]
\node[vertex] (p) at (-8.0,0.5) {$E$};
\begin{scope}[xshift=-6.5cm,yshift=-1.25cm]
\node at (0,0) {$w$};
\node[vertex] (vertexe) at (0,0)   {$\quad$};
\node at (-3,0) {$v$};
\node[vertex] (vertex-a) at (-3,0)   {$\quad$}
	edge [->,>=latex,out=35,in=145] node[below,swap,pos=0.5]{$3$} (vertexe)
	edge [->,>=latex,out=50,in=130] node[above,swap,pos=0.5]{$4$} (vertexe)
	edge [->,>=latex,out=210,in=150,loop] node[left,pos=0.5]{$1$} (vertexe)
	edge [<-,>=latex,out=310,in=230] node[below,swap,pos=0.5]{$2$} (vertexe);
\end{scope}

\node[vertex] (p) at (0,0.5) {$T_E$};
\begin{scope}[xshift=-2.3cm,yshift=0cm]
   \node[vertex] (v) at (0,0) {$v$};
    \node[vertex] (x) at (-1,-1) {$1$};
    \node[vertex] (y) at (1,-1) {$2$};
    \node[vertex] (xx) at (-1.5,-2.2) {$11$};
    \node[vertex] (xy) at (-0.5,-2.2) {$12$};
    \node[vertex] (yx) at (0.5,-2.2) {$23$};
    \node[vertex] (yy) at (1.5,-2.2) {$24$};
    \node[vertex] (31) at (-1.5-0.25,-3.2) {};
    \node[vertex] (32) at (-1.5+0.25,-3.2) {};
    \node[vertex] (33) at (-0.5-0.25,-3.2) {};
    \node[vertex] (34) at (-0.5+0.25,-3.2) {};
    \node[vertex] (35) at (0.5-0.25,-3.2) {};
    \node[vertex] (36) at (0.5+0.25,-3.2) {};
    \node[vertex] (37) at (1.5-0.25,-3.2) {};
    \node[vertex] (38) at (1.5+0.25,-3.2) {};
    \draw (x)--(v);
    \draw (y)--(v);
    \draw (xx)--(x);
    \draw (xy)--(x);
    \draw (yx)--(y);
    \draw (yy)--(y);
    \draw (31)--(xx);
    \draw (32)--(xx);
    \draw (33)--(xy);
    \draw (34)--(xy);
    \draw (35)--(yx);
    \draw (36)--(yx);
    \draw (37)--(yy);
    \draw (38)--(yy);
\end{scope}

\begin{scope}[xshift=2.3cm,yshift=0cm]
    \node[vertex] (v) at (0,0) {$w$};
    \node[vertex] (x) at (-1,-1) {$3$};
    \node[vertex] (y) at (1,-1) {$4$};
    \node[vertex] (xx) at (-1.5,-2.2) {$31$};
    \node[vertex] (xy) at (-0.5,-2.2) {$32$};
    \node[vertex] (yx) at (0.5,-2.2) {$41$};
    \node[vertex] (yy) at (1.5,-2.2) {$42$};
    \node[vertex] (31) at (-1.5-0.25,-3.2) {};
    \node[vertex] (32) at (-1.5+0.25,-3.2) {};
    \node[vertex] (33) at (-0.5-0.25,-3.2) {};
    \node[vertex] (34) at (-0.5+0.25,-3.2) {};
    \node[vertex] (35) at (0.5-0.25,-3.2) {};
    \node[vertex] (36) at (0.5+0.25,-3.2) {};
    \node[vertex] (37) at (1.5-0.25,-3.2) {};
    \node[vertex] (38) at (1.5+0.25,-3.2) {};
    \draw (x)--(v);
    \draw (y)--(v);
    \draw (xx)--(x);
    \draw (xy)--(x);
    \draw (yx)--(y);
    \draw (yy)--(y);
    \draw (31)--(xx);
    \draw (32)--(xx);
    \draw (33)--(xy);
    \draw (34)--(xy);
    \draw (35)--(yx);
    \draw (36)--(yx);
    \draw (37)--(yy);
    \draw (38)--(yy);
\end{scope}
\end{tikzpicture}
\caption{The forest of trees $T_E$ for the graph $E$}
\label{forest of trees}
\end{figure}
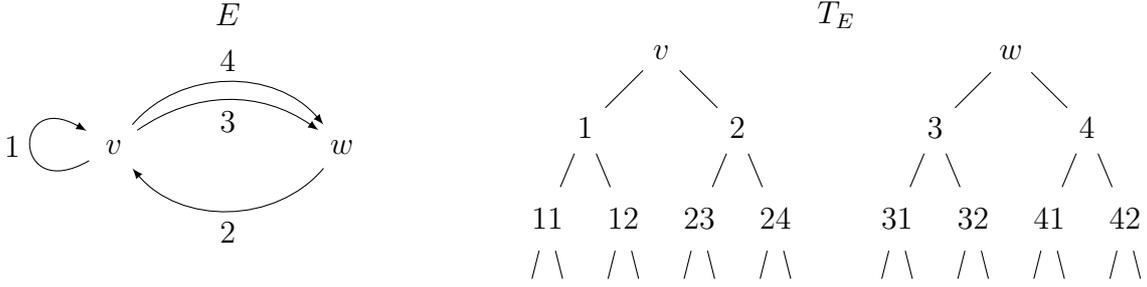

We will be using range and source maps in the context of both graphs and, later,  automata. If there is any room for confusion or if we simply want to be clear for later reference, we will use the notation $r_E$ and $s_E$ for the range and source maps associated to~$E$.

Difficulties in defining restrictions consistently in this new context mean that we need to deal with partial isomorphisms rather than automorphisms. Partial isomorphisms of $T_E$ are only defined on a subtree of $T_E$ but are still isomorphisms of their domain onto their range.

\begin{definition}\label{defn: partial iso}
Suppose $E=(E^0,E^1,r,s)$ is a directed graph. A \emph{partial isomorphism} of $T_E$ consists of two vertices $v,w \in E^0$ and a bijection $g:vE^* \to wE^*$ such that
\begin{equation}
\label{defnpi}
g|_{vE^k}\text{ is a bijection onto }wE^k \text{ for }k \in \N, \text{ and } 
g(\mu e) \in g(\mu)E^1 \text{ for all } \mu e \in vE^*. 
\end{equation}
For $v\in E^0$, we write $\id_{v}:vE^* \to vE^*$ for the partial isomorphism given by 
$\id_{v}(\mu)=\mu$ for all $\mu \in vE^*$. 
We denote the set of all partial isomorphisms of $T_E$ by $\PI(E^*)$.
We define domain and codomain maps $d,c\colon \PI(E^*) \to E^0$ so that $g:d(g)E^* \to c(g)E^*$; 
thus in~\eqref{defnpi} we have $d(g):=v$ and $c(g):=w$.
\end{definition}

Because we work with partial isomorphisms instead of automorphisms, 
we work with groupoids rather than groups. 
A groupoid differs from a group in two main ways: 
the product in a groupoid is only partially defined, and a groupoid typically has more than one unit.

A \emph{groupoid} is a small category with inverses. 
Thus a groupoid consists of a set $G^0$ of objects, a set $G$ of morphisms, 
two functions $c,d:G\to G^0$, and 
a partially defined product $(g,h)\mapsto gh$ from $G^2:=\{(g,h): d(g)=c(h)\}$ to $G$ 
such that $(G,G^0,c,d)$ is a category and such that each $g\in G$ has an inverse $g^{-1}$. 
(We then typically write $G$ to denote the groupoid, and call $G^0$ the unit space of the groupoid. 
If $|G^0|=1$, then $G$ is simply a group in the usual sense.) 
There are many formulations of this definition; 
we have chosen this one because it emphasises the objects and morphisms as distinct sets, 
and neatly summarises the axioms. 
The next result describes an important example, and its proof illustrates the efficiency of the definition.

\begin{prop}\label{prop:alltreeisos}
Suppose that $E=(E^0,E^1,r,s)$ is a directed graph with associated forest $T_E$, and resume the notation of Definition~\ref{defn: partial iso}. Then $(\PI(E^*),E^0,c,d)$ is a groupoid in which: the product is given by composition of functions, the identity isomorphism at $v\in E^0$ is $\id_v:vE^*\to vE^*$, and the inverse of $g\in \PI(E^*)$ is the inverse of the function $g:d(g)E^*\to c(g)E^*$.
\end{prop}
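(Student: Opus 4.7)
The plan is to verify directly, using the groupoid formulation given just before the proposition, that $(\PI(E^*), E^0, c, d)$ is a small category with inverses. Since composition of functions is associative whenever defined and the domain/codomain maps are defined precisely so that $g\circ h$ makes sense exactly when $d(g)=c(h)$, the categorical skeleton is essentially free; the real content is to check that $\PI(E^*)$ is closed under composition, contains the claimed identities, and is closed under functional inverses.

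For composition, suppose $g,h\in\PI(E^*)$ with $d(g)=c(h)=:w$, say $h\colon vE^*\to wE^*$ and $g\colon wE^*\to uE^*$. Then $g\circ h\colon vE^*\to uE^*$ is a bijection, and since each of $g,h$ respects the grading by path length, so does $g\circ h$. For the second clause of \eqref{defnpi}, for any $\mu e\in vE^*$ we have $h(\mu e)=h(\mu)f$ for some $f\in E^1$, and then $g(h(\mu)f)=g(h(\mu))f'$ for some $f'\in E^1$, giving $(g\circ h)(\mu e)\in (g\circ h)(\mu)E^1$. Thus $g\circ h\in\PI(E^*)$ with $d(g\circ h)=v=d(h)$ and $c(g\circ h)=u=c(g)$, so the composition matches the category structure.

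The identities are trivial: $\id_v$ obviously satisfies \eqref{defnpi} and acts as a two-sided identity for composition on morphisms with source or target $v$. The only step that requires a short argument is closure under inverses. Given $g\colon vE^*\to wE^*$ in $\PI(E^*)$, the set-theoretic inverse $g^{-1}\colon wE^*\to vE^*$ is a bijection, and since $g|_{vE^k}$ is a bijection onto $wE^k$ by hypothesis, $g^{-1}|_{wE^k}$ is a bijection onto $vE^k$. For the edge condition, take $\nu e\in wE^*$ with $\nu\in wE^k$ and write $\mu=g^{-1}(\nu e)\in vE^{k+1}$; factor $\mu=\mu'f$ with $f\in E^1$. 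The condition \eqref{defnpi} for $g$ gives $g(\mu'f)\in g(\mu')E^1$, so $\nu e=g(\mu')f'$ for some $f'\in E^1$, which forces $g(\mu')=\nu$, i.e., $\mu'=g^{-1}(\nu)$. Hence $g^{-1}(\nu e)=g^{-1}(\nu)f\in g^{-1}(\nu)E^1$, as required.

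The main (mild) obstacle is the inverse computation, because the second clause of \eqref{defnpi} is only an edge-extension condition for $g$ and not a priori symmetric in the two directions; the argument above shows it is in fact self-dual once one uses that $g$ preserves the length grading. With composition, identities, and inverses in $\PI(E^*)$ established, the groupoid axioms (associativity, $g\circ g^{-1}=\id_{c(g)}$, $g^{-1}\circ g=\id_{d(g)}$, $\id_{c(g)}\circ g=g=g\circ\id_{d(g)}$) are automatic from the fact that composition and inversion are the ordinary operations on bijections of sets.
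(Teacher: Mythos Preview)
Your proof is correct and follows essentially the same approach as the paper: the composition argument is identical, and your direct verification that $g^{-1}$ satisfies \eqref{defnpi} unpacks what the paper does by citing the last assertion of Lemma~\ref{self-similar defn of auto} (extended to partial isomorphisms). If anything, your explicit factoring argument for the inverse is more self-contained than the paper's reference back to the earlier lemma.
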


\begin{proof}
For $g,h\in \PI(E^*)^2$, the composition $gh$ of $h:d(h)E^*\to c(h)E^*$ with $g:d(g)E^*=c(h)E^*\to c(g)E^*$ is 
a bijection of $d(h)E^*$ onto $c(g)E^*$ which maps each $d(h)E^k$ onto $c(g)E^k$. 
To see that $gh$ has the second property in \eqref{defnpi}, we take $\mu e\in d(h)E^*$. 
Since $h$ satisfies \eqref{defnpi}, there exists $f\in E^1$ such that $h(\mu e)=h(\mu)f$, and then
\[
(gh)(\mu e)=g(h(\mu e))=g(h(\mu)f)
\]
belongs to $g(h(\mu))E^1=(gh)(\mu)E^1$ because $g$ satisfies \eqref{defnpi}. 
So $gh$ is a partial isomorphism with $d(gh)=d(h)$ and $c(gh)=c(g)$. 
Associativity of the multiplication follows from the associativity of composition. 
Thus $(\PI(E^*),E^0,c,d)$ is a category.

The usual properties of composition also imply that $\id_{c(g)}g=g=g\id_{d(g)}$. 
Each $g:d(g)E^*\to c(g)E^*$ is a bijection, and hence has a set-theoretic inverse $g^{-1}:c(g)E^*\to d(g)E^*$. 
That this inverse is indeed a partial isomorphism follows from 
the last assertion in Lemma~\ref{self-similar defn of auto} 
(or rather from its extension to partial isomorphisms).
\end{proof}

Suppose that $E$ is a directed graph and $G$ is a groupoid with unit space $E^0$. 
An \emph{action} of $G$ on the path space $E^*$ is 
a (unit-preserving) groupoid homomorphism $\phi:G\to \PI(E^*)$; 
the action is \emph{faithful} if $\phi$ is one-to-one. 
If the homomorphism is fixed, we usually write $g\cdot \mu$ for $\phi_g(\mu)$. 
This applies in particular when $G$ arises as a subgroupoid of $\PI(E^*)$.

\begin{definition}
\label{faithful self-similar groupoid action}
Suppose $E=(E^0,E^1,r,s)$ is a directed graph and $G$ is a groupoid with unit space $E^0$ which acts faithfully on $T_E$. The action is \emph{self-similar} if 
for every $g\in G$ and $e\in d(g)  E^1$, there exists $h\in G$ such that
\begin{equation}
\label{selfsimilar groupoid defn}
g\cdot(e\mu)=(g\cdot e)(h\cdot \mu) \text{ for all } \mu\in s(e)E^*.
\end{equation}
Since the action is faithful, there is then exactly one such $h\in G$, and we write $g|_e:=h$.
\end{definition}

Definition~\eqref{faithful self-similar groupoid action} has some immediate and important consequences.

\begin{lemma}
\label{source and range observations}
Suppose $E=(E^0,E^1,r,s)$ is a directed graph and 
$G$ is a groupoid with unit space $E^0$ acting self-similarly on $T_E$. 
Then for $g,h\in G$ with $d(h)=c(g)$ and $e\in d(g) E^1$, we have
\begin{enumerate}
\item 
\label{source and range in SSG}
$d(g|_e)=s(e)$ and $c(g|_e)=s(g\cdot e)$,   
\item 
\label{equivariance}
$r(g\cdot e) = g\cdot r(e)$ and $s(g\cdot e) = g|_e\cdot s(e)$,
\item\label{resid}
if $g=\id_{r(e)}$, then $g|_e=\id_{s(e)}$, and
\item\label{multonedges}
$(hg)|_{e}=(h|_{g\cdot e})(g|_e)$.
\end{enumerate}
\end{lemma}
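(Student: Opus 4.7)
My plan is to prove the four items in order, extracting each from the defining equation \eqref{selfsimilar groupoid defn}, the homomorphism property of the action, and the faithfulness assumption used to pin down the element $g|_e$.

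For \eqref{source and range in SSG}, I would read off the domain and codomain of $g|_e$ directly from \eqref{selfsimilar groupoid defn}. The left-hand side $g\cdot(e\mu)$ is defined precisely when $\mu \in s(e)E^*$, so $g|_e$ must be defined on $s(e)E^*$; since $g\cdot e\in c(g)E^1$ is an edge with range $c(g)$, the concatenation $(g\cdot e)(h\cdot \mu)$ on the right makes sense only when $h\cdot \mu$ lies in $s(g\cdot e)E^*$. Hence $d(g|_e)=s(e)$ and $c(g|_e)=s(g\cdot e)$.

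For \eqref{equivariance}, I note first that the unit $r(e)\in E^0$ is sent by the action to the identity partial isomorphism $\id_{r(e)}$, which is the unit in $\PI(E^*)$ at $r(e)$; since the homomorphism is unit-preserving, $g\cdot r(e)$ should be interpreted through the level-zero behaviour of $g$. Because $g$ is a partial isomorphism of the subtree $d(g)E^*=r(e)E^*$ onto $c(g)E^*$, it sends the unique level-zero vertex $r(e)$ to the level-zero vertex $c(g)$, so $g\cdot r(e)=c(g)=r(g\cdot e)$, the latter equality holding because $g\cdot e\in c(g)E^1$. The second identity is then immediate from \eqref{source and range in SSG}: $g|_e\cdot s(e) = c(g|_e) = s(g\cdot e)$.

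For \eqref{resid}, I substitute $g=\id_{r(e)}$ into \eqref{selfsimilar groupoid defn}. The left side becomes $e\mu$, and the right side becomes $(e)(g|_e\cdot \mu)$, so $g|_e\cdot \mu=\mu$ for every $\mu\in s(e)E^*$; since the action is faithful and $\id_{s(e)}$ is the unique element of $G$ with unit space $E^0$ that acts as the identity on $s(e)E^*$, we conclude $g|_e=\id_{s(e)}$. For \eqref{multonedges}, I would apply both sides of the self-similarity equation to $e\mu$ and exploit that $\phi$ is a groupoid homomorphism. On one hand, $(hg)\cdot (e\mu)=((hg)\cdot e)((hg)|_e\cdot \mu)$. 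On the other hand, unwinding the composition gives
\[
(hg)\cdot(e\mu)=h\cdot(g\cdot(e\mu))=h\cdot((g\cdot e)(g|_e\cdot\mu))=(h\cdot(g\cdot e))(h|_{g\cdot e}\cdot(g|_e\cdot\mu)).
\]
Since $\phi$ is a homomorphism, $h\cdot(g\cdot e)=(hg)\cdot e$, and part~\eqref{source and range in SSG} supplies $d(h|_{g\cdot e})=s(g\cdot e)=c(g|_e)$, so the product $(h|_{g\cdot e})(g|_e)$ is defined in $G$ and acts on $s(e)E^*$ by composition. Matching the two expressions and using faithfulness one more time gives $(hg)|_e=(h|_{g\cdot e})(g|_e)$.

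The main obstacle I anticipate is purely bookkeeping: in every step one must carefully verify that the composition in question is defined in the groupoid (that codomain meets domain) before invoking the homomorphism or faithfulness; part \eqref{source and range in SSG} is the workhorse that makes these compatibility checks routine in parts \eqref{equivariance} and \eqref{multonedges}.
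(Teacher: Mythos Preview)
Your proposal is correct and follows essentially the same route as the paper's own proof: each of the four parts is read off directly from the self-similarity equation \eqref{selfsimilar groupoid defn}, the fact that $g$ is a partial isomorphism carrying the root $d(g)$ to $c(g)$, and faithfulness of the action. Your treatment of \eqref{multonedges} is in fact slightly more careful than the paper's, since you explicitly verify $d(h|_{g\cdot e})=c(g|_e)$ before forming the product in $G$.
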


\begin{proof}
For every $\mu\in s(e)E^*$, $g\cdot(e\mu)=(g\cdot e)(g|_e\cdot \mu)$ is a path in $E$. 
This says, first, that $\mu$ is in the domain $d(g|_e)E^*$ of $g|_e$, so that $d(g|_e)=s(e)$, and, 
second, that $c(g|_e)=r(g|_e\cdot \mu)=s(g\cdot e)$. This gives \eqref{source and range in SSG}. 

For \eqref{equivariance}, we observe that $\mu\mapsto g\cdot\mu$ is an isomorphism of 
the tree $d(g)E^*$ onto $c(g)E^*$, and in particular we have $g\cdot d(g)=c(g)$. 
Thus $r(g\cdot e)=c(g)=g\cdot d(g)=g\cdot r(e)$. 
On the other hand, $g|_e$ is an isomorphism of $d(g|_e)E^*=s(e)E^*$ onto $c(g|_e)E^*=s(g\cdot e)E^*$. 
So $g|_e\cdot s(e)$ is $s(g\cdot e)$.

If $g=\id_{r(e)}$, then $g\cdot (e\mu)=e\mu=e(\id_{r(\mu)}\cdot\mu)=e(\id_{s(e)}\cdot\mu)$, 
and the uniqueness of restrictions gives \eqref{resid}. 
For \eqref{multonedges}, we take $\mu\in d(g)E^*$ and compute:
\begin{align*}
((hg)\cdot e)((hg)|_e\cdot\mu)&=(hg)\cdot(e\mu)=h\cdot((g\cdot e)(g|_e\cdot\mu))\\
&=(h\cdot(g\cdot e))(h|_{g\cdot e}\cdot(g|_e\cdot \mu))\\
&=((hg)\cdot e)((h|_{g\cdot e}g|_e)\cdot \mu).
\end{align*}
Thus we have $(hg)|_e\cdot\mu=(h|_{g\cdot e}g|_e)\cdot \mu$ for all $\mu\in d(g)E^*$, 
and \eqref{multonedges} follows because the action is faithful. 
\end{proof}

\begin{remark}\label{EP remark}
Lemma~\ref{source and range observations}\eqref{equivariance} implies that 
the source map may not be equivariant: $s(g\cdot e)\not=g\cdot s(e)$ in general. 
Indeed, $g\cdot s(e)$ will often not make sense: $g$ maps $d(g)E^*$ onto $c(g)E^*$, 
and $s(e)$ is not in $d(g)E^*$ unless $s(e)=d(g)$.
This non-equivariance of the source map is one of the main points of difference 
between our work and that of Exel--Pardo~\cite{EP2} and Bedos--Kaliszewski--Quigg~\cite{BKQ}; see Appendix \ref{App:Exel-Pardo Actions} for further details.
\end{remark}

Suppose $\mu=\mu_1\cdots\mu_m\in d(g)E^m$ and $\nu\in s(\mu)E^*$.
Then $\mu\nu\in d(g)E^*$ and
\begin{align*}
g\cdot(\mu\nu) &= (g\cdot \mu_1)(g|_{\mu_1}\cdot\mu_2)\cdots 
(\cdots (g|_{\mu_1})|_{\mu_2})\cdots)|_{\mu_{m-1}}\cdot \mu_m)
((\cdots (g|_{\mu_1})|_{\mu_2})\cdots)|_{\mu_m}\cdot\nu) \\
& = (g\cdot\mu)((\cdots (g|_{\mu_1})|_{\mu_2})\cdots)|_{\mu_m}\cdot\nu);
\end{align*}
thus with 
$
g|_\mu:= (\cdots (g|_{\mu_1})|_{\mu_2})\cdots)|_{\mu_m}
$
we can generalise~\eqref{selfsimilar groupoid defn} to
\begin{equation}
\label{selfsimilar groupoid action}
g\cdot(\mu\nu)=(g\cdot \mu)(g|_\mu\cdot \nu) \text{ for all } \mu\in d(g)E^*, \nu\in s(\mu)E^*.
\end{equation}
By iterating parts \eqref{resid} and \eqref{multonedges} of Lemma~\ref{source and range observations} we arrive at the following  more general versions:

\begin{prop}~\label{SSA:extension to paths}
\label{properties of faithful self-similar groupoid action}
Suppose $E=(E^0,E^1,r,s)$ is a directed graph and $G$ is a groupoid with unit space $E^0$ acting self-similarly on $T_E$. Then for all $g,h\in G$, $\mu\in d(g) E^*$, and $\nu\in s(\mu)E^*$ we have:
\begin{enumerate}
\item\label{path_rest_rest} $g|_{\mu\nu}=(g|_\mu)|_\nu$, 
\item $\id_{r(\mu)}|_\mu=\id_{s(\mu)}$,
\item\label{path_rest_nonhomo} $(hg)|_\mu= h|_{g\cdot\mu}g|_\mu$, and
\item\label{path_rest_inv} $g^{-1}|_\mu = (g|_{g^{-1}\cdot\mu})^{-1}$.
\end{enumerate}
\end{prop}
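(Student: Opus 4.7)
The plan is to prove the four assertions in order, each by induction on path length, using the incremental formula $g|_\mu = (\cdots(g|_{\mu_1})|_{\mu_2}\cdots)|_{\mu_m}$ displayed just above the proposition together with the single-edge statements of Lemma~\ref{source and range observations}. Throughout I adopt the natural convention that $g|_{d(g)}=g$, so that the incremental formula extends to paths of length zero.

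For part~(1), I would induct on $n=|\nu|$ with $|\mu|$ arbitrary. The case $n=0$ is the convention, and for $n=1$ the equality $g|_{\mu f}=(g|_\mu)|_f$ is immediate from the incremental formula. The inductive step with $\nu=\nu' f$ chains these together:
\[
g|_{\mu\nu' f}=(g|_{\mu\nu'})|_f=((g|_\mu)|_{\nu'})|_f=(g|_\mu)|_{\nu' f}.
\]
Alternatively, one may expand $g\cdot((\mu\nu)\lambda)$ and $g\cdot(\mu(\nu\lambda))$ via \eqref{selfsimilar groupoid action} for arbitrary $\lambda\in s(\nu)E^*$ and conclude by faithfulness.

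For~(2), induct on $|\mu|$. The base cases $|\mu|=0$ and $|\mu|=1$ come from the convention and from Lemma~\ref{source and range observations}(3) respectively. For the inductive step $\mu=\mu' e$, combine part~(1), the inductive hypothesis, and the single-edge case:
\[
\id_{r(\mu)}|_{\mu' e} = (\id_{r(\mu)}|_{\mu'})|_e = \id_{s(\mu')}|_e = \id_{s(e)} = \id_{s(\mu)}.
\]
For~(3), again induct on $|\mu|$, with base $|\mu|=1$ supplied by Lemma~\ref{source and range observations}(4). For $\mu=\mu' e$, apply~(1), the inductive hypothesis, and the base case in turn:
\[
(hg)|_{\mu' e} = ((hg)|_{\mu'})|_e = \bigl(h|_{g\cdot\mu'}\,g|_{\mu'}\bigr)|_e = h|_{g\cdot\mu'}|_{g|_{\mu'}\cdot e}\,g|_{\mu'}|_e,
\]
and then contract the right-hand side using~(1) together with \eqref{selfsimilar groupoid action} (which gives $g\cdot(\mu' e) = (g\cdot\mu')(g|_{\mu'}\cdot e)$), arriving at $h|_{g\cdot(\mu' e)}\,g|_{\mu' e}$ as required. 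Finally,~(4) falls out of~(3) applied with $h=g^{-1}$ to $\mu\in c(g)E^*$: the identity $(gg^{-1})|_\mu = g|_{g^{-1}\cdot\mu}\,g^{-1}|_\mu$, combined with~(2), rewrites the left-hand side as $\id_{s(\mu)}$, exhibiting $g|_{g^{-1}\cdot\mu}$ as the groupoid inverse of $g^{-1}|_\mu$.

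The main obstacle is purely bookkeeping: each displayed product must actually be composable in the groupoid, which one has to verify by tracking domains and codomains via Lemma~\ref{source and range observations}(1) and~(2). For instance, the product $h|_{g\cdot\mu}\,g|_\mu$ in~(3) requires $d(h|_{g\cdot\mu})=c(g|_\mu)$, which holds because both equal $s(g\cdot\mu)$. Once these accounting checks are kept straight, the inductions themselves are essentially mechanical.
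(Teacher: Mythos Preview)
Your proposal is correct and follows essentially the same route as the paper: the paper simply declares (1)--(3) ``straightforward'' iterations of Lemma~\ref{source and range observations}, and for (4) it computes $(gg^{-1})|_\mu$ via (3) and (2) exactly as you do. One small notational slip: in your derivation of (4) you say ``applied with $h=g^{-1}$'', but the displayed identity $(gg^{-1})|_\mu = g|_{g^{-1}\cdot\mu}\,g^{-1}|_\mu$ actually comes from (3) with $g^{-1}$ in the role of $g$ and the original $g$ in the role of $h$ --- the formula is right, just the labelling of the substitution is reversed.
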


\begin{proof}
The first three of these are straightforward. For the last, observe that on the one hand, we have $gg^{-1}=\id_{r(\mu)}$, so $(gg^{-1})|_{\mu}=\id_{s(\mu)}$, and on the other we have $(gg^{-1})|_{\mu}=(g|_{g^{-1}\cdot\mu})(g^{-1}|_{\mu})$. Now multiplying $\id_{s(\mu)}=(g|_{g^{-1}\cdot\mu})(g^{-1}|_{\mu})$ on the left by $(g|_{g^{-1}\cdot \mu})^{-1}$ gives the result.
\end{proof}

We now generalise the process of constructing faithful self-similar actions from automata. 
The switch to path spaces requires that both the action and restriction maps interact with the graph structure. 
We begin by defining automata in this more general context.

Since the groupoid $\PI(E^*)$ has local identities $\id_v$ associated to each vertex $v$, we may need to add some of the vertices to the alphabet~$A$ of the automaton. 
If our aim were only to ensure that the set $A$ is closed under taking restrictions then 
we may only need to add some of the vertices --- see Example~\ref{s,r clarifying example}, 
where we would need to add $v$ to $A$ but not~$w$.
However, we want groupoids generated from automata associated to $E$ to be subgroupoids of $\PI(E^*)$. 
Since the standard definition of a subgroupoid involves having the same unit space as the ambient groupoid, we assume that $E^0\subset A$.

\begin{definition}
\label{defn: E-automaton}
An \emph{automaton over $E=(E^0,E^1,r_E,s_E)$} is a finite set $A$ containing $E^0$ 
together with functions $r_A,s_A\colon A\to E^0$ such that $r_A(v)=v=s_A(v)$ if $v\in E^0\subset A$, 
and a function 
\begin{equation}
A \tensor[_{s_A}]{\times}{_{r_E}} E^1 \ni (a,e) \mapsto (a\cdot e, a|_e)\in  E^1 \tensor[_{s_E}]{\times}{_{r_A}} A
\end{equation} 
such that:
\begin{itemize}
\item[(A1)] for every $a\in A$, $e\mapsto a\cdot e$ is a bijection of $s_A(a)E^1$ onto $r_A(a)E^1$;
\item[(A2)]\label{second property of automaton} 
$s_A(a|_e)=s_E(e)$ for all $(a,e)\in A \tensor[_{s_A}]{\times}{_{r_E}} E^1 $;
\item[(A3)] $r_E(e)\cdot e= e$ and $r_E(e)|_e=s_E(e)$ for all $e\in E^1$.
\end{itemize}
\end{definition}

Since $s_E(v)=r_E(v)=s_A(v)=r_A(v)=v$ for all $v\in E^0$, 
the range and source maps are consistent whenever they both make sense.
So there should be no confusion in using $s,r$ for both the graph and the automaton. 
However, sometimes it is convenient to distinguish between the two for clarity 
as in Definition~\ref{defn: E-automaton}, in which case
we  write $s_E,r_E$ and $s_A,r_A$ for the range and source maps in $E$ and $A$ respectively.

We can use the property~(A2)  to extend restriction to paths by defining
\[
a|_\mu = (\cdots ((a|_{\mu_1})|_{\mu_2})|_{\mu_3}\cdots )|_{\mu_k}.
\] 
The point is that $s_A(a|_{\mu_1})=s_E(\mu_1)=r_E(\mu_2)$, for example,
and hence $(a|_{\mu_1})|_{\mu_2}$ makes sense. 
Our next result says that we can extend the action of elements of the set $A$ to partial isomorphisms on $E^*$.

\begin{prop}\label{extenda}
Suppose that $E$ is a directed graph and $A$ is an automaton over $E$. We recursively define maps $f_{a,k}:s(a)E^k\to r(a)E^k$ for $a\in A$ and $k\in\N$ by $f_{a,1}(e)=a\cdot e$ and
\begin{equation}\label{deffa}
f_{a,k+1}(e\mu)=(a\cdot e)f_{a|_e,k}(\mu)\quad\text{for $e\mu\in s_A(a)E^{k+1}$.}
\end{equation}
Then for every $a\in A$, $f_a=\{f_{a,k}\}$ is a partial isomorphism of $s(a)E^*$ onto $r(a)E^*$ so that $d(f_a)=s(a)$ and $c(f_a)=r(a)$; for $a=v\in A\cap E^0$, we have $f_a=\id_{v}\colon vE^*\to vE^*$.
\end{prop}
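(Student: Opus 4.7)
The plan is to verify the three defining conditions of a partial isomorphism (see Definition~\ref{defn: partial iso}) for the recursively defined $f_a$. First I would note that the recursion makes sense: the codomain $E^1 \tensor[_{s_E}]{\times}{_{r_A}} A$ of the automaton map forces $s_E(a\cdot e) = r_A(a|_e)$, so $(a\cdot e)\, f_{a|_e,k}(\mu)$ is a concatenable path; an induction on $k$ (using (A2) to propagate the source condition through iterated restrictions) then shows that each $f_{a,k}$ is a well-defined map from $s_A(a)E^k$ into $r_A(a)E^k$.

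Next, by induction on $k$, I would show that every $f_{a,k}$ is a bijection onto $r_A(a)E^k$. The base case $k=1$ is axiom (A1). For the inductive step, injectivity and surjectivity both follow from (A1) applied to the first edge together with the inductive hypothesis applied to $f_{a|_e,k}$ on the tail: if $f_{a,k+1}(e\mu)=f_{a,k+1}(e'\mu')$, cancelling the first edge gives $a\cdot e=a\cdot e'$ hence $e=e'$ by (A1), and then $f_{a|_e,k}(\mu)=f_{a|_e,k}(\mu')$ forces $\mu=\mu'$; conversely, any $f\nu\in r_A(a)E^{k+1}$ has $f=a\cdot e$ for a unique $e\in s_A(a)E^1$ by (A1), and then $\nu\in s_E(f)E^k=r_A(a|_e)E^k$ is in the range of $f_{a|_e,k}$ by induction.

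It then remains to check the second condition in~\eqref{defnpi}, namely $f_a(\mu e)\in f_a(\mu)E^1$, which I would prove by induction on the length of $\mu$. For $\mu=s_A(a)\in E^0$ this is immediate from $f_{a,1}(e)=a\cdot e\in r_A(a)E^1$. For $\mu=\mu_1 \mu'$ with $\mu_1\in E^1$, the recursion together with the inductive hypothesis applied to $f_{a|_{\mu_1}}$ gives
\[
f_{a,k+1}(\mu_1\mu' e) = (a\cdot \mu_1)\, f_{a|_{\mu_1},k}(\mu' e) \in (a\cdot \mu_1)\, f_{a|_{\mu_1},k-1}(\mu')\, E^1 = f_{a,k}(\mu)\, E^1,
\]
as required. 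Combining the three steps, $f_a$ is a partial isomorphism with $d(f_a)=s_A(a)$ and $c(f_a)=r_A(a)$.

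Finally, for $a=v\in E^0\subset A$ I would show $f_{v,k}=\id$ on $vE^k$ by a further induction on $k$. The base case uses (A3): $f_{v,1}(e)=v\cdot e=r_E(e)\cdot e=e$. For the inductive step,
\[
f_{v,k+1}(e\mu) = (v\cdot e)\, f_{v|_e,k}(\mu) = e \, f_{s_E(e),k}(\mu) = e\mu,
\]
using $v|_e=r_E(e)|_e=s_E(e)$ from (A3) and then the inductive hypothesis. The main obstacle is really just bookkeeping with the source/range conventions for $a|_e$: none of the arguments is conceptually delicate, but each appeal to the recursion requires confirming the correct fiber-product compatibility via (A1)--(A3).
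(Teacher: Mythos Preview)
Your proof is correct and follows essentially the same approach as the paper: both establish bijectivity of each $f_{a,k}$ by induction on $k$ using (A1) and the inductive hypothesis on the tail, then verify the edge-extension property $f_a(\mu e)\in f_a(\mu)E^1$ by induction on $|\mu|$, and finally dispatch the $a=v\in E^0$ case by induction using (A3). Your treatment is in fact slightly more careful in places (you start the edge-extension induction at $|\mu|=0$ rather than $1$, and you correctly write $v|_e=s_E(e)$ rather than $v$), but the structure is the same.
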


\begin{proof}
We prove by induction on $k$ that the set $\{f_{a,k}:a\in A\}$ consists of bijections $f_{a,k}$ of $s(a)E^k$ onto $r(a)E^k$. The base case $k=1$ is (A1) in the definition of automaton. So we suppose the claim is true for $k\geq 1$. For $e\mu\in s(a)E^{k+1}$, then $s(a|_e)=s(e)=r(\mu)$, so the right-hand side of \eqref{deffa} makes sense and belongs to $r(a)E^{k+1}$. To see that $f_{a,k+1}$ is one-to-one, suppose that $f_{a,k+1}(e\mu)=f_{a,k+1}(f\nu)$. Then we have $a\cdot e=a\cdot f$, which implies $e=f$ by (A1), and then
\[
f_{a|_e,k}(\mu)=f_{a|_f,k}(\nu)=f_{a|_e,k}(\nu)\Longrightarrow \mu=\nu,
\]
and $e\mu=f\nu$. To see that $f_{a,k+1}$ is onto, take $f\nu\in r(a)E^{k+1}$. Then (A1) gives $e\in E^1$ such that $a\cdot e=f$, and surjectivity of $f_{a|_e,k}$ gives $\mu$ such that $f_{a|_e,k}(\mu)=\nu$. Then $f_{a,k+1}(e\mu)=f\nu$, and $f_{a,k+1}$ is onto.

We now have to prove that the maps $f_{a,k}$ satisfy the second condition in \eqref{defnpi}, which we again do simultaneously for all $a$ by induction on $k$. Let $a\in A$, $\mu\in s(a)E^k$, $e\in s(\mu)E^1$, and we want $f_{a,k+1}(\mu e)\in f_{a,k}(\mu)E^1$. For $k=1$, we have $|\mu|=1$ and $\mu=\mu_1\in E^1$. Thus \eqref{deffa} gives
\[
f_{a,2}(\mu e)=(a\cdot \mu_1)f_{a|_{\mu_1}, 1}(e)=(a\cdot \mu_1)(a|_{\mu_1}\cdot e)\in f_{a,1}(\mu_1)E^1.
\]
We now suppose that what we want is true for $\{f_{a,k}:a\in A\}$ for some $k\geq 1$, and take $\mu e\in s(a)E^{k+2}$. Then since $|\mu|=k+1$, we can factor $\mu e=\mu_1\mu' e$, and then
\[
f_{a,k+2}(\mu e)=(a\cdot \mu_1)f_{a|_{\mu_1},k+1}(\mu' e)\in (a\cdot \mu_1)f_{a|_{\mu_1},k}(\mu')E^1=f_{a,k+1}(\mu)E^1,
\]
as required.

Since $r(v)=v=s(v)$, $v\cdot e=e$ and $v|_e=v$ for all $e\in vE^*$, a simple induction argument shows that $f_v$ is the identity on $vE^*$. 
\end{proof}

\begin{thm}\label{defnGA}
Suppose that $E$ is a directed graph and $A$ is an automaton over $E$. For $a\in A$, let $f_a$ be the partial isomorphism of $T_E$ described in Proposition~\ref{extenda}, and let $G_A$ be the subgroupoid of $\PI(E^*)$ generated by $\{f_a:a\in A\}$ (which by convention includes the identity morphisms $\{\id_v:v\in E^0\}$). Then $G_A$ acts faithfully on the path space $E^*$, and this action is self-similar.
\end{thm}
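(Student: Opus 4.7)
The action is the inclusion $G_A\hookrightarrow\PI(E^*)$, so faithfulness is automatic; the substantive content is self-similarity. My plan mirrors the template of Proposition~\ref{self similar action of G_A}: verify the self-similarity condition~\eqref{selfsimilar groupoid defn} first for the generators $f_a$, then for their inverses, then for compositions, and conclude by induction on word length.

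For a generator $f_a$ and a path $e\mu\in s(a)E^{k+1}$, the recursive definition~\eqref{deffa} reads
\[
f_a\cdot(e\mu)=(a\cdot e)\,f_{a|_e,k}(\mu)=(f_a\cdot e)(f_{a|_e}\cdot \mu),
\]
so~\eqref{selfsimilar groupoid defn} holds with $h=f_{a|_e}\in G_A$ (using that $a|_e\in A$ by the definition of an automaton over $E$). For an inverse, suppose $g\in G_A$ already satisfies~\eqref{selfsimilar groupoid defn} with all restrictions in $G_A$, and let $e\in c(g)E^1$. Set $f:=g^{-1}\cdot e\in d(g)E^1$ and $k:=g|_f$; by Lemma~\ref{source and range observations}\eqref{source and range in SSG}, $k$ is a bijection of $s(f)E^*$ onto $s(e)E^*$ satisfying $g\cdot(f\nu)=e(k\cdot\nu)$ for all $\nu\in s(f)E^*$. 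Writing an arbitrary $\mu\in s(e)E^*$ as $\mu=k\cdot\nu$ and applying $g^{-1}$ yields
\[
g^{-1}\cdot(e\mu)=(g^{-1}\cdot e)(k^{-1}\cdot\mu),
\]
so $h=k^{-1}=(g|_{g^{-1}\cdot e})^{-1}\in G_A$ witnesses~\eqref{selfsimilar groupoid defn} for $g^{-1}$. For a composition $g_1 g_2$ with both factors already self-similar and restrictions in $G_A$, chaining their decompositions gives
\[
(g_1 g_2)\cdot(e\mu)=((g_1g_2)\cdot e)\bigl((g_1|_{g_2\cdot e})(g_2|_e)\cdot\mu\bigr),
\]
and Lemma~\ref{source and range observations}\eqref{source and range in SSG} ensures that the two restrictions on the right are composable in $G_A$.

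Since every element of $G_A$ is, by construction, a finite product of generators $f_a$ and their inverses (together with the identities $\id_v$, which trivially satisfy~\eqref{selfsimilar groupoid defn} with $h=\id_{s(e)}$), induction on the length of such a product combines the three stages above to give~\eqref{selfsimilar groupoid defn} for every $g\in G_A$. The main bookkeeping challenge will be ensuring that the source/range/domain/codomain data match at every step so that the restrictions produced by the inductive hypothesis really do lie in $G_A$; this falls out cleanly from the fact that the restriction map $(a,e)\mapsto a|_e$ of an automaton has codomain $A$, combined with the closure of $G_A$ under products and inverses.
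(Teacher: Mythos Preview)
Your proof is correct and follows essentially the same approach as the paper: the paper likewise observes that faithfulness is automatic and then reduces self-similarity to the three cases (generators, inverses, compositions), explicitly referring back to the second and third paragraphs of the proof of Proposition~\ref{self similar action of G_A} as carrying over verbatim. One minor point: your appeals to Lemma~\ref{source and range observations} are technically to a statement whose hypothesis is a full self-similar groupoid action, which is what you are in the process of proving; the content you use (domain and codomain of a restriction) follows directly from the self-similarity equation for the single element $g$ given by your inductive hypothesis, so the logic is sound, but you might prefer to note this explicitly rather than cite the lemma.
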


This is the point where it really helps to have worked carefully through the classical case in which $|E^0|=1$ and $E^1$ is just a finite set $X$. Notice that $G_A$ acts faithfully because it is by definition a subgroupoid of $\PI(E^*)$.

\begin{proof}[Proof of Theorem~\ref{defnGA}] The inverses $f_a^{-1}:r(a)E^*\to s(a)E^*$ of the bijections $f_a:s(a)E^*\to r(a)E^*$ are also partial isomorphisms (see Proposition~\ref{prop:alltreeisos}; the crucial argument is that of Lemma~\ref{self-similar defn of auto}, which shows that $f_a^{-1}$ satisfies \eqref{prop:alltreeisos}). Then every element of $G_A$ is a finite product of partial isomorphisms $f_a$ and $f_b^{-1}$ in which the domains and codomains match up. (So, for example, $f_af_b^{-1}$ where $s(a)=c(f_b^{-1})=s(b)$.) Thus it suffices for us to show first, that if a partial automorphism $g\in \PI(E^*)$ has restrictions $g|_e$ for all $e\in d(g)E^1$, then so does $g^{-1}$; and second, that if $g_i\in \PI(E^*)$ have $d(g_1)=c(g_2)$, and $g_i$ have restrictions $g_i|_e$ for all $e\in d(g_i)E^1$, then so does $g_1g_2$. But for these we just need to look at the second and third paragraphs in the proof of Proposition~\ref{self similar action of G_A} to see that they carry over verbatim. So $G_A$ acts self-similarly, as claimed. 
\end{proof}

\begin{example}
\label{s,r clarifying example}

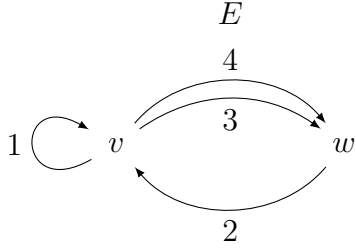
\begin{figure}
\begin{tikzpicture}[scale=1.0]
\node[vertex] (p) at (-8.0,0.5) {$E$};
\begin{scope}[xshift=-6.5cm,yshift=-1.25cm]
\node at (0,0) {$w$};
\node[vertex] (vertexe) at (0,0)   {$\quad$};
\node at (-3,0) {$v$};
\node[vertex] (vertex-a) at (-3,0)   {$\quad$}
	edge [->,>=latex,out=35,in=145] node[below,swap,pos=0.5]{$3$} (vertexe)
	edge [->,>=latex,out=50,in=130] node[above,swap,pos=0.5]{$4$} (vertexe)
	edge [->,>=latex,out=210,in=150,loop] node[left,pos=0.5]{$1$} (vertexe)
	edge [<-,>=latex,out=310,in=230] node[below,swap,pos=0.5]{$2$} (vertexe);
\end{scope}
\end{tikzpicture}
\caption{Graph for Example~\ref{s,r clarifying example}}
\label{asymmetric E}
\end{figure}

We consider the graph $E$ in Figure~\ref{asymmetric E}, 
so that $E^0=\{v,w\}$, $E^1=\{1,2,3,4\}$, 
$v=s_E(1)=r_E(1)=r_E(2)=s_E(3)=s_E(4)$ and $w=s_E(2)=r_E(3)=r_E(4)$. 
We take $A=\{a,b\}\cup E^0$, and define $r_A,s_A:A\to E^0$ 
by $s_A(a)=v=r_A(b)$ and $s_A(b)=w=r_A(a)$. 
We define
\begin{align}\label{defasymm}
a\cdot 1=4,\ a|_1=v;\quad\quad&b\cdot 3=1, \ b|_3=v;\\
a\cdot 2=3,\ a|_2=b;\quad\quad&b\cdot 4=2,\ b|_4=a.\notag
\end{align}
(The restrictions on $E^0\subset A$ are specified in (A3).) 
We verify that $e\mapsto a\cdot e$ is a bijection of $s_A(a)E^1=vE^1=\{1,2\}$ onto 
$r_A(a)E^1=wE^1=\{3,4\}$, and correspondingly for $b$. 
For (A2), we check 
\begin{align*}
s_A(a|_1)=s_A(v)=v=s_E(1)  \quad\quad &   r_A(a|_1)=r_A(v)=v=s_E(a \cdot 1) \\
s_A(a|_2)=s_A(b)=w=s_E(2)  \quad\quad&   r_A(a|_2)=r_A(b)=v=s_E(a \cdot 2) \\
s_A(b|_3)=s_A(v)=v=s_E(3) \quad\quad &    r_A(b|_3)=r_A(v)=v=s_E(b \cdot 3) \\
s_A(b|_4)=s_A(a)=v=s_E(4) \quad\quad &    r_A(b|_4)=r_A(w)=w=s_E(b \cdot 4).
\end{align*}
The data \eqref{defasymm} is often presented as 
\begin{align}
\label{nonsymmetric2_def}
a \cdot 1\mu=4\mu \quad\quad\quad\quad & b \cdot 3\mu=1\mu \\
\nonumber
a \cdot 2\nu=3(b\cdot \nu) \quad\quad &    b \cdot 4\mu=2(a \cdot \mu),
\end{align}
from which we are meant to deduce \eqref{defasymm}. 

To see how this deduction is meant to work, we observe that the formulas in \eqref{nonsymmetric2_def} allow us to 
define partial isomorphisms $f_a$ and $f_b$ recursively, 
using the construction that was formalised in Proposition~\ref{extenda}. So $a$ and $b$ have to be in the alphabet; we take $A=\{a,b\}\cup E^0$.
Consider the action of $a\in A$.
By assuming $1\mu\in E^*$ we are assuming $\mu\in s_E(1)E^*=vE^*$. 
Since $a \cdot 1\mu=4\mu$ for all $\mu\in vE^*$, 
it follows from Proposition~\ref{extenda} that 
$a\cdot 1 = 4$ and $f_{a|_1}(\mu)=\mu$ for all $\mu\in vE^*$. So whatever $a|_1$ is, it must satisfy $f_{a|_1}=\id_v$; since the partial isomorphisms $f_a$, $f_b$, $\id_v$ and $\id_w$ all act differently on the edge set $E^1$, they are distinct partial isomorphisms, and the only choice for $a|_1$ in our current alphabet is $a|_1:=v$. Similarly, we define $a|_2=b$, $b|_3=v$ and $b|_4=a$. We don't need to specify restrictions for $v$ and $w$, because the condition $r(e)|_e=s(e)$ forces $v|_1=v$, $v|_2=w$ and $w|_3=w|_4=v$.

Notice that we made choices in the last paragraph: we could have added extra elements to our alphabet. 
\end{example}

\section{The Toeplitz algebra of a self-similar groupoid action}\label{sec:Toep}

Suppose that $G$ is a (discrete) groupoid. Then the groupoid elements $g\in G$ give point masses $i_g$ in $C_c(G)$, and $C_c(G)=\sp\{i_g:g\in G\}$. For $g,h\in G$, the involution and product described in \cite[Proposition~3.11]{e3} are determined by
\[
i_g^*=i_{g^{-1}}\quad\text{and}\quad i_g*i_h=\begin{cases}i_{gh}&\text{if $d(g)=c(h)$}\\
0&\text{otherwise.}
\end{cases}
\]

A function $U:G\to B(H)$ is a \emph{unitary representation} of $G$ if
\begin{itemize}
\item for $v\in G^0$, $U_v$ is the orthogonal projection on a closed subspace of $H$,
\item for each $g\in G$, $U_g$ is a partial isometry with initial projection $U_{d(g)}$ and final projection $U_{c(g)}$, and
\item for $g,h\in G$, we have
\begin{equation}\label{defmult}
U_gU_h=\begin{cases}U_{gh}&\text{if $d(g)=c(h)$}\\0&\text{otherwise.}\end{cases}
\end{equation}
\end{itemize}
(The multiplication formula \eqref{defmult} applies when $g$ and $h$ are units, and then says that the projections $U_v$ are mutually orthogonal.) We have a similar notion of unitary representation with values in a $C^*$-algebra, and then the map $i:g\mapsto i_g$ is a unitary representation of $G$ in $C_c(G)\subset C^*(G)$. We have chosen the name ``unitary representation'' to emphasise that each $U_g$ is a unitary isomorphism of $U_{d(g)}H$ onto $U_{c(g)}H$.

\begin{prop}\label{univC*G}
Let $G$ be a groupoid. Then the pair $(C^*(G),i)$ is universal for unitary representations of $G$.
\end{prop}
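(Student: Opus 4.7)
The plan is the standard enveloping $C^*$-algebra construction. Given a unitary representation $U: G \to B$ into a $C^*$-algebra $B$, I will build a $*$-homomorphism on $C_c(G)$ by linear extension of $i_g \mapsto U_g$, extend it to $C^*(G)$ by continuity, and check uniqueness. The equation $\pi_U \circ i = U$ is then immediate by construction, and uniqueness follows because $\sp\{i_g : g \in G\}$ is dense in $C^*(G)$.

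The main step is verifying that $\pi_U: i_g \mapsto U_g$ extends to a $*$-homomorphism on $C_c(G)$. Multiplicativity on point masses splits into two cases. When $d(g) = c(h)$, both $\pi_U(i_g * i_h)$ and $\pi_U(i_g)\pi_U(i_h)$ equal $U_{gh}$ by definition. When $d(g) \neq c(h)$, the left side vanishes by the convolution rule, while the right side vanishes because
\[
U_g U_h = (U_g U_{d(g)})(U_{c(h)} U_h) = U_g (U_{d(g)} U_{c(h)}) U_h = 0,
\]
since the unit projections $U_{d(g)}$ and $U_{c(h)}$ are mutually orthogonal. For the involution, I must show $U_{g^{-1}} = U_g^*$: starting from $U_g^* U_g = U_{d(g)}$ (the initial-projection identity for the partial isometry $U_g$), I compute
\[
U_g^* = U_g^* U_{c(g)} = U_g^*(U_g U_{g^{-1}}) = (U_g^* U_g) U_{g^{-1}} = U_{d(g)} U_{g^{-1}} = U_{g^{-1}},
\]
using that $U_{c(g)}$ is the final projection of $U_g$ and $U_{d(g)} = U_{c(g^{-1})}$ is the final projection of $U_{g^{-1}}$.

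To extend $\pi_U$ to all of $C^*(G)$, observe that each $U_g$ is a partial isometry, so $\|\pi_U(\sum_g c_g i_g)\| \leq \sum_g |c_g|$ for finitely supported coefficients. Since $C^*(G)$ is by construction the completion of $C_c(G)$ in the universal $C^*$-seminorm $\|f\|_u := \sup \{\|\pi(f)\| : \pi \text{ is a } *\text{-representation of } C_c(G)\}$, a seminorm which is finite by the $\ell^1$-bound just obtained, the $*$-homomorphism $\pi_U$ on $C_c(G)$ is continuous in this seminorm and therefore extends uniquely to a $*$-homomorphism $\pi_U: C^*(G) \to B$.

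Finally, any $*$-homomorphism $\phi: C^*(G) \to B$ satisfying $\phi \circ i = U$ must agree with $\pi_U$ on each generator $i_g$, hence on the dense subalgebra $C_c(G)$, and therefore on all of $C^*(G)$ by continuity. The main technical obstacle is the involution calculation, ensuring that the abstract operation $i_g^* = i_{g^{-1}}$ on $C_c(G)$ corresponds to taking operator adjoints under $\pi_U$; once this translation is in hand, the remaining content is a formal consequence of the enveloping construction of $C^*(G)$.
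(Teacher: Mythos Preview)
Your proof is correct and follows essentially the same approach as the paper's: extend $U$ linearly to a $*$-representation of $C_c(G)$, observe it is bounded for the $C^*$-norm, and extend by continuity. The paper's version is considerably terser---it asserts the $*$-representation property without checking the involution and multiplication, and cites \cite[page~205]{e3} for boundedness---whereas you fill in these details explicitly and add the (straightforward) uniqueness clause.
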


\begin{proof}
Suppose that $U:G\to B(H)$ is a unitary representation. Then extending $U$ linearly gives a $*$-representation $\pi$ of $C_c(G)=\sp\{i_g\}$ in $B(H)$ such that $U=\pi\circ i$. Since this representation is bounded for the norm of $C^*(G)$ (see \cite[page 205]{e3}), $\pi$ extends to a representation $\pi_U$ of $C^*(G)$ on $H$ such that $\pi_U\circ i=U$.
\end{proof}

\begin{example}\label{easyrep}
For any groupoid $G$, there is a unitary representation $U$ of $G$ on $\ell^2(G^0)$ in which  $U_g$ is the matrix unit $e_{c(g),d(g)}$. The corresponding homomorphism $\pi_U:C^*(G)\to B(\ell^2(G^0))$ takes elements of $C_c(G)$ into finite rank operators, and hence has range in the algebra $\KK(\ell^2(G^0))$ of compact operators.

When $G^0$ is finite, $\pi_U$ takes values in $M_{G^0}(\C)$, and composing with the normalised trace gives a tracial state on $C^*(G)$. This is the analogue for groupoids of the trace $\tau_1$ on group algebras used in \cite{lrrw} (see page 6648). If $G$ is not transitive, then the range of $\pi_U$ will be a proper subalgebra of $M_{G^0}(\C)$, and the normalised traces on simple summands will give other tracial states on $C^*(G)$.
\end{example}

Suppose that $E$ is a finite directed graph and $(G,E)$ is a self-similar groupoid action. Consider the graph bimodule $X(E)$ of \cite{fr}, using the conventions of \cite[Chapter~8]{R_book}. The inclusion of $C(E^0)=C_c(G^0)$ in $C_c(G)$ gives a unital homomorphism of $C(E^0)$ into $C^*(G)$, which makes $C^*(G)$ into a right-Hilbert $C(E^0)$-$C^*(G)$ bimodule. We now form the internal tensor product $M:=X(E)\otimes_{C(E^0)}C^*(G)$, which is a right Hilbert $C^*(G)$-module with inner product given on elementary tensors by
\[
\langle x\otimes a,y\otimes b\rangle=\langle\,\langle y,x\rangle a,b\rangle = a^*\langle x,y \rangle b.
\]

Forming the internal tensor product involves taking a completion, which in turn involves modding out by elements with norm $0$. In this process, the tensor product becomes balanced over $C(E^0)$ (see the foot of \cite[page~34]{tfb}, for example). However, the bimodule $X(E)$ is spanned by the point masses $\{e:e\in E^1\}$, and the elements $e\otimes 1$ form a Parseval frame for $M$. In particular, every $m\in M$ is a finite sum
\begin{equation}\label{recon}
m=\sum_{e\in E^1}e\otimes \langle e\otimes 1,m\rangle
\end{equation}
of elementary tensors; we refer to \eqref{recon} as the \emph{reconstruction formula}. Thus $M$ coincides with the algebraic tensor product $X(E)\odot_{C(E^0)}C^*(G)$, and no completion is required. 

\begin{prop}\label{Toepres}
Suppose that $(G,E)$ is a self-similar groupoid action. Then there is a unitary representation $W:G\to \LL(M)$ such that, for $g\in G$, $e\in E^1$ and $a\in C^*(G)$,
\begin{equation}\label{defWg}
W_g(e\otimes a)=\begin{cases}(g\cdot e)\otimes i_{g|_e}a&\text{if $d(g)=r(e)$}\\
0&\text{otherwise.}
\end{cases}
\end{equation}
\end{prop}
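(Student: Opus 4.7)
The plan is to use the description of $M$ as the algebraic tensor product $X(E)\odot_{C(E^0)}C^*(G)$ with the Parseval frame $\{e\otimes 1:e\in E^1\}$. Writing a general element as $\sum_{e\in E^1} e\otimes a_e$, I would define
\[
W_g\Bigl(\sum_{e\in E^1} e\otimes a_e\Bigr)=\sum_{e\in d(g)E^1} (g\cdot e)\otimes i_{g|_e}a_e,
\]
which recovers \eqref{defWg} on elementary tensors. This is a valid element of $M$ because Lemma~\ref{source and range observations}\eqref{source and range in SSG} gives $d(g|_e)=s(e)$ and $c(g|_e)=s(g\cdot e)$, so each summand $i_{g|_e}a_e$ lies in the correct slice of $C^*(G)$. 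Well-definedness against the balancing relation $e\cdot f\otimes a=e\otimes f\cdot a$ for $f\in C(E^0)$ is immediate: $e\cdot p_v=\delta_{v,s(e)}e$ gets matched on the right by $i_{g|_e}p_v=\delta_{v,s(e)}i_{g|_e}$ because $d(g|_e)=s(e)$.

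Next, I would verify that $W_g$ is adjointable with $W_g^*=W_{g^{-1}}$. Computing both sides,
\[
\langle W_g(e\otimes a),f\otimes b\rangle=\delta_{g\cdot e,f}\,a^* i_{(g|_e)^{-1}}b,\qquad
\langle e\otimes a,W_{g^{-1}}(f\otimes b)\rangle=\delta_{e,g^{-1}\cdot f}\,a^* i_{g^{-1}|_f}b,
\]
and these agree by Proposition~\ref{SSA:extension to paths}\eqref{path_rest_inv}, which yields $g^{-1}|_{g\cdot e}=(g|_e)^{-1}$. Taking $g=v\in E^0$, Lemma~\ref{source and range observations}\eqref{resid} gives $v|_e=s(e)$ when $r(e)=v$, so $W_v(e\otimes a)=e\otimes p_{s(e)}a=e\otimes a$ and $W_v(e\otimes a)=0$ otherwise; thus each $W_v$ is the orthogonal projection onto the submodule spanned by $\{(e\otimes 1)C^*(G):e\in vE^1\}$, and in particular the projections $\{W_v\}_{v\in E^0}$ are pairwise orthogonal.

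For the partial-isometry property, I would show $W_g^*W_g=W_{d(g)}$ directly: for $r(e)=d(g)$,
\[
W_{g^{-1}}W_g(e\otimes a)=(g^{-1}\cdot(g\cdot e))\otimes i_{g^{-1}|_{g\cdot e}}\,i_{g|_e}\,a=e\otimes i_{(g|_e)^{-1}}i_{g|_e}a=e\otimes p_{s(e)}a=e\otimes a,
\]
using Proposition~\ref{SSA:extension to paths}\eqref{path_rest_inv} and the multiplication rule in $C^*(G)$; for $r(e)\neq d(g)$ the expression is zero. Symmetrically $W_gW_g^*=W_{c(g)}$.

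Finally, multiplicativity: for any $g,h\in G$,
\[
W_gW_h(e\otimes a)=W_g((h\cdot e)\otimes i_{h|_e}a),
\]
which is nonzero only when $r(e)=d(h)$ and $r(h\cdot e)=c(h)=d(g)$. In that case Proposition~\ref{SSA:extension to paths}\eqref{path_rest_nonhomo} gives $(g|_{h\cdot e})(h|_e)=(gh)|_e$, so the result equals $((gh)\cdot e)\otimes i_{(gh)|_e}a=W_{gh}(e\otimes a)$; when $d(g)\neq c(h)$ the expression vanishes for every $e$. This establishes \eqref{defmult}. The main obstacle is bookkeeping the source/range matching at every step; the computations themselves are short once one invokes the restriction-inverse and restriction-product identities of Proposition~\ref{SSA:extension to paths}.
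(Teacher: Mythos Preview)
Your proof is correct and follows essentially the same approach as the paper's: define $W_g$ on the algebraic span using the Parseval frame, verify adjointability with $W_g^*=W_{g^{-1}}$ by computing inner products on elementary tensors and invoking the restriction-inverse identity of Proposition~\ref{SSA:extension to paths}\eqref{path_rest_inv}, and check multiplicativity via Proposition~\ref{SSA:extension to paths}\eqref{path_rest_nonhomo}. You are somewhat more explicit than the paper in verifying that each $W_v$ is a projection and that $W_g$ has the correct initial and final projections, whereas the paper subsumes these in ``a short calculation\ldots shows that $W$ is a unitary representation''; conversely, the paper notes right $C^*(G)$-linearity and boundedness explicitly, which you leave implicit in the adjointability argument.
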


\begin{proof}
Fix $g\in G$. Then, motivated by the reconstruction formula \eqref{recon}, we define $W_g:M\to M$ by 
\[
W_gm=\sum_{f\in E^1,\;d(g)=r(f)}(g\cdot f)\otimes (i_{g|_f}\langle f\otimes 1,m\rangle).
\]
Now taking $m=e\otimes a$ and considering only the edges  $f$ with $r(f)=d(g)$ (that is, the only edges which appear in the sum), we have
\[
i_{g|_f}\langle f\otimes 1,e\otimes a\rangle=i_{g|_f}\langle \langle e,f\rangle 1,a\rangle=i_{g|_f}\langle f,e\rangle a =\delta_{e,f}i_{g|_e}i_{s(e)}a,
\]
which is $i_{g|_e}a$ because $g|_e s(e)=g|_e$ in $G$. Thus $W_g$ satisfies \eqref{defWg}. The original formula shows that $W_g$ is right $C^*(G)$-linear, and since the sum is finite, an application of the Cauchy-Schwarz inequality shows that $W_g$ is norm-bounded.

We next show that $W_g$ is adjointable with $W_g^*=W_{g^{-1}}$. We take $e,f\in E^1$ and $a,b\in C^*(G)$, and suppose first that
\[
\langle W_g(e\otimes a),f\otimes b\rangle\not=0.
\]
The formula \eqref{defWg} shows that $d(g)=r(e)$, and then
\begin{equation}
\label{lhs}0\not=\langle W_g(e\otimes a),f\otimes b\rangle=\langle\,\langle f,g\cdot e\rangle \cdot i_{g|_e}a,  b\rangle=a^*i_{g|_e}^*\langle g\cdot e,f\rangle b.
\end{equation}
We deduce that $\langle g\cdot e,f\rangle\not=0$, and hence $f=g\cdot e$. Thus
\[
r(f)=r(g\cdot e)=g\cdot r(e)=g\cdot d(g)=c(g)=s(g^{-1}),
\]
and we have
\begin{equation}
\label{rhs}\langle e\otimes a,W_{g^{-1}}(f\otimes b)\rangle=\langle\,\langle g^{-1}\cdot f, e\rangle \cdot a, i_{g^{-1}|_f} b\rangle=a^*\langle e,g^{-1}\cdot f\rangle i_{g^{-1}|_f}b.
\end{equation}
Then $\langle g\cdot e,f\rangle=i_{s(f)}$ and $\langle e,g^{-1}\cdot f\rangle=i_{s(e)}$, so 
\begin{align*}
i_{g|_e}^*\langle g\cdot e,f\rangle&=i_{g|_e}^*i_{s(f)}=  i_{g|_e^{-1}}i_{s(g\cdot e)}=   i_{g^{-1}|_{g\cdot e}}i_{s(g\cdot e)}\\
&=i_{g^{-1}|_{g\cdot e}}=i_{s(e)}i_{g^{-1}|_{g\cdot e}}\\
&=\langle e,g^{-1}\cdot f\rangle i_{g^{-1}|_f}.
\end{align*}
Thus we have \eqref{lhs}=\eqref{rhs} when $\langle W_g(e\otimes a),f\otimes b\rangle\not=0$. When $\langle e\otimes a,W_{g^{-1}}(f\otimes b)\rangle\not=0$, we can apply the preceding argument to 
\[
\langle e\otimes a,W_{g^{-1}}(f\otimes b)\rangle^*=\langle W_{g^{-1}}(f\otimes b),e\otimes a\rangle,
\]
and deduce that \eqref{rhs}=\eqref{lhs}. Thus we have equality in all cases, and $W_g$ is adjointable with $W_g^*=W_{g^{-1}}$.

Finally, a short calculation using Proposition~\ref{SSA:extension to paths}\eqref{path_rest_nonhomo} shows that $W$ is a unitary representation of $G$.
\end{proof}

The universal property of $(C^*(G),i)$ (Proposition~\ref{univC*G}) gives us a unital homomorphism $\pi_W:C^*(G)\to \LL(M)$ such that $\pi_W(i_g)=W_g$ for $g\in G$. We use this homomorphism to define a left action of $C^*(G)$ on the right-Hilbert $C^*(G)$ module $M=X(E)\otimes_{C(E^0)}C^*G)$, and $M$ thus becomes a Hilbert bimodule over $C^*(G)$ (alternatively known as a $C^*$-correspondence over $C^*(G)$). The \emph{Toeplitz algebra of the self-similar groupoid action $(G,E)$} is then by definition the Toeplitz algebra $\TT(G,E):=\TT(M)$ of the bimodule $M$. As in \cite[\S1]{fr}, we view it as the $C^*$-algebra generated by a universal Toeplitz representation $(i_M,i_{C^*(G)}):M\to\TT(M)$.

We now give a presentation of the Toeplitz algebra $\TT(G,E)$.

\begin{prop}\label{Toeplitz_repn}
Let $E$ be a finite graph without sources and $(G,E)$ a self-similar groupoid action. We define $u:G\to \TT(G,E)=\TT(M)$, $p:E^0\to \TT(M)$ and $s:E^1\to \TT(M)$ by $u=i_{C^*(G)}\circ i$, $p_v=i_{C^*(G)}(i_v)$, and $s_e=i_M(e\otimes 1)$. Then
\begin{enumerate}
\item\label{TR_pi} $u$ is a unitary representation of $G$ with $u_v=p_v$ for $v\in E^0$;
\item\label{TR_TCKfamily} $(p,s)$ is a Toeplitz-Cuntz-Krieger family in $\TT(M)$, and $\sum_{v\in E^0}p_v$ is an identity for $\TT(M)$;
\item\label{TR_SSA} for $g\in G$ and $e\in E^1$, we have
\[
u_gs_e=\begin{cases}
s_{g\cdot e}u_{g|_e}&\text{if $d(g)=r(e)$}\\
0&\text{otherwise;}
\end{cases}
\]
\item\label{TR_SSA_vert}  for $g\in G$ and $v\in E^0$, we have
\[
u_gp_v=\begin{cases}
p_{g\cdot v}u_{g}&\text{if $d(g)=v$}\\
0&\text{otherwise.}
\end{cases}
\]
\end{enumerate}
The $C^*$-algebra $\TT(M)$ is generated by $\{u_g\}\cup\{p_v\}\cup\{s_e\}$, and $(\TT(M), (u,p,s))$ is universal for families $U$, $P$, $S$ satisfying the upper-case analogues of \textnormal{(\ref{TR_pi}--\ref{TR_SSA_vert})}.
\end{prop}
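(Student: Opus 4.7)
The plan is to read off parts (\ref{TR_pi})--(\ref{TR_SSA_vert}) from the construction of $W$ in \proref{Toepres} and the Toeplitz representation identities, and then to establish the universal property by combining \proref{univC*G} with the universal property of $\TT(M)$ as the Toeplitz algebra of the bimodule $M=X(E)\otimes_{C(E^0)}C^*(G)$. Since $i:G\to C^*(G)$ is a unitary representation and $i_{C^*(G)}$ is a unital $*$-homomorphism (the unit $\sum_{v\in E^0}i_v$ of $C^*(G)$ maps to $1_{\TT(M)}$ because $E^0$ is finite), $u=i_{C^*(G)}\circ i$ is a unitary representation with $u_v=p_v$, giving (\ref{TR_pi}) and the identity $\sum p_v=1$. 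For (\ref{TR_TCKfamily}) I compute $s_e^*s_f=i_{C^*(G)}(\langle e\otimes 1,f\otimes 1\rangle)=\delta_{e,f}p_{s(e)}$ and $p_vs_e=i_M(i_v\cdot(e\otimes 1))=\delta_{v,r(e)}s_e$, and the remaining Toeplitz inequality $\sum_{r(e)=v}s_es_e^*\leq p_v$ is built into the definition of $\TT(M)$. Assertions (\ref{TR_SSA}) and (\ref{TR_SSA_vert}) follow by expanding
\[
u_gs_e=i_{C^*(G)}(i_g)i_M(e\otimes 1)=i_M(W_g(e\otimes 1))
\]
and applying formula \eqref{defWg}, and, respectively, using the multiplication rule $u_gu_v=u_g$ when $d(g)=v$ together with $u_g=u_{c(g)}u_g$ and $c(g)=g\cdot v$. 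That $\{u_g\}\cup\{p_v\}\cup\{s_e\}$ generates $\TT(M)$ follows because $\{i_g\}$ spans a dense subset of $C^*(G)$, and $i_M(e\otimes a)=s_ei_{C^*(G)}(a)$ for every elementary tensor.

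The crux is the universal property. Given a family $(U,P,S)$ in a $C^*$-algebra $B$ satisfying the analogous relations, the unitary representation $U$ integrates by \proref{univC*G} to a unital $*$-homomorphism $\psi:C^*(G)\to B$ with $\psi(i_g)=U_g$. On elementary tensors I define $\psi_M^0:X(E)\odot C^*(G)\to B$ by $\psi_M^0(e\otimes a)=S_e\psi(a)$ and extend linearly. I will verify that $\psi_M^0$ descends to a map $\psi_M:M\to B$ on the balanced tensor product by checking $S_{e\cdot b}\psi(a)=S_e\psi(ba)$ for $b\in C(E^0)$, which reduces to $S_eP_v=\delta_{v,s(e)}S_e$, a TCK relation in the $(P,S)$-family. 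Next I will confirm that $(\psi_M,\psi)$ is a Toeplitz representation of $M$. The inner-product axiom
\[
\psi_M(e\otimes a)^*\psi_M(f\otimes b)=\psi(\langle e\otimes a,f\otimes b\rangle)
\]
reduces to $S_e^*S_f=\delta_{e,f}P_{s(e)}$, which again is in the TCK family. The left-action axiom $\psi_M(\pi_W(a)m)=\psi(a)\psi_M(m)$ reduces on generators $a=i_g$ and $m=e\otimes 1$ to the identity $S_{g\cdot e}U_{g|_e}=U_gS_e$ (when $d(g)=r(e)$, and $0$ otherwise), which is exactly hypothesis (\ref{TR_SSA}); linearity in $m$ together with density of $\sp\{i_g\}$ in $C^*(G)$ propagate it to all $a\in C^*(G)$ and $m\in M$. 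The universal property of $(\TT(M),(i_M,i_{C^*(G)}))$ then produces a $*$-homomorphism $\Psi:\TT(M)\to B$ with $\Psi\circ i_M=\psi_M$ and $\Psi\circ i_{C^*(G)}=\psi$, whence $\Psi(u_g)=U_g$, $\Psi(p_v)=P_v$, and $\Psi(s_e)=S_e$.

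The principal obstacle is the verification of the left-action compatibility of $(\psi_M,\psi)$: it requires feeding the explicit form of $W_g$ from \proref{Toepres} into the $(U,S)$ intertwining relation (\ref{TR_SSA}), and then propagating the identity from the generators $i_g$ to all of $C^*(G)$ by linearity and continuity. The bookkeeping for well-definedness on the balanced tensor product is fiddly but routine once the appropriate TCK identities are isolated; everything else is a mechanical matching of Toeplitz-representation axioms to the hypotheses.
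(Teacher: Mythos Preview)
Your proposal is correct and follows essentially the same route as the paper's proof: verify (\ref{TR_pi})--(\ref{TR_SSA_vert}) from the definitions and \proref{Toepres}, and then build the Toeplitz representation $(\psi_M,\psi)$ from $(U,P,S)$ and invoke the universal property of $\TT(M)$. The only notable technical difference is that the paper defines $\psi_M$ globally on $M$ via the reconstruction formula
\[
\psi_M(m)=\sum_{e\in E^1}S_e\,\psi(\langle e\otimes 1,m\rangle),
\]
which is manifestly well-defined on the balanced tensor product, whereas you extend from elementary tensors and then check balancedness by hand; the two give the same map. One small imprecision: the Toeplitz inequality $\sum_{r(e)=v}s_es_e^*\leq p_v$ is not literally ``built into the definition of $\TT(M)$'' but follows from your computation $p_vs_e=\delta_{v,r(e)}s_e$ together with the mutual orthogonality of the $s_es_e^*$, and the claim that $i_{C^*(G)}$ is unital rests on the essentiality of $M$ rather than merely on $|E^0|<\infty$.
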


\begin{proof}
Part~\eqref{TR_pi} holds because $i:G\to C^*(G)$ is a unitary representation.

For part~\eqref{TR_TCKfamily}, we begin by observing that the point masses $\{v:v\in E^0\}$ are mutually orthogonal projections in $C(E^0)$, and hence the $\{p_v\}$ are mutually orthogonal projections in $\TT(M)$. In particular $q=\sum_{v\in E^0}p_v$ is also a projection. Next we note that the elements 
\[
\{e\otimes i_g:e\in E^1, g\in G, s(e)=c(g)\}
\]
span a dense subspace of $M$, and that for each such element we have
\[
q(e\otimes i_g)=\sum_{v\in E^0}(v\cdot e)\otimes i_g=(s(e)\cdot e)\otimes i_g=e\otimes i_g.
\]
Thus the bimodule $M$ is essential in the sense of \cite{fr}, and it follows that $q$ is an identity for $\TT(M)$. (See the discussion in the middle of \cite[page~178]{fr}.)
We next verify the Toeplitz-Cuntz-Krieger relations. We take $e,f\in E^1$ and compute
\begin{align*}
s_e^*s_f&=i_{M_e}(e\otimes 1)^*i_{M_e}(f\otimes 1)=i_{C^*(G)}\big(\langle e\otimes 1,f\otimes 1\rangle\big)\\
&=\delta_{e,f}i_{C^*(G)}(s(e)\cdot 1)=\delta_{e,f}i_{C^*(G)}(i_{s(e)})=\delta_{e,f}p_{s(e)};
\end{align*}
this proves that $s_s^*s_e=p_{s(e)}$, and that the elements $\{s_e s_e^* : e \in E^1\}$ are mutually orthogonal projections. We calculate
\begin{align}\label{proj_isom_comp}
p_{v}s_e&=i_{C^*(G)}(i_v)i_{M}(e\otimes 1)=i_{M}(W_v(e\otimes 1)) = i_{M}((v\cdot e)\otimes i_{v|_e}1)\\
&=\delta_{v,r(e)}i_{M}(e\otimes i_{s(e)})=\delta_{v,r(e)} i_{M_e}(e\otimes 1) = \delta_{v,r(e)} s_e; \notag
\end{align}
thus $p_{r(e)}\geq s_e s_e^*$, and, since the $\{s_e s_e^*\}$ are mutually orthogonal, $p_v\geq \sum_{e\in vE^0}s_e s_e^*$. Thus $(p,s)$ is a Toeplitz-Cuntz-Krieger $E$-family.

To establish part \eqref{TR_SSA}, we compute
\begin{align*}
u_g s_e&= i_{C^*(G)}(i_g)i_{M}(e\otimes 1)=i_{M}(i_g \cdot (e\otimes 1))\\
&=\delta_{d(g),r(e)}i_{M}((g\cdot e)\otimes i_{g|_e})
=\delta_{d(g),r(e)}i_{M}(((g\cdot e)\otimes 1)\cdot i_{g|_e})\\
&=\delta_{d(g),r(e)}i_{M}((g\cdot e)\otimes 1)i_{C^*(G)}(i_{g|_e})=\delta_{d(g),r(e)}s_{g \cdot e}u_{g|_e}.
\end{align*}
A similar computation gives \eqref{TR_SSA_vert}.

Since $\TT(M)$ is generated by the images of $i_M=\clsp\{e\otimes i_g\}$ and $i_{C^*(G)}=\clsp\{i_g\}$, the $t_g$, $p_v$ and $s_e$ generate. It remains to verify the universal property, so we take $U_g$, $P_v$ and $S_e$ in a $C^*$-algebra $B$ satisfying the relations (1--4).

The unitary representation $U$ of $G$ induces a homomorphism $\pi_U:C^*(G)\to B$ such that $\pi_U(i_g)=U_g$. Since $\sum_{v\in E^0}i_v$ is the identity in $C^*(G)$ and \eqref{TR_TCKfamily} says that $\sum_{v\in E^0}P_v$ is the identity in $B$, $\pi_U$ is unital. Define $\psi:M\to B$ by
\[
\psi(m)=\sum_{e\in E^1}S_e\pi_U(\langle e\otimes 1,m\rangle).
\]
We will prove that $(\psi,\pi_U)$ is a Toeplitz representation of $M$.

An inspection of the formula for $\psi(m)$ shows that $\psi(m\cdot a)=\psi(m)\pi_U(a)$. Next, for $m,n\in M$ we have
\[
\psi(m)^*\psi(n)=\sum_{e,f\in E^1}\pi_U(\langle e\otimes 1,m\rangle^*S_e^*S_f\pi_U(\langle f\otimes 1,n\rangle).
\]
The summands with $e\not=f$ vanish, and 
\begin{align*}
S_e^*S_e\pi_U(\langle e\otimes 1,n\rangle)&=P_{s(e)}\pi_U(\langle e\otimes 1,n\rangle)=\pi_U(i_{s(e)}\langle e\otimes 1,n\rangle)=\pi_U(\langle e\otimes 1,n\rangle).
\end{align*}
Thus 
\begin{align*}
\psi(m)^*\psi(n)&=
\sum_{e\in E^1}\pi_U\big(\langle e\otimes 1,m\rangle^*\langle e\otimes 1, n\rangle\big) \\
&=\pi_U\Big(\sum_{e\in E^1}\langle m, e\otimes 1\rangle\langle e\otimes 1, n\rangle\Big)\\
&=\pi_U\Big(\Big\langle m,\sum_{e\in E^1}(e\otimes 1)\langle e\otimes 1, n\rangle\Big\rangle\Big),
\end{align*}
which by the reconstruction formula is $\pi_U(\langle m,n\rangle)$.

Next we have to verify that $\psi(b\cdot m)=\pi_U(b)\psi(m)$ for $b\in C^*(G)$ and $m\in M$. For this we take $b=i_g$, $m =f\otimes a$ and compute both sides. On one hand, we have
\[
\psi(i_g(f\otimes a))=\psi(W_g(f\otimes a)),
\]
which is $0$ unless $d(g)=r(f)$. If $d(g)=r(f)$, then
\begin{align*}
\psi(i_g(f\otimes a))&=\psi((g\cdot f)\otimes i_{g|_f}a)\\
&= \sum_{e\in E^1}S_e \langle e\otimes 1 , gf \otimes i_{g|_f}a\rangle\\
&=\sum_{e\in E^1}S_e\pi_U\big(\big\langle\,\langle g\cdot f,e\rangle,i_{g|_f}a\big\rangle\big)\\
&=s_{g\cdot f}\pi_U(i_{g|_f}a).
\end{align*}
On the other hand, we have
\[
\pi_U(i_g)\psi(f\otimes a)=\sum_{e\in E^1}U_gS_e\pi_U\big(\langle e\otimes 1,f\otimes a\rangle\big)=U_gS_f\pi_U(a).
\]
The formula in \eqref{TR_SSA} says that this too is $0$ unless $d(g)=r(f)$, and then gives
\[
\pi_U(i_g)\psi(m)=S_{g\cdot f}U_{g|_f}\pi_U(a)=S_{g\cdot f}\pi_U(i_{g|_f}a),
\]
which is $\psi(i_g(f\otimes a))$.

Now the universal property of $\TT(M)$ give a homomorphism $\psi\times \pi_U:\TT(M)\to B$ such that $\psi=(\psi\times \pi_U)\circ i_M$ and $\pi_U=(\psi\times \pi_U)\circ i_{C^*(G)}$. To finish, we need to check that $\psi\times \pi_U$ maps $(u,p,s)$ to $(U,P,S)$. This is straightforward for $u$ and $p$. For $f\in E^1$, we have
\begin{align*}
(\psi\times \pi_U)(s_f)&=(\psi\times \pi_U)(i_M(f\otimes 1))=\psi(f\otimes 1)\\
&=\sum_{e\in E^1}S_e\pi_U(\langle e\otimes 1,f\otimes 1\rangle),
\end{align*}
which collapses to $S_f\pi_U(i_{s(f)})=S_fP_{s(f)}=S_f$.
 \end{proof}

\begin{prop}\label{Toeplitz_spanning}
Suppose $(G,E)$ is a faithful self-similar groupoid action and take $(u,p,s)$ as in Proposition~\ref{Toeplitz_repn}. Then
\[
\TT(G,E)=\clsp\{s_\mu u_g s_\nu^* : \mu,\nu \in E^*, g \in G \text{ and } s(\mu)=g\cdot s(\nu)\}.
\]
\end{prop}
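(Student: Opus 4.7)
The plan is to let $S := \{s_\mu u_g s_\nu^* : \mu,\nu\in E^*,\ g\in G,\ s(\mu)=g\cdot s(\nu)\}$, with the standard convention that $s_v := p_v$ for $v\in E^0$ regarded as a length-zero path and $s_\mu := s_{\mu_1}\cdots s_{\mu_k}$ for $\mu=\mu_1\cdots\mu_k$. I will show that $\sp S$ is a $*$-subalgebra of $\TT(G,E)$ containing each of the three families of generators; its closure is then a $C^*$-subalgebra containing the generators of \proref{Toeplitz_repn}, so it must be all of $\TT(G,E)$.

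First, the generators lie in $\sp S$: taking $\mu=c(g)$, $\nu=d(g)$ gives $s_\mu u_g s_\nu^* = p_{c(g)}u_g p_{d(g)} = u_g$, with compatibility $s(\mu)=c(g)=g\cdot d(g)=g\cdot s(\nu)$; specialising to units recovers $p_v$, and choosing $\mu=e$, $\nu=s(e)$, $g=s(e)$ recovers $s_e$. Closure under the adjoint is immediate from $u_g^* = u_{g^{-1}}$: the condition $s(\mu)=g\cdot s(\nu)$ unpacks to $d(g)=s(\nu)$ and $c(g)=s(\mu)$, and so the adjoint $s_\nu u_{g^{-1}}s_\mu^*$ automatically satisfies the required condition $s(\nu) = g^{-1}\cdot s(\mu)$.

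The main content is closure under multiplication. For a product $(s_\mu u_g s_\nu^*)(s_\alpha u_h s_\beta^*)$, I first evaluate $s_\nu^* s_\alpha$ using the Toeplitz-Cuntz-Krieger relations iterated to paths: this vanishes unless one of $\nu,\alpha$ is an initial segment of the other. When $\alpha=\nu$, compatibility forces $d(g)=s(\nu)=c(h)$, so $u_g p_{s(\nu)} u_h=u_{gh}$ and the product becomes $s_\mu u_{gh}s_\beta^*$. When $\alpha=\nu\alpha'$ with $|\alpha'|\geq 1$, the product is $s_\mu u_g s_{\alpha'} u_h s_\beta^*$, and I apply the path-extended identity $u_g s_{\alpha'} = s_{g\cdot\alpha'}u_{g|_{\alpha'}}$ (obtained by straightforward induction from \proref{Toeplitz_repn}\eqref{TR_SSA}) to collapse the product to $s_{\mu(g\cdot\alpha')}u_{g|_{\alpha'}h}s_\beta^*$. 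When $\nu=\alpha\nu'$ with $|\nu'|\geq 1$, I adjoint the same path-extended identity applied to $u_{h^{-1}}s_{\nu'}$, set $\nu'':=h^{-1}\cdot\nu'$, and use \proref{SSA:extension to paths}\eqref{path_rest_inv} to rewrite $(h^{-1}|_{\nu'})^{-1}$ as $h|_{\nu''}$; this yields $s_{\nu'}^* u_h = u_{h|_{\nu''}}s_{\nu''}^*$, and the product collapses to $s_\mu u_{g\,h|_{\nu''}}s_{\beta\nu''}^*$.

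In each surviving case I verify that the resulting element lies in $S$ by checking source-range compatibility; this reduces to the identities $r(g\cdot\lambda)=g\cdot r(\lambda)$, $s(g\cdot\lambda)=g|_\lambda\cdot s(\lambda)$, and hence $c(g|_\lambda)=s(g\cdot\lambda)$ from \lemref{source and range observations}\eqref{equivariance}, together with the rules for composing domains and codomains in $G$. The main obstacle is the third case ($\nu=\alpha\nu'$): the expression $(h^{-1}|_{\nu'})^{-1}$ produced when adjoining is awkward to work with, and invoking \proref{SSA:extension to paths}\eqref{path_rest_inv} to rewrite it as a restriction of $h$ itself is precisely what makes the final compatibility check $s(\mu)=(g\,h|_{\nu''})\cdot s(\beta\nu'')$ — which unpacks to $s(\mu)=c(g)$ via $c(h|_{\nu''})=s(h\cdot\nu'')=s(\nu')=s(\nu)=d(g)$ — transparent.
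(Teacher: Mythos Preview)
Your proof is correct and follows essentially the same approach as the paper: the paper notes that the generators lie in the span and then invokes the product formula of \lemref{prodform} (whose proof it omits as being ``very similar to \cite[Lemma~3.4]{lrrw}''), and you have effectively supplied that omitted proof inline, together with the source--range compatibility verifications that the paper leaves to the reader.
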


The condition $s(\mu)=g\cdot s(\nu)$ is just there to exclude elements which are certainly $0$: 
\[
s_\mu u_g s_\nu^*\not=0\Longrightarrow s_\mu p_{s(\mu)}u_g p_{s(\nu)}s_\nu^*\not=0\Longrightarrow p_{s(\mu)}u_g p_{s(\nu)}\not=0\Longrightarrow p_{s(\mu)}p_{g\cdot s(\nu)}\not=0.
\]
The closed linear span on the right-hand side contains all the generators, and hence the result follows from the following lemma, which is very similar to \cite[Lemma 3.4]{lrrw}.

\begin{lemma}\label{prodform}
Suppose $\kappa, \lambda, \mu,\nu \in E^*$ and $g,h \in S$ satisfy $s(\kappa)=g \cdot s(\lambda)$ and $s(\mu)=h\cdot s(\nu)$ we have
\begin{equation}\label{spanning_mult}
(s_\kappa u_g s_\lambda^*)(s_\mu u_h s_\nu^*)=\begin{cases}
s_{\kappa(g \cdot\mu')}u_{g|_{\mu'} h} s_\nu^* & \text{ if } \mu=\lambda \mu' \\
s_{\kappa} u_{gh|_{h^{-1}\cdot\lambda'}}s_{\nu(h^{-1}\cdot \lambda')}^* & \text{ if } \lambda=\mu\lambda' \\
0 & \text{ otherwise.} \\
\end{cases}
\end{equation}
\end{lemma}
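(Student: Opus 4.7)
The plan is to extend the self-similarity relation \eqref{TR_SSA} from edges to arbitrary paths by induction on length, using Proposition~\ref{SSA:extension to paths}\eqref{path_rest_rest}: this yields $u_g s_\mu = s_{g\cdot\mu}\, u_{g|_\mu}$ whenever $d(g)=r(\mu)$. Taking adjoints of this identity and applying Proposition~\ref{SSA:extension to paths}\eqref{path_rest_inv} to rewrite $(h^{-1}|_\mu)^{-1}$ as $h|_{h^{-1}\cdot \mu}$ yields the dual identity $s_\mu^*\, u_h = u_{h|_{h^{-1}\cdot \mu}}\, s_{h^{-1}\cdot\mu}^*$ when $c(h)=r(\mu)$.

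Next I would iterate the Toeplitz-Cuntz-Krieger relations to obtain the familiar three-case formula for $s_\lambda^* s_\mu$: it equals $s_{\mu'}$ when $\mu=\lambda\mu'$, equals $s_{\lambda'}^*$ when $\lambda=\mu\lambda'$, and vanishes otherwise. Substituting this into $(s_\kappa u_g s_\lambda^*)(s_\mu u_h s_\nu^*)$, the third case gives $0$ immediately. In the case $\mu=\lambda\mu'$, the product becomes $s_\kappa u_g s_{\mu'} u_h s_\nu^*$, and applying the extended form of \eqref{TR_SSA} to $u_g s_{\mu'}$ (legal since $d(g)=s(\lambda)=r(\mu')$) produces $s_\kappa s_{g\cdot\mu'} u_{g|_{\mu'}} u_h s_\nu^* = s_{\kappa(g\cdot\mu')}\, u_{g|_{\mu'}\, h}\, s_\nu^*$. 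In the case $\lambda=\mu\lambda'$, the product becomes $s_\kappa u_g s_{\lambda'}^* u_h s_\nu^*$, and the dual identity applied to $s_{\lambda'}^* u_h$ (legal since $c(h)=s(\mu)=r(\lambda')$) produces $s_\kappa\, u_{g(h|_{h^{-1}\cdot\lambda'})}\, s_{\nu(h^{-1}\cdot\lambda')}^*$.

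The remaining obligation is to verify that every concatenation and groupoid composition above is well-defined, which is the bookkeeping that requires the most care. The necessary source and range identities all come from combining the non-triviality hypotheses $s(\kappa)=g\cdot s(\lambda)$ and $s(\mu)=h\cdot s(\nu)$ with Lemma~\ref{source and range observations}. In the first case, $d(g|_{\mu'})=s(\mu')=s(\mu)=c(h)$ makes $g|_{\mu'}h$ a legal groupoid product, while $r(g\cdot\mu')=g\cdot r(\mu')=g\cdot s(\lambda)=c(g)=s(\kappa)$ makes $\kappa(g\cdot\mu')$ a legal concatenation. In the second case, one checks analogously that $c(h|_{h^{-1}\cdot\lambda'})=s(h\cdot(h^{-1}\cdot\lambda'))=s(\lambda')=s(\lambda)=d(g)$ to form $g(h|_{h^{-1}\cdot\lambda'})$, and $r(h^{-1}\cdot\lambda')=h^{-1}\cdot r(\lambda')=h^{-1}\cdot s(\mu)=d(h)=s(\nu)$ to form $\nu(h^{-1}\cdot\lambda')$. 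The main subtlety is the inverse-restriction manoeuvre in the second case, where Proposition~\ref{SSA:extension to paths}\eqref{path_rest_inv} is precisely what converts the adjoint of the extended \eqref{TR_SSA} into the form required by the statement.
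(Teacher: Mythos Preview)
Your proof is correct and follows exactly the standard approach: extend relation~\eqref{TR_SSA} to paths, derive the adjoint version via Proposition~\ref{SSA:extension to paths}\eqref{path_rest_inv}, and combine with the usual Toeplitz--Cuntz--Krieger formula for $s_\lambda^* s_\mu$. The paper does not actually write out a proof, simply noting that the lemma ``is very similar to \cite[Lemma~3.4]{lrrw}''; your argument is precisely the expected expansion of that reference, including the source/range bookkeeping needed in the groupoid setting.
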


\begin{prop}\label{Cuntz_repn}
Suppose $(G,E)$ is a self-similar groupoid action and let $(u,p,s)$ be the universal representation from  \proref{Toeplitz_repn}. Then $\OO(G,E)$ 
is the quotient of $\TT(G,E)$ by the ideal generated by 
\[
\Big\{p_v - \sum_{\{e\in vE^1\}} s_e s_e^*: v \in E^0\Big\}. 
\]
\end{prop}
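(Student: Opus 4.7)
The plan is to identify the Cuntz--Pimsner ideal defining $\OO(M)$ as a quotient of $\TT(M)$ with the ideal $I$ generated by $\{p_v-\sum_{e\in vE^1}s_es_e^*:v\in E^0\}$. Write $\phi=\pi_W\colon C^*(G)\to\LL(M)$ for the left action, and let $(i_M)^{(1)}\colon\KK(M)\to\TT(M)$ be the canonical map induced by $i_M$. By definition, $\OO(M)$ is the quotient of $\TT(M)$ by the ideal $I_{CP}$ generated by elements $i_{C^*(G)}(a)-(i_M)^{(1)}(\phi(a))$ for $a$ in a suitable Katsura-type ideal $J\subset C^*(G)$.

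First I would compute the left action of $i_v$ on an elementary tensor: from the formula in \proref{Toepres}, $\phi(i_v)(f\otimes a)=\delta_{v,r(f)}(f\otimes a)$, so $\phi(i_v)$ agrees with the rank-one sum $\sum_{e\in vE^1}\Theta_{e\otimes 1,\,e\otimes 1}$ in $\LL(M)$. This relies on the inner product formula $\langle e\otimes 1,f\otimes a\rangle=\delta_{e,f}i_{s(e)}a$ and on $E$ having no sources, so that $vE^1$ is nonempty. In particular $\phi(i_v)\in\KK(M)$, and $(i_M)^{(1)}(\phi(i_v))=\sum_{e\in vE^1}s_es_e^*$. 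Hence $p_v-\sum_{e\in vE^1}s_es_e^*\in I_{CP}$, giving $I\subset I_{CP}$.

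For the reverse inclusion, the same technique identifies
\[
\phi(i_g)\;=\;\sum_{f\in d(g)E^1}\Theta_{(g\cdot f)\otimes i_{g|_f},\,f\otimes 1}\;\in\;\KK(M)
\]
for every $g\in G$, where one uses $d(g|_f)=s(f)$ from \lemref{source and range observations}\eqref{source and range in SSG} to ensure the module balancing behaves correctly. Applying $(i_M)^{(1)}$ and rewriting $s_{g\cdot f}u_{g|_f}=u_g s_f$ via \proref{Toeplitz_repn}\eqref{TR_SSA} would then yield
\[
(i_M)^{(1)}(\phi(i_g))\;=\;\sum_{f\in d(g)E^1}s_{g\cdot f}u_{g|_f}s_f^*\;=\;u_g\Big(\sum_{f\in d(g)E^1}s_fs_f^*\Big).
\]
Modulo $I$ the bracketed sum collapses to $p_{d(g)}$, and $u_g p_{d(g)}=u_g$, so $u_g-(i_M)^{(1)}(\phi(i_g))\in I$. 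Since $\sp\{i_g:g\in G\}$ is dense in $C^*(G)$ and $a\mapsto i_{C^*(G)}(a)-(i_M)^{(1)}(\phi(a))$ is continuous, every Cuntz--Pimsner generator lies in $I$, giving $I_{CP}\subset I$.

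The main obstacle I anticipate is computational rather than conceptual: executing the rank-one decomposition of $\phi(i_g)$ with the correct source/range bookkeeping (in particular verifying $i_{g|_f}i_{s(f)}=i_{g|_f}$ from \lemref{source and range observations}). A potential subtlety in the definition of $\OO(M)$, namely whether one uses Katsura's ideal or all of $\phi^{-1}(\KK(M))$, is immaterial here because $\phi$ sends all of $C^*(G)$ into $\KK(M)$, so the same ideal $I_{CP}$ arises either way.
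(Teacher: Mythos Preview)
Your proposal is correct and follows essentially the same approach as the paper: both show that $\phi$ takes $C^*(G)$ into $\KK(M)$, compute $(i_M)^{(1)}(\phi(i_g))$, and verify that Cuntz--Pimsner covariance on the span $\{i_g\}$ is equivalent to the Cuntz--Krieger relations at each vertex. The only cosmetic difference is that the paper packages your explicit rank-one decompositions into the single reconstruction-formula identity $\phi(a)=\sum_{e\in E^1}\Theta_{e\otimes 1,\,a^*(e\otimes 1)}$, which immediately gives $(\psi,\pi)^{(1)}(\phi(a))=\sum_e s_es_e^*\pi(a)$ and makes the reverse inclusion a one-line consequence of $i_g=i_{c(g)}i_g$.
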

\begin{proof}
Let $\phi:C^*(G)\to \LL(M)$ be the homomorphism implementing the left action, and let $(\psi,\pi)$ be a Toeplitz representation of $M$. Then the reconstruction formula implies that
\[
\phi(a)=\sum_{e\in E^1}\Theta_{e\otimes 1, a^*(e\otimes 1)}\quad\text{for all $a\in C^*(G)$.}
\]
Thus for every $a\in C^*(G)$ we have
\begin{equation}\label{CPcov}
(\psi,\pi)^{(1)}(\phi(a))=\sum_{e\in E^1}\psi(e\otimes 1)\psi(a^*\cdot (e\otimes 1))^*=\sum_{e\in E^1}s_es_e^*\pi(a).
\end{equation}
If $(\psi,\pi)$ is Cuntz-Pimsner covariant, then taking $a=i_v$ in \eqref{CPcov} gives 
\[
\pi(p_v)=(\psi,\pi)^{(1)}(\phi(p_v))=\sum_{r(e)=v}\psi(s_e)\psi(s_e)^*.
\]
Conversely, if we have the Cuntz Krieger relation at every $v$, then for every $g\in G$ we have
\begin{align*}
\pi(i_g)&=\pi(i_{c(g)}i_g)=\sum_{r(e)=c(g)}\psi(s_e)\psi(s_e)^*\pi(i_g)\\
&=\sum_{e\in E^1}\psi(s_e)\psi(s_e)^*\pi(i_g)\\
&=(\psi,\pi)^{(1)}(\phi(i_g)).\qedhere
\end{align*}
\end{proof}

\section{An algebraic characterisation of KMS states}\label{sec:algchar}

Suppose that $E$ is a finite graph with no sources and that $(G,E)$ is a self-similar groupoid action over $E$.
There is a strongly continuous gauge action $\gamma: \T \to \Aut \TT(G,E)$ such that $\gamma_z(i_{C^*(G)}(a))=i_{C^*(G)}(a)$ and $\gamma_z(i_M(m))=z i_M(m)$ for $a \in {C^*(G)}$ and $m \in M$ \cite[Proposition 1.3 (c)]{fr}. 
The gauge action gives rise to a periodic action $\sigma$ of the real line (a dynamics) by the formula $\sigma_t=\gamma_{e^{i t}}$. This dynamics satisfies
\begin{equation}\label{T_spanning_sigma}
\sigma_t(s_\mu u_g s_\nu^*)=e^{it(|\mu|-|\nu|)}s_\mu u_g s_\nu^*.
\end{equation}

Suppose that $\sigma:\R \to \Aut A$ is a strongly continuous action on a $C^*$-algebra $A$. An element $a \in A$ is analytic if the function $t \mapsto \sigma_t(a)$ extends to an entire function on $\C$. For $\beta \in \R$ a state $\phi$ on $A$ is a KMS$_\beta$ state for $(A,\sigma)$ if it satisfies the KMS$_\beta$ condition
\begin{equation}\label{KMS_condition}
\phi(ab)=\phi(b \sigma_{i\beta}(a)),
\end{equation}
for all $a,b$ in a set of analytic elements that spans a dense $\sigma$-invariant subspace of $A$.
For $\beta \neq 0$ all KMS$_\beta$ states are $\sigma$-invariant, see \cite[Proposition 5.3.3]{bra-rob}; for $\beta=0$ it is customary to make $\sigma$-invariance part of the definition, so that the KMS$_0$ states are the $\sigma$-invariant traces.

We are interested in the KMS$_\beta$ states of $(\TT(G,E), \sigma)$. The spanning elements $s_\mu u_g s_\nu^*$ are analytic in $\TT(G,E)$, because the function $t \in\R  \mapsto e^{it(|\mu|-\nu|)}$ has an entire extension to $\C$.
Therefore it is enough to check the KMS$_\beta$ condition on spanning elements.

Since the ideal generated by the elements $p_v - \sum_{e \in vE^1} s_e s_e^*$ 
is $\sigma$-invariant, there is a compatible time evolution (also denoted by $\sigma$) on  the quotient $\OO(G,E)$. The
KMS states of $(\OO(G,E), \sigma)$ are given by KMS states of $(\TT(G,E), \sigma)$ that factor through $\OO(G,E)$.
The following alternative characterisation of the KMS$_\beta$ states of  $\TT(G,E)$ and $\OO(G,E)$ is motivated by \cite[\S2]{hlrs} and \cite[\S4]{lrrw}. 

\begin{prop}\label{KMS_algebraic}
Let  $E$ be a finite graph with no sources and vertex matrix B, and let $\rho(B)$ be the spectral radius of $B$.
Suppose that $(G,E)$ is a self-similar groupoid action. 
Let $\sigma: \R \to \Aut\TT(G,E)$ be the dynamics given by  \eqref{T_spanning_sigma}.
\begin{enumerate}
\item\label{no_states} For $\beta < \ln \rho(B)$, there are no KMS$_{\beta}$-states for $\sigma$.
\item\label{states_iff} For $\beta \geq \ln \rho(B)$, a state $\phi$ is a KMS$_{\beta}$-state for $\sigma$ if and only if $\phi \circ i_{C^*(G)}$ is a trace on $C^*(G)$ and 
\begin{equation}\label{char:spanning}
\phi(s_\mu u_g s_\nu^*)=
\delta_{\mu,\nu} \delta_{s(\mu),c(g)}\delta_{s(\nu),d(g)}e^{-\beta |\mu|}\phi( u_g) \qquad \text{ for } g \in S \text{ and } \mu,\nu \in E^*.
\end{equation}
\end{enumerate}
\end{prop}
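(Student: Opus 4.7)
The plan is to follow the template developed in \cite[\S2]{hlrs} and \cite[\S4]{lrrw}, adapted to the groupoid setting, splitting the argument into the bound \eqref{no_states} and the two directions of the equivalence in \eqref{states_iff}.

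For part \eqref{no_states}, I would suppose $\phi$ is a KMS$_\beta$ state and apply the KMS condition to obtain $\phi(s_e s_e^*)=\phi(s_e^*\sigma_{i\beta}(s_e))=e^{-\beta}\phi(p_{s(e)})$. Combining with the Toeplitz-Cuntz-Krieger inequality $p_v\geq\sum_{e\in vE^1}s_e s_e^*$ and summing, the vector $m\in[0,1]^{E^0}$ with entries $m_v:=\phi(p_v)$ satisfies $Bm\leq e^\beta m$ componentwise. Since $\sum_v m_v=\phi(1)=1$, the vector $m$ is nonzero and nonnegative, and a standard Perron-Frobenius argument (pair with a nonnegative left eigenvector of $B$ for $\rho(B)$ whose support meets that of $m$, or iterate to get $B^n m\leq e^{n\beta}m$ and compare norms) forces $\rho(B)\leq e^\beta$.

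For the ``only if'' direction of \eqref{states_iff}, the key observation is that $i_{C^*(G)}(C^*(G))$ lies in the fixed point algebra of $\sigma$, so the KMS condition on products from this subalgebra collapses to the trace identity and shows $\phi\circ i_{C^*(G)}$ is a trace. For the spanning formula, $\sigma$-invariance of $\phi$ together with \eqref{T_spanning_sigma} forces $\phi(s_\mu u_g s_\nu^*)=0$ whenever $|\mu|\neq|\nu|$. When $|\mu|=|\nu|$, the KMS condition and the identity $s_\nu^* s_\mu=\delta_{\mu,\nu}p_{s(\mu)}$ give
\[
\phi(s_\mu u_g s_\nu^*)=e^{-\beta|\mu|}\phi(s_\nu^* s_\mu u_g)=\delta_{\mu,\nu}e^{-\beta|\mu|}\phi(p_{s(\mu)}u_g),
\]
and the identities $p_{s(\mu)}u_g=\delta_{s(\mu),c(g)}u_g$ and $\phi(u_g)=\phi(u_g u_{d(g)})=\delta_{c(g),d(g)}\phi(u_g)$ (the latter from the trace property) produce the two remaining Kronecker deltas.

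For the ``if'' direction, I would verify the KMS condition $\phi(ab)=\phi(b\sigma_{i\beta}(a))$ directly on products of spanning elements $a=s_\kappa u_g s_\lambda^*$ and $b=s_\mu u_h s_\nu^*$. Lemma~\ref{prodform} expands both $ab$ and $ba$ as single spanning elements, so that after applying the hypothesised formula \eqref{char:spanning} and the identity $\sigma_{i\beta}(a)=e^{-\beta(|\kappa|-|\lambda|)}a$, both sides reduce to scalar multiples of values of $\phi\circ u$ on groupoid elements. The main obstacle is the case analysis here: one must separately treat the cases $\mu=\lambda\mu'$ and $\lambda=\mu\lambda'$ produced by the product formula (and the analogous cases $\kappa=\nu\kappa'$ and $\nu=\kappa\nu'$ for $ba$), use Proposition~\ref{SSA:extension to paths}\eqref{path_rest_nonhomo} and \eqref{path_rest_inv} to match the restriction elements (for example, $g|_{\mu'}h$ with $hg|_{\mu'}$), and check that the delta constraints in \eqref{char:spanning} force the nonvanishing supports on the two sides to coincide. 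Once this bookkeeping is complete, the exponentials $e^{-\beta|\kappa(g\cdot\mu')|}$ and $e^{-\beta(|\kappa|-|\lambda|)+|\mu|\beta}$ agree by the length relations forced by the delta conditions, and the residual equality reduces to a tracial identity $\phi(u_{g_1}u_{g_2})=\phi(u_{g_2}u_{g_1})$ for appropriate $g_1,g_2\in G$, which holds by hypothesis.
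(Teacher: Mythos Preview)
Your proposal is correct and follows essentially the same route as the paper. The only differences are in execution: for part~\eqref{no_states} the paper composes $\phi$ with the canonical map $\TT(E)\to\TT(G,E)$ and quotes \cite[Proposition~4.3(c)]{hlrs} rather than redoing the subinvariance argument, and for the ``if'' direction the paper uses a symmetry reduction (swapping $b\leftrightarrow c$ and taking adjoints) to reduce the case analysis to the single case $\phi(bc)\neq 0$ and $|\mu|\geq|\lambda|$, which spares you the full four-way bookkeeping you describe.
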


In principle, we should be able to deduce part~\eqref{states_iff} from the general result in \cite[Proposition~3.1]{aahr}, which characterises the KMS states on an arbitrary Toeplitz-Cuntz-Pimsner algebra $\TT(X)$. However, the description there is in terms of elementary tensors in the tensor powers $X^{\otimes n}$, and since our module $M$ is itself a tensor product, the tensor powers get quite complicated. So it seems more straightforward to reason directly in terms of our spanning family.

\begin{proof} Suppose first that $\phi$ is a KMS$_{\beta}$-state of $(\TT(G,E), \sigma)$.
The Toeplitz-Cuntz-Krieger family $(p,s)$ in $\TT(G,E)$
induces a homomorphism $\pi_{p,s}:\TT (E)\to \TT(G,E)$, and $\pi_{p,s}$ is equivariant for $\sigma$ and the periodic dynamics arising from the gauge action on $\TT (E)$.
Thus the composition $\phi\circ\pi_{p,s}$ is a KMS$_\beta$-state on $\TT (E)$, and \cite[Proposition 4.3~(c)]{hlrs} implies that $\beta \geq \ln \rho(B)$. This proves part \eqref{no_states}.

For part~\eqref{states_iff}, suppose again that $\phi$ is a KMS$_\beta$ state. Since $\gamma$ fixes $u_g$ and $u_h$, the KMS relation implies that $\phi(u_gu_h)=\phi(u_hu_g)$, and hence that $\phi\circ i_{C^*(G)}$ is a trace. The second and third delta functions on the right-hand side of \eqref{char:spanning} just reflect that $s_\mu u_g s_\nu^*=0$ unless $s(\mu)=c(g)$ and $d(g)=s(\nu)$. Since $\phi$ is gauge-invariant, $\phi(s_\mu u_g s_\nu^*)=0$ unless $|\mu|=|\nu|$, and then using the KMS commutation relation to pull $s_\mu$ past $u_gs_\nu^*$ gives \eqref{char:spanning}.

For the reverse implication, suppose that $\phi\circ i_{C^*(G)}$ is a trace and that $\phi$ satisfies \eqref{char:spanning}. To see that $\phi$ is a KMS$_\beta$ state, it suffices to verify the KMS condition
\begin{equation}\label{KMScondbc}
\phi(bc)=\phi(c\alpha_{i\beta}(b))\quad \text{for $b=s_\kappa u_g s_\lambda^*$ and $c=s_\mu u_h s_\nu^*$.}
\end{equation}
There is symmetry in \eqref{KMScondbc} which we can use to simplify the argument. First, note that both sides vanish unless $|\kappa|+|\mu|=|\lambda|+|\nu|$. The right-hand side is $e^{-\beta(|\kappa|-|\lambda|)}\phi(cb)$, and \eqref{KMScondbc} is equivalent to
\[
\phi(cb)=e^{\beta(|\kappa|-|\lambda|)}\phi(bc)=e^{-\beta(|\mu|-|\nu|)}\phi(bc).
\]
Thus it suffices to prove \eqref{KMScondbc} for $\phi(bc)\not=0$, because we can then swap $b$ and $c$ and deduce it for $\phi(cb)\not=0$. Next we observe that it suffices to prove \eqref{KMScondbc} when $|\mu|\geq |\lambda|$, because taking complex conjugates reduces the other case to this one (see the end of the proof of \cite[Proposition~4.1]{lrrw}).

So we suppose that $\phi(bc)\not=0$ and $|\mu|\geq |\lambda|$. Then since $\phi(bc)\not=0$, \eqref{spanning_mult} implies that there exists $\mu'$ such that $\mu=\lambda\mu'$. Then the first option in \eqref{spanning_mult} shows that
\begin{align}
\phi(bc)&=\phi(s_{\kappa(g\cdot \mu')} u_{g|_{\mu'}}u_hs_\nu^*)\notag\\
&=\delta_{\kappa(g\cdot \mu'),\nu}\delta_{s(\mu),r(g|_{\mu'})}
\delta_{s(\nu),d(h)}e^{-\beta(|\kappa|+|\mu'|)}\phi(u_{g|_{\mu'}}u_h).\label{phibc}
\end{align}
Thus $\phi(bc)\not=0$ implies that $\nu=\kappa(g\cdot \mu')$, $s(\mu)=r(g|_{\mu'})=d(h)$ and 
\[
\phi(bc)=e^{-\beta(|\kappa|+|\mu'|)}\phi(u_{g|_{\mu'}}u_h);
\]
since then $u_{g|_{\mu'}}u_h\not =0$, we also have $d(g|_{\mu'})=c(h)$. 

Now we remember that $\nu=\kappa(g\cdot \mu')$, and use the second option in \eqref{spanning_mult} to compute
\begin{align*}
\phi(c\alpha_{i\beta}(b))&=e^{-\beta(|\kappa|-|\lambda|)}\phi(cb)\notag\\
&=e^{-\beta(|\kappa|-|\lambda|)}\phi\big((s_\mu u_h s_\nu^*)(s_\kappa u_g s_\lambda^*)\big)\notag\\
&=e^{-\beta(|\kappa|-|\lambda|)}\phi\big(s_\mu u_{hg|_{g^{-1}\cdot(g\cdot\mu')}} s^*_{\lambda(g^{-1}\cdot(g\cdot\mu'))}\big)\notag\\
&=e^{-\beta(|\kappa|-|\lambda|)}\phi\big(s_\mu u_{hg|_{\mu'}} s^*_{\lambda\mu'}\big)\notag\\
&=e^{-\beta(|\kappa|-|\lambda|)}e^{-\beta|\mu|}\delta_{\mu,\lambda\mu'}\delta_{s(\mu),c(h)}
\delta_{s(\mu),d(g|_{\mu'})}\phi(u_h u_{g|_{\mu'}}).
\end{align*}
All the delta functions are $1$, and we deduce that 
\begin{align}
\phi(c\alpha_{i\beta}(b))&=e^{-\beta(|\kappa|-|\lambda|+|\mu|)}\phi(u_h u_{g|_{\mu'}})\notag\\
&=e^{-\beta(|\kappa|-|\lambda|+|\mu|)}\phi(u_{g|_{\mu'}}u_h),\label{phicb}
\end{align}
because $\phi\circ i_{C^*G)}$ is a trace. Since
\[
|\kappa|-|\lambda|+|\mu|=|\kappa|-|\lambda|+(|\lambda|+|\mu'|)=|\kappa|+|\mu'|,
\]
we deduce that \eqref{phibc}\,=\,\eqref{phicb}, and we are done.
\end{proof}

If $\tau$ is a trace on  a groupoid algebra $C^*(G)$, then
\[
\tau(i_g)=\tau(i_{c(g)} i_g i_{d(g)}) = \tau( i_g i_{d(g)}i_{c(g)}), \qquad g\in G,
\]
so  $\tau(i_g)\not=0$ implies $d(g) = c(g)$. So in particular we can make the following simple and important observation:

\begin{cor}
Suppose that $\phi$ is a KMS$_\beta$ state of $(\TT (G,E),\sigma)$. Then
\[
\phi(u_g)\not=0\Longrightarrow d(g)=c(g).
\]
\end{cor}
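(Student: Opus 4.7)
The plan is to combine Proposition~\ref{KMS_algebraic}\eqref{states_iff} with the trace observation stated in the displayed equation immediately preceding the corollary. First, since $\phi$ is a KMS$_\beta$ state, Proposition~\ref{KMS_algebraic}\eqref{states_iff} gives us that the composition $\tau := \phi\circ i_{C^*(G)}$ is a trace on $C^*(G)$. By definition, $\phi(u_g) = \phi(i_{C^*(G)}(i_g)) = \tau(i_g)$, so it suffices to prove the implication $\tau(i_g)\neq 0 \implies d(g)=c(g)$ for an arbitrary trace $\tau$ on $C^*(G)$.

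For this, I would use that $i_g = i_{c(g)}\, i_g\, i_{d(g)}$ in $C^*(G)$ (the defining multiplication rule for the groupoid algebra). Applying the tracial property cyclically gives
\[
\tau(i_g) = \tau(i_{c(g)}\, i_g\, i_{d(g)}) = \tau(i_g\, i_{d(g)}\, i_{c(g)}).
\]
The point masses $\{i_v : v\in G^0\}$ are mutually orthogonal projections, so if $d(g)\neq c(g)$ then $i_{d(g)}\, i_{c(g)} = 0$, forcing $\tau(i_g)=0$. Contrapositively, $\tau(i_g)\neq 0$ implies $d(g)=c(g)$, which is what we need.

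I do not expect any real obstacle here: both ingredients are already in hand, namely the trace property of $\phi \circ i_{C^*(G)}$ from Proposition~\ref{KMS_algebraic} and the elementary groupoid-algebra identity noted just before the corollary. The proof is a one-line combination of these two facts.
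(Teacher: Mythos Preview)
Your proof is correct and follows essentially the same approach as the paper: you invoke Proposition~\ref{KMS_algebraic}\eqref{states_iff} to conclude that $\phi\circ i_{C^*(G)}$ is a trace, then apply the identity $\tau(i_g)=\tau(i_{c(g)}i_g i_{d(g)})=\tau(i_g i_{d(g)} i_{c(g)})$ together with orthogonality of the unit projections, exactly as in the displayed computation preceding the corollary.
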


\section{KMS states above the critical temperature}\label{sec:largetemp}

\begin{thm}\label{Thm:KMS_beta_tau}
Suppose that $E$ is a finite graph with no sources and $G$ is a groupoid which acts self-similarly on $E$. Let $B$ be the vertex matrix of $E$, and let $\sigma$ be the dynamics on $\TT(G,E)$ such that
\[
\sigma_t(s_\mu u_h s_\nu^*)=e^{it(|\mu|-|\nu|)}s_\mu u_h s_\nu^*.
\]
Suppose that  $\beta > \ln\rho(B)$, where $\rho(B)$ is the spectral radius of $B$.
 \begin{enumerate}
 \item\label{Zconverges} For each normalised trace $\tau$ on $C^*(G)$, the series
\begin{equation}\label{treblesumcvges}
\sum_{j=0}^\infty \sum_{\mu \in E^j} e^{-\beta j} \tau(i_{s(\mu)})
\end{equation}
converges with sum $Z(\beta,\tau)$, say.
 \item\label{propertytaubeta} For each normalised trace $\tau$ on $C^*(G)$  there is a KMS$_\beta$-state $\psi_{\beta,\tau}$ of $(\TT(G,E),\sigma)$ such that
\begin{equation}\label{KMS_tau_formula}
\psi_{\beta,\tau}(s_\kappa u_g s_\lambda^*)=\delta_{\kappa,\lambda}Z(\beta,\tau)^{-1} \sum_{j=|\kappa|}^\infty e^{-\beta j} \sum_{\{\nu \in s(\kappa)E^{j-|\kappa|}: \, g \cdot \nu=\nu\}} \tau\big(i_{g|_\nu}\big) .
\end{equation}
\item\label{KMS_tau_affine} 
The map $\tau \mapsto \psi_{\beta,\tau}$ 
is an affine homeomorphism of the simplex of tracial states of $C^*(G)$ onto the simplex of KMS$_\beta$ states of $(\TT(G,E),\sigma)$.
\end{enumerate}
\end{thm}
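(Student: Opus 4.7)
The plan is to first establish convergence of $Z(\beta,\tau)$, then construct $\psi_{\beta,\tau}$ as a Gibbs state on a Fock-style representation of $\TT(G,E)$, and finally prove the bijective correspondence by identifying both the inverse of $\tau\mapsto\psi_{\beta,\tau}$ and a reconstruction formula for arbitrary KMS$_\beta$ states.

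For \eqref{Zconverges}, the normalisation $\tau(1)=1$ gives $\tau(i_{s(\mu)})\leq 1$, so it suffices to prove $\sum_j e^{-\beta j}|E^j|<\infty$. Since $|E^j|\leq |E^0|^2\|B^j\|$ and the Gelfand formula yields $\limsup_j\|B^j\|^{1/j}=\rho(B)$, the series converges whenever $\beta>\ln\rho(B)$.

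For \eqref{propertytaubeta}, let $(\pi_\tau,H_\tau,\xi_\tau)$ be the GNS triple of $\tau$ and form $\HH:=\bigoplus_{\mu\in E^*}\pi_\tau(i_{r(\mu)})H_\tau$ with distinguished vectors $\xi_\mu:=\pi_\tau(i_{r(\mu)})\xi_\tau$ in the $\mu$-summand. Using Proposition~\ref{Toeplitz_repn} I would induce a representation of $\TT(G,E)$ on $\HH$ in which $s_e$ is the creation operator shifting into the $e\mu$-summand, $p_v$ is the projection onto those $\mu$-summands with $r(\mu)=v$, and $u_g$ permutes summands according to the self-similar action while acting diagonally via $\pi_\tau(i_{g|_\mu})$. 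Then
\[
\psi_{\beta,\tau}(T):=Z(\beta,\tau)^{-1}\sum_{\mu\in E^*}e^{-\beta|\mu|}\langle T\xi_\mu,\xi_\mu\rangle
\]
is a positive, normalised linear functional, and a direct computation of $\langle s_\kappa u_g s_\lambda^*\xi_\mu,\xi_\mu\rangle$ (nonzero only when $\mu=\lambda\nu$, $\kappa=\lambda$, and $g\cdot\nu=\nu$, contributing $\tau(i_{g|_\nu})$) recovers~\eqref{KMS_tau_formula}. Proposition~\ref{KMS_algebraic}\eqref{states_iff} then gives the KMS$_\beta$ condition: $\psi_{\beta,\tau}\circ i_{C^*(G)}$ is a trace because $\tau$ is, and the shape of~\eqref{KMS_tau_formula} matches the requirement~\eqref{char:spanning}.

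For \eqref{KMS_tau_affine}, affinity and weak-$*$ continuity are immediate from the formula. Setting $q_v:=p_v-\sum_{e\in vE^1}s_e s_e^*$ and using the identity $s_e^*u_g s_e=\delta_{e,g\cdot e}u_{g|_e}$ (which drops out of Proposition~\ref{Toeplitz_repn}\eqref{TR_SSA} and the Toeplitz-Cuntz-Krieger relations), one computes directly from~\eqref{KMS_tau_formula} that $\psi_{\beta,\tau}(u_g q_{d(g)})=Z(\beta,\tau)^{-1}\tau(i_g)$, so $\tau$ is determined by $\psi_{\beta,\tau}$; this gives injectivity. For surjectivity, given a KMS$_\beta$ state $\phi$, define $\tilde\tau(i_g):=\phi(u_g q_{d(g)})$; tracial-ness of $\tilde\tau$ follows from the KMS property of $\phi$ combined with the product rule~\eqref{spanning_mult}. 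Iterating the telescoping identity $1=\sum_{|\mu|\leq N}s_\mu q_{s(\mu)}s_\mu^*+\sum_{|\eta|=N+1}s_\eta s_\eta^*$ inside $\phi(u_g\,\cdot\,)$ and invoking KMS, the remainder is dominated by $e^{-\beta(N+1)}|E^{N+1}|\to 0$, and in the limit
\[
\phi(u_g)=\sum_{\mu\in d(g)E^*,\ g\cdot\mu=\mu}e^{-\beta|\mu|}\,\tilde\tau(i_{g|_\mu}).
\]
Setting $\tau:=\tilde\tau/\tilde\tau(1)$, the identity $1=\sum_v\phi(p_v)$ forces $Z(\beta,\tau)=\tilde\tau(1)^{-1}$, and a comparison with~\eqref{KMS_tau_formula} yields $\phi=\psi_{\beta,\tau}$.

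The main obstacle is this last surjectivity step, which is the most delicate analytically: one must simultaneously verify tracial-ness of $\tilde\tau$, control the remainder in the iterated identity (this is precisely where $\beta>\ln\rho(B)$ enters decisively), and reconcile the two normalisations $\tau(1)=1$ and $Z(\beta,\tau)$. None of these is conceptually hard, but they must be carefully dovetailed so that the formula \eqref{KMS_tau_formula} reproduces $\phi$ exactly.
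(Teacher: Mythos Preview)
Your overall strategy matches the paper's: construct $\psi_{\beta,\tau}$ spatially via a Fock-type representation built from the GNS space of $\tau$, verify the KMS condition through Proposition~\ref{KMS_algebraic}, and prove bijectivity by conditioning on the gap projection $P=\sum_v q_v$. The surjectivity argument you sketch (defining $\tilde\tau$ via the gap projections and iterating a telescoping identity) is essentially the paper's reconstruction lemma in different clothing; the paper instead shows $\phi(p_n)\to 1$ for $p_n=\sum_{|\mu|\leq n}s_\mu P s_\mu^*$ and uses $\phi(p_n a p_n)\to\phi(a)$, which is cleaner than tracking the remainder directly but amounts to the same thing.

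There are, however, two genuine problems. First, your Fock space must be $\bigoplus_\mu \pi_\tau(i_{s(\mu)})H_\tau$, not $\pi_\tau(i_{r(\mu)})H_\tau$. The balancing in $X(E^j)\otimes_{C(E^0)}K_\tau$ is over the source, and with your choice the creation operators cannot be defined coherently: $s_e$ would have to carry $\pi_\tau(i_{s(e)})H_\tau$ into the distinct subspace $\pi_\tau(i_{r(e)})H_\tau$. Worse, $u_g$ acts on the $\mu$-summand via $\pi_\tau(i_{g|_\mu})$, and since $d(g|_\mu)=s(\mu)$ this partial isometry has initial space $\pi_\tau(i_{s(\mu)})H_\tau$, not $\pi_\tau(i_{r(\mu)})H_\tau$. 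The state is also mis-normalised: with the range you get $\psi_{\beta,\tau}(1)=Z^{-1}\sum_\mu e^{-\beta|\mu|}\tau(i_{r(\mu)})\neq 1$ in general.

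Second, the claim that ``$\psi_{\beta,\tau}\circ i_{C^*(G)}$ is a trace because $\tau$ is'' is too fast. From~\eqref{KMS_tau_formula} one has $\psi_{\beta,\tau}(u_{gh})=Z^{-1}\sum_j e^{-\beta j}\sum_{\{\nu:\,gh\cdot\nu=\nu\}}\tau(i_{(gh)|_\nu})$, and this does not obviously equal the corresponding sum for $hg$ merely because $\tau$ is tracial: the index sets are different and the restrictions $(gh)|_\nu$, $(hg)|_\mu$ are a priori unrelated. The paper closes this by exhibiting the bijection $\nu\mapsto h\cdot\nu$ between the two fixed-point sets and then computing $\tau(i_{(gh)|_\nu})=\tau(i_{g|_{h\cdot\nu}}i_{h|_\nu})=\tau(i_{h|_\nu}i_{g|_{h\cdot\nu}})=\tau(i_{(hg)|_{h\cdot\nu}})$, using Proposition~\ref{SSA:extension to paths}\eqref{path_rest_nonhomo} twice and the trace property of $\tau$ once. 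This is where the self-similar structure genuinely enters the trace verification, and it cannot be skipped.
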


Our strategy for part~\eqref{propertytaubeta} is based on a construction in \cite{ln} (see the proof of Theorem~2.1 in \cite{ln}). Given $\tau$, we consider the GNS represntation $\pi_\tau$ of $C^*(G)$ on $K_\tau$, say. We then consider the Fock module $\FF(M)=\bigoplus_{j=0}^\infty M^{\otimes n}$, which is also a bimodule over $C^*(G)$, and the  induced representation of $C^*(G)$ on $\FF(M)\otimes_{C^*(G)}K_\tau$. We then extend this induced representation to a representation of $\TT(G,E)$, and construct our KMS states as sums of vector states in this representation. As in previous applications \cite{lr,lrr,lrrw}, we find it easier to work in a concretely defined realisation of this induced representation. The key observation is that the tensor powers $X(E)^{\otimes j}$ of the graph bimodule $X(E)$ used in the construction of $M$ can be realised as functions on the path spaces $E^j$ (see \cite[Proposition~9.7]{R_book}).  

For $j\in \N$, we consider the graph bimodule $X(E^j)$ of the graph $(E^0,E^j,r,s)$, which is a Hilbert bimodule over $C(E^0)$. Thus we can form the Hilbert space $H_j:=X(E^j)\otimes_{C(E^0)} K_\tau$. For a path $\mu\in E^j$, we also write $\mu$ for the characteristic  function of the set $\{\mu\}$. Then every element $x$ of $C(E^j)$, which is the underlying space of $X(E^j)$, is the finite linear combination $\sum_{\mu\in X^j} x(\mu)\mu$. Because of the balancing in $H_j=X(E^j)\otimes_{C(E^0)} K_\tau$, we have 
\[
\mu\otimes k=(\mu\cdot s(\mu))\otimes k=\mu\otimes \pi_\tau(i_{s(\mu)})k \text{ for $\mu\in E^j$, $k\in K_\tau$.}
\]
The Hilbert space of our representation is then $H:=\bigoplus_{j=0}^\infty H_j$. We observe that
\[
H=\clsp\{\mu\otimes k:\mu\in E^*, k\in K_\tau\text{ and }\pi_\tau(i_{s(\mu)})k=k\}.
\]
We observe for future use that these spanning vectors satisfy
\begin{equation}\label{Fock_ip}
\big(\mu\otimes k\mid \nu\otimes l\big)=\big( \pi_\tau(\langle\nu,\mu\rangle)k\mid l\big)=\delta_{\mu,\nu}(\pi_\tau(i_{s(\mu)})k\mid l).
\end{equation}
Thus if we choose an orthonormal basis $\{e_{v,i}\}$ for each $\pi_\tau(i_{v})K_\tau$, then $\{\mu\otimes e_{s(\mu),i}\}$ is an orthonormal basis for $H$.

\begin{lemma}\label{representT}
Suppose that $E$ is a finite graph with no sources, and $G$ is a groupoid with unit space $E^0$ which acts self-similarly on $E$. Let $\tau$ be a tracial state of $C^*(G)$, and take $\pi_\tau$ and $H$ as above. Then there is 
a Toeplitz-Cuntz-Krieger family $(P,S)$ on $H$ 
such that 
\begin{equation}\label{Fock_TSP}
P_{v}(\mu \otimes k)=\delta_{v,r(\mu)}\mu\otimes k \qquad \text{and} \qquad S_e(\mu\otimes k)=\delta_{s(e),r(\mu)}e\mu\otimes k;
\end{equation}
and there is a unitary representation $U: G\to U(H)$ 
such that for each $g\in G$
\begin{equation}\label{Fock_T}
U_g(\mu \otimes k)= \begin{cases}(g \cdot \mu )\otimes \pi_\tau (i_{g|_\mu})k & \text {if } r(\mu) = d(g)\\
0 &\text{ otherwise.}
\end{cases}
\end{equation}
Moreover, the triple $(U, P, S)$ satisfies the relations \textnormal{(1--4)} of Proposition~\ref{Toeplitz_repn}.
\end{lemma}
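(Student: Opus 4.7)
The plan is to construct each of the three families directly on the spanning vectors $\mu\otimes k$ of $H$ (with $\pi_\tau(i_{s(\mu)})k=k$), and then verify, using the inner product formula \eqref{Fock_ip}, that the formulas extend to bounded operators with the required algebraic properties. Throughout, the key identities will be those of Lemma~\ref{source and range observations} and Proposition~\ref{properties of faithful self-similar groupoid action}, together with the fact that $\pi_\tau$ is a $*$-representation of $C^*(G)$.

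First I would check the easy piece. For $P_v$ defined by \eqref{Fock_TSP}, it is immediate that $P_v$ is the orthogonal projection onto $\clsp\{\mu\otimes k : r(\mu)=v\}$. For $S_e$, the formula $S_e(\mu\otimes k)=\delta_{s(e),r(\mu)}e\mu\otimes k$ respects the balancing because $s(e\mu)=s(\mu)$, so $\pi_\tau(i_{s(e\mu)})k=\pi_\tau(i_{s(\mu)})k=k$. Using \eqref{Fock_ip}, a one-line calculation shows $S_e^*$ sends $e\mu\otimes k\mapsto \mu\otimes k$ and kills basis vectors that do not start with $e$; hence $S_e^*S_e=P_{s(e)}$, the projections $\{S_eS_e^*\}$ are pairwise orthogonal, and $P_v\geq\sum_{e\in vE^1}S_eS_e^*$. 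So $(P,S)$ is a Toeplitz–Cuntz–Krieger family.

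The substantive part is $U_g$. I would first check that the formula \eqref{Fock_T} is compatible with the balancing on $X(E^{|\mu|})\otimes_{C(E^0)}K_\tau$. Since $d(g|_\mu)=s(\mu)$ by Lemma~\ref{source and range observations}\eqref{source and range in SSG}, we have $i_{g|_\mu}i_{s(\mu)}=i_{g|_\mu}$, so the formula sends $\mu\otimes\pi_\tau(i_{s(\mu)})k$ to $(g\cdot\mu)\otimes\pi_\tau(i_{g|_\mu})k$ consistently. Next, using \eqref{Fock_ip} and the facts $s(g\cdot\mu)=c(g|_\mu)$ and $i_{g|_\mu}^*i_{g|_\mu}=i_{d(g|_\mu)}=i_{s(\mu)}$, I would compute
\begin{align*}
\bigl(U_g(\mu\otimes k)\mid U_g(\nu\otimes l)\bigr)
&=\delta_{g\cdot\mu,\,g\cdot\nu}\bigl(\pi_\tau(i_{g|_\mu}^*i_{g|_\mu})k\mid l\bigr)\\
&=\delta_{\mu,\nu}\bigl(\pi_\tau(i_{s(\mu)})k\mid l\bigr)=\bigl(\mu\otimes k\mid\nu\otimes l\bigr)
\end{align*}
whenever $r(\mu)=r(\nu)=d(g)$. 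Thus $U_g$ extends to an isometry on the range of $P_{d(g)}$ and vanishes on its complement, so $U_g^*U_g=P_{d(g)}$. Symmetry in $g\mapsto g^{-1}$ gives $U_gU_g^*=P_{c(g)}$ (the inverse formula follows from Proposition~\ref{properties of faithful self-similar groupoid action}\eqref{path_rest_inv}), so each $U_g$ is a partial isometry with the projections $P_{d(g)}$ and $P_{c(g)}$ as required. Multiplicativity $U_gU_h=U_{gh}$ when $d(g)=c(h)$ follows directly from Proposition~\ref{properties of faithful self-similar groupoid action}\eqref{path_rest_nonhomo}: for $r(\mu)=d(h)$,
\[
U_gU_h(\mu\otimes k)=\bigl((gh)\cdot\mu\bigr)\otimes\pi_\tau(i_{g|_{h\cdot\mu}}i_{h|_\mu})k
=\bigl((gh)\cdot\mu\bigr)\otimes\pi_\tau(i_{(gh)|_\mu})k,
\]
and orthogonality of $U_gU_h$ on $P_{d(h)}^\perp$ makes the case $d(g)\neq c(h)$ trivial. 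So $U$ is a unitary representation of $G$ with $U_v=P_v$.

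Finally I would verify relations~(3) and~(4) of Proposition~\ref{Toeplitz_repn}. For~(3), applying each side to $\mu\otimes k$ and using $g\cdot(e\mu)=(g\cdot e)(g|_e\cdot\mu)$ together with $(g|_e)|_\mu=g|_{e\mu}$ shows that $U_gS_e(\mu\otimes k)$ and $S_{g\cdot e}U_{g|_e}(\mu\otimes k)$ agree when $d(g)=r(e)$ and $s(e)=r(\mu)$, and both vanish otherwise; the source/range bookkeeping uses $c(g|_e)=s(g\cdot e)$ from Lemma~\ref{source and range observations}. Relation~(4) is the same calculation with a trivial path, using $r(g\cdot\mu)=g\cdot r(\mu)=c(g)$. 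The main obstacle is keeping the groupoid source/range conditions and the balancing straight; once Proposition~\ref{properties of faithful self-similar groupoid action} is invoked at the right places the computations are routine.
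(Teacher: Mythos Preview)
Your proposal is correct and follows essentially the same approach as the paper: define $P$, $S$, $U$ on spanning vectors, verify via the inner-product formula \eqref{Fock_ip} that $U_g$ is isometric on $P_{d(g)}H$, and then check the relations of Proposition~\ref{Toeplitz_repn} by direct computation using Proposition~\ref{properties of faithful self-similar groupoid action}. The only cosmetic difference is that the paper computes $\|U_g h\|^2$ on a finite sum and then shows $U_g$ is onto $P_{c(g)}H$ directly, whereas you compute inner products on pairs of spanning vectors and obtain the range projection by the symmetry $g\mapsto g^{-1}$; for that step to be complete you need $U_{g^{-1}}=U_g^*$, which does follow from the inverse formula in Proposition~\ref{properties of faithful self-similar groupoid action}\eqref{path_rest_inv} as you indicate.
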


\begin{proof}
Looking at their effects on the orthonormal basis $\{\mu\otimes e_{s(\mu),i}\}$ shows that there are  projections $P_v$ and a partial isometries $U_e$ satisfying \eqref{Fock_TSP}, and that they form a Toeplitz-Cuntz-Krieger family. 

Regarding now the vertex $v$ as a unit of the groupoid $G$, we define $U_v:=P_v$. We now need to define $U_g$ for more general $g\in G$. Each vector $h$ in the algebraic linear span
\[
H':=\sp\{\mu\otimes k:\mu\in E^*, k\in K_\tau\text{ and }\pi_\tau(i_{s(\mu)})k=k\}
\]
has a unique sum decomposition $h=\sum_{\mu\in F}\mu\otimes k_\mu$ for some finite subset $F$ of $E^*$. So we can define $U_g$ as a function on $P_{d(g)}H'$ by
\[
U_g\Big(\sum_{\mu\in F} \mu \otimes k_\mu\Big) := \sum_{\mu\in F} (g \cdot \mu)\otimes \pi_\tau(i_{g|_{\mu}})k_\mu. 
\]
We aim to prove that $U_g$ is isometric on $P_{d(g)}H'$, and hence extends to an isometry on $\overline{P_{d(g)}H'}=P_{d(g)}H$. We have
 \begin{equation*}\label{compnormT}
\Big\|U_g\Big(\sum_{\mu\in F} \mu \otimes k_\mu\Big)\Big\|^2 
=\Big(\sum_{\mu\in F} (g \cdot \mu) \otimes \pi_\tau(i_{g|_{\mu}})k_\mu \Big| \sum_{\nu\in F} (g \cdot \nu)\otimes \pi_\tau(i_{g|_{\nu}})k_\nu  \Big) 
\end{equation*}
Since $\langle \mu,\nu\rangle=\delta_{\mu,\nu}i_{s(\mu)}$, the $\mu,\nu$ summand on the right-hand side vanishes unless $g\cdot \mu=g\cdot \nu$, and hence unless $\mu=\nu$. Thus
\[
\Big\|U_g\Big(\sum_{\mu\in F} \mu \otimes k_\mu\Big)\Big\|^2
= \sum_{\mu\in F} \big(\pi_\tau( i_{g|_{\mu}})k_\mu\,\big|\, \pi_\tau(i_{g|_{\mu}} )k_\mu  \big).
\]
Since $\pi_\tau(i_{g|_{\mu}})$ is an isomorphism of $d(g|_\mu)E^*=s(\mu)E^*$ onto $c(g|_\mu)E^*$, it follows that
\begin{align*}
\Big\|U_g\Big(\sum_{\mu\in F} \mu \otimes k_\mu\Big)\Big\|^2=\sum_{\mu\in F}(k_\mu\,|\,k_\mu)=\Big\|\sum_{\mu\in F}\mu\otimes k_\mu\Big\|^2.
\end{align*}
Thus the map $U_g$ is well-defined and isometric on a dense subset of $d(g) H = U_{d(g)}H $, and hence extends to an isometry of $d(g) H$ into $U_{d(g)}H $. Since $\mu\mapsto g\cdot \mu$ is onto $c(g)E^*$ and $\pi_\tau(i_{g|_\mu})$ is onto $\pi_\tau(i_{c(g|_\mu)})K_\tau=\pi_\tau(i_{s(g\cdot\mu)})K_\tau$ 
(because $\pi_\tau$ is a unitary representation), every elementary tensor in $P_{c(g)}H'$ belongs to the range of $U_g$. Thus $U_g$ is a unitary isomorphism of $P_{d(g)}H$ onto $P_{c(g)}H$.

Now take $g,h\in G$. If $d(g) \neq c(h)$, then the final subspace $P_{c(h)} H$ of $U_h$ is orthogonal to the initial subspace $P_{d(g)} H$ of $U_g$, and hence $U_gU_h=0$.
So we assume that $d(g) = c(h)$. Then the range projection of $U_h$ is the initial projection of $U_g$, and thus $U_gU_h$ is a partial isometry whose initial and final projections are the same as those of $U_{gh}$.
To prove that $U_gU_h = U_{gh}$, we compute on an elementary tensor  $\mu \otimes k $ with $r(\mu) = d(g)$:
\begin{align*}
U_gU_h(\mu \otimes k)&=U_g(h \cdot \mu \otimes \pi (i_{h|_\mu})k)=g\cdot (h \cdot \mu) \otimes \pi (i_{g|_{h \cdot \mu}}) \pi (i_{h|_\mu})k \\
&=gh \cdot \mu \otimes \pi (i_{gh|_\mu})= U_{gh}(\mu \otimes k).
\end{align*}
Thus $U$ is a unitary representation of the groupoid $G$ on $H$.
 
Next we take $g\in G$ and $e\in E^1$, and verify relation \eqref{TR_SSA} in Proposition~\ref{Toeplitz_repn}. If $d(g) \neq r(e)$, then  $S_eH =0=P_{r(e)}H$ is orthogonal to the initial space $P_{d(g)}H$ of $U_g$, and hence $U_gS_e=0$.
So suppose that $d(g) = r(e) $, and take an elementary tensor  $\mu \otimes k \in H$ with $s(e) = r(\mu)$. Then we compute
\begin{align*}
U_gS_e(\mu \otimes k) &=U_g(e\mu \otimes k)=(g\cdot e\mu) \otimes \pi (i_{g|_{e\mu}})k \\
&=(g\cdot e)(g|_{e} \cdot \mu) \otimes \pi (i_{g|_{e\mu}})k =S_{g\cdot e}(g|_{e} \cdot \mu \otimes \pi (i_{g|_{e}|_{\mu}})k)\\
& = S_{g\cdot e}U_{g|_e}(\mu \otimes k).
\end{align*}
Finally, we check that $U_g P_v = P_{g \cdot v}U_g$. For $\mu \otimes k \in H$, we have
\begin{align*}
U_g P_v(\mu \otimes k)&=U_g (\delta_{v,r(\mu)} \mu \otimes k)=\delta_{v,r(\mu)} g \cdot \mu \otimes \pi (i_{g|_{\mu}}) k \\
&=\delta_{g \cdot v,r(g \cdot \mu)} g \cdot \mu \otimes \pi (i_{g|_{\mu}}) k = P_{g \cdot v} (g \cdot \mu \otimes \pi (i_{g|_{\mu}}) k)\\
&= P_{g \cdot v} U_g (\mu \otimes k).
\end{align*}
Thus $(U,P,S)$ satisfies all the relations in Proposition~\ref{Toeplitz_repn}, as required.
\end{proof}

\begin{proof}[Proof of Theorem~\ref{Thm:KMS_beta_tau}\eqref{Zconverges}]
Suppose that $\tau$ is a normalised trace on $C^*(G)$, and consider  the corresponding GNS representation $(K_\tau, \pi_\tau, \xi_\tau)$ of $C^*(G)$. Construct the Hilbert space $H$ as above. Let $\pi_{U,S,P}:\TT(G,E)\to B(H)$ be the representation arising from the family $(U,P,S)$ of Lemma~\ref{representT} by the universal property from \proref{Toeplitz_repn}.
 
Suppose that $\beta > \ln \rho(B)$.  Then $\sum_{j=0}^{\infty}\sum_{\mu\in E^jv}e^{-\beta j}$ 
converges for each $v\in E^0$ by \cite[Theorem~3.1(a)]{hlrs}. Since $0\leq \tau(i_{s(\mu)})\leq 1$ for every $\mu\in E^*$, it follows that 
\begin{equation}\label{treblesumcvges_a}
\sum_{j=0}^\infty \sum_{\mu \in E^j} e^{-\beta j} \tau(i_{s(\mu)})=\sum_{v\in E^0}\sum_{j=0}^{\infty}\sum_{\mu\in E^jv}e^{-\beta j}\tau(i_{s(\mu)})
\end{equation}
converges. 
\end{proof} 

\begin{proof}[Proof of Theorem \ref{Thm:KMS_beta_tau}\eqref{propertytaubeta}] 
We begin by defining $\psi_{\beta,\tau}$ spatially using the representation on $H$. As in the previous paragraph, for every $a\in \TT(G,E)$, the series
\begin{equation}\label{psi_a}
\psi_{\beta,\tau}(a):={Z(\beta,\tau)\inv} \sum_{j=0}^\infty e^{-\beta j} \sum_{\mu \in E^j} \big( \pi_{U,P,S}(a)(\mu \otimes \xi_\tau) \mid \mu \otimes \xi_\tau\big)
\end{equation}
converges absolutely with sum $\psi_{\beta,\tau}(a)\in \C$. By standard properties of convergent series, the assignment $\psi_{\beta,\tau}:a\mapsto \psi_{\beta,\tau}(a)$ is a positive linear functional on $\TT(G,E)$. In fact, we have 
\begin{align*}
\psi_{\beta,\tau}(1)&=Z(\beta,\tau)\inv \sum_{j=0}^\infty e^{-\beta j} \sum_{\mu \in E^j} (\mu \otimes \xi_\tau \mid \mu \otimes \xi_\tau) \\
&=Z(\beta,\tau)\inv \sum_{j=0}^\infty e^{-\beta j}\sum_{\mu \in E^j}\tau(i_{s(\mu)})\\
&=1, 
\end{align*}
and hence $\psi_{\beta,\tau}$ is a state.

To  prove  that $\psi_{\beta,\tau}$ satisfies \eqref{KMS_tau_formula}, we fix a spanning element $s_\kappa u_g s_\lambda^*$. Then
\begin{equation}\label{psi_a_spanning}
\psi_{\beta,\tau}(s_\kappa u_g s_\lambda^*)=Z(\beta,\tau)\inv \sum_{j=0}^\infty e^{-\beta j} \sum_{\mu \in E^j} \big( S_\kappa U_g S_\lambda^*(\mu \otimes \xi_\tau) \,\big|\, \mu \otimes \xi_\tau\big).
\end{equation}
Suppose there exists $\mu$ such that
\[
\big( S_{\kappa} U_g S_\lambda^*(\mu \otimes \xi_\tau) \mid \mu \otimes \xi_\tau\big)=\big( U_g S_\lambda^*(\mu \otimes \xi_\tau) \mid S_{\kappa}^* (\mu \otimes \xi_\tau)\big)\not=0.
\]
Since $S_\kappa^*(\mu\otimes\xi_\tau)\not=0$, there exists $\mu'$ such that $\mu=\kappa \mu'$, and similarly there exists $\mu''$ such that $\mu=\lambda \mu''$. Then
\begin{align}\label{ip_eqn}
0\neq \big( U_g S_\lambda^*(\mu \otimes \xi_\tau) \mid S_{\kappa}^* (\mu \otimes \xi_\tau)\big)
&=\big( U_g (\mu'' \otimes \xi_\tau) \mid \mu' \otimes \xi_\tau\big)\\
&=\big((g \cdot \mu'' )\otimes \pi_\tau(i_{g|_{\mu''}})\xi_\tau \mid \mu' \otimes \xi_\tau\big),\notag
\end{align}
which implies that $g \cdot \mu''=\mu'$ and $|\mu''| = |g \cdot \mu''| = |\mu'|$. Now we have $\kappa \mu'=\mu=\lambda \mu''$. From all this we deduce: first, that $\kappa=\lambda$, which gives the $\delta_{\kappa,\lambda}$ in \eqref{KMS_tau_formula}; second, that  only the terms with $\mu=\kappa\mu'$ contribute non-zero terms to the sum on the right of \eqref{psi_a_spanning}; third, that only the terms with $g\cdot\mu'=\mu'$ survive. For these terms, we have
\begin{align*}
\big( S_{\kappa} U_g S_{\lambda}^*(\mu \otimes \xi_\tau) \mid \mu \otimes \xi_\tau\big)
&=\big(g\cdot\mu' \otimes \pi_\tau(i_{g|_{\mu'}})\xi_\tau \mid \mu' \otimes \xi_\tau\big)\\
&=\big(\pi_\tau(i_{s(\mu)}i_{g|_{\mu'}})\xi_\tau) \mid \mu' \otimes \xi_\tau\big)\\
&=\big(\pi_\tau(i_{g|_{\mu'}})\xi_\tau) \mid \xi_\tau\big). 
\end{align*}
Now summing over $\mu'\in s(\kappa)E^*$ satisfying $g\cdot\mu'=\mu'$, and writing $\nu$ for $\mu'$, gives \eqref{KMS_tau_formula}.

To show that $\psi_{\beta,\tau}$ is a  KMS$_\beta$ state, we verify the conditions in Proposition \ref{KMS_algebraic}. To see that $\psi_{\beta,\tau} \circ i_{C^*(G)}$ is a trace on $C^*(G)$, consider  two generators $i_g$ and $i_h$  in $C^*(G)$. Then \eqref{KMS_tau_formula} gives
\begin{align}
\psi_{\beta,\tau} \circ i_{C^*(G)}(i_gi_h)
&=\psi_{\beta,\tau}(u_g  u_h)= \delta_{d(g),c(h)}\psi_{\beta,\tau}(u_{gh}) \notag \\
\label{Toeplitz_trace1}
&= \delta_{d(g),c(h)}\delta_{c(g), d(h)}  \, Z(\beta,\tau)\inv \sum_{j=0}^\infty e^{-\beta j} \sum_{\{\nu \in d(h)E^{j} : gh \cdot \nu = \nu\}} \tau\big( i_{(gh)|_{\nu}}\big).
\end{align}
On the other hand, we have
\begin{align}
\psi_{\beta,\tau} \circ i_{C^*(G)}(i_hi_g)
&=\psi_{\beta,\tau}(u_h  u_g)= \delta_{d(h), c(g)}\psi_{\beta,\tau}(u_{hg}) \notag \\
\label{Toeplitz_trace2}
&= \delta_{d(h), c(g)}\delta_{c(h), d(g)}  \, Z(\beta,\tau)\inv \sum_{j=0}^\infty e^{-\beta j} \sum_{\{\mu \in d(g)E^{j} : hg \cdot \mu=\mu\}} \tau\big( i_{(hg)|_{\mu}}\big).
\end{align}

We need to prove that $\eqref{Toeplitz_trace1}=\eqref{Toeplitz_trace2}$. Both sides vanish unless $d(h)=c(g)$ and $c(h)=d(g)$, so we suppose that both are true. We claim that the map $\theta:\mu \mapsto g \cdot \mu$ is a bijection of the index set $\{\mu \in d(g)E^j : (hg) \cdot \mu=\mu\}$ in the sum \eqref{Toeplitz_trace2} onto the index set  $\{\nu \in d(h)E^j : (gh) \cdot \nu =\nu\}$ in \eqref{Toeplitz_trace1}. Suppose $\mu \in d(g)E^j$ satisfies  $ hg ( \mu)=\mu$. Then $r(\mu)=d(g)$ gives $r(g\cdot \mu)=c(g)=d(h)$, so $g\cdot \mu \in d(h)E^j$. Now $(gh) \cdot(g \cdot \mu)=g \cdot (hg \cdot \mu)=g\cdot \mu$, and hence 
$g\cdot \mu$ belongs to the set $\{\nu \in d(h)E^j : gh\cdot \nu=\nu\}$. 
The map $\nu \mapsto h\cdot \nu$ is an inverse for $\theta$, and the claim follows. 

Now we show that the bijection $\theta$ matches up the summands as well as their labels. Suppose that $\nu\in d(h)E^j$ satisfies $gh \cdot \nu =\nu$. Then
\begin{align*}
\tau\big( i_{(gh)|_{\nu}}\big)&=\tau\big(i_{g|_{h \cdot \nu}}i_{h|_{\nu}}\big)\quad \text{by \proref{SSA:extension to paths}} \\
&=\tau\big(i_{h|_{\nu}} i_{g|_{h \cdot \nu}}\big) \quad \text{ since $\tau$ is a trace} \\
&=\tau\big( i_{h|_{gh \cdot \nu}} i_{g|_{h \cdot \nu}}\big) \quad \text{ because $gh \cdot \nu = \nu$}\\
&=\tau\big( i_{(hg)|_{h \cdot \nu}}\big).
\end{align*}
Thus the sums in  \eqref{Toeplitz_trace1} and  \eqref{Toeplitz_trace2} coincide, and $\psi_{\beta,\tau} \circ i_{C^*(G)}$ is a trace on $C^*(G)$.

We now prove that $\psi_{\beta,\tau}$ satisfies \eqref{char:spanning}. Let $s_{\kappa} u_g s_\lambda^* \in \TT(G,E)$ and assume that $s(\kappa)=g\cdot s(\lambda)$, for otherwise the product vanishes. On one hand, \eqref{KMS_tau_formula} gives
\begin{align*}
\psi_{\beta,\tau}(s_{\kappa} u_g s_\lambda^*)&=\delta_{\kappa,\lambda}Z(\beta,\tau)\inv \sum_{j=|\kappa|}^\infty e^{-\beta j} \sum_{\{\nu \in s(\kappa)E^{j-|\kappa|} : g \cdot \nu=\nu\}} \tau\big( i_{g|_{\nu}}\big) \\
&=\delta_{\kappa,\lambda}Z(\beta,\tau)\inv \sum_{k=0}^\infty e^{-\beta |\kappa|} e^{-\beta k} \sum_{\{\nu \in s(\kappa)E^{k} : g \cdot \nu=\nu\}} \tau\big( i_{g|_{\nu}}\big).
\end{align*}
On the other hand, since $\psi_{\beta,\tau} \circ i_{C^*(G)}$ is a trace, the right hand side of \eqref{char:spanning} gives
\begin{align*}
\delta_{\kappa,\lambda} e^{-\beta |\kappa|} \psi_{\beta,\tau}(p_{s(\kappa)}u_g)
&=\delta_{\kappa,\lambda} e^{-\beta |\kappa|} \psi_{\beta,\tau}(p_{s(\kappa)}u_g p_{s(\kappa)})\quad \\
&=\delta_{\kappa,\lambda} e^{-\beta |\kappa|} Z(\beta,\tau)\inv  \sum_{k=0}^\infty e^{-\beta k} \sum_{\{\nu \in s(\kappa)E^{k} : g \cdot \nu=\nu\}} \tau\big(i_{s(\nu)} i_{g|_{\nu}}\big) \\
&=\delta_{\kappa,\lambda} Z(\beta,\tau)\inv
\sum_{k=0}^\infty e^{-\beta |\kappa|} e^{-\beta k} \sum_{\{\nu \in s(\kappa)E^{k} : g \cdot \nu=\nu\}} \tau\big(i_{s(\nu)} i_{g|_{\nu}}\big). 
\end{align*}
Since  $g\cdot\nu=\nu$ implies that $i_{s(\nu)} i_{g|_{\nu}} =  i_{g|_{\nu}}$ , we deduce that $\psi_{\beta,\tau}$ satisfies  \eqref{char:spanning}. Now Proposition~\ref{KMS_algebraic} implies that $\psi_{\beta,\tau}$ is a KMS$_\beta$-state. This finishes the proof  of Theorem \ref{Thm:KMS_beta_tau}\eqref{propertytaubeta}.
\end{proof}

The key step in our proof of part \eqref{KMS_tau_affine} of Theorem \ref{Thm:KMS_beta_tau}
is proving that every KMS state has the form $\psi_{\beta,\tau}$ for exactly one trace $\tau$ on $C^*(G)$. The proof of uniqueness follows previous ones (for example, in \cite[\S10]{lr} and \cite[\S6]{lrrw}) in which a KMS$_\beta$ state is reconstructed from its conditioning by a projection in  $\TT(G,E)$.

\begin{lemma}\label{Lem:phi_P}
Suppose that $\beta >\ln\rho(B)$ and $\phi$ is a KMS$_\beta$ state of $\TT(G,E)$. Define $m^\phi\in[0,\infty)^{E^0}$ by
$m^\phi_v :=\phi(p_v)$ in $[0,\infty)^{E^0}$, and  a projection $P$ in $\TT(G,E)$ by
 \[
P:= 1 - \sum_{e\in E^1} s_es_e^* = \sum_{v\in E^0} \Big(p_v - \sum_{e\in vE^1}  s_es_e^*\Big).
\]
Then 
\begin{equation}\label{P=sum}
\phi(P)= \sum_{v\in E^0}\big((1 - e^{-\beta}B)m^\phi\big)_v    > 0,
\end{equation}
and $\phi_P:a \mapsto \phi(P)\inv \phi(PaP)$ is a state 
on $\TT(G,E)$ such that $\phi_P\circ i_{C^*(G)}$ is a trace on $C^*(G)$.
\end{lemma}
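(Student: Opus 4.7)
The plan is to establish the three claims in turn: the explicit formula for $\phi(P)$, its strict positivity, and the trace property of $\phi_P \circ i_{C^*(G)}$. For the formula, I would expand $P = 1 - \sum_{e \in E^1} s_e s_e^*$ and apply the KMS condition to each summand: $\phi(s_e s_e^*) = e^{-\beta}\phi(s_e^* s_e) = e^{-\beta}\phi(p_{s(e)}) = e^{-\beta} m^\phi_{s(e)}$. Grouping the resulting sum over edges by their range vertex converts $\sum_e m^\phi_{s(e)}$ into $\sum_v (Bm^\phi)_v$, and combining this with $\sum_v m^\phi_v = \phi(1) = 1$ rearranges $\phi(P)$ into the claimed form $\sum_v ((1 - e^{-\beta}B) m^\phi)_v$.

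For strict positivity, the same KMS computation applied to the operator inequality $p_v \geq \sum_{e \in vE^1} s_e s_e^*$ gives the componentwise bound $m^\phi_v \geq e^{-\beta}(Bm^\phi)_v$, so $(1 - e^{-\beta}B)m^\phi$ is entrywise non-negative and hence $\phi(P) \geq 0$. If $\phi(P) = 0$, each component would have to vanish, forcing $Bm^\phi = e^\beta m^\phi$; since $m^\phi \geq 0$ is non-zero (it sums to one) and $B$ has non-negative entries, this puts $e^\beta$ in the spectrum of $B$, contradicting $\beta > \ln\rho(B)$.

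With $\phi(P) > 0$ in hand, positivity of $a \mapsto \phi(PaP)$ and $\phi_P(1) = \phi(P^2)/\phi(P) = 1$ make $\phi_P$ a state. For the trace property I would show $\phi_P(u_g u_h) = \phi_P(u_h u_g)$ for $g,h \in G$. Both sides vanish unless $d(g) = c(h)$ and $d(h) = c(g)$, so assume these, whence $u_g u_h = u_{gh}$ and $u_h u_g = u_{hg}$. Because $P$ is $\sigma$-invariant, the KMS condition collapses $\phi(Pu_{gh}P) = \phi(Pu_{gh})$, and a second application of KMS to each $\phi(s_e s_e^* u_{gh})$ in the expansion $Pu_{gh} = u_{gh} - \sum_e s_e s_e^* u_{gh}$ yields
\[
\phi(Pu_{gh}P) = \phi(u_{gh}) - e^{-\beta}\sum_{\{e \in d(h)E^1\,:\, gh \cdot e = e\}} \phi(u_{(gh)|_e}),
\]
together with the analogous expression for $hg$. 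The leading terms agree because $\phi \circ i_{C^*(G)}$ is already a trace by Proposition~\ref{KMS_algebraic}, and the bijection argument from the proof of Theorem~\ref{Thm:KMS_beta_tau}\eqref{propertytaubeta} applies verbatim to the tail sums: the map $e \mapsto g \cdot e$ gives a bijection between the two index sets, and Proposition~\ref{SSA:extension to paths}\eqref{path_rest_nonhomo} combined with the trace property of $\phi \circ i_{C^*(G)}$ matches summands via $\phi(u_{(hg)|_e}) = \phi(u_{(gh)|_{g \cdot e}})$ term by term. The only real obstacle is the bookkeeping in this last identification; the positivity argument is short but genuinely uses the spectral-radius hypothesis.
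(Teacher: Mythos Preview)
Your argument is correct, and the computation of $\phi(P)$ and its strict positivity match the paper's approach essentially verbatim (the paper phrases positivity via invertibility of $1-e^{-\beta}B$ rather than via the eigenvalue contradiction, but these are the same observation).

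For the trace property, however, the paper takes a shorter route. Rather than expanding $Pu_{gh}$ and invoking the bijection argument from the proof of Theorem~\ref{Thm:KMS_beta_tau}\eqref{propertytaubeta}, the paper proves directly that $P$ commutes with every $u_g$: using the relation $u_g s_e = s_{g\cdot e} u_{g|_e}$ and its adjoint one checks $u_g s_e s_e^* = s_{g\cdot e} s_{g\cdot e}^* u_g$, and since $e\mapsto g\cdot e$ is a bijection of $d(g)E^1$ onto $c(g)E^1$, summing gives $u_g P = P u_g$. The trace property is then a one-line KMS computation: $\phi(Pu_gu_hP) = \phi(u_gP^2u_h) = \phi(Pu_h\sigma_{i\beta}(u_gP)) = \phi(Pu_hu_gP)$. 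Your approach reproduces the index-set bijection and the summand-matching identity $\phi(u_{(hg)|_e}) = \phi(u_{(gh)|_{g\cdot e}})$, which is correct but amounts to re-deriving a special case of the commutation $u_gP = Pu_g$ after applying $\phi$. The paper's commutation lemma is worth isolating: it is reused later (implicitly, whenever one needs $P$ to be central relative to the groupoid generators), and it makes the trace property independent of the earlier bijection machinery.
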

\begin{proof}
A computation like that of \cite[Equation (2.5)]{hlrs} yields
\begin{align*} \phi(P) &=  \sum_{v\in E^0} \Big(\phi(p_v) -\sum_{e\in vE^1}  \phi(s_es_e^*)\Big)\\ 
&=  \sum_{v\in E^0} \Big(\phi(p_v) -\sum_{e\in vE^1}  e^{-\beta}\phi(s_e^*s_e)\Big) \\
&= \sum_{v\in E^0} \Big(\phi(p_v) - \sum_{w\in E^0}  |vE^1 w| e^{-\beta}\phi(p_w)\Big) \\
&=  \sum_{v\in E^0}\big((1 - e^{-\beta}B)m^\phi\big)_v  .
\end{align*}
Since $1= \phi(1)=\sum_{v}\phi(p_v)$, the vector $ m^\phi $ is nonzero,
and the matrix $1 - e^{-\beta}B$ is invertible because $e^\beta >\rho(B)$. 
Thus $\phi(P) > 0$, and $\phi_P$ is a state.

With a view to proving that  $\phi_P$ is a trace on the copy of $C^*(G)$, we first claim that if $d(g) = r(e)$, then  $u_g s_e s_e^* =  s_{g \cdot e} s_{g \cdot e}^* u_g$. Theorem~\ref{Toeplitz_repn} imples that $u_gs_e=s_{g\cdot e}u_{g|_e}$. Now we use the properties of Proposition~\ref{properties of faithful self-similar groupoid action} to compute
\begin{align*}
u_{g|_e}s_e^*&=(s_eu_{g|e}^*)^*=(s_eu|_{g|_e^{-1}})^*=(s_eu_{g^{-1}|_{g\cdot e}})^*\\
&=(s_{g^{-1}\cdot(g\cdot e)}u_{g^{-1}|_{g\cdot e}})^*=(s_{g^{-1}\cdot(g\cdot e)}u_{g^{-1}|_{g\cdot e}})^*\\
&=(u_{g^{-1}}s_{g\cdot e})^*=s_{g\cdot e}^*u_g.
\end{align*}
Thus $u_gs_es_e^*=s_{g\cdot e}u_{g|_e}s_e^*=s_{g\cdot e}s_{g\cdot e}^*u_g$, as claimed.
Now for $g\in G$ we have 
\begin{align*}
u_g P&=u_g\Big(1-\sum_{e\in E^1}s_es_e^*\Big)=u_g - \sum_{e \in g(g)E^1} u_gs_e s_{e}^*\\
&=u_g - \sum_{e \in d(g)E^1} s_{g\cdot e}s_{g\cdot e}^*u_g.
\end{align*}
Since $e\mapsto g\cdot e$ is a bijection of $d(g)E^1$ onto $c(g)E^1$, we have
\[
u_g P=\Big(1-\sum_{f\in c(g)E^1}s_fs_f^*\Big)u_g=\Big(1-\sum_{f\in E^1}s_fs_f^*\Big)u_g=Pu_g.
\]
Suppose that $g,h\in G$ satisfy $d(g)=c(h)$. Then $u_g$ and $u_gP$ are fixed under the action $\sigma$, so the KMS condition implies that
\[
\phi(Pu_gu_hP)=\phi(u_gP^2u_h)=\phi(Pu_h\sigma_{i\beta}(u_{g}P))=\phi(Pu_hu_gP).
\]
Thus $a\mapsto \phi(PaP)$ is a trace on $C^*(G)$, and so is the multiple $\phi_P$.
\end{proof}

The next lemma is the main technical ingredient in our reconstruction formula.

\begin{lemma}\label{reconstr_lemma}
Suppose that $\beta>\ln\rho(B)$ and $\phi$ is a KMS$_\beta$ state on $(\TT(G,E),\sigma)$. Then for $a \in \TT(G,E)$, we have 
\begin{align}\label{KMS_reconstr_formula}
\phi(a)&=\phi(P) \sum_{j=0}^\infty \sum_{\mu \in E^j} e^{-\beta j} \phi_P(s_\mu^* a s_\mu)\\
&=\Big(\sum_{v\in E^0}\big((1-e^{-\beta}B)m^\phi\big)_v \Big)\sum_{j=0}^\infty \sum_{\mu \in E^j} e^{-\beta j} \phi_P(s_\mu^* a s_\mu).\notag
\end{align}
\end{lemma}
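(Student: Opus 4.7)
The plan is to iterate the single-step Cuntz--Krieger-style identity
\[
s_\mu s_\mu^* \;=\; s_\mu P s_\mu^* \;+\; \sum_{e\in s(\mu)E^1} s_{\mu e}s_{\mu e}^*
\]
(which I will derive from the definition of $P$ together with $s_\mu p_v s_\mu^* = \delta_{v,s(\mu)}s_\mu s_\mu^*$ and $s_\mu s_e = \delta_{r(e),s(\mu)}s_{\mu e}$) to produce, by induction on $n$, the partition of unity
\[
1 \;=\; \sum_{j=0}^n \sum_{\mu\in E^j} s_\mu P s_\mu^* \;+\; \sum_{\mu\in E^{n+1}} s_\mu s_\mu^*.
\]
The base case is just $\sum_v p_v = \sum_v s_v P s_v^* + \sum_{e\in E^1} s_e s_e^*$, and the inductive step applies the single-step identity to each $s_\mu s_\mu^*$ with $|\mu|=n+1$.

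Multiplying on the right by $a$ and applying $\phi$, I will reduce the main term using the KMS condition: since $s_\mu$ is analytic with $\sigma_{i\beta}(s_\mu)=e^{-\beta|\mu|}s_\mu$,
\[
\phi(s_\mu P s_\mu^* a) \;=\; \phi\bigl((P s_\mu^* a)\sigma_{i\beta}(s_\mu)\bigr) \;=\; e^{-\beta|\mu|}\phi(P s_\mu^* a s_\mu).
\]
Because $P$ is $\sigma$-invariant, two further applications of the KMS condition give $\phi(Px) = \phi(xP) = \phi(PxP)$, and therefore $\phi(P s_\mu^* a s_\mu) = \phi(P)\phi_P(s_\mu^* a s_\mu)$. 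This produces the partial sums on the right-hand side of \eqref{KMS_reconstr_formula} and, combined with \eqref{P=sum} from \lemref{Lem:phi_P}, the second displayed equation.

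The main obstacle is controlling the remainder $\phi(Q_{n+1}a)$, where $Q_{n+1}:=\sum_{\mu\in E^{n+1}} s_\mu s_\mu^*$. Since $\{s_\mu s_\mu^* : |\mu|=n+1\}$ are mutually orthogonal projections, $Q_{n+1}$ is itself a projection, so the Cauchy--Schwarz inequality for states yields $|\phi(Q_{n+1}a)| \le \|a\|\,\phi(Q_{n+1})^{1/2}$. Thus it suffices to show $\phi(Q_{n+1})\to 0$. Using KMS once more, $\phi(s_\mu s_\mu^*)=e^{-\beta|\mu|}\phi(p_{s(\mu)})=e^{-\beta|\mu|}m^\phi_{s(\mu)}$, and summing over $\mu\in E^{n+1}$ gives
\[
\phi(Q_{n+1}) \;=\; e^{-\beta(n+1)} \sum_{w\in E^0}|E^{n+1}w|\,m^\phi_w \;=\; e^{-\beta(n+1)}\,\mathbf{1}^{T} B^{n+1} m^\phi.
\]
Since $\beta>\ln\rho(B)$ forces $\rho(e^{-\beta}B)<1$, the matrix $(e^{-\beta}B)^{n+1}$ tends to zero and hence $\phi(Q_{n+1})\to 0$. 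Letting $n\to\infty$ in the decomposition of $\phi(a)$ then yields \eqref{KMS_reconstr_formula}; absolute convergence of the displayed series is automatic since all the terms $\phi(s_\mu P s_\mu^* a)$ arise from taking $\phi$ of positive operators before passing to $\phi_P$.
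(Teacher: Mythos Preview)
Your argument is correct and follows essentially the same route as the paper: both hinge on the partition of unity $1 = \sum_{j=0}^n \sum_{\mu\in E^j} s_\mu P s_\mu^* + Q_{n+1}$, the KMS relation to convert $\phi(s_\mu P s_\mu^* \,\cdot\,)$ into $e^{-\beta|\mu|}\phi(P\,\cdot\,P)$, and the spectral-radius bound $\rho(e^{-\beta}B)<1$ to kill the remainder. The paper sandwiches $a$ two-sidedly as $p_n a p_n$ (citing \cite[Lemma~7.3]{lrr} to get $\phi(p_n a p_n)\to\phi(a)$, and then using KMS to show the off-diagonal terms $\phi(s_\mu P s_\mu^* a s_\nu P s_\nu^*)$ vanish for $\mu\neq\nu$), whereas your one-sided expansion $\phi(a)=\phi(p_n a)+\phi(Q_{n+1}a)$ together with the Cauchy--Schwarz bound $|\phi(Q_{n+1}a)|\le\|a\|\,\phi(Q_{n+1})^{1/2}$ is slightly more direct and self-contained. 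One loose end: your closing sentence about absolute convergence is not justified as written, since $s_\mu P s_\mu^* a$ is not positive for general $a$; but absolute convergence is not needed for the lemma---the convergence of the partial sums, which you have already established, suffices.
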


\begin{proof}
A computation similar to the one in the proof of \cite[Lemma 6.3]{lrrw} shows that for each $n \in \N$
\[ 
p_n := \sum_{j=0}^n \sum_{\mu \in E^j} s_\mu P s_\mu^*
\]
is a projection in $\TT(G,E)$. 

We next show that $\phi(p_n) \to 1$ as $n \to \infty$. To do this, we compute:
\begin{align*}
\phi(p_n)&=\sum_{j=0}^n \sum_{\mu \in E^j} \phi(s_\mu P s_\mu^*)
= \sum_{j=0}^n \sum_{\mu \in E^j} e^{-\beta j}\phi(P s_\mu^*s_\mu) \notag \\
&= \sum_{j=0}^n \sum_{\mu \in E^j} e^{-\beta j}\Big(\phi(p_{s(\mu)}) -\sum_{r(e)=s(\mu)} \phi(s_e s_e^*)\Big)\\
&= \sum_{j=0}^n \sum_{\mu \in E^j} e^{-\beta j}\Big(m_{s(\mu)}^\phi -\sum_{r(e)=s(\mu)} e^{-\beta} m_{s(e)}^\phi \Big).
\end{align*}
Now we rewrite this in terms of the vertex matrix $B$, and continue the computation, using some tricks from \cite[\S3]{hlrs}:
\begin{align*}
\phi(p_n)&= \sum_{j=0}^n \sum_{\mu \in E^j} e^{-\beta j}\Big(m_{s(\mu)}^\phi -\sum_{w \in E^0} e^{-\beta} B(s(\mu),w) m_{w}^\phi \Big)\\
&= \sum_{j=0}^n \sum_{\mu \in E^j} e^{-\beta j}\big((1-e^{-\beta} B)m^\phi \big)_{s(\mu)} \notag\\
&= \sum_{j=0}^n \sum_{v \in E^0} \sum_{w \in E^0} e^{-\beta j} B^j(v,w) \big((1-e^{-\beta} B)m^\phi \big)_{w}\\
&= \sum_{j=0}^n \sum_{v \in E^0} e^{-\beta j} \big(B^j(1-e^{-\beta} B)m^\phi \big)_{v} \notag\\
&= \sum_{v \in E^0}\Big(\sum_{j=0}^n e^{-\beta j} B^j(1-e^{-\beta} B)m^\phi \Big)_{v}. 
\end{align*}
Since $\beta>\ln\rho(B)$, we have $e^{\beta}>\rho(B)$, and the series $\sum_je^{-\beta j}B^j$ converges with sum $(1-e^{-\beta}B)^{-1}$. Thus as $n\to \infty$ 
\[
\phi(p_n)\to \sum_{v \in E^0}\big((1-e^{\beta}B)^{-1} (1-e^{-\beta} B)m^\phi \big)_{v} = \sum_{v \in E^0} m_v^\phi =1.
\]

Since $\phi(p_n)\to 1$, we have  $\phi(p_n a p_n) \to \phi(a)$ as $n\to \infty$ (see, for example, \cite[Lemma 7.3]{lrr}). For $\mu,\nu\in E^j$, the elements $s_\mu Ps_\mu^*$ and $s_\nu Ps_\nu^*$  belong to the fixed-point algebra for $\sigma$, and hence since $\phi$ is a KMS state, we have
\[
\phi\big((s_\mu Ps_\mu^*)a(s_\nu Ps_\nu^*)\big)=0\quad\text{for $a\in \TT(G,E)$ and $\mu\not=\nu$.}
\]
We now use the KMS condition again to compute
\begin{align*}
\phi(a)&=\sum_{j=0}^\infty \sum_{\mu \in E^j} \phi(s_\mu P s_\mu^* a s_\mu P s_\mu^*)\\
&=\sum_{j=0}^\infty  \sum_{\mu \in E^j} e^{-\beta j} \phi(P s_\mu^* a s_\mu Ps_\mu^*s_\mu ) \\
&=\sum_{j=0}^\infty  \sum_{\mu \in E^j} e^{-\beta j} \phi(P s_\mu^* a s_\mu s_\mu^*s_\mu P)\\
&=\sum_{j=0}^\infty  \sum_{\mu \in E^j} e^{-\beta j} \phi(P) \phi_P(s_\mu^* a s_\mu).
\end{align*}
The second formula follows from this and \eqref{P=sum}.
\end{proof}

\begin{proof}[Proof of  \thmref{Thm:KMS_beta_tau}\eqref{KMS_tau_affine}]
The proof follows that of \cite[Theorem 6.1]{lrrw}. An application of the monotone convergence theorem shows that $\tau \mapsto \psi_{\beta,\tau}$ is affine and weak$^*$ continuous. Since the set of tracial states and the set of KMS$_\beta$ states are both weak$^*$ compact, it suffices to show that the map $\tau \mapsto \psi_{\beta,\tau}$ is one-to-one and onto.

The equations \eqref{Fock_TSP} in \lemref{representT} imply that the  representation $\pi=\pi_{U,P,S}$ of $\TT(G,E)$
satisfies $\pi(P)(\mu \otimes \xi_\tau) = 0 $ unless $\mu = v \in E^0$, 
in which case $\pi(P) v\otimes \xi_\tau = v \otimes \xi_\tau$.
We now take $b\in C^*(G)$, and set $a = Pu(b)P $ in \eqref{psi_a}. Then all the terms with $j>0$ vanish, yielding
\begin{equation}\label{taufrompsi}
\psi_{\beta,\tau}(Pu(b)P) = Z(\beta, \tau)\inv \sum_v \tau(p_v b p_v) = Z(\beta, \tau)\inv \tau(b).
\end{equation}
Taking $b = 1$ shows that $\psi_{\beta,\tau}(P) = Z(\beta, \tau)\inv$, and then dividing \eqref{taufrompsi} through by $\psi_{\beta,\tau}(P)$ gives 
\begin{equation}\label{condpsi}
(\psi_{\beta,\tau})_P(u(b)) = \psi_{\beta,\tau}(P)\inv\psi_{\beta,\tau}(Pu(b)P) = \tau(b).
\end{equation}
Thus $\tau = (\psi_{\beta,\tau})_P \circ u$, and  the map $\tau \mapsto \psi_{\beta,\tau}$ is one-to-one. 

It remains to show that the map $\tau \mapsto \psi_{\beta,\tau}$ is onto. Suppose that $\phi$ is a KMS$_\beta$-state on $\TT(G,E)$. By \lemref{Lem:phi_P}, $\tau:=\phi_P\circ u$ is a tracial state on $C^*(G)$,
and by the above argument, $\phi_P \circ u= \tau = (\psi_{\beta,\tau})_P u$. Thus \lemref{reconstr_lemma} implies that $\phi(u_g) = \psi_{\beta,\tau}(u_g)$ for every $g\in G$, and in \proref{KMS_algebraic}\eqref{states_iff} we have $\phi =\psi_{\beta,\tau}$. 
\end{proof}

\section{Traces on groupoid algebras}\label{sec:traces}

Theorem~\ref{Thm:KMS_beta_tau} tells us that the KMS states at large inverse temperatures are parametrised by traces on the groupoid algebra $C^*(G)$. In general finding these traces is a hard problem, even for group algebras --- indeed, one unexpected outcome of our previous analysis of KMS states in  was the discovery of new tracial states on group algebras \cite[Corollary~7.5]{lrrw}. The exception is when $G$ is an abelian group, in which case $C^*(G)$ is the algebra $C(\widehat G)$ of continuous functions on the compact dual group $\widehat G$, and the tracial states are given by probability measures on $\widehat G$. Fortunately, for the groupoids of interest to us, we can often reduce to this case.

The existence of the isomorphism in the following result is a special case of Theorem~3.1 of \cite{mrw}, but it will be helpful to have a concrete description of the isomorphism.  As a local convention to avoid complicated subscripts, if $u$ is a unitary representation of a group or groupoid, we write $u(g)$ rather than $u_g$.

\begin{lemma}\label{mrwiso}
Suppose that $G$ is a groupoid with finite unit space $G^0$ and that $G$ acts transitively on $G^0$. Fix $x\in G^0$, let $G_x:=xGx$ be the isotropy group, and write $i_{G_x}$ for the canonical unitary representation of $G_x$ in its group algebra $C^*(G_x)$. For each $y\in G^0$ choose $k_y\in G$ such that $d(k_y)=x$, $c(k_y)=y$ and $k_x=x$. Then there is an isomorphism $\psi$ of $M_{G^0}(C^*(G_x))$ onto $C^*(G)$ such that
\begin{equation}
\psi\big(\,(a_{yz})\,\big)=\sum_{y,z\in G^0} i(k_y)\pi_i(a_{yz})i(k_z)^{-1}.
\end{equation}\label{form1}
\!The inverse $\psi^{-1}$ satisfies
\begin{equation}\label{form-1}
\psi^{-1}(i(g))_{yz}=\begin{cases}u\big(k_{c(g)}^{-1}gk_{d(g)}\big)&\text{if $y=c(g)$ and $z=d(g)$}\\
0&\text{otherwise}.
\end{cases}
\end{equation}
\end{lemma}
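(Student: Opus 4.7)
The plan is to realise $\psi$ as a *-homomorphism by assembling matrix-unit expressions, and then to produce the inverse using the universal property of $C^*(G)$ from Proposition~\ref{univC*G}.

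First I would observe that the restriction $i|_{G_x}$ of the canonical representation to the isotropy group lands in the corner $i(x)C^*(G)i(x)$, because $g \in G_x$ satisfies $d(g)=c(g)=x$, so $i(g)^*i(g)=i(g^{-1}g)=i(x)=i(gg^{-1})=i(g)i(g)^*$. Hence $i|_{G_x}$ is a unitary representation of the group $G_x$ and induces a unital *-homomorphism $\pi_i: C^*(G_x) \to i(x)C^*(G)i(x)$, which is exactly what appears in the formula for $\psi$. Multiplicativity of $\psi$ on matrices $(a_{yz})$ reduces to checking that products of single-entry expressions $i(k_y)\pi_i(a)i(k_w)^{-1}$ and $i(k_{w'})\pi_i(b)i(k_z)^{-1}$ vanish unless $w=w'$: the sandwiched factor $i(k_w)^{-1}i(k_{w'})$ equals $0$ when $w\neq w'$ by the groupoid multiplication rule, and equals $i(x)$ when $w=w'$, which is absorbed by $\pi_i$. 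The *-identity is immediate after relabelling.

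To construct the inverse I would define $V: G \to M_{G^0}(C^*(G_x))$ by
\[
V(g) := E_{c(g),d(g)} \otimes i_{G_x}\bigl(k_{c(g)}^{-1} g k_{d(g)}\bigr),
\]
where $E_{yz}$ is the matrix unit; this lies in $M_{G^0}(C^*(G_x))$ because $k_{c(g)}^{-1} g k_{d(g)} \in G_x$. A short computation using $k_{d(g)}k_{d(g)}^{-1} = d(g) = c(h)$ for composable $(g,h)$ shows that $V(g)V(h)=V(gh)$ when $d(g)=c(h)$ and vanishes otherwise, and the units satisfy $V(v)=E_{vv}\otimes 1$ (this is where the normalisation $k_x = x$ ensures consistency at $x$). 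So $V$ is a unitary representation of $G$, and by Proposition~\ref{univC*G} extends to a *-homomorphism $\Phi: C^*(G) \to M_{G^0}(C^*(G_x))$ whose action on generators is precisely~\eqref{form-1}.

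It remains to verify $\psi \circ \Phi = \mathrm{id}$ and $\Phi \circ \psi = \mathrm{id}$, which, since both are *-homomorphisms, I would check on generators. Applying $\psi$ to $\Phi(i(g)) = V(g)$ and telescoping the $k$'s gives $i(k_{c(g)}k_{c(g)}^{-1}\, g\, k_{d(g)}k_{d(g)}^{-1}) = i(c(g)\cdot g \cdot d(g)) = i(g)$; and applying $\Phi$ to $\psi(E_{yz}\otimes i_{G_x}(h)) = i(k_y h k_z^{-1})$ and using $c(k_y h k_z^{-1}) = y$, $d(k_y h k_z^{-1})=z$ recovers $E_{yz}\otimes i_{G_x}(h)$. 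The main thing to watch is the groupoid bookkeeping of sources and ranges and the asymmetry between $c$ and $d$; once one settles on the convention $k_x=x$ and uses the corner property of $\pi_i$, the verifications are essentially formal.
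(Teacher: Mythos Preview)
Your argument is correct and takes a genuinely different, more elementary route than the paper's. The paper constructs $\psi$ indirectly: it first shows that $\{i(k_yk_z^{-1})\}$ are matrix units in $C^*(G)$, giving an embedding $\pi:M_{G^0}(\C)\to C^*(G)$; then it proves the corner map $\rho:C^*(G_x)\to i(x)C^*(G)i(x)$ is injective by showing every unitary representation of $G_x$ extends to one of $G$; and finally it checks that a diagonal embedding $\theta$ of $C^*(G_x)$ commutes with $\pi$, so that the universal property of the maximal tensor product produces an injection $\theta\otimes_{\max}\pi:C^*(G_x)\otimes_{\max}M_{G^0}(\C)\to C^*(G)$, which is seen to be onto. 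You bypass both the representation-extension argument and the tensor-product machinery by writing down $\psi$ and its putative inverse $\Phi$ explicitly (the latter via the universal property of $C^*(G)$ applied to your $V$) and verifying on generators that they invert each other. The paper's approach yields the extra structural information that $C^*(G)\cong C^*(G_x)\otimes_{\max}M_{G^0}(\C)$, but your approach reaches the stated formulas with less overhead and no appeal to injectivity of $\pi_i$ as a separate step.
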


\begin{proof}
We verify that $\{i(k_yk_z^{-1}):y,z\in G^0\}$ is a set of non-zero matrix units in $C^*(G)$, and hence there is an injective homomorphism $\pi:M_{G^0}(\C)\to C^*(G)$ which takes the usual matrix units $e_{yz}$ to $i(k_yk_z^{-1})$. 

The restriction $i|_{G_x}$ is a unitary representation (in the usual group-theoretic sense) of $G_x$ in the corner $i(x)C^*(G)i(x)$, and hence there is a unital homomorphism $\rho:C^*(G_x)\to i(x)C^*(G)i(x)$ such that $\rho\circ u=i|_{G_x}$. To see that $\rho$ is injective, we show that every unitary representation $U:G_x\to U(H)$ is the compression to $G_x$ of a unitary representation $W$ of the groupoid $G$. Indeed, we take $H_x=H$, and for $y\in G^0$ let $H_y$ be copies of $H$, with $V_y:H\to H_y$ the identity maps. Then for $g\in G$, 
\[
W(g):=V_{c(g)}U(k_{c(g)}^{-1}gk_{d(g)})V_{d(g)}^*:H_{d(g)}\to H_{c(g)}
\]
is a unitary isomorphism. For $g,h\in G$ with $d(g)=c(h)$ we have $V_{d(g)}=V_{c(h)}$ and $k_{d(g)}=k_{c(h)}$. Thus
\begin{align*}
W(g)W(h)&=\big(V_{c(g)}U(k_{c(g)}^{-1}gk_{d(g)})V_{d(g)}^*\big)\big(V_{c(h)}U(k_{c(h)}^{-1}hk_{d(h)})V_{d(h)}^*\\
&=V_{c(g)}U(k_{c(g)}^{-1}ghk_{d(h)})V_{d(h)}.
\end{align*}
Now $c(gh)=c(g)$ and $d(gh)=d(h)$. Thus
\[
W(g)W(h)=V_{c(gh)}U(k_{c(gh)}^{-1}ghk_{d(gh)})V_{d(gh)}=W(gh),
\]
and $W$ is a unitary representation of the groupoid $G$ on $\bigoplus_{y\in E^0}H_y$, with $W(x)$ the projection on the Hilbert space $H$ of $U$. Since $k_x=x$ and $V_x$ is the identity, we have $W(x)W(g)W(x)^*=U(g)$ for $g\in G_x$. Thus the homomorphism $\rho$ is isometric for the enveloping $C^*$-norms on $C_c(G_x)$ and $C_c(G)$, and in particular is injective. It follows that $\theta:u(g)\mapsto \sum_{y\in G^0}i(k_yg k_y^{-1})$ is also injective from $C^*(G_x)$ to $C^*(G)$. 

Now we show that $\theta$ and $\pi$ have commuting ranges. Indeed, for $g\in G_x$ and a matrix unit $e_{zw}\in M_n(\C)$,  
\[
\theta(i_{G_x}(g))\pi(e_{zw})=\sum_{y\in G^0}i(k_yg k_y^{-1})i(k_zg k_w^{-1})
\]
vanishes unless $y=z$, and then equals $i(k_zg k_w^{-1})$; a similar computation on the other side shows that only the $y=w$ summand in $\pi(e_{zw})\theta(i_{G_x}(g))$ survives, and is again equal to $i(k_zg k_w^{-1})$. Thus $\theta$ and $\pi$ have commuting ranges, and there is an injection $\theta\otimes_{\max}\pi$ of the maximal tensor product $C^*(G_x)\otimes_{\max} M_{G^0}(\C)$ into $C^*(G)$ such that 
\begin{equation}\label{thetaotimespi}
\theta\otimes_{\max}\pi(i_{G_x}(g)\otimes e_{yz})=\theta(i_{G_x}(g))\pi(e_{yz})=i(k_yg k_z^{-1})
\end{equation} (see \cite[Theorem~B.27]{tfb}). Since each 
\[
i(h)=i\big(k_{c(h)}(k^{-1}_{c(h)}hk_{d(h)})k_{d(h)}^{-1})=\theta\otimes_{\max}\pi(u(k^{-1}_{c(h)}hk_{d(h)})\otimes e_{c(h)d(h)})
\]
belongs to the range of $\theta\otimes_{\max}\pi$, it is an isomorphism of $C^*(G_x)\otimes_{\max} M_{G^0}(\C)$ onto $C^*(G)$.

When we view $C^*(G_x)\otimes_{\max} M_{G^0}(\C)$ as $M_{G^0}(C^*(G_x))$, we identify the matrix $(a_{yz})$ with the sum $\sum_{y,z}a_{yz}\otimes i(e_{yz})$ (see \cite[Proposition~B.18]{tfb}). Hence the formulas \eqref{form1} and \eqref{form-1} follow from \eqref{thetaotimespi}.
\end{proof}

\begin{prop}\label{extendtrace}
Suppose that $G$ is a groupoid with finite unit space $G^0$ and that $G$ acts transitively on $G^0$. Fix $x\in G^0$, let $G_x$ be the isotropy group at $x$, and choose elements $k_y\in G$ such that $d(k_y)=x$, $c(k_y)=y$ and $k_x=x$. Then for every trace $\tau$ on $C^*(G_x)$, there is a trace $\tau'$ on $C^*(G)$ such that
\begin{equation}\label{tau'trans}
\tau'(i(g))=
\begin{cases}
\tau\big(u(k_{d(g)}^{-1}gk_{d(g)})\big)&\text{if $d(g)=c(g)$}\\
0&\text{otherwise}.
\end{cases}
\end{equation}
The trace $\tau'$ does not depend on the choice of orbit representatives $\{k_y\}$.
\end{prop}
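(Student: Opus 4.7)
The plan is to build $\tau'$ by pulling back a natural trace under the isomorphism $\psi\colon M_{G^0}(C^*(G_x))\to C^*(G)$ of Lemma~\ref{mrwiso}. Given the tracial state $\tau$ on $C^*(G_x)$, the (unnormalised) matrix trace $\operatorname{Tr}_\tau\colon (a_{yz})\mapsto \sum_{y\in G^0}\tau(a_{yy})$ is a trace on $M_{G^0}(C^*(G_x))$. Define
\[
\tau' := \operatorname{Tr}_\tau \circ \psi^{-1}.
\]
This is automatically a trace on $C^*(G)$ because $\psi$ is a $*$-isomorphism and $\operatorname{Tr}_\tau$ is a trace. (It will not be a tracial \emph{state} unless we renormalise, but the proposition only asserts that it is a trace.)

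Next, I would verify formula~\eqref{tau'trans} by direct computation using \eqref{form-1}. For $g\in G$, the matrix $\psi^{-1}(i(g))$ has at most one nonzero entry, located in the $(c(g),d(g))$ position. Applying the diagonal trace $\operatorname{Tr}_\tau$ therefore yields $0$ unless $c(g)=d(g)$; in that case the unique contributing diagonal entry is the $(d(g),d(g))$-entry, and \eqref{form-1} identifies this entry as $u\bigl(k_{d(g)}^{-1}gk_{d(g)}\bigr)\in C^*(G_x)$. Hence
\[
\tau'(i(g)) = \tau\bigl(u(k_{d(g)}^{-1}gk_{d(g)})\bigr) \quad\text{when } d(g)=c(g),
\]
and vanishes otherwise, as required.

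Finally, I would show independence from the choice of coset representatives $\{k_y\}$ by a direct conjugation argument on the right-hand side of \eqref{tau'trans} (this avoids having to show that $\psi$ itself is independent of the choice). Suppose $\{k_y'\}$ is another family satisfying $d(k_y')=x$, $c(k_y')=y$. For each $y$, the element $h_y := k_y^{-1}k_y'$ lies in $G_x$. For $g\in G$ with $d(g)=c(g)=y$, one computes
\[
(k_y')^{-1} g k_y' = h_y^{-1}\bigl(k_y^{-1} g k_y\bigr) h_y \quad\text{in } G_x,
\]
so $u\bigl((k_y')^{-1}gk_y'\bigr) = u(h_y)^{-1} u(k_y^{-1}gk_y) u(h_y)$. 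Since $\tau$ is a trace on $C^*(G_x)$, its value on this conjugate coincides with its value on $u(k_y^{-1}gk_y)$, which is what we wanted.

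The main obstacle is essentially bookkeeping: keeping track of the domain/codomain conventions and making sure \eqref{form-1} is applied at the right entry; once Lemma~\ref{mrwiso} is in hand the argument is short. Nothing analytically delicate is needed because $G$ has finite unit space and the isotropy group $G_x$ is discrete, so all $C^*$-algebras in sight are built by pure algebra from $C^*(G_x)$ and $M_{G^0}(\mathbb{C})$.
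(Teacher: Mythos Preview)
Your proof is correct and follows essentially the same approach as the paper: you define $\tau'$ by composing the matrix trace $\operatorname{Tr}_\tau$ with $\psi^{-1}$ from Lemma~\ref{mrwiso}, read off \eqref{tau'trans} from \eqref{form-1}, and verify independence of the $k_y$ by conjugating inside $G_x$ and invoking the trace property of $\tau$. The only cosmetic difference is notation for the alternative representatives.
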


\begin{proof}
There is a trace $\sigma$ on $M_{G^0}(C^*(G_x))$ such that
\[
\sigma\big(\,(a_{yz})\,\big)=\sum_{y\in G^0} \tau(a_{yy}).
\]
Pulling this over to $C^*(G)$ via the isomorphism of Lemma~\ref{mrwiso} gives a trace $\tau'$ on $C^*(G)$ satisfying
\begin{align*}
\tau'(i(g))&=\sigma\big(\psi^{-1}(i(g))\big)=\sum_{y\in G^0}\tau\big(\psi^{-1}(i(g))_{yy}\big)\\
&=\begin{cases}
\tau(u(k_{d(g)}^{-1}gk_{d(g)}))&\text{if $d(g)=c(g)$}\\
0&\text{otherwise,}
\end{cases}
\end{align*}
as required.

If $\{h_y:y\in G^0\}$ is another set of orbit representatives, then $h_y^{-1}k_y\in G_x$ for all $y\in G^0$. Thus if $d(g)=c(g)$, say $d(g)=y=c(g)$, we have
\begin{align*}
\tau(u(h_y^{-1}gh_y))&=\tau\big(u(h_y^{-1}k_yk_y^{-1}gk_yk_y^{-1}h_y)\big)\\
&=\tau\big(u(h_y^{-1}k_y)u(k_y^{-1}gk_y)u(h_y^{-1}k_y)^{-1}\big)\\
&=\tau\big(u(k_y^{-1}gk_y)u(h_y^{-1}k_y)^{-1}u(h_y^{-1}k_y)\big)\quad\text{because $\tau$ is a trace}\\
&=\tau\big(u(k_y^{-1}gk_y)).\qedhere
\end{align*}
\end{proof}

\begin{cor}
In the situation of Proposition~\ref{extendtrace}, there are traces $\tau_e'$ and $\tau_1'$ on $C^*(G)$ such that
\begin{align*}
\tau_e'(i(g))&=\begin{cases}
1&\text{if $g\in G^0$}\\
0&\text{otherwise, and}\end{cases}\\
\tau_1'(i(g))&=\begin{cases}
1&\text{if $d(g)=c(g)$}\\
0&\text{otherwise.}\end{cases}
\end{align*} 
\end{cor}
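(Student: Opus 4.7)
The plan is to apply \proref{extendtrace} with two standard choices of trace on the isotropy group algebra $C^*(G_x)$, namely the canonical ``diagonal'' trace at the identity (which gives $\tau_e'$) and the augmentation character (which gives $\tau_1'$). Both traces on $C^*(G_x)$ exist for any discrete group, so once they are named the formula \eqref{tau'trans} does all the work.

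For $\tau_e'$, I would take $\tau$ to be the canonical tracial state on the group $C^*$-algebra $C^*(G_x)$, obtained by composing the canonical surjection $C^*(G_x)\to C^*_r(G_x)$ with the faithful trace on the reduced group $C^*$-algebra given by $f\mapsto f(x)$ for $f\in C_c(G_x)$ (recall $x$ is the identity element of $G_x$). Thus $\tau(u(h))=\delta_{h,x}$ for $h\in G_x$. Feeding this into \eqref{tau'trans}, the value $\tau_e'(i(g))$ is $0$ unless $d(g)=c(g)$, and when $d(g)=c(g)=y$ we obtain $\tau_e'(i(g))=\tau(u(k_y^{-1}gk_y))$, which equals $1$ exactly when $k_y^{-1}gk_y=x$; since the $k_y$ are bijections conjugating $yGy$ to $G_x$, this happens precisely when $g=k_yxk_y^{-1}=y\in G^0$. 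This produces the claimed $\tau_e'$.

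For $\tau_1'$, I would instead take $\tau$ to be the augmentation character $\epsilon$ on $C^*(G_x)$, that is, the $*$-homomorphism $C^*(G_x)\to\C$ induced by the universal property of $C^*(G_x)$ from the trivial one-dimensional unitary representation $h\mapsto 1$. Since $\epsilon$ factors through $\C$ it is automatically a tracial state, and $\epsilon(u(h))=1$ for all $h\in G_x$. Inserting this into \eqref{tau'trans} immediately yields $\tau_1'(i(g))=1$ whenever $d(g)=c(g)$ (since then $k_{d(g)}^{-1}gk_{d(g)}\in G_x$ and $\epsilon$ sends every element of $G_x$ to $1$) and $0$ otherwise. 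There is no real obstacle in either construction beyond identifying the two canonical traces on the group $C^*$-algebra $C^*(G_x)$; the transport back to $C^*(G)$ is handled entirely by \proref{extendtrace}.
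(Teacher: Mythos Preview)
Your proof is correct and follows essentially the same approach as the paper: both apply \proref{extendtrace} to the two canonical traces $\tau_e$ and $\tau_1$ on the isotropy group algebra $C^*(G_x)$ (the paper cites \cite{lrrw} for these, while you spell them out as the canonical trace and the augmentation character), and then read off the formulas from \eqref{tau'trans}. Your version is slightly more explicit in verifying that $k_y^{-1}gk_y=x$ forces $g=y$, but the substance is identical.
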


\begin{proof}
The isotropy group algebra $C^*(G_x)$ has (at least) two traces $\tau_e$ and $\tau_1$ (see the discussion on page~6648 of \cite{lrrw}, for example), and we can apply Proposition~\ref{extendtrace} to both of them. The elements $k_{c(g)}^{-1}gk_{d(g)}$ of $G$ all belong to $G_x$, and hence the formulas for $\tau_e'$ and $\tau_1'$ follow from \eqref{tau'trans} and the corresponding properties of $\tau_e$ and $\tau_1$: $\tau_e(i(g))=0$ for $g\not=\id_x=e_{G_x}$, and $\tau_1(g)=1$ for all $g\in G_x$. 
\end{proof}

\begin{remark}
Both traces $\tau_e'$ and $\tau_1'$ can be spatially implemented. For $\tau_e'$, take the regular unitary representation $\lambda$ of $G$ on $\ell^2(G)$, defined in terms of the usual orthonormal basis $\{\xi_g:g\in G\}$ by
\[
\lambda_g\xi_h=\begin{cases}\xi_{gh}&\text{if $d(g)=c(h)$}\\
0&\text{if $d(g)\not=c(h)$.}
\end{cases}
\]
Then
\[
\tau_e'(a)=\sum_{x\in G^0}(\pi_\lambda(a)\xi_x\,|\,\xi_x)\quad\text{for $a\in C^*(G)$.}
\]
For $\tau_1'$, we take the unitary representation $U$ of $G$ on $\ell^2(G^0)$ from Example~\ref{easyrep}. Then in terms of the usual basis $\{h_x:x\in G^0\}$, we have
\[
\tau_1'(a)=\sum_{x\in G^0} (\pi_U(a)h_x\,|\,h_x)\quad\text{for $a\in C^*(G)$.}
\]
\end{remark}

Now suppose that $G$ is a groupoid with finite unit space $G^0$. We define a relation $\sim$ on $G^0$ by $x\sim y\Longleftrightarrow xGy\not=\emptyset$. Then $\sim$ is an equivalence relation, and we write $G^0/\!\!\sim$ for the set of equivalence classes. For each component $C\in G^0/\!\!\sim$, we let 
\[
G_C:=\{g\in G: d(g)\in C\text{ and }c(g)\in C\}
\] be the reduction of $G$ to $C$.

\begin{lemma}\label{directsum}
Let $i:G\to C^*(G)$ be the canonical unitary representation of $G$. Then $i|_{G_C}$ is a unitary representation of $G_C$; let $\pi_C$ be the corresponding homomorphism from $C^*(G_C)$ into $C^*(G)$. Then $\bigoplus_{C\in G^0/\!\sim}\pi_C:(a_C)\mapsto\sum_C\pi_C(a_C)$ is an isomorphism of $\bigoplus_{C\in G^0/\!\sim}C^*(G_C)$ onto $C^*(G)$.
\end{lemma}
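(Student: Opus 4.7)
The plan is to verify that $\Pi := \bigoplus_C \pi_C$ is a well-defined $*$-homomorphism, then to show surjectivity by inspecting generators, and finally to establish injectivity via a universal-property argument constructing a faithful representation of $\bigoplus_C C^*(G_C)$ that factors through $\Pi$.

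First, $G_C$ is genuinely a subgroupoid with unit space $C$, and the restriction to $G_C$ of any unitary representation of $G$ is a unitary representation of $G_C$; thus $i|_{G_C}$ is one, and $\pi_C$ exists by \proref{univC*G}. The crucial algebraic observation is that for $C \neq C'$, $g \in G_C$ and $h \in G_{C'}$, the product $i(g)i(h)$ vanishes in $C^*(G)$, because $d(g) \in C$ and $c(h) \in C'$ lie in different $\sim$-classes, so $d(g) \neq c(h)$. Consequently, $\pi_C(a)\pi_{C'}(b)=0$ for all $a \in C^*(G_C)$, $b \in C^*(G_{C'})$, and $\Pi$ is a well-defined $*$-homomorphism.

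Surjectivity is immediate: every $g \in G$ satisfies $d(g) \sim c(g)$ (witnessed by $g$ itself), so $g \in G_C$ for a unique $C$, and $i(g) = \pi_C(i_{G_C}(g)) \in \operatorname{range}(\Pi)$. Since the range of a $C^*$-algebra homomorphism is closed and these generators span a dense subset, $\Pi$ is onto. For injectivity, given unitary representations $U_C : G_C \to B(H_C)$, I will build a unitary representation $U$ of $G$ on $H := \bigoplus_C H_C$ by setting $U_g := U_C(g)$ on the $H_C$ summand (where $C$ is the unique component containing $g$) and zero on all other summands. The multiplication axiom \eqref{defmult} must be checked: if $d(g) = c(h)$ then both $g$ and $h$ lie in a common $G_C$, and $U_g U_h = U_C(g)U_C(h) = U_C(gh) = U_{gh}$; if $d(g) \neq c(h)$ and $g,h$ are in different components then $U_h$ lands in $H_{C'}$ while $U_g$ vanishes on $H_{C'}$, and if they are in the same component then the initial/final projections $U_C(d(g))$ and $U_C(c(h))$ are orthogonal. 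By \proref{univC*G}, $U$ integrates to $\pi_U : C^*(G) \to B(H)$, and on generators
\[
\pi_U \circ \Pi\big((i_{G_C}(g))_C\big) = \pi_U(i(g)) = U(g) = U_C(g) = \Big(\bigoplus_{C} \rho_{U_C}\Big)\big((i_{G_C}(g))_C\big),
\]
where $\rho_{U_C}$ is the integrated form of $U_C$. Hence $\pi_U \circ \Pi = \bigoplus_C \rho_{U_C}$. Choosing each $U_C$ so that $\rho_{U_C}$ is faithful (for instance, via a direct sum of GNS representations of $C^*(G_C)$) makes $\bigoplus_C \rho_{U_C}$ faithful on $\bigoplus_C C^*(G_C)$, forcing $\Pi$ to be injective.

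The main point requiring care is the construction of $U$ from the family $(U_C)$, specifically the verification of \eqref{defmult} in the sub-case where $d(g) \neq c(h)$ but $g$ and $h$ nevertheless belong to the same $G_C$; this is handled by the orthogonality of the initial and final unit projections within the representation $U_C$. All other steps are essentially bookkeeping about which component a given element lies in.
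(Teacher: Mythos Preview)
Your proof is correct and takes essentially the same approach as the paper: both establish orthogonality of the ranges $\pi_C(C^*(G_C))$, observe surjectivity on generators, and obtain injectivity by extending unitary representations of the $G_C$ by zero to unitary representations of $G$. The only difference is packaging: the paper shows each individual $\pi_C$ is injective via this extension trick and then invokes \cite[Proposition~A.6]{R_book} to conclude, whereas you extend all the $U_C$ simultaneously to a single representation of $G$ and deduce injectivity of $\Pi$ directly, making the argument self-contained.
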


\begin{proof}
Since $G_C$ is closed under all the operations of $G$, including the domain and codomain maps\footnote{Normally one would say that $G_C$ is a subobject of $G$. But we are using ``subgroupoid'' to mean a subobject with the same unit space.}, the restriction $i|_{G_C}$ is a unitary representation of $G_C$. Every unitary representation $U$ of $G_C$ extends to a unitary representation of $G$, simply by taking $U_g=0$ for $g\notin G_C$, and hence factors through the homomorphism $\pi_C$; thus $\pi_C$ is injective.  For $D\in G^0/\!\!\sim$ and $D\not=C$, we have $i_D(h)i_C(g)=(i_D(h)i(d(h)))(i(c(g))i_D(g))=0$, and hence the ranges of $\pi_D$ and $\pi_C$ satisfy $\pi_D(C^*(G_D))\pi_C(C^*(G_C))=0$. Since every $g\in G$ belongs to some $G_C$ (the one for which $d(g)\in C$ and $c(G)\in C$), it follows from \cite[Proposition~A.6]{R_book}, for example, that $\bigoplus\pi_C$ is an isomorphism of $\bigoplus C^*(G_C)$ onto $C^*(G)$.
\end{proof}

\begin{cor}\label{tracesonCG}
Suppose that $G$ is a groupoid with finite unit space $G^0$, and $C\in G^0/\!\!\sim$. Fix $x\in C$ and choose representatives $k_y$ for each $xCy$ with $y\in C$. Then for every trace $\tau_C$ on $C^*(G_x)$, there is a trace $\tau_C'$ on $C^*(G)$ such that
\begin{equation}\label{formfortauC}
\tau_C'(i(g))=
\begin{cases}
\tau_C(u(k_{c(g)}^{-1}gk_{d(g)}))&\text{if $d(g)=c(g)\in C$}\\
0&\text{otherwise}.
\end{cases}
\end{equation}
Every trace on $C^*(G)$ is a sum of traces of the form $\tau_C'$.
\end{cor}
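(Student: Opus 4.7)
The plan is to combine Lemma~\ref{directsum} with Proposition~\ref{extendtrace}, and then to use the standard description of traces on a matrix algebra to handle the surjectivity assertion.

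For existence of $\tau_C'$: the reduction $G_C$ is a groupoid with unit space $C$ that acts transitively on its unit space, and the chosen representatives $\{k_y : y \in C\}$ all lie in $G_C$ (since $d(k_y)=x\in C$ and $c(k_y)=y\in C$). So Proposition~\ref{extendtrace} applied to $G_C$ produces a trace $\widetilde\tau_C$ on $C^*(G_C)$ obeying the formula \eqref{formfortauC} on $G_C$. Composing $\widetilde\tau_C$ with the projection $C^*(G)\to C^*(G_C)$ coming from the direct-sum decomposition of Lemma~\ref{directsum} yields a trace $\tau_C'$ on $C^*(G)$. To verify \eqref{formfortauC}, fix $g\in G$. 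If $d(g)=c(g)\in C$ then $g\in G_C$, so $\tau_C'(i(g))=\widetilde\tau_C(i(g))=\tau_C(u(k_{c(g)}^{-1}gk_{d(g)}))$; otherwise either $g\in G_D$ for some $D\neq C$ (in which case the extension by zero gives $0$), or $g\in G_C$ with $d(g)\neq c(g)$ (in which case Proposition~\ref{extendtrace} already gives $0$).

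For the second claim, let $\tau$ be any trace on $C^*(G)$. Lemma~\ref{directsum} gives a decomposition $\tau=\sum_{C\in G^0/\sim}\tau|_{C^*(G_C)}$, so it suffices to show that each restriction $\tau|_{C^*(G_C)}$ equals $\widetilde\tau_C$ for some trace $\tau_C$ on $C^*(G_x)$. Pulling $\tau|_{C^*(G_C)}$ back through the isomorphism $\psi$ of Lemma~\ref{mrwiso} gives a trace $\sigma$ on $M_C(C^*(G_x))$. A standard computation using the trace property and the matrix-unit identities $(e_{yy}\otimes 1)(e_{yz}\otimes a)=e_{yz}\otimes a$ and $(e_{yz}\otimes a)(e_{yy}\otimes 1)=\delta_{zy}e_{yz}\otimes a$ shows that $\sigma(e_{yz}\otimes a)=0$ whenever $y\neq z$, and a cyclic conjugation by $e_{xy}\otimes 1$ and $e_{yx}\otimes 1$ shows that $\sigma(e_{yy}\otimes a)$ is independent of $y\in C$. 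Setting $\tau_C(a):=\sigma(e_{xx}\otimes a)$, which is plainly a trace on $C^*(G_x)$, we obtain
\[
\sigma\big((a_{yz})\big)=\sum_{y\in C}\tau_C(a_{yy}),
\]
which is precisely the trace produced in the proof of Proposition~\ref{extendtrace} from $\tau_C$. Transporting back through $\psi$ yields $\tau|_{C^*(G_C)}=\widetilde\tau_C$, and summing over $C$ gives $\tau=\sum_C\tau_C'$.

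The main obstacle is the matching step in the converse: one has to recognise that an arbitrary trace on $C^*(G_C)\cong M_C(C^*(G_x))$ is forced into the very specific shape prescribed by the construction in Proposition~\ref{extendtrace}. Once the off-diagonal matrix units are killed and the diagonal entries are shown to be weighted uniformly, the identification of $\tau|_{C^*(G_C)}$ with some $\widetilde\tau_C$ is immediate from \eqref{form1}.
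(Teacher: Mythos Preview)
Your proof is correct and follows the same route as the paper: apply Proposition~\ref{extendtrace} to the transitive reduction $G_C$, extend by zero across the direct-sum decomposition of Lemma~\ref{directsum}, and for the converse decompose an arbitrary trace along the summands. The only difference is one of completeness: the paper disposes of the surjectivity in a single sentence, writing that $\tau$ is recovered as $\sum_C(\tau\circ\pi_C)'$, which tacitly uses the standard fact that every trace on $M_C(C^*(G_x))$ has the diagonal form $\sum_y\tau_C(a_{yy})$ for a unique trace $\tau_C$ on $C^*(G_x)$; you have spelled out exactly that step via the matrix-unit computation, which is helpful and fills a gap the paper leaves to the reader.
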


\begin{proof}
The groupoid $G_C$ is transitive, so Proposition~\ref{extendtrace} gives us a trace $\tau_C'$ on $C^*(G_C)$. We can extend this trace to $\bigoplus_D C^*(G_D)$ by taking it to be zero on the other summands, and then use the isomorphism of Lemma~\ref{directsum} to pull it over to a trace $\tau'$ on $C^*(G)$. Since the isomorphism on the summand $C^*(G_C)$ comes from $i|_{G_C}$, the formula \eqref{formfortauC} follows from \eqref{tau'trans}. For the last remark, note that if $\tau$ is a trace on $C^*(G)$, then $\tau\circ \pi_C$ is a trace on $C^*(G_C)$, and we can recover $\tau$ as $\sum_C (\tau\circ\pi_C)'$.
\end{proof}

At this point, we discuss an important class of examples that motivated the previous work of Exel and Pardo \cite{EP2}. The Cuntz-Pimsner  algebras of the following self-similar groupoids are a family of algebras constructed by Katsura \cite{k2} to provide models of Kirchberg algebras. These examples also fit the theory of \cite{EP2}, and in particular have the property $s(g\cdot e)=s(e)=g\cdot s(e)$.

\begin{example}
\label{Katsura example}
Suppose that $N\in \N$, and consider matrices $A=(a_{ij})$, $B=(b_{ij})$ in $M_N(\N)$ such that $A$ has no nonzero rows and $a_{ij}=0\Longrightarrow b_{ij}=0$. Let $E$ be the graph
with
\[
E^0=\{1,2,\cdots,N\},\ E^1=\{e_{i,j,m}:0\leq m<a_{ij}\},\ r(e_{i,j,m})=i,\
s(e_{i,j,m})=j.
\]
We define an automaton $\AA$ over $E$ as follows.  Let\footnote{We have used the calligraphic $\AA$ to distinguish the automaton from the matrix $A$ in the definition of the Katsura action, thereby avoiding a clash with the established notation in \cite{k2} and \cite{EP2}.} $\AA:=\{a_i : 1 \leq i \leq N\}$ with $s(a_i)=i$ and $r(a_i)=i$. We then define, for $\mu \in jE^*$,
\begin{equation}
\label{Katsura_action}
a_i \cdot e_{i,j,m}\mu=e_{i,j,n}(a_j^l \cdot \mu) \quad \text{where} \quad
b_{ij}+m=l{a_{ij}}+n \text{ and }0\leq n< a_i.
\end{equation}
We consider the faithful self-similar groupoid action $(G_{\AA},E)$ of Theorem~\ref{defnGA}.

Each generator $f_{a_i}$ of $G_{\AA}$ acts trivially on the vertex set $E^0$, and hence so does every element of $G_{\AA}$. Thus the connected components $C\in E^0/\!\!\sim$ are the singletons $C_i=\{i\}$. The reductions $(G_{\AA})_{\{i\}}$ each contain one generator $f_{a_i}$, and hence they act on different subtrees of $T_E$. Thus every element of $G_{\AA}$ is a power of one generator $f_{a_i}$, and the reductions $(G_{\AA})_{\{i\}}$ are cyclic groups generated by $f_{a_i}$. For each $i$, there is either an integer $n_i$ such that $(G_{\AA})_{\{i\}}=\{i, f_{a_i},f_{a_i}^2,\cdots,f_{a_i}^{n_i-1}\}$, or $(G_{\AA})_{\{i\}}=\{f_{a_i}^k:k\in \Z\}$. In the first case, the normalised traces on $C^*((G_{\AA})_{\{i\}})=\C^n$ form a simplex $\Delta_i$ of dimension $n_i-1$, in the second case they form a copy $\Delta_i$ of the probability measures on the circle $\T$. Corollary~\ref{tracesonCG} then says that for each $i$ and each $\mu_i\in \Delta_i$, there is a distinct normalised trace $\tau'_{i,\mu_i}$ on $C^*(G_{\AA})$.  Theorem~\ref{Thm:KMS_beta_tau} says that each gives a distinct KMS$_\beta$ state on $\TT(G_{\AA},E)$.
\end{example}

\begin{example}\label{s,r clarifying example-part two}
We return to the self-similar action described in Example~\ref{s,r clarifying example}, and the associated subgroupoid $G=G_A$ of $\PI(E^*)$. With $E$ listed as $\{v,w\}$, the vertex matrix is
\[
B=\begin{pmatrix}1&1\\2&0\end{pmatrix},
\]
which has spectrum $\sigma(B)=\{-1,2\}$ and $\rho(B)=2$. So Theorem~\ref{Thm:KMS_beta_tau} describes the KMS$_\beta$ states for $\beta>\ln 2$ in terms of the normalised traces on $C^*(G)$. We aim to apply our results to find these.

Since $a\cdot v=w$, the equivalence $\sim$ on $E^0=\{v,w\}$ has a single orbit. Thus Proposition~\ref{extendtrace} gives us a bijection between traces $\tau$ on the group algebra $C^*(G_v)$ and traces $\tau'$ on $C^*(G)$. To get a specific formula for $\tau'$, we take $k_v=\id_v$, as instructed for the base point $v$, and $k_w=f_a$. Then we have
\begin{align*}
\tau'(i(\id_v))&=\tau(u(\id_v^{-1}\id_v)=\tau(u(\id_v)),\text{ and}\\
\tau'(i(\id_w))&=\tau(u(f_a^{-1}\id_w f_a)=\tau(u(\id_v)).
\end{align*}
So each trace on $C^*(G_v)$ such that $\tau(\id_v)=\frac{1}{2}$ gives a normalised trace $\tau'$ on $C^*(G)$.  

To describe the KMS states, we need to understand $C^*(G_v)$. Recall that $G$ is the subgroupoid of $\PI(E^*)$ generated by two partial isomorphisms $f_a$, $f_b$ with $d(f_a)=v=c(f_b)$ and $c(f_a)=w=d(f_b)$. Every $g\in G_v$ can be writen $g=g_1g_2\cdots g_n$ with $g_i\in S:=\{f_a,f_b,f_a^{-1},f_b^{-1}\}$, $g_{i+1}\not=g_i^{-1}$ and $c(g_1)=v=d(g_n)$. Since each element of $S$ switches the vertices $v$ and $w$, $n$ has to be even, say $n=2k$. Then $c(g)=c(g_1)=v$ forces $g_1\in \{f_b,f_a^{-1}\}$. First suppose that $g_1=f_b$. Then $g_2$ has to be in $\{f_a,f_b^{-1}\}$, and since $g_2\not= g_1^{-1}$, we must have $g_2=f_a$. Continuing this way shows that $g=(f_bf_a)^k$. On the other hand, if $g_1=f_a^{-1}$, we have $g_2=f_b^{-1}$, and continuing gives $g=(f_a^{-1}f_b^{-1})^k=(f_bf_a)^k$. Thus
\[
G_v=\{(f_bf_a)^{k}:k\in \Z\}.
\]
Then $k\mapsto (f_bf_a)^{k}$ is either an isomorphism of $\Z$ onto $G_v$ or a quotient map, in which case there exists $k\in \N$ such that $(f_bf_a)^k=e_{G_v}=\id_v$. We will show that there is no such $k$, and hence $G_v$ is a copy of $\Z$.

So we suppose that $k\in \N$, and have to prove that $(f_bf_a)^k\not=\id_v$. First we use the defining relations
\[
(f_bf_a)(1)=f_b(f_a(1))=f_b(a\cdot 1)=f_b(4)=b\cdot 4=2,
\]
and similarly $(f_bf_a)(2)=1$. Thus $(f_bf_a)^k$ is not the identity for $k$ odd. So we suppose that $k=2n$ is even. Next we do some calculations with the restriction map:
\begin{align*}
(f_bf_a)|_1&=f_{b|_{a\cdot 1}}f_{a|_1}=f_{b|_4}\id_v=f_{b|_4}=f_a, \text{ and }\\
(f_bf_a)|_2&=f_{b|_3}f_{a|_2}=\id_vf_b=f_b,\\
\intertext{which imply}
(f_bf_a)^2|_1&=(f_bf_a)|_{(f_bf_a)(1)}(f_bf_a)|_1=(f_bf_a)|_2f_a=f_bf_a.
\end{align*}
Continuing this way, we find that $(f_bf_a)^{2n}|_1=(f_bf_a)^n$. Now by looking at the action of $(f_bf_a)^{2n}$ on sufficiently long words of the form $\mu=111\cdots 1$ we can reduce $2n$ repeatedly by factors of $2$, and arrive at $(f_bf_a)^{2n}|_\mu=(f_bf_a)^m$ with $m$ odd, so that $(f_bf_a)^{2n}\cdot\mu 1=\mu (f_bf_a)^m(1)=\mu 2$. For example,
\begin{align*}
(f_bf_a)^4(111)&= (f_bf_a)^4(1)(f_bf_a)^4|_1(11)=(f_bf_a)^4(f_bf_a)^2(11)\\
&=1(f_bf_a)^2(1)(f_bf_a)^2|_1(1)=11(f_bf_a)(1)\\
&=112.
\end{align*}
So whatever $k$ is, $(f_bf_a)^k$ acts nontrivially on $1E^*$.

Thus $G_v$ is isomorphic to $\Z$, and the traces of norm $\frac{1}{2}$ on $C^*(G_v)$ are in one-to-one correspondence (via multiplication by $2$) with the set of probability measures on the unit circle $\T$. Thus so is the simplex of KMS$_\beta$ states on $(\TT(G,E),\sigma)$ for every $\beta>\ln 2$.
\end{example}

There are two families of self-similar groupoids for which complete classifications of KMS states have previously been found. The first is when $|E^0|=1$, in which case our dynamical system is the one studied in \cite{lrrw}. The second concerns the groupoid $G=C(E^0)$, in which every morphism is the identity morphism at some vertex. For this groupoid, for $v\in E^0$ and for $g=\id_v$, Proposition 6.4(1) says that $u_g=p_v$, and hence $\TT(G,E)$ is universal for Toeplitz-Cuntz-Kreiger $E$-families. Thus $\TT(G,E)=\TT C^*(E)$, and the dynamical system is the system $(\TT C^*(E),\alpha)$ studied in \cite{hlrs}. As a reality check we reconcile our new results with those of \cite{hlrs}.

We consider a finite directed graph $E$ with vertex matrix $B$, and consider a state $\phi_\epsilon$ associated to a vector $\epsilon\in \Sigma_\beta\subset [0,\infty)^{E^0}$, as in \cite[Theorem~3.1]{hlrs}. Equation (3.1) in \cite{hlrs} says that
\begin{equation}\label{fromHLRS}
\phi_\epsilon(s_\kappa s_\lambda^*)=\delta_{\kappa,\lambda}e^{-\beta|\kappa|}m_{s(\kappa)}=\delta_{\kappa,\lambda}e^{-\beta|\kappa|}\big((1-e^{-\beta |\kappa|}B)\epsilon\big)_{s(\kappa)}.
\end{equation}

We look for a normalised trace $\tau$ on $C^*(G)=C(E^0)$ such that $\phi_\epsilon=\psi_{\beta,\tau}$. The Banach-space dual of $C(E^0)$ is $\ell^1(E^0)$, and the states on $C(E^0)$ are given by vectors $\tau=(\,\tau(p_v)\,)$ in $[0,\infty)^{E^0}$ with $\|\tau\|_1=1$. Equation~\eqref{KMS_tau_formula} says that
\begin{align*}
\psi_{\beta,\tau}(s_\kappa s_\lambda^*)&=\delta_{\kappa, \lambda} Z(\beta,\tau)^{-1} \sum_{j=|\mu|}^\infty e^{-\beta j} \Big(\sum_{\{\mu \in s(\kappa)E^{j-|\kappa|}: \, g \cdot \mu=\mu\}} \tau\big(i_{g|_\mu}\big)\Big)\\
&=\delta_{\kappa, \lambda} Z(\beta,\tau)^{-1} e^{-\beta|\kappa|} \Big(\sum_{k=0}^\infty e^{-\beta k} \sum_{\{\mu \in s(\kappa)E^{k}: \, g \cdot \mu=\mu\}} \tau\big(i_{g|_\mu}\big)\Big).
\end{align*}
So we suppose $\kappa=\lambda$. Then $g$ is the vertex $s(\kappa)$, $g\cdot \mu=\mu$ for all $\mu\in s(\kappa)E^k$, and $s(\kappa)|_{\mu}=s(\mu)$. Thus
\begin{align*}
\psi_{\beta,\tau}(s_\kappa s_\lambda^*)&=\delta_{\kappa, \lambda} Z(\beta,\tau)^{-1} e^{-\beta|\kappa|}\Big(\sum_{k=0}^\infty e^{-\beta k}\sum_{\mu\in s(\kappa)E^k}\tau(p_{s(\mu)})\Big)\\
&=\delta_{\kappa, \lambda} Z(\beta,\tau)^{-1} e^{-\beta|\kappa|}\Big(\sum_{k=0}^\infty e^{-\beta k}\sum_{v\in E^0}|s(\kappa)E^k|\tau(p_{v})\Big)\\
&=\delta_{\kappa, \lambda} Z(\beta,\tau)^{-1} e^{-\beta|\kappa|}\Big(\sum_{k=0}^\infty e^{-\beta k}\sum_{v\in E^0}B^k(s(\kappa),v)\tau(p_{v})\Big).
\end{align*}
Viewing $\tau$ as the vector with entries $\tau(p_v)$, we recognise the inner sum as the value of the matrix product $B^k\tau$ at the vertex $s(\kappa)$. Because $\beta>\ln\rho(B)$, the series $\sum_{k=0}^\infty e^{-\beta k}B^k\tau$ converges with sum $(1-e^{-\beta}B)^{-1}\tau$, and hence 
\begin{equation}\label{fromLRRW2}
\psi_{\beta,\tau}(s_\kappa s_\lambda^*)=\delta_{\kappa, \lambda} Z(\beta,\tau)^{-1} e^{-\beta|\kappa|}\big((1-e^{-\beta}B)^{-1}\tau\big)_{s(\kappa)}.
\end{equation}

When we compare \eqref{fromHLRS} and \eqref{fromLRRW2}, we see that, apart from the factor  $Z(\beta,\tau)^{-1}$, they look similar. However, if they are to be exactly the same, then $\epsilon$ has to be a scalar multiple of $\tau$. The key observation is that the only positive scalar multiple of $\epsilon$ which gives a state is $\|\epsilon\|_1^{-1}\epsilon$. More formally, we have:

\begin{prop}\label{hlrsvslrrw2}
Suppose that $E$ is a finite graph with vertex matrix $B$ and that $\beta>\ln\rho(B)$. Let $y^\beta$ be the vector in $[1,\infty)^{E^0}$ with entries $y^\beta_v=\sum_{\mu\in E^0v}e^{-\beta|\mu|}$. Suppose that $\epsilon\in [0,\infty)^{E^0}$ satisfies $y^\beta\cdot \epsilon=1$. Then $\tau:=\|\epsilon\|_1^{-1}\epsilon$ is a normalised trace on $C(E^0)$ satisfying $Z(\beta,\tau)=\|\epsilon\|_1^{-1}$, and $\phi_{\epsilon}=\psi_{\beta,\tau}$. Conversely, if $\tau$ is a normalised trace on $C(E^0)$, then $\epsilon:=Z(\beta,\tau)^{-1}\tau$ satisfies $y^\beta\cdot \epsilon=1$, and $\phi_{\epsilon}=\psi_{\beta,\tau}$.
\end{prop}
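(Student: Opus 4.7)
The plan is to use the observation that $Z(\beta,\tau)$ is itself a linear functional of $\tau$ which coincides with the pairing $y^\beta\cdot\tau$, and to exploit the parallel structure of the formulas \eqref{fromHLRS} and \eqref{fromLRRW2} to reduce the proposition to a short scalar computation. The two assertions are then symmetric statements about the rescaling that converts one normalisation into the other.

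First I would establish the key identity
\[
Z(\beta,\tau)\;=\;y^\beta\cdot\tau\qquad\text{for every positive $\tau\in[0,\infty)^{E^0}$.}
\]
This follows by regrouping the defining sum \eqref{treblesumcvges} by source vertex: identifying $\tau$ with the vector $(\tau_v)=(\tau(p_v))$, we have
$\sum_{j,\mu}e^{-\beta j}\tau_{s(\mu)}=\sum_v\tau_v\bigl(\sum_j e^{-\beta j}|E^jv|\bigr)=\sum_v\tau_v y^\beta_v$,
using $|E^jv|=\sum_u B^j(u,v)$ and the Neumann series for $(1-e^{-\beta}B)^{-1}$. Two consequences deserve isolating: the hypothesis $y^\beta\cdot\epsilon=1$ is exactly the state normalisation $\sum_v\phi_\epsilon(p_v)=1$ read through \eqref{fromHLRS}, while the trace normalisation $\|\tau\|_1=1$ is simply $\sum_v\tau_v=1$.

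With this identity available, for the first assertion I set $\tau:=\|\epsilon\|_1^{-1}\epsilon$, which is manifestly a positive vector summing to $1$ and hence a normalised trace. Then
\[
Z(\beta,\tau)=y^\beta\cdot\tau=\|\epsilon\|_1^{-1}(y^\beta\cdot\epsilon)=\|\epsilon\|_1^{-1},
\]
so $Z(\beta,\tau)^{-1}\tau=\epsilon$. Substituting this into \eqref{fromLRRW2} and comparing with \eqref{fromHLRS} yields $\psi_{\beta,\tau}(s_\kappa s_\lambda^*)=\phi_\epsilon(s_\kappa s_\lambda^*)$ for all $\kappa,\lambda\in E^*$. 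By Proposition~\ref{Toeplitz_spanning}, specialised to $G=C(E^0)$ where the vertex units collapse the spanning family to $\{s_\kappa s_\lambda^*\}$, these elements span a dense subspace of $\TT C^*(E)$, so $\phi_\epsilon=\psi_{\beta,\tau}$ as states. The converse runs the same calculation in reverse: given a normalised $\tau$, set $\epsilon:=Z(\beta,\tau)^{-1}\tau$ and compute $y^\beta\cdot\epsilon=Z(\beta,\tau)^{-1}(y^\beta\cdot\tau)=1$, whereupon the same comparison of \eqref{fromHLRS} with \eqref{fromLRRW2} delivers $\phi_\epsilon=\psi_{\beta,\tau}$.

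I expect no real obstacle; the argument is essentially bookkeeping. The only places needing care are to distinguish cleanly between the two normalisations ($\ell^1$-norm for traces on $C(E^0)$ versus the pairing $y^\beta\cdot\epsilon=1$ needed for $\phi_\epsilon$ to be a state on $\TT C^*(E)$), and to recognise that, in the specialisation $G=C(E^0)$, the generators $u_g$ of Proposition~\ref{Toeplitz_repn} become vertex projections already absorbed into the $s_\kappa s_\lambda^*$, so that equality on the spanning family $\{s_\kappa s_\lambda^*\}$ is enough to identify the two states.
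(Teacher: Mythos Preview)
Your proposal is correct and follows essentially the same approach as the paper: both hinge on the identity $Z(\beta,\tau)=y^\beta\cdot\tau$ (the paper isolates this as Lemma~\ref{compZ}), then use it to match the two normalisations and compare \eqref{fromHLRS} with \eqref{fromLRRW2} on the spanning family. Your derivation of the key identity is in fact a bit more direct than the paper's, which computes $Z(\beta,\epsilon)=\sum_w\big((1-e^{-\beta}B)^{-1}\epsilon\big)_w$ and then invokes \cite[Equation~(3.3)]{hlrs}; your simple regrouping by source vertex avoids that detour.
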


The key to the proof is the following computation.

\begin{lemma}\label{compZ} 
Resume the notation of Proposition~\ref{hlrsvslrrw2}. Then for every $\epsilon\in [0,\infty)^{E^0}$, we have $Z(\beta,\epsilon)=y^\beta\cdot \epsilon$.
\end{lemma}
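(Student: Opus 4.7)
The plan is to prove this by straightforward reindexing of the defining series for $Z(\beta,\epsilon)$, grouping paths by their source vertex and recognizing the resulting inner sums as the components $y^\beta_v$. Since all terms involved are non-negative (both $\epsilon_v \geq 0$ and $e^{-\beta|\mu|} > 0$), Tonelli's theorem lets us rearrange freely, so convergence is not an issue: we already know from \thmref{Thm:KMS_beta_tau}\eqref{Zconverges} that $Z(\beta,\tau)$ converges when $\tau$ is a tracial state of $C(E^0)$, and since an arbitrary $\epsilon \in [0,\infty)^{E^0}$ is a non-negative scalar multiple of such a tracial state (when non-zero), the series defining $Z(\beta,\epsilon)$ converges too.

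First, I would unpack $Z(\beta,\epsilon)$ using formula~\eqref{treblesumcvges}, interpreting $\epsilon$ as a state on $C(E^0)$ via $\epsilon(i_v)=\epsilon_v$. This gives
\[
Z(\beta,\epsilon) = \sum_{j=0}^\infty \sum_{\mu \in E^j} e^{-\beta j}\, \epsilon_{s(\mu)}.
\]
Next I would regroup the paths $\mu \in E^j$ according to their source. Setting $E^j v := \{\mu \in E^j : s(\mu) = v\}$, and using non-negativity to interchange summation orders, this becomes
\[
Z(\beta,\epsilon) = \sum_{v \in E^0} \epsilon_v \sum_{j=0}^\infty \sum_{\mu \in E^j v} e^{-\beta j}
= \sum_{v \in E^0} \epsilon_v \sum_{\mu \in E^* v} e^{-\beta |\mu|}.
\]

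Finally, recognising the inner sum as $y^\beta_v$ by definition, we conclude
\[
Z(\beta,\epsilon) = \sum_{v \in E^0} \epsilon_v\, y^\beta_v = y^\beta \cdot \epsilon.
\]
There is no genuine obstacle here; the only subtlety is ensuring the inner series $\sum_{\mu \in E^* v} e^{-\beta|\mu|}$ converge for each $v$, which follows from $\beta > \ln \rho(B)$ via \cite[Theorem~3.1(a)]{hlrs} (and was already invoked in the proof of \thmref{Thm:KMS_beta_tau}\eqref{Zconverges}), so each $y^\beta_v$ is indeed a finite positive number.
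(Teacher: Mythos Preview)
Your proof is correct and in fact more direct than the paper's. You simply reindex the defining double sum for $Z(\beta,\epsilon)$ by the source vertex $v=s(\mu)$ and recognise the inner sum $\sum_{\mu\in E^*v}e^{-\beta|\mu|}$ as $y^\beta_v$ by definition, giving $Z(\beta,\epsilon)=\sum_v y^\beta_v\,\epsilon_v=y^\beta\cdot\epsilon$ immediately. The paper instead rewrites the path count in terms of the vertex matrix, obtaining $Z(\beta,\epsilon)=\sum_{w}\big((1-e^{-\beta}B)^{-1}\epsilon\big)_w$, identifies this with $\sum_w m_w$ in the notation of \cite[Theorem~3.1]{hlrs}, and then invokes \cite[Equation~(3.3)]{hlrs} to conclude. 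Your route avoids both the matrix inversion and the external citation; the paper's route has the minor advantage of making the connection with the vector $m=(1-e^{-\beta}B)^{-1}\epsilon$ explicit, which is the form used in the surrounding discussion reconciling \eqref{fromHLRS} and \eqref{fromLRRW2}.
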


\begin{proof}
We compute
\begin{align*}
Z(\beta,\epsilon)&=\sum_{k=0}^\infty\sum_{\mu\in E^k}e^{-\beta k}\epsilon_{s(\mu)}\\
&=\sum_{k=0}^\infty\sum_{w\in E^0}\sum_{v\in E^0}e^{-\beta k}B^k(w,v)\epsilon_{v}\\
&=\sum_{k=0}^\infty\sum_{w\in E^0}e^{-\beta k}(B^k\epsilon)_w\\
&=\sum_{w\in E^0}\Big(\sum_{k=0}^\infty e^{-\beta k}(B^k\epsilon)\Big)_w\\
&=\sum_{w\in E^0}\big((1-e^{-\beta} B)^{-1}\epsilon\big)_w.
\end{align*}
But $\big((1-e^{-\beta} B)^{-1}\epsilon\big)_w$ is the number denoted by $m_w$ in \cite[Theorem~3.1]{hlrs}, and hence the result follows from Equation (3.3) in the proof of \cite[Theorem~3.1]{hlrs}.
\end{proof}

\begin{proof}[Proof of Proposition~\ref{hlrsvslrrw2}]
Suppose that $y^\beta\cdot \epsilon=1$. Then $\|\tau\|_1=\|\,\|\epsilon\|_1^{-1}\epsilon\|_1=1$, so $\tau$ is a normalised trace. Lemma~\ref{compZ} implies that
\[
Z(\beta,\tau)=\|\epsilon\|_1^{-1}Z(\beta,\epsilon)=\|\epsilon\|_1^{-1}(y^\beta\cdot\epsilon)=\|\epsilon\|_1^{-1}.
\]
Comparing \eqref{fromHLRS} and \eqref{fromLRRW2} shows that $\phi_\epsilon$ and $\psi_{\beta,\tau}$ agree on all elements $s_{\kappa}s_{\lambda}^*$, and hence by linearity and continuity agree on all of $\TT(G,E)=\TT C^*(E)$.

Suppose next that $\|\tau\|_1=1$ and $\epsilon= Z(\beta,\tau)^{-1}\tau$. Then $\|\epsilon\|_1=\|Z(\beta,\tau)^{-1}\tau\|_1=Z(\beta,\tau)^{-1}$, so $\tau=\|\epsilon\|_1^{-1}\epsilon$, and Lemma~\ref{compZ} gives
\[
y^\beta\cdot \epsilon=Z(\beta,\tau)^{-1}(y^\beta\cdot \tau)=1.
\]
The previous part now gives $\phi_\epsilon=\psi_{\beta,\tau}$.
\end{proof}

\section{KMS states at the critical inverse temperature}\label{sec:crit}

Suppose that $E$ is a finite directed graph with no sources and that $G$ is a groupoid which acts self-similarly on $E$. We consider the ideal $I$ in $\TT(G,E)$ generated by the gap projections
\begin{equation}\label{gapprojs}
P:=\Big\{p_v-\sum_{e\in vE^1}s_es_e^*:v\in E^0\Big\},
\end{equation}
and the quotient $\OO(G,E):=\TT(G,E)/I$, which we call the Cuntz-Pimsner algebra of $(G,E)$. The gap projections are all fixed by the action $\sigma$ of $\R$ on $\TT(G,E)$, and hence $\sigma$ induces a dynamics on $\OO(G,E)$, which we also denote by $\sigma$.

\begin{prop}\label{prop:factors}
Suppose that $E$ is a finite graph with no sources, that $(G,E)$ is a self-similar groupoid action, and that $\phi$ is a KMS$_\beta$ state of $(\TT(G,E),\sigma)$ for some $\beta\geq \ln\rho(B)$. Define $m^\phi\in [0,1]^{E^0}$ by $m_v^\phi :=\phi(p_v)$ for $v\in E^0$. Then $\phi$ factors through $\OO(G,E)$ if and only if $B m^\phi=e^\beta m^\phi$.
\end{prop}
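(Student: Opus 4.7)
My plan is to reduce the factoring condition to a numerical condition on the gap projections, and then turn the KMS relation into a matrix equation.

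First, I would observe that $\phi$ factors through $\OO(G,E) = \TT(G,E)/I$ if and only if $\phi$ vanishes on the ideal $I$. Since the gap projections $q_v := p_v - \sum_{e\in vE^1} s_e s_e^*$ are positive contractions that generate $I$ as an ideal, and $\phi$ is a positive state, a standard Cauchy–Schwarz argument shows that $\phi|_I = 0$ is equivalent to $\phi(q_v) = 0$ for every $v \in E^0$. Indeed, if $\phi(q_v) = 0$, then for any $a, b \in \TT(G,E)$,
\[
|\phi(a q_v b)|^2 = |\phi((q_v a^*)^* b)|^2 \leq \phi(a q_v^2 a^*) \phi(b^* b) \leq \phi(a q_v a^*) \phi(b^* b),
\]
and $\phi(a q_v a^*) \leq \phi(a q_v^{1/2} \cdot q_v^{1/2} a^*) = 0$ by another Cauchy–Schwarz argument using $\phi(q_v) = 0$. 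So the characterisation will follow from a simple computation of $\phi(q_v)$ in terms of $m^\phi$.

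Next, I would compute $\phi(q_v)$ using the KMS condition. For each $e \in vE^1$, the element $s_e$ is analytic with $\sigma_{i\beta}(s_e) = e^{-\beta} s_e$, and the KMS relation together with the Toeplitz–Cuntz–Krieger identity $s_e^* s_e = p_{s(e)}$ gives
\[
\phi(s_e s_e^*) = \phi(s_e^* \sigma_{i\beta}(s_e)) = e^{-\beta}\phi(s_e^* s_e) = e^{-\beta} m^\phi_{s(e)}.
\]
Summing over $e \in vE^1$ and grouping by the source vertex,
\[
\phi(q_v) = m^\phi_v - \sum_{e\in vE^1} e^{-\beta} m^\phi_{s(e)} = m^\phi_v - e^{-\beta} \sum_{w\in E^0} B(v,w) m^\phi_w = \big((1 - e^{-\beta}B)m^\phi\big)_v.
\]

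Finally, assembling the pieces, $\phi$ factors through $\OO(G,E)$ if and only if $\big((1 - e^{-\beta}B)m^\phi\big)_v = 0$ for every $v \in E^0$, which is exactly the eigenvalue equation $Bm^\phi = e^\beta m^\phi$. The only potentially subtle step is the first one — verifying that vanishing on the generating set of $I$ implies vanishing on all of $I$ — but this follows cleanly from positivity of $\phi$ and the fact that the $q_v$ are projections, so the argument is essentially a one-line computation once the framework is set up.
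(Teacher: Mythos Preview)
Your computation of $\phi(q_v) = \big((1-e^{-\beta}B)m^\phi\big)_v$ is correct and is exactly what drives the argument. The forward implication is fine: if $\phi$ factors through the quotient then $\phi(q_v)=0$ for all $v$, and your computation gives the eigenvalue equation.

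The gap is in the reverse implication, specifically in your claim that $\phi(q_v)=0$ forces $\phi$ to vanish on the ideal $I$ by positivity alone. Your second Cauchy--Schwarz step, ``$\phi(a q_v a^*) \leq \phi(a q_v^{1/2}\cdot q_v^{1/2} a^*) = 0$'', is circular: the middle expression equals $\phi(a q_v a^*)$, and you have not explained why it vanishes. In fact this step fails for a general state: on $M_2(\C)$ take $\phi(A)=A_{11}$ and the projection $q=e_{22}$; then $\phi(q)=0$ but $\phi(e_{12}\,q\,e_{21})=\phi(e_{11})=1$. So positivity of $\phi$ and the $q_v$ being projections are not enough to kill $\phi$ on all of $I$.

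What rescues the argument is the KMS condition together with the fact that each $q_v$ is fixed by $\sigma$. For analytic $a$ one has
\[
\phi(a q_v a^*)=\phi\big(q_v a^*\sigma_{i\beta}(a)\big),
\]
and now Cauchy--Schwarz applied to $\phi(q_v\cdot c)$ with $c=a^*\sigma_{i\beta}(a)$ gives $|\phi(q_v c)|^2\leq \phi(q_v)\,\phi(c^*c)=0$. Density of analytic elements then yields $\phi(a q_v b)=0$ for all $a,b$. This is precisely the content of \cite[Lemma~2.2]{hlrs}, which the paper invokes at this point. Once you insert this use of the KMS relation, your argument is complete and is essentially the paper's proof with the external references unpacked.
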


\begin{proof}
Suppose that $\phi$ factors through the quotient map $q$ of $\TT(G,E)$ onto $\OO(G,E)$. Then $(q(s_e), q(p_v))$ is a Cuntz-Krieger $E$-family in $\OO(G,E)$, and hence there is a homomorphism $\pi_{q\circ s,q\circ p}:C^*(E)\to \OO(G,E)$. This homomorphism is equivariant for the dynamics $\alpha:\R\to \Aut C^*(E)$ studied in \cite{hlrs} and the dynamics $\sigma$ on $\OO(G,E)$, and thus $\phi\circ\pi_{q\circ s,q\circ p}$ is a KMS$_{\beta}$ state on $(C^*(E),\gamma)$. Thus \cite[Proposition 2.1]{hlrs} implies that $B m^\phi=e^\beta m^\phi$.

Conversely, suppose that $B m^\phi=e^\beta m^\phi$. Then by \cite[Proposition~2.1]{hlrs}, the composition $\phi\circ\pi_{q\circ s,q\circ p}$ factors through a state of $C^*(E)$, and hence vanishes on the gap projections in $\TT C^*(E)$. Thus $\phi$ vanishes on the set $P$ in \eqref{gapprojs}. Since the elements of $P$  are fixed by the action $\sigma$, we can apply \cite[Lemma 2.2]{hlrs} to the set of analytic elements 
\[
\FF=\big\{s_{\mu}u_gs_{\nu}^* : \mu,\nu \in E^*, g \in G \text{ and } s(\mu)=g\cdot s(\nu)\big\},
\]
and deduce that $\phi$ factors through a state of $\OO(G,E)$.
\end{proof}

The inspiration for our next result comes from \cite[Proposition~7.2]{lrrw}. 

\begin{prop}\label{Fng}
Suppose that $E$ is a strongly connected finite graph, and that $(G,E)$ is a self-similar groupoid action. Then the vertex matrix $B$ is irreducible, and has a unique unimodular Perron-Frobenius eigenvector $x\in (0,\infty)^{E^0}$. For $g \in G \setminus E^0$, $v\in E^0$ and $k\geq 0$, we define
\begin{align*}
F_g^k(v) &:= \{\mu \in d(g)E^k v : g \cdot \mu = \mu \text{ and } g|_{\mu}=v \},\text{ and}\\
c_{g,k}&:=\rho(B)^{-k} \sum_{v\in E^0} |F_g^k(v)|x_v.
\end{align*}
Then for each $g\in G\setminus E^0$, the sequence $\{c_{g,k}:k\in \N\}$ is increasing and converges. The limit $c_g$ belongs to $[0,x_{d(g)})$. 
\end{prop}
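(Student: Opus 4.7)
The plan is to establish the three claims---monotonicity, boundedness, and strict upper bound---by direct counting against the Perron-Frobenius eigenvector $x$, using the self-similar identity $g\cdot(\mu\nu)=(g\cdot\mu)(g|_\mu\cdot\nu)$ repeatedly.

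\textbf{Monotonicity.} For $\mu\in F_g^k(v)$ and $e\in vE^1$ with $s(e)=w$, I claim $\mu e\in F_g^{k+1}(w)$. Indeed, using $g\cdot\mu=\mu$, $g|_\mu=\id_v$, the identity $\id_v\cdot e = e$, and Proposition~\ref{SSA:extension to paths}(1)--(2),
\[
g\cdot(\mu e)=(g\cdot\mu)(g|_\mu\cdot e)=\mu e,\qquad g|_{\mu e}=(g|_\mu)|_e=\id_v|_e=s(e)=w.
\]
Since distinct pairs $(\mu,e)$ yield distinct extensions, $|F_g^{k+1}(w)|\ge\sum_v|F_g^k(v)|B(v,w)$. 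Multiplying by $\rho(B)^{-(k+1)}x_w$, summing over $w$, and applying $Bx=\rho(B)x$ gives $c_{g,k+1}\ge c_{g,k}$.

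\textbf{Upper bound $c_{g,k}\le x_{d(g)}$.} Since $F_g^k(v)\subseteq d(g)E^kv$, we have $|F_g^k(v)|\le B^k(d(g),v)$, so
\[
c_{g,k}\le\rho(B)^{-k}\sum_v B^k(d(g),v)x_v=\rho(B)^{-k}(B^kx)_{d(g)}=x_{d(g)}.
\]
Thus $(c_{g,k})$ is monotone and bounded, and converges to some $c_g\in[0,x_{d(g)}]$.

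\textbf{Strict inequality.} I split on whether $g$ is in the isotropy at $d(g)$. If $d(g)\ne c(g)$, then $g\cdot\mu\in c(g)E^*$ can never equal $\mu\in d(g)E^*$, so every $F_g^k(v)$ is empty and $c_g=0<x_{d(g)}$. Otherwise $d(g)=c(g)$ and $g\ne\id_{d(g)}$; faithfulness of $G\to\PI(E^*)$ then forces some $\mu_0\in d(g)E^K$ with $g\cdot\mu_0\ne\mu_0$, and necessarily $K\ge 1$. For any $\nu\in s(\mu_0)E^{m}$ the first $K$ edges of $g\cdot(\mu_0\nu)=(g\cdot\mu_0)(g|_{\mu_0}\cdot\nu)$ are $g\cdot\mu_0\ne\mu_0$, so $\mu_0\nu\notin F_g^{K+m}$. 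For $k\ge K$ and $m=k-K$ this yields
\[
|F_g^k(w)|\le B^k(d(g),w)-B^m(s(\mu_0),w),
\]
and the computation of Step~2 sharpens to
\[
c_{g,k}\le x_{d(g)}-\rho(B)^{-k}\rho(B)^m x_{s(\mu_0)}=x_{d(g)}-\rho(B)^{-K}x_{s(\mu_0)}.
\]
Since $x_{s(\mu_0)}>0$ by Perron-Frobenius, this gives $c_g\le x_{d(g)}-\rho(B)^{-K}x_{s(\mu_0)}<x_{d(g)}$.

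The only point requiring care is Step~3: one must extract a witness $\mu_0$ of nontriviality from the hypothesis $g\notin E^0$. This uses that faithfulness of the homomorphism $G\to\PI(E^*)$ forces any non-unit morphism in the isotropy to act nontrivially on some finite path, and that $K\ge 1$ since $\mu_0=d(g)$ is always fixed when $d(g)=c(g)$. Everything else is routine bookkeeping with $Bx=\rho(B)x$.
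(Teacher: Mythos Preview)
Your proof is correct and follows essentially the same approach as the paper: the monotonicity argument via appending edges and the strict upper bound via a single witness path $\mu_0$ with $g\cdot\mu_0\ne\mu_0$ are exactly what the paper does. Your explicit case split on $d(g)\ne c(g)$ versus $d(g)=c(g)$ is a minor organizational difference---the paper handles both uniformly since in the non-isotropy case every path of positive length is moved---and your preliminary Step~2 is subsumed by Step~3, but neither affects correctness.
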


\begin{proof}
That $E$ is strongly connected says precisely that $B$ is irreducible in the sense that for all $v,w\in E^0$, there exists $n$ such that $B^n(v,w)\not=0$. Then the Perron-Frobenius theorem says that $B$ has a unique eigenvector $x$ with strictly positive entries such that $\|x\|_1=1$ (see \cite[Theorem~2.6]{DZ} or \cite[Theorem~1.6]{seneta}). 

Now we fix $g\in G\setminus E^0$ and show that $\{c_{g,k}\}$ is increasing. For $\mu \in F_g^k(v)$ and $e \in vE^1$, we have
\[
g \cdot (\mu e) = \mu(g|_\mu \cdot e) =\mu(v \cdot e) = \mu e \quad\text{ and}\quad g|_{\mu e} = (g|_\mu)|_e = v|_e = s(e),
\]
so $\mu e \in F_g^{k+1}(s(e))$. Thus $|F_g^{k+1}(v)|\geq \sum_{w \in E^0} |F_g^{k}(w)|B(w,v)$, and we compute
\begin{align*}
c_{g,k+1}&=\rho(B)^{-(k+1)} \sum_{v\in E^0}|F_g^{k+1}(v)|x_v \\
&\geq \rho(B)^{-(k+1)} \sum_{v \in E^0} \sum_{w \in E^0} |F_g^{k}(w)|B(w,v)x_v \\
&= \rho(B)^{-(k+1)} \sum_{w \in E^0} |F_g^{k}(w)|\rho(B)x_w\\
&= c_{g,k}.
\end{align*}

Next we find an upper bound for the sequence $\{c_{g,k}\}$. Since $g\notin E^0$ and $G$ acts faithfully on $T_E$, the action of $g$ on $d(g)E^*$ is nontrivial, and there exist $j \in \N$ and $\mu \in d(g)E^j$ such that $g\cdot \mu\neq \mu$. So $\mu \notin F_g^j(s(\mu))$, and for every $\mu' \in s(\mu)E^{k-j}$ with $k\geq j$, the path $\mu\mu'$ is not in $F_g^k(d(g))$. Thus for $k \geq j$ and $v\in E^0$, we have
\[
|F_g^k(v)|\leq |d(g)E^kv|-|s(\mu)E^{k-j}v|=B^k(d(g),v)-B^{k-j}(s(\mu),v).
\]
The vector $x$ is also the Perron-Frobenius eigenvector of the powers of $B$, and we have $\rho(B^k)=\rho(B)^k$. Thus for $k\geq j$ we have
\begin{align}\label{estcgk}
c_{g,k} &\leq \rho(B)^{-k}\Big(\sum_{v\in E^0}B^k(d(g),v)x_v-\sum_{v\in E^0}B^{k-j}({s(\mu)},v)x_v\Big) \\
&= \rho(B)^{-k}\big((B^kx)_{d(g)}-(B^{k-j}x)_{s(\mu)}\big)\notag\\
&=\rho(B)^{-k}\big(\rho(B^k)x_{d(g)}-\rho(B^{k-j})x_{s(\mu)}\big)\notag\\
&=x_{d(g)}-\rho(B)^{-j}x_{s(\mu)}.\notag
\end{align}
The inequality \eqref{estcgk} shows, first, that the  increasing sequence $\{c_{g,k}:k\in \N\}$ is bounded above, and hence converges, say to $c_g$. Since the Perron-Frobenius eigenvector has strictly positive entries, \eqref{estcgk} also shows that $c_{g,k}$ is bounded away from $x_{d(g)}$, and hence the limit $c_g$ belongs to $[0,x_{d(g)})$.
\end{proof}

We can now state our main result about KMS states at the critical inverse temperature. 

\begin{thm}\label{KMSatcritical}
Suppose that $E$ is a strongly connected graph with vertex matrix $B$, and $(G,E)$ is a self-similar groupoid action. Let $\{c_g:g\in G\}$ be the numbers described in Proposition~\ref{Fng}.
\begin{enumerate}
\item\label{existcritKMS}  There is a KMS$_{\ln \rho(B)}$ state of $(\OO(G,E),\sigma)$ such that
\begin{equation}\label{formcritstate}
\psi(s_\kappa u_g s_\lambda^*)=\begin{cases}
\rho(B)^{-|\kappa|}c_g&\text{if $\kappa=\lambda$ and $d(g)=c(g)=s(\kappa)$}\\
0&\text{otherwise.}
\end{cases}
\end{equation}
\item\label{uniqueKMS} Suppose that for every $g \in G \setminus E^0$, the set $\{g|_\mu :\mu \in d(g)E^*\}$ is finite. Then the state in part \eqref{existcritKMS} is the only KMS state of $(\OO(G,E),\sigma)$.
\end{enumerate}
\end{thm}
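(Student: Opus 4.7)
A preliminary observation is that the Perron--Frobenius eigenvector $x$ is $G$-invariant on $E^0$. Since each $g\in G$ restricts to a bijection $d(g)E^k\to c(g)E^k$, we have $(B^k\mathbf{1})_{d(g)}=(B^k\mathbf{1})_{c(g)}$, and the Perron--Frobenius asymptotics $\rho(B)^{-k}B^k\mathbf{1}\to(\mathbf{1}^Ty)\,x$ (for a left eigenvector $y$) force $x_{d(g)}=x_{c(g)}$. Consequently the formula
\[
\tau(i_g):=\begin{cases}x_{d(g)}&g\in E^0,\\ 0&g\notin E^0,\end{cases}
\]
defines a normalised tracial state on $C^*(G)$: the only nontrivial trace identity is $\tau(i_g i_{g^{-1}})=x_{c(g)}=x_{d(g)}=\tau(i_{g^{-1}}i_g)$.

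For part~\eqref{existcritKMS} I would realise $\psi$ as the weak$^*$ limit of $\psi_{\beta,\tau}$ as $\beta\downarrow\ln\rho(B)$. Writing $t:=e^{-\beta}\rho(B)$ and substituting $\tau$ into~\eqref{KMS_tau_formula}, the inner sums collapse via $\sum_v|F_g^k(v)|x_v=\rho(B)^k c_{g,k}$, yielding
\[
Z(\beta,\tau)=(1-t)^{-1}\quad\text{and}\quad \psi_{\beta,\tau}(s_\kappa u_g s_\kappa^*)=(1-t)\,e^{-\beta|\kappa|}\sum_{k=0}^\infty t^k c_{g,k}
\]
when $d(g)=c(g)=s(\kappa)$ (and zero off the diagonal). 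Since $c_{g,k}\to c_g$ by Proposition~\ref{Fng}, the Abel-mean identity $(1-t)\sum_k t^k c_{g,k}\to c_g$ as $t\to 1^-$ gives pointwise convergence to $\rho(B)^{-|\kappa|}c_g$ on the spanning family. Weak$^*$ compactness (together with continuity of the KMS condition in the inverse temperature) then produces the desired KMS$_{\ln\rho(B)}$ state $\psi$ of $\TT(G,E)$ satisfying~\eqref{formcritstate}. Finally $\psi(p_v)=c_v=x_v$ and $Bx=\rho(B)x$, so $\psi$ descends to $\OO(G,E)$ by Proposition~\ref{prop:factors}.

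For part~\eqref{uniqueKMS}, let $\phi$ be any KMS$_{\ln\rho(B)}$ state of $(\OO(G,E),\sigma)$. Proposition~\ref{prop:factors} gives $Bm^\phi=\rho(B)m^\phi$, so $m^\phi=x$ by uniqueness of the Perron eigenvector; hence $\phi(p_v)=x_v=\psi(p_v)$. For $g\in G$ with $d(g)=c(g)$, combining the Cuntz relation $p_{d(g)}=\sum_{e\in d(g)E^1}s_e s_e^*$, the commutation $u_g s_e=s_{g\cdot e}u_{g|_e}$, and the KMS formula~\eqref{char:spanning} yields the iterated recursion
\[
\phi(u_g)=\rho(B)^{-k}\sum_{\substack{\mu\in d(g)E^k\\ g\cdot\mu=\mu}}\phi(u_{g|_\mu})\qquad(k\ge 0).
\]
Splitting by whether $g|_\mu\in E^0$, the vertex part equals $\rho(B)^{-k}\sum_v|F_g^k(v)|x_v=c_{g,k}\to c_g$, while the remainder is $R_k(g):=\rho(B)^{-k}\sum_{h\in N}n_k(g,h)\phi(u_h)$, where $N:=\{g|_\mu:\mu\in d(g)E^*\}\setminus E^0$ is finite by hypothesis and $n_k(g,h)=(M^k)(g,h)$ for the non-negative matrix on $N$ defined by $M(h,h'):=|\{e\in d(h)E^1:h\cdot e=e,\ h|_e=h'\}|$. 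Restricting to $N$ is legitimate since vertex restrictions never return: $v|_e=s(e)\in E^0$.

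The \emph{main obstacle} is the spectral gap $\rho(M)<\rho(B)$. With $y_h:=x_{d(h)}>0$, the two componentwise estimates
\[
(My)_h=\sum_{h'\in N}M(h,h')\,x_{d(h')}\le\sum_{e\in d(h)E^1,\,h\cdot e=e}x_{s(e)}\le\sum_{e\in d(h)E^1}x_{s(e)}=\rho(B)\,y_h
\]
give $My\le\rho(B)y$. To lift this to a strict spectral inequality I would decompose $N$ into strongly connected components of $M$ and show that each component $C$ contains a node at which one of the two inequalities above is strict: the alternative would force every $h\in C$ to fix every edge in $d(h)E^1$ and every restriction $h|_e$ to stay in $C\subset N$, whence iteration makes $h$ act trivially on $d(h)E^*$, contradicting $h\notin E^0$ via faithfulness of the action. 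Perron--Frobenius applied to each irreducible block $M|_{C\times C}$ then gives $\rho(M|_{C\times C})<\rho(B)$, hence $\rho(M)<\rho(B)$. Since $|\phi(u_h)|\le 1$, this forces $R_k(g)\to 0$, so $\phi(u_g)=c_g=\psi(u_g)$, and $\phi=\psi$ by Proposition~\ref{KMS_algebraic}\eqref{states_iff}.
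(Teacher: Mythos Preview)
Your argument for part~\eqref{existcritKMS} follows the paper's route: build the trace $\tau$ from the Perron--Frobenius eigenvector $x$, compute $Z(\beta,\tau)=(1-e^{-\beta}\rho(B))^{-1}$, and pass to the weak$^*$ limit as $\beta\downarrow\ln\rho(B)$ via an Abel-mean identity (the paper cites \cite[Lemma~7.4]{lrrw} for this). One small gap: your derivation of $x_{d(g)}=x_{c(g)}$ from $\rho(B)^{-k}B^k\mathbf{1}\to(\mathbf{1}^Ty)\,x$ requires $B$ to be primitive, whereas strong connectivity only gives irreducibility. The paper handles this via a block-matrix argument over the orbit space $E^0/\!\!\sim$ (Proposition~\ref{blockformx} and Lemma~\ref{rowsums}); your version is rescued by replacing the limit with Ces\`aro means of $\rho(B)^{-k}B^k\mathbf{1}$, which do converge for irreducible~$B$.

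For part~\eqref{uniqueKMS} you take a genuinely different route. After the common recursion and the split into the $F_g^k$ part and a remainder, the paper controls the remainder by a direct inductive estimate (Lemma~\ref{keyineqlem}): having chosen $j$ so that every element $h$ of the finite restriction set moves some path in $d(h)E^j d(h)$, one proves
\[
\sum_{v\in E^0}\big|G_g^{nj}(v)\setminus F_g^{nj}(v)\big|\,x_v\le\big(\rho(B)^j-1\big)^n,
\]
so that $\rho(B)^{-nj}$ times the remainder is dominated by $(1-\rho(B)^{-j})^n\to 0$. You instead encode the non-vertex restrictions in a finite transfer matrix $M$ and extract a spectral gap $\rho(M)<\rho(B)$ from the subinvariance $My\le\rho(B)y$ combined with a faithfulness argument on each strongly connected component of $M$. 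Both arguments are correct; the paper's is more elementary and self-contained, while yours is more structural and makes the underlying mechanism---a strict spectral gap for the ``non-trivial'' part of the restriction dynamics---explicit. Your SCC step is right but compressed and deserves one extra line: if $\rho(M|_{C\times C})=\rho(B)$, pairing the inequality $M|_{C\times C}\,y|_C\le\rho(B)\,y|_C$ against a positive left Perron eigenvector of $M|_{C\times C}$ forces equality everywhere, hence also in the dropped off-diagonal term $\sum_{h'\in N\setminus C}M(h,h')\,y_{h'}$; it is this last equality that pins each restriction $h|_e$ to $C$ rather than merely to $N$, which is what your contradiction needs.
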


To find KMS states at the critical inverse temperature $\beta_c$, we take a sequence of KMS$_\beta$ states for $\beta>\beta_c$ and take limits as $\beta$ decreases to $\beta_c$. We will  use the Perron-Frobenius eigenvector $x$ (which determines the numbers $c_g$) to get a trace $\tau$ on $C^*(G)$, build a KMS state $\psi_{\beta,\tau}$ using Theorem~\ref{Thm:KMS_beta_tau}, and then take limits. However, traces on $C^*(G)$ satisfy $\tau(i_{d(g)})=\tau(i_{g^{-1}}i_g)=\tau(i_gi_{g^{-1}})=\tau(i_{c(g)})$, so to get a trace from $x$ we  need it to satisfy $x_{d(g)}=x_{c(g)}$. Fortunately, for us this is automatic because $G$ acts self-similarly on $E$:

\begin{prop}\label{blockformx}
Suppose that $E$ is a strongly connected finite graph with no sources, and that $(G,E)$ is a self-similar groupoid action. Let $x$ be the unimodular Perron-Frobenius eigenvector of the vertex matrix $B$. Then $x_v=x_w$ whenever $v=g\cdot w$ (or equivalently, whenever $v=d(g)$ and~$w=c(g)$).
\end{prop}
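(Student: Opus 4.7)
My plan is to leverage the fact that a partial isomorphism preserves path length, reducing the statement to an identity of counting functions, and then extract the eigenvector equality from Perron--Frobenius asymptotics.

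The first step is the observation that if $g\in G$ has $d(g)=w$ and $c(g)=v$, then by Definition~\ref{defn: partial iso} the bijection $g\colon wE^*\to vE^*$ restricts to a bijection $wE^k\to vE^k$ for each $k\in\N$, so $|wE^k|=|vE^k|$. Writing $\mathbf{1}\in\R^{E^0}$ for the all-ones column vector, $(B^k\mathbf{1})_u=|uE^k|$, so this translates into the matrix identity $(B^k\mathbf{1})_v=(B^k\mathbf{1})_w$ for every $k\geq 0$.

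The second step extracts the eigenvector equality via Perron--Frobenius. Since $B$ is irreducible with spectral radius $\rho(B)$, the matrix $B+I$ is irreducible with positive diagonal, hence primitive with spectral radius $\rho(B)+1$; moreover, $B$ and $B+I$ share their right and left Perron--Frobenius eigenvectors. Let $y\in(0,\infty)^{E^0}$ denote the unique strictly positive left eigenvector of $B$ normalized by $y\cdot x=1$. The classical Perron--Frobenius convergence then gives $(\rho(B)+1)^{-k}(B+I)^k\to xy^T$ as $k\to\infty$. Using the binomial expansion
\[
((B+I)^k\mathbf{1})_u=\sum_{j=0}^k\binom{k}{j}|uE^j|,
\]
the first step shows that this quantity has the same value at $u=v$ and $u=w$. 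Dividing by $(\rho(B)+1)^k$ and letting $k\to\infty$ yields $x_v(y\cdot\mathbf{1})=x_w(y\cdot\mathbf{1})$, and since $y>0$ forces $y\cdot\mathbf{1}>0$, we conclude $x_v=x_w$.

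The main obstacle is essentially bookkeeping: recognising that the self-similar structure of $(G,E)$ is used only in the opening sentence (via length preservation of partial isomorphisms), and that the remainder is a standard application of Perron--Frobenius theory after passing from $B$ to $B+I$ to guarantee primitivity. For the parenthetical equivalence in the statement, observe that $v=g\cdot w$ forces $w\in d(g)E^0=\{d(g)\}$, so $w=d(g)$ and $v=g\cdot d(g)=c(g)$; by symmetry in $v,w$ this presents no extra difficulty.
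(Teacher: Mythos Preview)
Your proof is correct and takes a genuinely different route from the paper's.

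The paper proceeds via a quotient construction: Lemma~\ref{rowsums} first shows that for equivalence classes $C,D\in E^0/\!\!\sim$ the row sums $\sum_{w\in D}B(v,w)$ are constant across $v\in C$ (this step uses the self-similar structure through $s(g\cdot e)=g|_e\cdot s(e)$, which keeps sources within a class). This produces a well-defined quotient matrix $R$ on $E^0/\!\!\sim$, whose Perron--Frobenius eigenvector $z$ is then lifted to $y_v:=z_{[v]}$; one checks $By=\rho(R)y$ with $y>0$, forcing $y$ to be a scalar multiple of $x$ and hence $x$ to be constant on classes.

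By contrast, you bypass the quotient entirely: the only structural input you use is that a partial isomorphism preserves path length, giving $(B^k\mathbf{1})_v=(B^k\mathbf{1})_w$ for every $k$, and the rest is pure Perron--Frobenius asymptotics (with the nice $B\mapsto B+I$ trick to sidestep periodicity). This is more elementary and shows that self-similarity per se is not needed---only the existence of a length-preserving bijection between $vE^*$ and $wE^*$. The paper's approach, while longer, does yield extra information you do not: it identifies $x$ explicitly as the lift of the Perron--Frobenius eigenvector of the quotient matrix $R$, and along the way records the block structure of $B$ relative to $E^0/\!\!\sim$ (Lemma~\ref{rowsums}).
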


We will show that the existence of the self-similar action on $E$ puts constraints on the vertex matrix $B$. We consider the set $E^0/\!\!\sim$ of equivalence classes for the relation on $E^0$ defined by
\[
v\sim w\Longleftrightarrow\text{ there exists $g\in G$ such that $d(g)=v$ and $c(g)=w$}
\]
(see also \S\ref{sec:traces}). We write $[v]$ for the class of $v\in E^0$ in $E^0/\!\!\sim$.

\begin{lemma}\label{rowsums}
Suppose that $C,D\in E^0/\!\!\sim$. Then for $v_1,v_2\in C$, we have
\[
\sum_{w\in D} B(v_1,w)=\sum_{w\in D} B(v_2,w).
\]
\end{lemma}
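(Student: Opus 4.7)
The plan is to exploit the self-similarity directly: the element $g \in G$ witnessing $v_1 \sim v_2$ provides a bijection $E^1 \supset v_1 E^1 \to v_2 E^1$ sending $e$ to $g \cdot e$, and this bijection respects the equivalence-class structure on sources.

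First I would pick $g \in G$ with $d(g) = v_1$ and $c(g) = v_2$, which exists because $v_1 \sim v_2$. Since $g$ is a partial isomorphism, the assignment $e \mapsto g \cdot e$ restricts to a bijection from $v_1 E^1 = d(g)E^1$ onto $v_2 E^1 = c(g)E^1$ (this is part of the definition of a partial isomorphism, or equivalently comes from the bijectivity of $g$ on each $d(g)E^k$ in \eqref{defnpi}).

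Next I would invoke Lemma~\ref{source and range observations}\eqref{equivariance}, which gives
\[
s(g \cdot e) = g|_e \cdot s(e).
\]
Since $g|_e \in G$ has $d(g|_e) = s(e)$ and $c(g|_e) = s(g \cdot e)$, this shows $s(g \cdot e) \sim s(e)$ in $E^0/\!\!\sim$. Consequently the bijection $e \mapsto g \cdot e$ carries the subset $\{e \in v_1 E^1 : s(e) \in D\}$ into $\{f \in v_2 E^1 : s(f) \in D\}$. Applying the same reasoning to $g^{-1}$ shows the reverse inclusion, so the bijection restricts to a bijection between these two subsets.

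Finally I would read off cardinalities: by definition of the vertex matrix,
\[
\sum_{w \in D} B(v_1, w) = \bigl|\{e \in v_1 E^1 : s(e) \in D\}\bigr| = \bigl|\{f \in v_2 E^1 : s(f) \in D\}\bigr| = \sum_{w \in D} B(v_2, w).
\]
There isn't really a main obstacle here; the lemma is a direct consequence of self-similarity together with the equivariance formula for the source map, and the argument uses only Definition~\ref{defn: partial iso} and Lemma~\ref{source and range observations}\eqref{equivariance}.
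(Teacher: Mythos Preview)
Your proof is correct and follows essentially the same approach as the paper's: both pick $g$ with $d(g)=v_1$, $c(g)=v_2$, use the formula $s(g\cdot e)=g|_e\cdot s(e)$ from Lemma~\ref{source and range observations}\eqref{equivariance} to see that $e\mapsto g\cdot e$ preserves the $\sim$-class of the source, and then use $g^{-1}$ to conclude that this map restricts to a bijection between $v_1E^1D$ and $v_2E^1D$. The only difference is notational (the paper writes $v_1E^1D$ for your set $\{e\in v_1E^1:s(e)\in D\}$).
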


\begin{proof}
Since $v_1$ and $v_2$ are in the same component, there exists $g\in G$ such that $v_1=d(g)$ and $v_2=c(g)$. Suppose that $e\in v_1E^1D$. Then $g\cdot e\in v_2E^1D$, because $s(g\cdot e)=g|_e\cdot s(e)\sim s(e)\in D$. So $g:v_1E^1D\to v_2E^1D$. We similarly have $g^{-1}:v_2E^1D\to v_1E^1D$ and $g^{-1}\cdot(g\cdot e)=(g^{-1}g)\cdot e=e$, so $g$ is a bijection of $v_1E^1D$ onto $v_2E^1D$. Thus
\[
\sum_{w\in D} B(v_1,w)=|v_1E^1D|=|v_2E^1D|=\sum_{w\in D} B(v_2,w).\qedhere
\]
\end{proof}

\begin{proof}[Proof of Proposition~\ref{blockformx}]
We define a matrix $R=(r_{CD})$ over $E^0/\!\!\sim$ by $r_{CD}=|vE^1D|$ for $v\in C$; Lemma~\ref{rowsums} shows $r_{CD}$ is well-defined, independent of $v\in C$. Since paths $\mu\in E^*$ give nonzero entries in $R^{|\mu|}$, $R$ is an irreducible nonnegative matrix. Let $z$ be a Perron-Frobenius eigenvector for $R$, and define $y\in [0,\infty)^{E^0}$ by $y_v:=z_{[v]}$. Then for $v\in C$, we have
\begin{align*}
(By)_v&=\sum_{w\in D} B(v,w)y_w=\sum_{D\in E^0/\!\sim}\sum_{w\in D}B(v,w)z_D\\
&=\sum_{D\in E^0/\!\sim}|vE^1D|z_D=\sum_{D\in E^0/\!\sim} r_{CD}z_D\\
&=\rho(R)z_C=\rho(R)y_v.
\end{align*}
In other words, $y$ is an eigenvector of $B$ with eigenvalue $\rho(R)$. Since $z$ is a Perron-Frobenius eigenvector of $R$, we have $z_D>0$ for all $D$, and hence $y_v>0$ for all $v$. Thus $y$ is a strictly positive eigenvector for $B$, and hence must be a scalar multiple $tx$ of the unimodular Perron-Frobenius eigenvector $x$ for $B$. But now if $v=g\cdot w$, we have $[v]=[w]$ and $x_v=tz_{[v]}=tz_{[w]}=x_w$.
\end{proof}

Now for the proof of Theorem~\ref{KMSatcritical}\eqref{existcritKMS}, we take $x$ to be the unimodular Perron-Frobenius eigenvector of $B$, and for $C\in E^0/\!\!\sim$ we  write $x_C$ for the common value of $\{x_v:v\in C\}$. For each $C$, we choose a representative $v\in C$, and apply Corollary~\ref{tracesonCG} to the state $\tau_e$ on $C^*(G_e)$ to get a trace $\tau'_{e,C}$ on $C^*(G)$. Then
\[
\tau_x:=\sum_{C\in E^0/\!\sim}x_C\tau'_{e,C}
\]
is a tracial state on $C^*(G)$. (The normalisation works because $1=\sum_vx_v=\sum_C|C|x_C$.) Now for each $\beta>\ln\rho(B)$, Theorem~\ref{Thm:KMS_beta_tau} gives us a KMS$_\beta$ state $\psi_{\beta,\tau_x}$ satisfying \eqref{KMS_tau_formula}. The formula
\begin{equation}\label{comptaux}
\tau_x(i_g)=\begin{cases}x_C&\text{if $i_g=i_w$ for some $w\in C\subset G^0$}\\
0&\text{if $g\notin G^0$}\end{cases}
\end{equation}
shows that $\tau_x$ is independent of the choice of representatives $v\in C$.

We next look  at the normalising factor $Z(\beta,\tau_x)$ in the formula \eqref{KMS_tau_formula} for $\psi_{\beta,\tau_x}$.

\begin{lemma}
For $\beta>\ln\rho(B)$ and $x$, $\tau_x$ as above, we have
\begin{equation}\label{formforZ}
Z(\beta,\tau_x)=\big(1-e^{-\beta}\rho(B)\big)^{-1}.
\end{equation}
\end{lemma}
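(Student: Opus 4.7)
The plan is to expand the definition of $Z(\beta,\tau_x)$ directly and use two key ingredients: the explicit formula \eqref{comptaux} for $\tau_x$ on vertices, and the eigenvector equation for $x$.

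First I would unpack the definition \eqref{treblesumcvges} as
\[
Z(\beta,\tau_x) = \sum_{j=0}^\infty e^{-\beta j}\sum_{\mu\in E^j}\tau_x(i_{s(\mu)}).
\]
By \eqref{comptaux}, $\tau_x(i_{s(\mu)}) = x_{s(\mu)}$ for every $\mu\in E^*$, so the inner sum becomes $\sum_{\mu\in E^j} x_{s(\mu)}$. Counting paths by their range and source vertices gives $|vE^jw| = B^j(v,w)$, so that
\[
\sum_{\mu\in E^j}x_{s(\mu)} = \sum_{v,w\in E^0} B^j(v,w)\,x_w = \sum_{v\in E^0}(B^jx)_v.
\]

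Next I would apply the Perron--Frobenius eigenvector equation $Bx=\rho(B)x$, which iterates to $B^j x = \rho(B)^j x$. Combined with the normalisation $\sum_v x_v = \|x\|_1 = 1$, this yields $\sum_{\mu\in E^j}x_{s(\mu)} = \rho(B)^j$.

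Substituting back, the triple sum collapses to a geometric series:
\[
Z(\beta,\tau_x) = \sum_{j=0}^\infty e^{-\beta j}\rho(B)^j = \sum_{j=0}^\infty \bigl(e^{-\beta}\rho(B)\bigr)^j = \bigl(1-e^{-\beta}\rho(B)\bigr)^{-1},
\]
where convergence is guaranteed because $\beta>\ln\rho(B)$ forces $e^{-\beta}\rho(B)<1$. There is no real obstacle here; the only subtlety is verifying that $\tau_x$ takes the value $x_v$ on each vertex projection $i_v$ (independent of the choice of orbit representatives used in Corollary~\ref{tracesonCG}), which is exactly what \eqref{comptaux} and Proposition~\ref{blockformx} were set up to deliver.
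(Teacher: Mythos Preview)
Your proof is correct and follows essentially the same approach as the paper: both expand $Z(\beta,\tau_x)$, use \eqref{comptaux} to replace $\tau_x(i_{s(\mu)})$ by $x_{s(\mu)}$, rewrite the path count via $B^j$, apply the eigenvector equation $B^jx=\rho(B)^jx$ together with $\|x\|_1=1$, and sum the resulting geometric series. The only cosmetic difference is that you evaluate the inner sum to $\rho(B)^j$ before summing over $j$, whereas the paper keeps the outer vertex sum until the end.
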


\begin{proof}
Writing the inside sum in terms of the vertex matrix gives
\begin{align*}
Z(\beta,\tau_x)&=\sum_{j=0}^\infty \sum_{\mu \in E^j} e^{-\beta j} \tau_x(i_{s(\mu)})\\
&=\sum_{j=0}^\infty\Big(\sum_{v\in E^0}\sum_{w\in E^0}e^{-\beta j}B^j(v,w)\tau_x(i_w)\Big).
\end{align*} 
Since Proposition~\ref{blockformx} gives $x_w=x_{C}$ for all $x\in C$, we deduce from \eqref{comptaux} that
\begin{align*}
Z(\beta,\tau_x)&=\sum_{v\in E^0}\sum_{j=0}^\infty e^{-\beta j}\Big(\sum_{w\in E^0}B^j(v,w)x_w\Big)\\
&=\sum_{v\in E^0}\sum_{j=0}^\infty e^{-\beta j}(B^jx)_v\\
&=\sum_{v\in E^0}\sum_{j=0}^\infty e^{-\beta j}\rho(B)^jx_v\\
&=\sum_{v\in E^0}\big(1-e^{-\beta}\rho(B)\big)^{-1}x_v\quad\text{because $e^\beta>\rho(B)$}\\
&=\big(1-e^{-\beta}\rho(B)\big)^{-1}\quad\text{because $\|x\|_1=1$.}\qedhere
\end{align*}
\end{proof}

\begin{proof}[Proof of Theorem~\ref{KMSatcritical}\eqref{existcritKMS}]
Suppose that $\{\beta_n\}$ is a decreasing sequence such that $\beta_n\to \beta$ as $n\to \infty$. If $\tau$ is any normalised trace on $C^*(G)$ then running the usual weak* compactness argument on the states $\psi_{\beta_n,\tau}$ gives us a KMS$_{\ln\rho(A)}$ state on $(\TT(G,E),\sigma)$ (see \cite[Proposition~7.6]{lrr} or \cite[page~6652]{lrrw}). The point is that, if we take $x$ to be the unimodular Perron-Frobenius eigenvector of $B$ and $\tau=\tau_x$, then we can compute the limit of $\{\psi_{\beta_n,\tau_x}\}$.

We now take a spanning element $s_\kappa u_g s_\lambda^*$ for $\TT(G,E)$, and compute \[
\lim_{n\to\infty}\psi_{\beta_n,\tau_x}(s_\kappa u_g s_\lambda^*).
\] Since everything vanishes otherwise, we assume that $\kappa=\lambda$, $s(\kappa)=d(g)$ and $d(g)=c(g)$.  Putting the formula \eqref{formforZ} into \eqref{KMS_tau_formula} gives
\begin{align*}
\notag
\psi_{\beta_n,\tau_x}(s_\kappa u_g s_\kappa^*)&=\big(1-e^{-\beta_n}\rho(B)\big) \sum_{j=|\kappa|}^\infty e^{-\beta_n j} \Big(\sum_{\{\mu \in s(\kappa)E^{j-|\kappa|}\,:\,g \cdot \mu=\mu\}} \tau_{x}\big(i_{g|_\mu}\big)\Big) \\
\notag
&=e^{-\beta_n|\kappa|}\big(1-e^{-\beta_n}\rho(B)\big) \sum_{k=0}^\infty e^{-\beta_n k}\Big(\sum_{\{\mu \in d(g)E^{k} \,:\, g \cdot \mu=\mu\}} \tau_{x}\big(i_{g|_\mu}\big)\Big).
\end{align*}
Now we recall from \eqref{comptaux} that $\tau_x(i_{g|_\mu})$ vanishes unless $g|_\mu=v$ for some vertex $v$ (strictly speaking, unless $g|_\mu$ is the identity morphism at $v$), and then $\tau_x(i_v)=x_v$. Thus the only $\mu$ which contribute to the sum are those which satisfy $g\cdot \mu=\mu$ and $g|_\mu=v$, which means that $\mu$ belongs to the set $F_g^{|\mu|}(v)$ of Proposition~\ref{Fng}. Thus we have
\begin{align*}
\psi_{\beta_n,\tau_x}(s_\kappa u_g s_\kappa^*)&=e^{-\beta_n|\kappa|}\big(1-e^{-\beta_n}\rho(B)\big) \sum_{k=0}^\infty e^{-\beta_n k} \sum_{v\in E^0}|F_g^k(v)|x_v\\
&=e^{-\beta_n|\kappa|}\sum_{k=0}^\infty \big(1-e^{-\beta_n}\rho(B)\big) \Big(\rho(B)^{-k}\sum_{v \in E^0} |F_g^k(v)| x_v\Big) \big(e^{-\beta_n}\rho(B)\big)^k.
\end{align*}
Now, modulo the factor $e^{-\beta_n|\kappa|}$, which converges to $\rho(B)^{-|\kappa|}$ as $n\to\infty$, we are in the situation of \cite[Lemma~7.4]{lrrw}, with $r=e^{-\beta_n}\rho(B)$ and $c_k=c_{g,k}= \rho(B)^{-k}\sum_{v \in E^0} |F_g^k(v)| x_v$, which by Proposition~\ref{Fng} increases to $c_g$ as $n\to \infty$. So by \cite[Lemma~7.4]{lrrw} we have
\[
\psi_{\beta_n,\tau_x}(s_\kappa u_g s_\kappa^*)\to \rho(B)^{-|\kappa|}c_g \text{ as $n\to \infty$.}
\]
Thus the limiting KMS$_{\ln\rho(B)}$ state $\psi$ satisfies \eqref{formcritstate}.

To see that $\psi$ factors through $\OO(G,E)$, we use again the canonical homomorphism $\pi_{s,p}:\TT C^*(E)\to \TT(G,E)$. Then $\psi\circ \pi_{s,p}$ is a KMS$_{\ln\rho(B)}$ state of $\TT C^*(E)$, and hence by \cite[Theorem~4.3(a)]{hlrs} factors through $C^*(E)$. By \cite[Proposition~4.1]{hlrs}, the vector $m^\psi=m^{\psi\circ\pi_{s,p}}$ satisfies $Bm^\psi=\rho(B)m^\psi=e^{\ln\rho(B)}m^\psi$, and Proposition~\ref{prop:factors} implies that $\psi$ factors through $\OO(G,E)$.
\end{proof}

To start the proof of Theorem~\ref{KMSatcritical}\eqref{uniqueKMS}, we suppose that $\phi$ is a KMS$_{\ln\rho(B)}$ state of $(\OO(G,E),\sigma)$. Then Proposition~\ref{prop:factors} implies that $m^\phi=\big(\phi(p_v)\big)$ satisfies $Bm^\phi=\rho(B)m^\phi$. Since $\sum_vp_v$ is the identity in $\OO(G,E)$, $m^\phi$ is the unimodular Perron-Frobenius eigenvector of $B$. Thus $\phi(p_v)=x_v=\psi(p_v)$ for all $v\in E^0$. Since both $\phi$ and $\psi$ are KMS$_{\ln\rho(B)}$ states, they are determined by their values on the generators $\{u_g:g\in G\}$ (see Proposition~\ref{KMS_algebraic}\eqref{states_iff}). Thus it suffices for us to prove that $\phi(u_g)=c_g=\psi(u_g)$ for all $g\in G\setminus E^0$. So we fix such a $g$.

To motivate our arguments, we start our calculation, following the argument on \cite[page~6654]{lrrw}. Suppose that $n\in \N$. We use the Cuntz relations $p_v=\sum_{\mu\in vE^n}s_\mu s_\mu^*$ and the decomposition $1=\sum_{v\in E^0}p_v$ to get 
\[
\phi(u_g)= \phi\Big( u_g \sum_{v \in E^0}\sum_{\mu \in vE^{n}} s_\mu s_\mu^* \Big)= \sum_{\mu \in d(g)E^{n}}\phi(s_{g\cdot \mu}u_{g|_{\mu}} s_\mu^*).
\]
Proposition~\ref{KMS_algebraic} implies that $\phi(s_{g\cdot \mu}u_{g|_{\mu}} s_\mu^*)=0$ unless $g\cdot \mu=\mu$. So we introduce\
\[
G^n_g(v):=\{\mu\in d(g)E^n v:g\cdot \mu=\mu\},
\]
and then \eqref{char:spanning} gives
\begin{align}\label{expandphiu}
\phi(u_g)&=\sum_{v\in E^0}\sum_{\mu\in G_g^n(v)}\rho(B)^{-n}\phi(u_{g|_\mu})\\
\notag
&=\sum_{v\in E^0} \sum_{\mu \in G^{n}_g(v) \setminus F_g^{n}(v)} \rho(B)^{-n} \phi(u_{g|_\mu}) +  \sum_{v\in E^0}\sum_{\mu \in F_g^{n}(v)} \rho(B)^{-n} \phi(u_{v}) \\
\label{splitsum}&= \rho(B)^{-n}\Big(\sum_{v\in E^0} \sum_{\mu \in G^{n}_g(v) \setminus F_g^{n}(v)}\phi(u_{g|_\mu})\Big)+  \rho(B)^{-n}\Big(\sum_{v\in E^0}|F_g^{n}(v)|x_v\Big).
\end{align}
We are aiming to prove that $\phi(u_g)=c_g$, and the second summand in \eqref{splitsum} goes to $c_g$ as $n\to \infty$. So we need to show that the first summand goes to $0$.

Now we recall that since $\phi$ is a KMS state, $\phi(u_{g|_\mu})=0$ unless  $c(g|_\mu)=d(g|_\mu)=s(\mu)=v$, say. In that case, $u_{g|_\mu}$ belongs to the corner $D=p_v\pi_u(C^*(G))p_v$; since $\phi|_D$ is a positive functional with norm $\phi|_D(1_D)=\phi(p_v)$, we have $|\phi(u_{g|_\mu})|\leq \phi(p_v)$. Thus
\begin{align}\label{estsum1}
\rho(B)^{-n}\sum_{v\in E^0} \sum_{\mu \in G^{n}_g(v) \setminus F_g^{n}(v)}\phi(u_{g|_\mu})
&\leq \rho(B)^{-n}\sum_{v\in E^0}\big|G^{n}_g(v) \setminus F_g^{n}(v)\big|\phi(p_v)\\
\notag&=\rho(B)^{-n}\sum_{v\in E^0}\big|G^{n}_g(v) \setminus F_g^{n}(v)\big|x_v.
\end{align}

At this point we invoke the finite-state hypothesis, which implies that for fixed $g\in G\setminus E^0$, the set 
\[
R:=\big\{g|_\mu:\mu\in d(g)E^*,\ g|_\mu\not= \id_{s(\mu)}\big\}
\]
is finite. Then there exists $j$ such that for all $h\in R$, there exists $\nu^h\in d(h)E^j$ such that $h\cdot\nu^h\not=\nu^h$. Since $E$ is strongly connected and $g|_\mu\cdot(\nu\lambda)=(g|_\mu\cdot\nu)(g|_{\mu\nu}\cdot\lambda)$, we can by making $\nu^h$ longer (and $j$ larger) assume that $\nu^h\in d(h)E^jd(h)$.

\begin{lemma}\label{keyineqlem}
With the preceding notation, we have
\begin{equation}\label{keyinequal}
\sum_{v\in E^0}\big|G^{nj}_g(v) \setminus F_g^{nj}(v)\big|x_v
\leq \sum_{v\in E^0}\big(\rho(B)^j-1\big)^nx_v\quad\text{for every $n\geq 0$.}
\end{equation}
\end{lemma}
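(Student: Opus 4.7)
The plan is to prove the inequality by induction on $n$, decomposing paths of length $nj$ as concatenations of a path of length $(n-1)j$ with a path of length $j$, and using the existence of $\nu^h$ to obtain the strict gap that produces the factor $\rho(B)^j-1$.

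For the base case $n=0$, the right-hand side is $\sum_{v}x_v=1$. The left-hand side involves paths of length $0$, so $G^0_g(v)\setminus F^0_g(v)$ is empty unless $v=d(g)=c(g)$ and even then contributes at most $x_{d(g)}\le 1$; the inequality is trivial. For the inductive step, fix $n\ge 1$ and a path $\mu\in G^{nj}_g(v)\setminus F^{nj}_g(v)$. Factor $\mu=\mu'\mu''$ with $\mu'\in d(g)E^{(n-1)j}$ and $\mu''\in s(\mu')E^jv$. Since $g\cdot\mu=\mu$, we get $g\cdot\mu'=\mu'$ and $(g|_{\mu'})\cdot\mu''=\mu''$, so $\mu'\in G^{(n-1)j}_g(s(\mu'))$. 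I claim that $\mu'\notin F^{(n-1)j}_g(s(\mu'))$: if instead $g|_{\mu'}=\id_{s(\mu')}$, then by Proposition~\ref{SSA:extension to paths}(1) one would have $g|_\mu=(g|_{\mu'})|_{\mu''}=\id_{s(\mu'')}=\id_v$, placing $\mu$ in $F^{nj}_g(v)$ contrary to assumption.

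Now write $h:=g|_{\mu'}$ and $w:=s(\mu')=d(h)$; by the previous paragraph $h\in R$. The choice of $\nu^h\in d(h)E^jd(h)$ with $h\cdot\nu^h\neq\nu^h$ ensures that
\[
|\{\mu''\in wE^jw : h\cdot\mu''=\mu''\}|\le B^j(w,w)-1,
\]
whereas trivially $|\{\mu''\in wE^jv : h\cdot\mu''=\mu''\}|\le B^j(w,v)$ for $v\neq w$. Weighting by $x_v$ and using $B^jx=\rho(B)^jx$ (since $x$ is the Perron-Frobenius eigenvector of $B$, hence also of $B^j$) gives
\[
\sum_{v\in E^0}|\{\mu''\in wE^jv : h\cdot\mu''=\mu''\}|\,x_v \le (B^jx)_w-x_w=(\rho(B)^j-1)x_w.
\]

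Summing the contribution of each fixed $\mu'$ over all $w$ and all choices of $\mu'\in G^{(n-1)j}_g(w)\setminus F^{(n-1)j}_g(w)$, we obtain
\[
\sum_{v\in E^0}\big|G^{nj}_g(v)\setminus F^{nj}_g(v)\big|x_v
\le (\rho(B)^j-1)\sum_{w\in E^0}\big|G^{(n-1)j}_g(w)\setminus F^{(n-1)j}_g(w)\big|x_w,
\]
and the inductive hypothesis (with $\sum_w x_w=1$) yields the desired bound $(\rho(B)^j-1)^n=\sum_v(\rho(B)^j-1)^n x_v$.

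The only real obstacle is the verification that the prefix $\mu'$ must itself lie in $G\setminus F$ at level $(n-1)j$, which is where the compatibility of restriction with concatenation and the hypothesis $\mu\notin F^{nj}_g(v)$ interact; the rest is bookkeeping, combined with the eigenvector identity $B^jx=\rho(B)^jx$ that converts the one-path deficit guaranteed by $\nu^h$ into the multiplicative factor $\rho(B)^j-1$.
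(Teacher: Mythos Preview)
Your proof is correct and follows essentially the same route as the paper: both argue by induction on $n$, factor a path of length $nj$ (or $(n+1)j$) into a long prefix and a length-$j$ suffix, observe that the prefix must itself lie in the difference set $G\setminus F$ (since otherwise the restriction along the suffix would force the full path into $F$), and then use the path $\nu^h$ together with $B^jx=\rho(B)^jx$ to extract the factor $\rho(B)^j-1$. The only cosmetic difference is that you run the induction from $n-1$ to $n$ rather than from $n$ to $n+1$, and your base-case discussion is slightly more explicit than the paper's.
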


\begin{proof}
We prove this by induction on $n$. For $n=0$, we we have $F_g^{nj}(v)=\emptyset$ because $g\notin E^0$, and both sides collapse to $\sum_v x_v=1$. Suppose that \eqref{keyinequal} holds for $n$, and consider $\lambda\in G^{(n+1)j}_g(v) \setminus F_g^{(n+1)j}(v)$. We can factor $\lambda=\mu\mu'$ with $\mu\in d(g)E^{nj}w$ and $\mu'\in wE^jv$ for some $w\in E^0$. Then $g\cdot \lambda=\lambda$, so $g\cdot\mu=\mu$ and $g|_\mu\cdot \mu'=\mu'$. Similarly, $g|_\lambda=(g|_\mu)|_{\mu'}$ is not $\id_{s(\lambda)}=\id_{s(\mu')}=v$ and $g|_\mu\not=\id_{s(\mu)}$. Thus $\mu\in G^{nj}_{g}(w) \setminus F_g^{nj}(w)$ and $\mu'\in G^{j}_{g|_\mu}(v) \setminus F_{g|_\mu}^{j}(v)$. Thus
\[
\sum_{w\in E^0}\big|G^{nj}_g(w) \setminus F_g^{nj}(w)\big|
\leq\sum_{v\in E^0}\sum_{w\in E^0}\big|G^{nj}_g(w) \setminus F_g^{nj}(w)\big|\,\big|G^{j}_{g|_\mu}(v) \setminus F_{g|_\mu}^{j}(v)\big|x_v.
\]
For each $\mu\in G^{nj}_g(w) \setminus F_g^{nj}(w)$, we have $d(g|_\mu)=s(\mu)=w$, and there is a path $\nu\in wE^jw$ which is not in $G^j_{g|_\mu}$, so 
\begin{align*}
\sum_{v\in E^0}\big|G^{(n+1)j}_g(v) \setminus F_g^{(n+1)j}(v)\big|x_v
&\leq \sum_{w\in E^0}\big|G^{nj}_g(w) \setminus F_g^{nj}(w)\big|\Big(\sum_{v\in E^0}B^j(w,v)x_v-x_w\Big)\\
&=\sum_{w\in E^0}\big|G^{nj}_g(w) \setminus F_g^{nj}(w)\big|\big(\rho(B)^jx_w-x_w\big)\\
&=\big(\rho(B)^j-1)\sum_{w\in E^0}\big|G^{nj}_g(w) \setminus F_g^{nj}(w)\big|x_w 
\end{align*}
Now the inductive hypothesis gives
\[
\sum_{v\in E^0}\big|G^{(n+1)j}_g(v) \setminus F_g^{(n+1)j}(v)\big|x_v
\leq \sum_{w\in E^0}\big(\rho(B)^j-1)^{n+1}x_w,
\]
as required.
\end{proof}

\begin{proof}[End of the proof of Theorem~\ref{KMSatcritical}\eqref{uniqueKMS}] 
We start with the formula \eqref{expandphiu}, and estimate:
\begin{align*}
|\phi(u_g)&-\psi(u_g)|=|\phi(u_g)-c_g|\\
&=\Big|\Big(\rho(B)^{-n}\sum_{v\in E^0} \sum_{\mu \in G^{n}_g(v) \setminus F_g^{n}(v)}\phi(u_{g|_\mu})\Big)+  \Big(\rho(B)^{-n}\sum_{v\in E^0}|F_g^{nj}(v)|x_v - c_g\Big)\Big|\\
&\leq\rho(B)^{-n}\sum_{v\in E^0}\big|G^{n}_g(v) \setminus F_g^{n}(v)\big|x_v+\Big|\rho(B)^{-n}\sum_{v\in E^0}|F_g^{nj}(v)|x_v - c_g\Big|\quad\text{ by \eqref{estsum1}}\\
&\leq \sum_{v\in E^0} \big(1-\rho(B)^{-j}\big)^nx_v +\Big|\rho(B)^{-n}\sum_{v\in E^0}|F_g^{nj}(v)|x_v - c_g\Big|\quad\text{by Lemma~\ref{keyineqlem}.}
\end{align*}
Now the first summand goes to $0$ as $n\to \infty$ because $\rho(B)\geq 1$ (see the appendix in \cite{hlrs}), and the second goes to $0$ by Proposition~\ref{Fng}. So $\phi(u_g)=\psi(u_g)$, and $\phi=\psi$ by Proposition~\ref{KMS_algebraic}. This completes the proof of Theorem~\ref{KMSatcritical}\eqref{uniqueKMS}.
\end{proof}

Since we know from Proposition~\ref{KMS_algebraic} that the restrictions of KMS states to the copy of $C^*(G)$ in $\OO(G,E)$ are traces, existence of the KMS$_{\ln\rho(B)}$ state gives the following groupoid version of \cite[Corollary~7.5]{lrrw}.

\begin{cor}
Suppose that $G$ is a finite groupoid that acts self-similarly on a finite graph $E$ with no sources and vertex set $G^0$. Then there is a trace $\tau$ on $C^*(G)$ such  that $\tau(u_g)=c_g$ for all $g\in G\setminus G^0$.
\end{cor}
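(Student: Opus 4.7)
The plan is to extract $\tau$ by restricting a KMS state at the critical inverse temperature to the coefficient algebra. Specifically, Theorem~\ref{KMSatcritical}\eqref{existcritKMS} provides a KMS$_{\ln\rho(B)}$ state of $(\OO(G,E),\sigma)$, and pulling back along the quotient map $\TT(G,E)\to\OO(G,E)$ gives a KMS$_{\ln\rho(B)}$ state of $(\TT(G,E),\sigma)$ which I will continue to call $\psi$. By Proposition~\ref{KMS_algebraic}\eqref{states_iff}, the composition $\tau:=\psi\circ i_{C^*(G)}$ is then automatically a trace on $C^*(G)$, so it only remains to identify its values on the canonical generators.

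To verify that $\tau(u_g)=c_g$ for $g\in G\setminus G^0$, I would split into two cases. If $d(g)=c(g)=v$, then $v$ is a path of length $0$ with $s_v=p_v$, and applying formula \eqref{formcritstate} with $\kappa=\lambda=v$ gives
\[
\tau(u_g)=\psi(u_g)=\psi(s_v u_g s_v^*)=\rho(B)^{0}c_g=c_g,
\]
since $u_g=p_{c(g)}u_g p_{d(g)}=p_v u_g p_v$ by Proposition~\ref{Toeplitz_repn}\eqref{TR_SSA_vert}.

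If instead $d(g)\neq c(g)$, I would argue that both sides vanish. On one side, the set $F_g^k(v)$ in Proposition~\ref{Fng} is empty for every $k$ and $v$: any $\mu$ with $g\cdot\mu=\mu$ would satisfy $d(g)=r(\mu)=r(g\cdot\mu)=c(g)$, a contradiction. Hence $c_g=0$. On the other side, decomposing $u_g=\sum_{v,w\in E^0}s_v u_g s_w^*$ using $1=\sum_v p_v$ and applying \eqref{char:spanning} shows that each summand vanishes, because the factor $\delta_{v,w}\,\delta_{s(v),c(g)}\,\delta_{s(w),d(g)}$ forces $v=c(g)=d(g)=w$, which contradicts our assumption. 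Thus $\tau(u_g)=0=c_g$ in this case as well.

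There is no substantial obstacle here: all the hard analytic work has already been done in constructing the KMS state $\psi$ in Theorem~\ref{KMSatcritical}\eqref{existcritKMS}. The corollary simply packages the restriction of this critical-temperature KMS state to $C^*(G)$ as a trace with prescribed values, exactly mirroring the role played by \cite[Corollary~7.5]{lrrw} in the self-similar group setting.
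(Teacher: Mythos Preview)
Your proposal is correct and follows exactly the approach the paper indicates in the sentence preceding the corollary: take the KMS$_{\ln\rho(B)}$ state $\psi$ from Theorem~\ref{KMSatcritical}\eqref{existcritKMS}, observe via Proposition~\ref{KMS_algebraic}\eqref{states_iff} that its restriction to $C^*(G)$ is a trace, and read off the values from \eqref{formcritstate}. Your case analysis for $d(g)\neq c(g)$ is more explicit than what the paper writes, but it is the same argument.
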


\section{Computing KMS states at the critical inverse temperature}\label{sec:exscomp}

Suppose that $E$ is a strongly connected finite directed graph, that $(G,E)$ is a self-similar groupoid action, and that for every $g \in G \setminus E^0$, the set $\{g|_\mu :\mu \in d(g)E^*\}$ is finite. Then Theorem \ref{KMSatcritical}\eqref{uniqueKMS} implies that $(\OO(G,E),\sigma)$ has a unique KMS state given by \eqref{formcritstate}.

We say that $(G,E)$ \emph{contracts} to a finite subset $\NN$ of $G$ if  for every $g \in G$ there exists $n \in \N$ such that $\{g|_{\mu} : \mu \in d(g)E^*, |\mu| \geq n\} \subset \NN$. Then the \emph{Moore diagram} for $\NN$ is the labelled directed graph with vertex set $\NN$ and, for each $g \in \NN$ and $e \in d(g)E^1$, an edge from $g \in \NN$ to $g|_e \in \NN$ labelled $(e,g \cdot e)$. For $g \in \NN$ and $e \in d(g)E^1$, a typical edge in a Moore diagram looks like
\begin{center}
\begin{tikzpicture}
\node[vertex] (vertexa) at (-3,0)   {$g$};	
\node[vertex] (vertexb) at (0,0)  {$g|_e$}
	edge [<-,>=latex,out=180,in=0,thick] node[auto,swap,pos=0.5]{$\scriptstyle (e,g\cdot e)$}(vertexa);
\end{tikzpicture}
\end{center}
The self-similarity relations for the set $\NN$ can be read off the Moore diagram: the edge above encodes the relation $g\cdot (e \mu)=(g\cdot e)(g|_e\cdot \mu)$ for $\mu \in s(e)E^*$. 

\begin{remark}
We do not assume that the contracting set $\NN$ is minimal, as is often done in the literature on self-similar groups. However, as in \cite[\S2]{lrrw}, we can decide whether such a set is minimal by looking for cycles in the Moore diagram of $G$: if $g$ lies on a cycle, then $g|_{\mu}\in \NN$ for all $\mu\in s(g)E^*$.
\end{remark}

Now suppose that $\psi$ is the KMS$_{\ln\rho(B)}$ state of $(\OO(G,E),\sigma)$ from Theorem \ref{KMSatcritical}\eqref{existcritKMS}. The definition of the sets $G^n_g(v)$ in \eqref{expandphiu} shows that
\begin{align}\label{sum c_g formula}
\phi(u_g)&=\sum_{v\in E^0}\sum_{\mu\in G_g^n(v)}\rho(B)^{-n}\phi(u_{g|_\mu})\\
&=\sum_{\{\mu\in d(g)E^n\,:\,g\cdot \mu=\mu\}}\rho(B)^{-n}\phi(u_{g|_{\mu}}).\notag
\end{align}
Since for fixed $g$ and large enough $n$, every $g|_{\mu}\in \NN$, the values $\{\psi(u_g):g\in G\}$ are determined by the values $\{\psi(u_g):g\in \NN\}$. 

 For $g \in \NN \setminus E^0$, the set $F_g^k(v)$ consists of \emph{stationary paths} $\mu \in d(g)E^k v$ in the Moore diagram of the form
\begin{center}
\begin{tikzpicture}
\node[vertex] (vertexe) at (-5,0)   {$g$};
\node[vertex] (vertexa) at (-2.5,0)   {$g|_{\mu_1}$}	
	edge [<-,>=latex,out=180,in=0,thick] node[auto,swap,pos=0.5]{$\scriptstyle(\mu_1,\mu_1)$} (vertexe);
\node[vertex] (vertexb) at (0,0)  {$g|_{\mu_1\mu_2}$}
	edge [<-,>=latex,out=180,in=0,thick] node[auto,swap,pos=0.5]{$\scriptstyle(\mu_2,\mu_2)$}(vertexa);
\node[vertex] (vertexc) at (2.5,0)   {$\cdots$}
	edge [<-,>=latex,out=180,in=0,thick] node[auto,swap,pos=0.5]{$\scriptstyle(\mu_3,\mu_3)$} (vertexb);
\node[vertex] (vertexd) at (5,0)  {$g|_{\mu}=\id_v$.}
	edge [<-,>=latex,out=180,in=0,thick] node[auto,swap,pos=0.5]{$\scriptstyle(\mu_k,\mu_k)$}(vertexc);
\end{tikzpicture}
\end{center}
So we can compute the values of $|F_g^k(v)|$ by counting the stationary paths of length $k$ from $g$ to $\id_v$. Once we have the values $|F_g^k(v)|$, we compute $\psi(u_g)=c_g$ by evaluating the limit $\lim_{k\to \infty} c_{g,k}$ of Proposition \ref{Fng}.

We return to the graph $E$ of Example~\ref{s,r clarifying example}, and the  faithful self-similar groupoid action $(G,E)$ obtained by applying Theorem~\ref{defnGA} to the partial automorphisms $f_a$ and $f_b$ descibed in that example. Looking at the drawing of $E$ in Figure~\ref{asymmetric E} shows that the graph $E$ is strongly connected.

\begin{prop}\label{prop:nonsymm2}
The self-similar groupoid action $(G,E)$ of Example~\ref{s,r clarifying example} contracts to 
\begin{equation}\label{defNoldex}
\NN:=\{\id_v,\id_w,f_a,f_{a^{-1}},f_b,f_{b^{-1}}\}.
\end{equation}
\end{prop}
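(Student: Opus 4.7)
The plan is to verify contraction of $(G,E)$ to $\NN$ by combining explicit first-level computation with an induction on word length in the generators. To begin, I would compute all the edge-restrictions of the four generators $f_a^{\pm 1}, f_b^{\pm 1}$. Reading \eqref{defasymm} directly gives $f_a|_1=\id_v$, $f_a|_2=f_b$, $f_b|_3=\id_v$, $f_b|_4=f_a$, and Proposition~\ref{SSA:extension to paths}\eqref{path_rest_inv} (which says $g^{-1}|_e=(g|_{g^{-1}\cdot e})^{-1}$) then produces $f_a^{-1}|_3=f_b^{-1}$, $f_a^{-1}|_4=\id_v$, $f_b^{-1}|_1=\id_v$, $f_b^{-1}|_2=f_a^{-1}$. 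All eight values lie in $\NN$, so iterating Proposition~\ref{SSA:extension to paths}\eqref{path_rest_rest} shows $g|_\mu\in\NN$ for every $g\in\NN$ and every $\mu\in d(g)E^*$; in particular the proposition holds for $g\in\NN$.

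Next I would enlarge $\NN$ to
\[
\widetilde\NN:=\NN\cup\{f_af_b,\ f_bf_a,\ f_a^{-1}f_b^{-1},\ f_b^{-1}f_a^{-1}\}
\]
(the composable products of at most two elements of $\NN$) and check, using $(hg)|_e=h|_{g\cdot e}\,g|_e$ together with the table above, that $\widetilde\NN$ is closed under edge-restriction. A short case analysis gives $(f_bf_a)|_1=f_a$, $(f_bf_a)|_2=f_b$, $(f_a^{-1}f_b^{-1})|_1=f_b^{-1}$, $(f_a^{-1}f_b^{-1})|_2=f_a^{-1}$ (all in $\NN$), together with $(f_af_b)|_3=\id_v$, $(f_af_b)|_4=f_bf_a$ and $(f_b^{-1}f_a^{-1})|_3=f_a^{-1}f_b^{-1}$, $(f_b^{-1}f_a^{-1})|_4=\id_v$ (in $\widetilde\NN$). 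Combined, this yields the key intermediate statement: every $h\in\widetilde\NN$ satisfies $h|_\nu\in\NN$ for every $\nu$ of length $\geq 2$.

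For a general $g\in G$ I would then induct on the length $k$ of a fixed expression $g=x_1x_2\cdots x_k$ in the generators $\{f_a^{\pm 1},f_b^{\pm 1}\}$. The base case $k\leq 2$ places $g$ in $\widetilde\NN$, which the previous paragraph handles. For $k\geq 3$, set $g'=x_2\cdots x_k$; by the inductive hypothesis there is $n'$ with $g'|_\mu\in\NN$ for every $|\mu|\geq n'$. Proposition~\ref{SSA:extension to paths}\eqref{path_rest_nonhomo} then gives
\[
g|_\mu=x_1|_{g'\cdot\mu}\cdot g'|_\mu,
\]
a composable product of two elements of $\NN$, and therefore an element of $\widetilde\NN$. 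Invoking the two-step contraction of $\widetilde\NN$ produces $g|_\nu\in\NN$ for every $\nu$ of length $\geq n'+2$, which is the required contraction with $n_g\leq n'+2$.

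The main conceptual obstacle is that $\NN$ itself is not closed under groupoid multiplication, so the restriction formula can output a two-factor product lying outside $\NN$. The enlargement to $\widetilde\NN$ absorbs exactly this one-step overshoot, and what makes the induction close is the combination of its closure under restriction with its uniform two-step contraction into $\NN$. Both of these depend on the ``easy edge / hard edge'' structure of the generators visible in the first-paragraph table: each generator has exactly one edge whose restriction is an identity, and these identities are what collapse the two-factor products at the crucial step.
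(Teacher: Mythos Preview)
Your proof is correct and follows essentially the same approach as the paper: both hinge on computing the edge-restrictions of the four reduced length-two products $f_bf_a,\ f_a^{-1}f_b^{-1},\ f_af_b,\ f_b^{-1}f_a^{-1}$ and observing that after two edges every such product lands in $\NN$, then pushing this through an induction on word length to get the bound $2(k-1)$. Your write-up is in fact more careful than the paper's, which asserts the final inductive step (``if $g$ is the product of $n$ generators then $g|_\nu\in\NN$ for all $\nu\in d(g)E^{2(n-1)}$'') without spelling it out; your introduction of the auxiliary set $\widetilde\NN$ and the explicit factorisation $g|_\mu=x_1|_{g'\cdot\mu}\,g'|_\mu$ make that step transparent.
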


\begin{proof}
All reduced elements of length two in $G$ belong to either
\[
R_v:=\{f_b f_a,f_{a^{-1}}f_{b^{-1}}\} \quad \text{or} \quad R_w:=\{f_a f_b,f_{a^{-1}}f_{b^{-1}}\}.
\]
We compute all of their restrictions by length one paths:
\begin{equation}\label{ordertwo_nonsymmetric_a}
\begin{tabular}{lclclcl}
$(f_bf_a)|_{1}=f_a$ & \qquad & $(f_{a^{-1}}f_{b^{-1}})|_{1}=f_{b^{-1}}$ & \qquad & $(f_af_b)|_{3}=\id_v$ & \qquad & $(f_{b^{-1}}f_{a^{-1}})|_{3}=f_{a^{-1}}f_{b^{-1}}$ \\
$(f_bf_a)|_{2}=f_b$ & \qquad & $(f_{a^{-1}}f_{b^{-1}})|_{2}=f_{a^{-1}}$ & \qquad & $(f_af_b)|_{4}=f_bf_a$ & \qquad & $(f_{b^{-1}}f_{a^{-1}})|_{4}=\id_v$ \\
\end{tabular}
\end{equation}
The restriction of each element of $R_v$ belongs to $\{f_a,f_b,f_{a^{-1}},f_{b^{-1}}\}$ and the restriction of each element of $R_w$ belongs to $R_v \cup E^0$. Thus for any $g \in R_v \cup R_w$ and $\mu \in d(g)E^2$, we have $g|_{\mu} \in \NN$. So if $g \in G$ is the product of $n$ generators, then $g|_{\nu}$ belongs to $\NN$ for all $\nu \in d(g)E^{2(n-1)}$. Thus  $(G,E)$ contracts to $\NN$. 
\end{proof}

We draw the Moore diagram for the contracting set $\NN$ of $(G,E)$ in Figure~\ref{Moore:nonsymmetric2}.

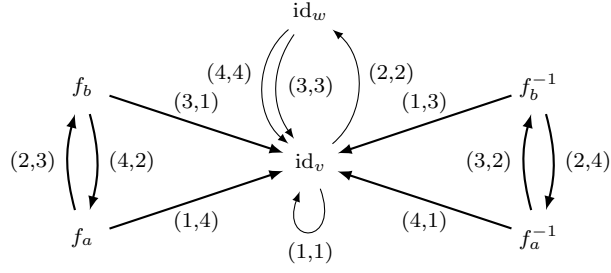
\begin{figure}
\begin{center}
\begin{tikzpicture}
\node at (0,2) {$\scriptstyle \id_w$};
\node[vertex] (vertexew) at (0,2)   {$\quad$};
\node at (0,0) {$\scriptstyle \id_v$};
\node[vertex] (vertexev) at (0,0)   {$\quad$}
	edge [<-,>=latex,out=35+90,in=145+90] node[right,swap,pos=0.5]{$\scriptstyle (3,3)$} (vertexew)
	edge [<-,>=latex,out=50+90,in=130+90] node[left,swap,pos=0.6]{$\scriptstyle (4,4)$} (vertexew)
	edge [<-,>=latex,out=200+90,in=160+90,loop] node[below,pos=0.5]{$\scriptstyle (1,1)$} (vertexev)
	edge [->,>=latex,out=310+90,in=230+90] node[right,swap,pos=0.6]{$\scriptstyle (2,2)$} (vertexew);
\node at (-3,-1) {$\scriptstyle f_a$};
\node[vertex] (vertexa) at (-3,-1)  {$\quad$}
	edge [->,>=latex,out=288.4+90,in=108.4+90,thick] node[below,pos=0.5]{$\scriptstyle(1,4)$} (vertexev);
\node at (-3,1) {$\scriptstyle f_b$};
\node[vertex] (vertexb) at (-3,1)  {$\quad$}
	edge [<-,>=latex,out=165+90,in=15+90,thick] node[left,pos=0.5]{$\scriptstyle (2,3)$} (vertexa)
	edge [->,>=latex,out=195+90,in=-15+90,thick] node[right,pos=0.5]{$\scriptstyle (4,2)$} (vertexa)
	edge [->,>=latex,out=251.6+90,in=71.6+90,thick] node[above,pos=0.5]{$\scriptstyle (3,1)$} (vertexev);
\node at (3,-1) {$\scriptstyle f_{a}^{-1}$};
\node[vertex] (vertex-a) at (3,-1)  {$\quad$}
	edge [->,>=latex,out=71.6+90,in=251.6+90,thick] node[below,pos=0.5]{$\scriptstyle(4,1)$} (vertexev);
\node at (3,1) {$\scriptstyle f_{b}^{-1}$};
\node[vertex] (vertex-b) at (3,1)  {$\quad$}
	edge [<-,>=latex,out=165+90,in=15+90,thick] node[left,pos=0.5]{$\scriptstyle (3,2)$} (vertex-a)
	edge [->,>=latex,out=195+90,in=-15+90,thick] node[right,pos=0.5]{$\scriptstyle (2,4)$} (vertex-a)
	edge [->,>=latex,in=288.4+90,out=108.4+90,thick] node[above,pos=0.5]{$\scriptstyle (1,3)$} (vertexev);
\end{tikzpicture}
\end{center}
\caption{The Moore diagram for the contracting set $\NN$ of \eqref{defNoldex}.}
\label{Moore:nonsymmetric2}
\end{figure}

\begin{prop}
The Cuntz-Pimsner algebra $(\OO(G,E),\sigma)$ has a unique KMS$_{\ln 2}$ state $\psi$ which is given on $\NN$ by
\[
\psi(u_g)=\begin{cases}
0 & \text{ for } g \in \{f_a,f_b,f_{a^{-1}},f_{b^{-1}}\}, \\
1/2 & \text{ for } g \in \{\id_v,\id_w\}. \\
\end{cases}
\]
\end{prop}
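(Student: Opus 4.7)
The plan is to apply the uniqueness half of Theorem~\ref{KMSatcritical} to dispose of existence/uniqueness, and then to read off the listed values from two elementary observations: the Perron-Frobenius eigenvector of $B$ gives the values on vertices, and the corollary after Proposition~\ref{KMS_algebraic} forces the values on the four non-identity elements to vanish.

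First I would check that the hypothesis of Theorem~\ref{KMSatcritical}\eqref{uniqueKMS} is satisfied. Since $(G,E)$ contracts to the finite set $\NN$ of Proposition~\ref{prop:nonsymm2}, for each $g\in G\setminus E^0$ there exists $n\in\N$ such that $g|_\mu\in\NN$ whenever $|\mu|\geq n$; combined with the finitely many shorter restrictions, this shows $\{g|_\mu:\mu\in d(g)E^*\}$ is finite. The graph $E$ of Figure~\ref{asymmetric E} is strongly connected (every vertex can reach every other), so both parts of Theorem~\ref{KMSatcritical} apply, giving the existence and uniqueness of a KMS$_{\ln\rho(B)}$ state $\psi$ on $(\OO(G,E),\sigma)$ satisfying \eqref{formcritstate}.

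Next I would compute $\rho(B)$ and the associated Perron-Frobenius eigenvector. With $E^0=\{v,w\}$ ordered as in Example~\ref{s,r clarifying example-part two}, the vertex matrix
\[
B=\begin{pmatrix}1&1\\2&0\end{pmatrix}
\]
has characteristic polynomial $\lambda^2-\lambda-2=(\lambda-2)(\lambda+1)$, so $\rho(B)=2$ (whence $\beta_c=\ln 2$) and the unique unimodular Perron-Frobenius eigenvector is $x=(1/2,1/2)$. By Proposition~\ref{prop:factors}, the fact that $\psi$ descends from $\TT(G,E)$ to $\OO(G,E)$ forces $m^\psi=x$, and since $u_{\id_v}=p_v$ by Proposition~\ref{Toeplitz_repn}\eqref{TR_pi}, we obtain $\psi(u_{\id_v})=\psi(p_v)=x_v=1/2$ and similarly $\psi(u_{\id_w})=1/2$.

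Finally, for each $g\in\{f_a,f_b,f_a^{-1},f_b^{-1}\}$ we have $d(g)\neq c(g)$ (each of these partial isomorphisms exchanges $v$ and $w$). The corollary immediately following Proposition~\ref{KMS_algebraic} states that $\phi(u_g)\neq 0$ implies $d(g)=c(g)$ for any KMS state $\phi$, so $\psi(u_g)=0$ for all four of these elements.

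There is no serious obstacle here — everything is already supplied by the general theorems, and the only computations are the eigenvalue/eigenvector calculation for the $2\times 2$ matrix $B$. The proof amounts to applying Theorem~\ref{KMSatcritical} to the contracting action verified in Proposition~\ref{prop:nonsymm2} and observing that, on $\NN$, the formula \eqref{formcritstate} reduces to the Perron-Frobenius eigenvector on the vertices and vanishes on elements that move a vertex.
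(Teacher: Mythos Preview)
Your proof is correct and follows the paper closely for existence, uniqueness, and the values on the vertices. For the vanishing on $\{f_a,f_b,f_a^{-1},f_b^{-1}\}$ you take a genuinely simpler route: you observe that each of these elements has $d(g)\neq c(g)$ and invoke the corollary to Proposition~\ref{KMS_algebraic} (or equivalently the ``otherwise'' clause of \eqref{formcritstate}). The paper instead reads off from the Moore diagram in Figure~\ref{Moore:nonsymmetric2} that there are no stationary paths from these elements to $\id_v$ or $\id_w$, so $F_g^k(v)=F_g^k(w)=\emptyset$ and hence $c_g=0$. Your argument is more direct for this particular example, since all non-identity elements of $\NN$ happen to switch the two vertices; the paper's path-counting approach is the general method of \S\ref{sec:exscomp} that one needs when $d(g)=c(g)$, as in the Katsura example treated immediately afterwards.
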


\begin{proof}
The spectral radius of the vertex matrix $B$ of $E$ is $\rho(B)=2$ with normalised Perron-Frobenius eigenvector $(1/2,1/2)$. Theorem \ref{KMSatcritical}\eqref{uniqueKMS} implies that there is a unique KMS$_{\ln 2}$ state $\psi$ and Proposition \ref{prop:factors} implies that $\psi(u_g)=1/2$ for $g \in \{\id_v,\id_w\}$.

The Moore diagram in Figure \ref{Moore:nonsymmetric2} shows that there are no stationary paths from elements of the set $S=\{f_a,f_b,f_{a^{-1}},f_{b^{-1}}\}$ to $\id_v$ or $\id_w$. Thus $F_g^k(v)=F_g^k(w)=\emptyset$ for all $k \in \N$ and $g \in S$, and $\psi(u_g)=0$ for $g \in S$.
\end{proof}

We next examine  one of the self-similar groupoid actions $(G_{\AA},E)$ of Example~\ref{Katsura example}, for $\AA=\{a_1,a_2\}$ and the matrices 
\begin{equation}\label{Katex}
A = \begin{pmatrix}
2 & 1 \\
2 & 2\end{pmatrix}
\quad  \text{ and } \quad 
B = \begin{pmatrix}
1 & 0 \\
2 & 1 \end{pmatrix}.
\end{equation}
We draw the corresponding graph $E$ in Figure~\ref{fig:graph Katsura 1}, and observe that $E$ is strongly connected. 
\begin{figure}
\begin{tikzpicture}
\node at (0,0) {$2$};
\node[vertex] (vertex2) at (0,0)   {$\quad$}
	edge [<-,>=latex,out=20,in=340,loop,thick,looseness=20] node[right,pos=0.5]{\,$e_{2,2,0}$} (vertex2)
	edge [<-,>=latex,out=40,in=320,loop,thick,looseness=15] node[left,pos=0.5]{$e_{2,2,1}$\,} (vertex2);
\node at (-3,0) {$1$};
\node[vertex] (vertex1) at (-3,0)   {$\quad$}
	edge [<-,>=latex,out=160,in=200,loop,thick,looseness=20] node[right,pos=0.5]{$e_{1,1,0}$} (vertex1)
	edge [<-,>=latex,out=140,in=220,loop,thick,looseness=15] node[left,pos=0.5]{$e_{1,1,1}$} (vertex1)
	edge [<-,>=latex,out=30,in=150,thick] node[above,pos=0.5]{$e_{1,2,0}$} (vertex2)
	edge [->,>=latex,out=335,in=205,thick] node[above,pos=0.5]{$e_{2,1,0}$} (vertex2)
	edge [->,>=latex,out=320,in=220,thick] node[below,pos=0.5]{$e_{2,1,1}$} (vertex2);
\end{tikzpicture}
\caption{The graph $E$ defined by the matrix $A$ in \eqref{Katex}.}
\label{fig:graph Katsura 1}
\end{figure}
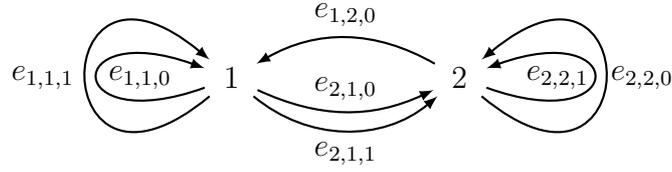
The action of the generating set $\{f_{a_1},f_{a_2}\}$ is given by the relations
\begin{equation*}
\begin{tabular}{lclcl}
$f_{a_1} \cdot e_{1,1,0}\phantom{.} \mu =e_{1,1,1} \phantom{.}\mu $ & \qquad & $f_{a_1} \cdot e_{1,1,1} \phantom{.}\mu =e_{1,1,0} (f_{a_1}\cdot \mu) $ & \qquad & $f_{a_1} \cdot e_{1,2,0}\phantom{.} \nu =e_{1,2,0}\phantom{.} \nu $\\
$f_{a_2} \cdot e_{2,1,0}\phantom{.} \mu =e_{2,1,0}(f_{a_1} \cdot \mu) $ & \qquad & $f_{a_2} \cdot e_{2,1,1}\phantom{.} \mu =e_{2,1,1}(f_{a_1} \cdot \mu)$ & \qquad & $f_{a_2} \cdot e_{2,2,0}\phantom{.} \nu =e_{2,2,1}\phantom{.}\nu$ \\
$f_{a_2} \cdot e_{2,2,1}\phantom{.} \nu =e_{2,2,0}(f_{a_2} \cdot \nu) $ &&&&\\
\end{tabular}
\end{equation*}

\begin{prop}\label{prop:KatsuraEx}
We take $(\AA,E)$ as above. Then the self-similar groupoid action $(G_{\AA},E)$ contracts to 
\begin{equation}\label{nucKat}
\NN:=\{\id_1,\id_2,f_{a_1},f_{a_1}^{-1},f_{a_2},f_{a_2}^{-1}\},
\end{equation}
and for each connected component $C_i=\{i\}$ the reduction $G_{\{i\}}$ is isomorphic to~$\Z$.
\end{prop}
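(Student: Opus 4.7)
The plan is to split into the contraction claim and the isomorphism $G_{\{i\}}\cong\Z$, and to exploit the adding-machine (odometer) structure inherent in the Katsura action. Example~\ref{Katsura example} shows that the orbits of $G_{\AA}$ on $E^0$ are singletons and that each reduction $(G_\AA)_{\{i\}}$ is the cyclic group $\langle f_{a_i}\rangle$; in particular every $g\in G_\AA$ equals $f_{a_i}^k$ for a unique $i\in\{1,2\}$ and some $k\in\Z$. So the whole proposition reduces to controlling the restrictions and the order of the two generators $f_{a_1}$ and $f_{a_2}$.

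For contraction, a direct application of \eqref{Katsura_action} with the matrices in \eqref{Katex} produces the length-$1$ restrictions $f_{a_1}|_{e_{1,1,0}}=\id_1$, $f_{a_1}|_{e_{1,1,1}}=f_{a_1}$, $f_{a_1}|_{e_{1,2,0}}=\id_2$, $f_{a_2}|_{e_{2,1,0}}=f_{a_2}|_{e_{2,1,1}}=f_{a_1}$, $f_{a_2}|_{e_{2,2,0}}=\id_2$, and $f_{a_2}|_{e_{2,2,1}}=f_{a_2}$; Proposition~\ref{SSA:extension to paths}\eqref{path_rest_inv} yields the corresponding restrictions for the inverses. In particular $\NN$ is closed under single-edge restriction, so it suffices to show that each $f_{a_i}^k$ reaches $\NN$ after finitely many restrictions. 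The core technical step is an odometer identification: under the bijection $e_{1,1,m_1}e_{1,1,m_2}\cdots\mapsto \sum_{j\geq 1}m_j 2^{j-1}$, $f_{a_1}$ is the standard $2$-adic adder. Using Proposition~\ref{SSA:extension to paths}\eqref{path_rest_nonhomo} and induction on $n$, I will establish that for $\mu=e_{1,1,m_1}\cdots e_{1,1,m_n}\in 1E^n$ consisting entirely of loops,
\[
(f_{a_1}^k)|_\mu=f_{a_1}^{c_{n,k,\mu}},\qquad c_{n,k,\mu}=\Big\lfloor \tfrac{k+\sum_{j=1}^n m_j 2^{j-1}}{2^n}\Big\rfloor,
\]
which is precisely the carry when adding $k$ to the partial sum. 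Choosing $n$ with $2^n>2|k|$ forces $|c_{n,k,\mu}|\leq 1$, hence the restriction lies in $\NN$. For a path leaving the loops through $e_{1,2,0}$, an immediate induction in $k$ using $f_{a_1}|_{e_{1,2,0}}=\id_2$ and $f_{a_1}\cdot e_{1,2,0}=e_{1,2,0}$ gives $(f_{a_1}^k)|_{e_{1,2,0}}=\id_2$, and the path then lies in a subtree on which all further restrictions are identities. The $f_{a_2}$-case is strictly parallel on the loops at $2$, and along an edge $e_{2,1,*}$ one finds $(f_{a_2}^k)|_{e_{2,1,m}}$ is a power of $f_{a_1}$, reducing to the already-treated case. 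Taking $n$ large enough uniformly in the possible path types yields contraction to $\NN$.

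For $G_{\{i\}}\cong\Z$, the cyclicity already holds by Example~\ref{Katsura example}, so I only need $f_{a_i}$ to have infinite order. The same odometer description says that $f_{a_i}$ permutes the $2^n$ length-$n$ loop paths at $i$ by addition of $1$ modulo $2^n$; hence $f_{a_i}^k$ acts as addition of $k$ modulo $2^n$, which is non-trivial as soon as $n>\log_2|k|$. Thus $f_{a_i}^k\neq\id_i$ for every $k\neq 0$. The main obstacle is the carry formula for $(f_{a_1}^k)|_\mu$: tracking how the composition in Proposition~\ref{SSA:extension to paths}\eqref{path_rest_nonhomo} iterates the odometer's individual-edge restrictions over $n$ steps and yields the stated uniform bound on the exponent. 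Once that identity is in hand, both contraction and infinite order drop out of the same analysis.
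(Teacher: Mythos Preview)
Your proof is correct but takes a genuinely different route from the paper's. For contraction, the paper does not identify the odometer structure or prove any carry formula; instead it computes directly that each reduced length-two word $f_{a_i}^{\pm 2}$, when restricted along any path of length two, lands in $\NN$ (the key observation being that $f_{a_1}^2|_e$ already has length at most one for every edge $e$, while $f_{a_2}^2|_e$ is either $f_{a_1}^2$ or has length one). A straightforward induction on word length then shows that any product of $n$ generators restricts into $\NN$ along every path of length $2(n-1)$. For infinite order, the paper likewise avoids the general odometer description: it observes that $f_{a_i}$ swaps $e_{i,i,0}$ and $e_{i,i,1}$, so odd powers are nontrivial, and that $f_{a_i}^{2n}|_{e_{i,i,0}}=f_{a_i}^n$, so one can halve the exponent repeatedly until it becomes odd.

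Your approach trades this short direct computation for a structural identification: the carry formula $(f_{a_1}^k)|_\mu=f_{a_1}^{\lfloor (k+\sum m_j 2^{j-1})/2^n\rfloor}$ is more work to establish, but it yields both the contraction bound and infinite order simultaneously, and it makes transparent \emph{why} the nucleus is what it is rather than merely verifying it. The paper's argument, by contrast, is shorter and requires no induction on the carry identity, only a finite check on length-two words; it would also adapt more readily to examples where no clean odometer description is available. Both proofs are complete; yours gives a sharper quantitative picture of the restriction map at the cost of a heavier inductive lemma.
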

\begin{proof}
We first show that $(G_{\AA},E)$ contracts to $\NN$ by showing that for each $g \in G$ and sufficiently long $\mu \in d(g)E^*$ we have $g|_{\mu} \in \NN$.

The reduced elements of length two in $G$ are $S:=\{f_{a_1}^2$, $(f_{a_1}^2)^{-1}$, $f_{a_2}^2$, $(f_{a_2}^2)^{-1}$\}.
We compute
\begin{equation}\label{ordertwo_nonsymmetric}
\begin{tabular}{lclclcl}
$f_{a_1}^2|_{e_{1,1,0}}=f_{a_1}$ & \qquad & $f_{a_1}^2|_{e_{1,1,1}}=f_{a_1}$ & \qquad & $f_{a_1}^2|_{e_{1,2,0}}=\id_2$ &&\\
$f_{a_2}^2|_{e_{2,1,0}}=f_{a_1}^2$ & \qquad & $f_{a_2}^2|_{e_{2,1,1}}=f_{a_1}^2$ & \qquad & $f_{a_2}^2|_{e_{2,2,0}}=f_{a_2}$ & \qquad & $f_{a_2}^2|_{e_{2,2,1}}=f_{a_2}$ \\
\end{tabular}
\end{equation}
We see that the length of $f_{a_1}^2|_e$ is less than two for all $e \in d(f_{a_1})$ and $f_{a_1}^2|_{e'}$ is either $f_{a_1}^2$ or has length one for all $e' \in d(f_{a_2})$. We conclude that $f_{a_i}^2|_{\mu}$ has length less than two for all $\mu \in d(f_{a_i})$ with $|\mu|=2$. Since $(f_{a_i}^2)^{-1}|_{\mu}=f_{a_i}^2|_{(f_{a_i}^2)^{-1}\cdot \mu}$ by Proposition \ref{SSA:extension to paths}\eqref{path_rest_inv} and partial automorphisms are length preserving, we also conclude that $(f_{a_i}^2)^{-1}|_{\mu}$ has length less than two for all $\mu \in d(f_{a_i}^{-1})$ with $|\mu|=2$. So for any $g\in S$ and $\mu \in d(g)E^2$, we have that~$g|_{\mu} \in \NN$. Thus if $g \in G_{\AA}$ is the product of $n$ generators, then $g|_{\nu}$ belongs to $\NN$ for all $\nu \in d(g)E^{2(n-1)}$. Thus $(G_{\AA},E)$ contracts to $\NN$, as claimed.

We now show that $G_{\{i\}}\cong \Z$. Fix $i \in \{1,2\}$, and recall from  Example~\ref{Katsura example} that either $G_{\{i\}}=\{i, f_{a_i},f_{a_i}^2,\cdots,f_{a_i}^{n_i-1}\}$ or $G_{\{i\}}=\{f_{a_i}^k:k\in \Z\}$.
To show the latter, we suppose $k \in \N$ and prove that $f_{a_i}^k \neq \id_i$. Since $f_{a_i}\cdot e_{i,i,0}=e_{i,i,1}$ and $f_{a_i}\cdot e_{i,i,1}=e_{i,i,0}$, we have that $f_{a_i}^k\neq \id_v$ for $k$ odd. So suppose $k=2n$ is even. As shown in \eqref{ordertwo_nonsymmetric} we have $f_{a_i}^2|_{e_{i,i,0}}=f_{a_i}$, and hence
\[
f_{a_i}^4|_{e_{i,i,0}}=f_{a_i}^2|_{f_{a_i}^2\cdot e_{i,i,0}}f_{a_i}^2|_{e_{i,i,0}}=f_{a_i}^2|_{e_{i,i,0}}f_{a_i}=f_{a_i}^2.
\]
Continuing in this way, we find that $f_{a_i}^{2n}|_{e_{i,i,0}}=f_{a_i}^{n}$. Considering the action of $f_{a_i}^{2n}$ on sufficiently long words of the form $\mu=e_{i,i,0}e_{i,i,0}\dots e_{i,i,0}$, we can reduce $2n$ by factors of $2$ and arrive at $f_{a_i}^{2n}|_{\mu}=f_{a_i}^{m}$ with $m$ odd. Then $f_{a_i}^{2n}\cdot \mu \phantom{.}e_{i,i,0}=\mu (f_{a_i}^{m}\cdot e_{i,i,0})=\mu \phantom{.} e_{i,i,1}$. Thus $f_{a_i}^k$ acts nontrivially on $iE^*$ for all $k \in \Z$, and $G_{\{i\}}\cong \Z$.
\end{proof}

We have drawn  the Moore diagram for $\NN$ in Figure~\ref{Moore:KMS interesting example}. Since every element in $\NN$ is in a cycle, this contracting set $\NN$ is in fact  minimal. Note that the central horizontal layer in Figure~\ref{Moore:KMS interesting example} is a copy of the graph $E$ of Figure~\ref{fig:graph Katsura 1} with the arrows reversed. 

\begin{figure}
\begin{center}
\begin{tikzpicture}
\node at (0,0) {$\scriptstyle \id_2$};
\node[vertex] (vertex2) at (0,0)   {$\quad$}
	edge [->,>=latex,out=20,in=340,loop,thick,looseness=20] node[right,pos=0.6]{\,\,$\scriptstyle (e_{2,2,0},e_{2,2,0})$} (vertex2)
	edge [->,>=latex,out=40,in=320,loop,thick,looseness=15] node[right,pos=0.4]{$\scriptstyle (e_{2,2,1},e_{2,2,1})$} (vertex2);
\node at (-3,0) {$\scriptstyle \id_1$};
\node[vertex] (vertex1) at (-3,0)   {$\quad$}
	edge [->,>=latex,out=160,in=200,loop,thick,looseness=20] node[left,pos=0.4]{$\scriptstyle (e_{1,1,0},e_{1,1,0})$\,\,} (vertex1)
	edge [->,>=latex,out=140,in=220,loop,thick,looseness=15] node[left,pos=0.6]{$\scriptstyle (e_{1,1,1},e_{1,1,1})$} (vertex1)
	edge [->,>=latex,out=16,in=164,thick] node[above,pos=0.4]{$\scriptstyle (e_{1,2,0},e_{1,2,0})$} (vertex2)
	edge [<-,>=latex,out=342,in=198,thick] node[above,pos=0.5]{$\scriptstyle (e_{2,1,0},e_{2,1,0})$} (vertex2)
	edge [<-,>=latex,out=330,in=210,thick] node[below,pos=0.4]{$\scriptstyle (e_{2,1,1},e_{2,1,1})$} (vertex2);
\node at (0,3) {$\scriptstyle f_{a_2}$};
\node[vertex] (vertex2a) at (0,3)   {$\quad$}
	edge [->,>=latex,out=270,in=90,thick] node[right,pos=0.5]{\,$\scriptstyle (e_{2,2,0},e_{2,2,1})$} (vertex2)
	edge [->,>=latex,out=40,in=320,loop,thick,looseness=15] node[right,pos=0.5]{$\scriptstyle (e_{2,2,1},e_{2,2,0})$} (vertex2a);
\node at (-3,3) {$\scriptstyle f_{a_1}$};
\node[vertex] (vertex1a) at (-3,3)   {$\quad$}
	edge [->,>=latex,out=270,in=90,thick] node[left,pos=0.5]{$\scriptstyle (e_{1,1,0},e_{1,1,1})$} (vertex1)
	edge [->,>=latex,out=140,in=220,loop,thick,looseness=15] node[left,pos=0.5]{$\scriptstyle (e_{1,1,1},e_{1,1,0})$} (vertex1a)
	edge [->,>=latex,out=315,in=135,thick] node[right,pos=0.3]{$\scriptstyle (e_{1,2,0},e_{1,2,0})$} (vertex2)
	edge [<-,>=latex,out=10,in=170,thick] node[above,pos=0.5]{$\scriptstyle (e_{2,1,0},e_{2,1,0})$} (vertex2a)
	edge [<-,>=latex,out=350,in=190,thick] node[below,pos=0.5]{$\scriptstyle (e_{2,1,1},e_{2,1,1})$} (vertex2a);
\node at (0,-3) {$\scriptstyle f_{a_2}^{-1}$};
\node[vertex] (vertex-2a) at (0,-3)   {$\quad$}
	edge [->,>=latex,out=90,in=270,thick] node[right,pos=0.5]{\,$\scriptstyle (e_{2,2,1},e_{2,2,0})$} (vertex2)
	edge [->,>=latex,out=40,in=320,loop,thick,looseness=15] node[right,pos=0.5]{$\scriptstyle (e_{2,2,0},e_{2,2,1})$} (vertex-2a);
\node at (-3,-3) {$\scriptstyle f_{a_1}^{-1}$};
\node[vertex] (vertex-1a) at (-3,-3)   {$\quad$}
	edge [->,>=latex,out=90,in=270,thick] node[left,pos=0.5]{$\scriptstyle (e_{1,1,1},e_{1,1,0})$} (vertex1)
	edge [->,>=latex,out=140,in=220,loop,thick,looseness=15] node[left,pos=0.5]{$\scriptstyle (e_{1,1,0},e_{1,1,1})$} (vertex-1a)
	edge [->,>=latex,out=45,in=225,thick] node[right,pos=0.3]{$\scriptstyle (e_{1,2,0},e_{1,2,0})$} (vertex2)
	edge [<-,>=latex,out=10,in=170,thick] node[above,pos=0.5]{$\scriptstyle (e_{2,1,0},e_{2,1,0})$} (vertex-2a)
	edge [<-,>=latex,out=350,in=190,thick] node[below,pos=0.5]{$\scriptstyle (e_{2,1,1},e_{2,1,1})$} (vertex-2a);
\end{tikzpicture}
\end{center}
\caption{The Moore diagram for $\NN$ of Proposition~\ref{prop:KatsuraEx}.}
\label{Moore:KMS interesting example}
\end{figure}
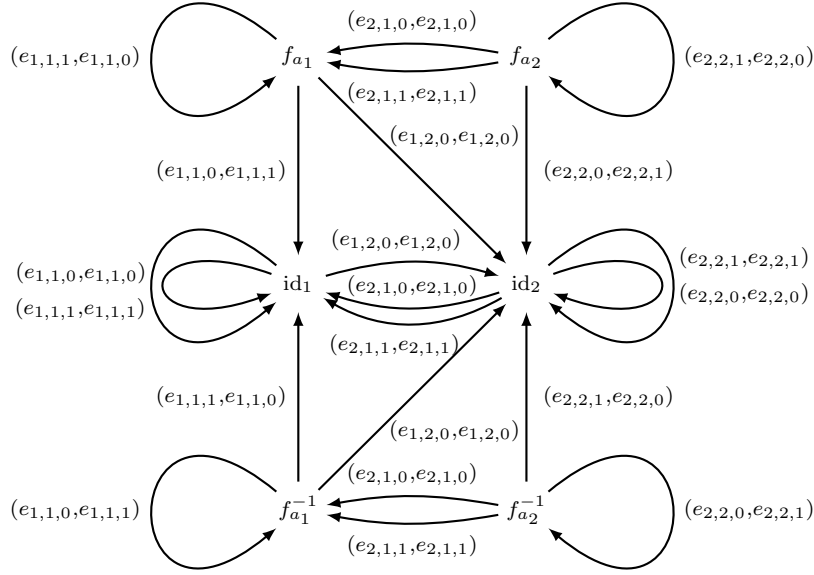

\begin{prop}\label{wildnos}
Keep the above notation, and consider the self-similar groupoid action $(G_{\AA},E)$ with contracting set $\NN$ as in \ref{nucKat}.
Then the Cuntz-Pimsner algebra $(\OO(G_{\AA},E),\sigma)$ has a unique KMS$_{\ln (2+\sqrt{2})}$ state $\psi$ which is given on $\NN$ by  
\[
\psi(u_g)=\begin{cases}
\sqrt{2}-1 & \text{ for } g=\id_1, \\
2-\sqrt{2} & \text{ for } g=\id_2, \\
3-2\sqrt{2} & \text{ for } g=f_{a_1} \text{ and } g=f_{a_1}^{-1}, \\
10-7\sqrt{2} & \text{ for } g=f_{a_2} \text{ and } g=f_{a_2}^{-1}. \\
\end{cases}
\]
\end{prop}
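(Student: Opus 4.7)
The plan is to apply Theorem~\ref{KMSatcritical}. Proposition~\ref{prop:KatsuraEx} shows that $(G_{\AA},E)$ contracts to the finite set $\NN$ in~\eqref{nucKat}, so the hypothesis of Theorem~\ref{KMSatcritical}\eqref{uniqueKMS} is satisfied and there is a unique KMS$_{\ln\rho(B)}$ state $\psi$ of $(\OO(G_{\AA},E),\sigma)$. I first identify $\rho(B)$ and compute the Perron--Frobenius eigenvector: the vertex matrix of $E$ equals the matrix $A$ in~\eqref{Katex}, whose characteristic polynomial $(2-\lambda)^{2}-2$ has dominant root $\rho(B)=2+\sqrt{2}$, and the corresponding unimodular eigenvector is $x=(\sqrt{2}-1,\,2-\sqrt{2})$. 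Proposition~\ref{prop:factors} then gives $\psi(p_i)=x_i$, which is exactly the claimed value at $\id_i$.

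For the generators $f_{a_1}$ and $f_{a_2}$, I use the recursion~\eqref{sum c_g formula} with $n=1$, namely
\[
\psi(u_g)=\rho(B)^{-1}\sum_{\{e\in d(g)E^1\,:\,g\cdot e=e\}}\psi(u_{g|_e}).
\]
Reading off the length-one fixed edges from the Moore diagram in Figure~\ref{Moore:KMS interesting example} (or from the defining relations of the Katsura action), the only edge fixed by $f_{a_1}$ is $e_{1,2,0}$, with restriction $\id_2$, so
\[
\psi(u_{f_{a_1}})=\frac{2-\sqrt{2}}{2+\sqrt{2}}=\frac{(2-\sqrt{2})^{2}}{2}=3-2\sqrt{2}.
\]
The edges fixed by $f_{a_2}$ are $e_{2,1,0}$ and $e_{2,1,1}$, each with restriction $f_{a_1}$, so
\[
\psi(u_{f_{a_2}})=\frac{2\,\psi(u_{f_{a_1}})}{2+\sqrt{2}}=(3-2\sqrt{2})(2-\sqrt{2})=10-7\sqrt{2}.
\]
The values at the inverses follow immediately: formula~\eqref{formcritstate} shows that $\psi(u_g)\in\R$ for every $g$, while $u_{f_{a_i}^{-1}}=u_{f_{a_i}}^{*}$, so the state property forces $\psi(u_{f_{a_i}^{-1}})=\overline{\psi(u_{f_{a_i}})}=\psi(u_{f_{a_i}})$.

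The main thing to be careful about is the bookkeeping of length-one fixed edges and their restrictions: one must verify that every restriction appearing on the right-hand side of the one-step recursion lies in $\NN$ and has a KMS value that is either already known (the vertex values from the Perron--Frobenius calculation) or has been computed at an earlier step. The design of the contracting set $\NN$ ensures that the recursion closes after a single application for each $g\in\NN\setminus E^0$, so no deeper iteration is required and the proof reduces to the arithmetic displayed above.
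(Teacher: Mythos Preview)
Your proof is correct and follows essentially the same strategy as the paper: invoke Theorem~\ref{KMSatcritical}\eqref{uniqueKMS} for uniqueness, read off the vertex values from the Perron--Frobenius eigenvector via Proposition~\ref{prop:factors}, and then use the recursion~\eqref{sum c_g formula} together with the Moore diagram to evaluate $\psi$ on the remaining elements of $\NN$.

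The only noteworthy difference is in the computation for $f_{a_1}$. The paper chooses to illustrate the general limit formula of Proposition~\ref{Fng} there, computing $|F_{f_{a_1}}^{k}(i)|$ in terms of the entries of $A^{k-1}$ and then evaluating $c_g=\lim_k c_{g,k}$ directly; it reserves the one-step recursion~\eqref{sum c_g formula} for $f_{a_2}$. You instead apply the one-step recursion already at $f_{a_1}$, exploiting that the unique fixed edge $e_{1,2,0}$ has restriction $\id_2$, whose value is already known. Your route is shorter and more uniform; the paper's route has the pedagogical advantage of showing the limit machinery in action. Your argument for the inverses via $\psi(u_{g^{-1}})=\overline{\psi(u_g)}=\psi(u_g)$ is also a clean alternative to the paper's appeal to the symmetry of the Moore diagram.
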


\begin{proof}
A computation shows that the spectral radius of the vertex matrix for $E$ is $\rho(A)=2+\sqrt{2}$ with normalised Perron-Frobenius eigenvector $x=(\sqrt{2}-1,2-\sqrt{2})$. So Theorem \ref{KMSatcritical}\eqref{uniqueKMS} implies that there is a unique KMS$_{\ln (2+\sqrt{2})}$ state $\psi$. Moreover, since $u_v=p_v$ by Proposition \ref{Toeplitz_repn}\eqref{TR_pi}, Proposition \ref{prop:factors} implies that $\psi(u_1)=\sqrt{2}-1$ and $\psi(u_2)=2-\sqrt{2}$.

Since only the stationary paths in the Moore diagram are used to compute nontrivial KMS states at the critical temperature, we consider stationary paths in the Moore diagram in Figure~\ref{Moore:KMS interesting example}, which all lie in the cut-down subdiagram shown in Figure~\ref{Moore:KMS interesting example cut down}.

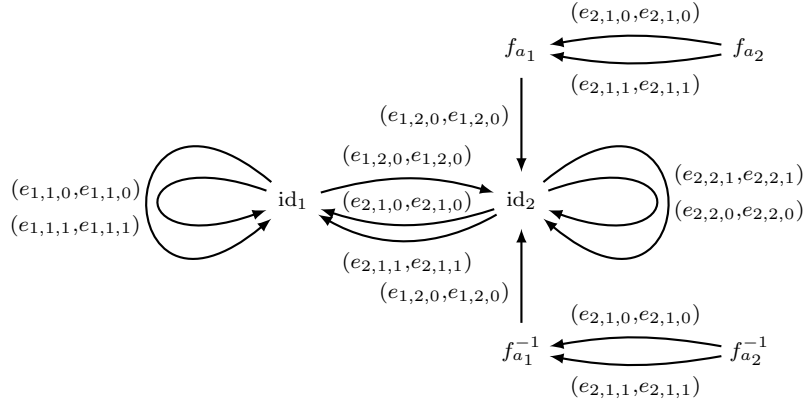
\begin{figure}
\begin{tikzpicture}
\node at (0,0) {$\scriptstyle \id_2$};
\node[vertex] (vertex2) at (0,0)   {$\quad$}
	edge [->,>=latex,out=20,in=340,loop,thick,looseness=20] node[right,pos=0.6]{\,\,$\scriptstyle (e_{2,2,0},e_{2,2,0})$} (vertex2)
	edge [->,>=latex,out=40,in=320,loop,thick,looseness=15] node[right,pos=0.4]{$\scriptstyle (e_{2,2,1},e_{2,2,1})$} (vertex2);
\node at (-3,0) {$\scriptstyle \id_1$};
\node[vertex] (vertex1) at (-3,0)   {$\quad$}
	edge [->,>=latex,out=160,in=200,loop,thick,looseness=20] node[left,pos=0.4]{$\scriptstyle (e_{1,1,0},e_{1,1,0})$\,\,} (vertex1)
	edge [->,>=latex,out=140,in=220,loop,thick,looseness=15] node[left,pos=0.6]{$\scriptstyle (e_{1,1,1},e_{1,1,1})$} (vertex1)
	edge [->,>=latex,out=16,in=164,thick] node[above,pos=0.5]{$\scriptstyle (e_{1,2,0},e_{1,2,0})$} (vertex2)
	edge [<-,>=latex,out=342,in=198,thick] node[above,pos=0.5]{$\scriptstyle (e_{2,1,0},e_{2,1,0})$} (vertex2)
	edge [<-,>=latex,out=330,in=210,thick] node[below,pos=0.5]{$\scriptstyle (e_{2,1,1},e_{2,1,1})$} (vertex2);
\node at (3,2) {$\scriptstyle f_{a_2}$};
\node[vertex] (vertex2a) at (3,2)   {$\quad$};
\node at (0,2) {$\scriptstyle f_{a_1}$};
\node[vertex] (vertex1a) at (0,2)   {$\quad$}
	edge [->,>=latex,out=270,in=90,thick] node[left,pos=0.4]{$\scriptstyle (e_{1,2,0},e_{1,2,0})$} (vertex2)
	edge [<-,>=latex,out=10,in=170,thick] node[above,pos=0.5]{$\scriptstyle (e_{2,1,0},e_{2,1,0})$} (vertex2a)
	edge [<-,>=latex,out=350,in=190,thick] node[below,pos=0.5]{$\scriptstyle (e_{2,1,1},e_{2,1,1})$} (vertex2a);
\node at (3,-2) {$\scriptstyle f_{a_2}^{-1}$};
\node[vertex] (vertex2a-) at (3,-2)   {$\quad$};
\node at (0,-2) {$\scriptstyle f_{a_1}^{-1}$};
\node[vertex] (vertex1a-) at (0,-2)   {$\quad$}
	edge [->,>=latex,out=90,in=270,thick] node[left,pos=0.3]{$\scriptstyle (e_{1,2,0},e_{1,2,0})$} (vertex2)
	edge [<-,>=latex,out=10,in=170,thick] node[above,pos=0.5]{$\scriptstyle (e_{2,1,0},e_{2,1,0})$} (vertex2a-)
	edge [<-,>=latex,out=350,in=190,thick] node[below,pos=0.5]{$\scriptstyle (e_{2,1,1},e_{2,1,1})$} (vertex2a-);
\end{tikzpicture}
\caption{The cut-down Moore diagram.}
\label{Moore:KMS interesting example cut down}
\end{figure}

We now compute $\psi(u_{g})$ for $g=f_{a_1}$. Observe that there is a length one stationary path from  $g$ to $\id_2$. Let $a_{ij}^k$ denote the entries of the matrix $A^k$. Then the sets $|F_{g}^k(i)|$ are given by $|F_{g}^k(1)|=a_{21}^{k-1}$ and $|F_{g}^k(2)|=a_{22}^{k-1}$. To compute $\psi(u_{g})=c_{g}$ we use Proposition \ref{Fng} to obtain
\[
c_{g,k}= \rho(B)^{-k} \sum_{v\in E^0} |F_g^k(v)|x_v = (2+\sqrt{2})^{-k}\big(a_{21}^{k-1}(\sqrt{2}-1)+a_{22}^{k-1}(2-\sqrt{2})\big).
\]
Then a bit of algebra gives that $\psi(u_{g})=c_{g}=\lim_{k \to \infty} c_{g,k}=3-2\sqrt{2}$. Since the computation for $\psi(u_{g^{-1}})$ is symmetric, we also have $\psi(u_{g^{-1}})=3-2\sqrt{2}$.

We now compute $\psi(u_{h})$ for $h=f_{a_2}$. We use \eqref{sum c_g formula} and the two stationary paths from $f_{a_2}$ to $f_{a_1}$ to compute
\[
\psi(u_{h})=2(2+\sqrt{2})^{-1}\psi(u_g)=10-7\sqrt{2}.
\]
By symmetry, we also have $\psi(u_{h^{-1}})=10-7\sqrt{2}$.
\end{proof}

\appendix

\section{Exel-Pardo Actions}\label{App:Exel-Pardo Actions}

In this appendix we show that if the self-similar groups on graphs of Exel and Pardo \cite{EP2} satisfy a certain faithfulness condition, then they give rise to faithful self-similar groupoid actions. We begin by recalling Exel and Pardo's definition of a self-similar group on a graph \cite[\S 2]{EP2}. A tuple $(K,E,\sigma,\phi)$ is a self-similar group on a graph if $K$ is a group and $E$ is a graph with no sources such that
\begin{enumerate}
\item There is a group action $\sigma:K \to \Aut E$ such that $\sigma^0_k:E^0 \to E^0$ and $\sigma^1_k:E^1 \to E^1$ are bijections, for $k \in K$, satisfying 
\begin{align}
\label{EP_range} 
r \circ \sigma_k^1&=\sigma_k^0 \circ r, \\
\label{EP_source}
s \circ \sigma_k^1&=\sigma_k^0 \circ s, and \\
\label{EP_action} 
\sigma_k \sigma_l&=\sigma_{kl}.
	\end{align}
\item There is a group $1$-cocycle $\phi:K \times E^1 \to K$ satisfying
\begin{equation}\label{EP_cocycle}
\phi(kl,e)=\phi(k,\sigma_l^1(e))\phi(l,e).
\end{equation}
\item For $k \in K$ and $e \in E^1$ there is a standing assumption that
\begin{equation}\label{EP_SA}
\sigma_{\phi(k,e)}^0=\sigma_k^0.
\end{equation}
\end{enumerate}

\begin{prop}[{\cite[Proposition 2.4]{EP2}}]\label{EP extension to paths}
Suppose $(K,E,\sigma,\phi)$ is an Exel-Pardo self-similar group on a graph. The pair $(\sigma,\phi)$ define a unique pair $(\sigma^*,\phi^*)$ that extend $(\sigma,\phi)$ such that $\sigma^*$ is an action of $K$ on $E^*$ and $\phi^*$ is a group one-cocycle $\phi:K \times E^* \to K$ such that, for every $n \geq 0$, every $k \in K$, and every $v \in E^0$,
\begin{enumerate}
\item\label{EP contained in n} $\sigma_k^*(E^n) \subset E^n$,
\item\label{EP range} $r \circ \sigma_k^*=\sigma_k \circ r$ on $E^n$,
\item\label{EP source} $s \circ \sigma_k^*=\sigma_k \circ s$ on $E^n$,
\item\label{EP rest fixed} $\sigma_{\phi^*(k,\mu)}(v)=\sigma_k(v)$ for all $\mu \in E^n$,
\item\label{EP SSCondition} $\sigma_k^*(\mu\nu)=\sigma_k^*(\mu)\sigma^*_{\phi^*(k,\mu)}(\nu)$ where $\mu\nu \in E^n$,
\item\label{EP rest rest} $\phi^*(k,\mu\nu)=\phi^*(\phi^*(k,\mu),\nu)$ where $\mu\nu \in E^n$.
\end{enumerate}
\end{prop}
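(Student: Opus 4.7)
The natural approach is to define $\sigma^*$ and $\phi^*$ by simultaneous recursion on path length, taking the desired formulas in (5) and (6) as the defining recursions, and then to verify properties (1)--(6) by induction. The recursion is forced by uniqueness, so I would begin by showing that any extension must satisfy these recurrences. Then I would set up the definitions and check well-definedness, which is where the standing assumption \eqref{EP_SA} comes in essentially.

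\textbf{Base cases and recursion.} On paths of length $0$ and $1$ the definitions are forced: $\sigma^*_k|_{E^0} := \sigma^0_k$, $\sigma^*_k|_{E^1} := \sigma^1_k$, $\phi^*(k,v) := k$ for $v \in E^0$, and $\phi^*(k,e) := \phi(k,e)$ for $e \in E^1$. For $n \geq 2$ and $\mu = e\nu$ with $e \in E^1$ and $\nu \in E^{n-1}$, define
\begin{align*}
\sigma^*_k(e\nu) &:= \sigma^1_k(e)\,\sigma^*_{\phi(k,e)}(\nu),\\
\phi^*(k,e\nu) &:= \phi^*(\phi(k,e),\nu).
\end{align*}
The first step is to check that this concatenation is a legal path: by \eqref{EP_source} we have $s(\sigma^1_k(e)) = \sigma^0_k(s(e))$, and by the inductive hypothesis on (2) we have $r(\sigma^*_{\phi(k,e)}(\nu)) = \sigma^0_{\phi(k,e)}(r(\nu)) = \sigma^0_{\phi(k,e)}(s(e))$. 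Equality of these two vertices is exactly the content of the standing assumption \eqref{EP_SA}. This is the crucial place where (7) is used.

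\textbf{Verification.} Properties (1), (2), (3) follow by short inductions directly from the recursive definition combined with \eqref{EP_range} and \eqref{EP_source}. Property (6) is essentially built into the definition and is confirmed by induction on $|\mu|$, splitting the case $|\mu|\geq 1$ as $\mu = e\mu'$ and unwinding both sides. Property (4), namely $\sigma^0_{\phi^*(k,\mu)} = \sigma^0_k$, is the path-space generalisation of the standing assumption, and I would prove it by induction on $|\mu|$: the base case is \eqref{EP_SA} itself, and the inductive step $\sigma^0_{\phi^*(k,e\mu')} = \sigma^0_{\phi^*(\phi(k,e),\mu')} = \sigma^0_{\phi(k,e)} = \sigma^0_k$ uses the inductive hypothesis applied to $\mu'$ followed by \eqref{EP_SA}. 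Property (5) is then an induction on $|\mu|$: the case $|\mu|=0$ is trivial (using (4) to see $\sigma^0_k(r(\nu))$ is the correct common vertex), $|\mu|=1$ is the definition, and for $\mu = e\mu'$ one expands
\begin{align*}
\sigma^*_k(e\mu'\nu) &= \sigma^1_k(e)\,\sigma^*_{\phi(k,e)}(\mu'\nu)\\
&= \sigma^1_k(e)\,\sigma^*_{\phi(k,e)}(\mu')\,\sigma^*_{\phi^*(\phi(k,e),\mu')}(\nu)\\
&= \sigma^*_k(e\mu')\,\sigma^*_{\phi^*(k,e\mu')}(\nu),
\end{align*}
using the inductive hypothesis for (5) on $\mu'$ together with the defining recursion and (6).

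\textbf{Group action and uniqueness.} It remains to show $\sigma^*_{kl} = \sigma^*_k \circ \sigma^*_l$. By induction on path length, using the edge-level identity $\sigma^1_{kl} = \sigma^1_k \sigma^1_l$ and the cocycle relation \eqref{EP_cocycle} in the form $\phi(kl,e) = \phi(k,\sigma^1_l(e))\phi(l,e)$, the inductive step reduces to applying property (5) to decompose $\sigma^*_k(\sigma^*_l(e\nu)) = \sigma^*_k(\sigma^1_l(e)\sigma^*_{\phi(l,e)}(\nu))$ and then using the inductive hypothesis on the shorter path $\nu$. Uniqueness is immediate: any extension satisfying (5) and (6) must satisfy the recursive formulas above when specialised to $\mu = e \in E^1$. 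The main obstacle is not any single step but rather the bookkeeping: one must interleave the inductions for (2), (4), (5), and (6) in the correct order so that each inductive step only uses hypotheses already established at shorter lengths. The use of \eqref{EP_SA} to make the concatenation well-defined, and its upgrade to property (4), is the real conceptual content.
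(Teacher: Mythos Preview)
The paper does not give its own proof of this proposition: it is quoted verbatim from \cite[Proposition~2.4]{EP2} and used as a black box in the appendix. So there is nothing to compare your argument against in this paper.

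That said, your sketch is the standard inductive proof and is correct in outline. One small omission: you verify that $\sigma^*$ is an action but do not explicitly check that $\phi^*$ is a $1$-cocycle in the extended sense, i.e.\ that $\phi^*(kl,\mu)=\phi^*(k,\sigma^*_l(\mu))\,\phi^*(l,\mu)$ for all $\mu\in E^*$. This is part of the statement (``$\phi^*$ is a group one-cocycle'') and is needed, for instance, to make your inductive step for $\sigma^*_{kl}=\sigma^*_k\sigma^*_l$ go through cleanly. It follows by the same kind of induction on $|\mu|$, using the defining recursion for $\phi^*$ together with property (5), so it is not a serious gap; just make sure to state and prove it alongside the action property.
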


If $E$ has no sources, Exel and Pardo  construct a unital $C^*$-algebra 
$\OO_{K,E}$ as the universal algebra generated by elements $\{u_k\colon k\in K\}$, mutually orthogonal projections $\{p_v \colon v\in E^0 \}$, 
and partial isometries $\{s_e \colon e\in E^1 \}$ 
subject to the conditions that
\begin{itemize}
\item $\{p_v \colon v\in E^0 \}\cup \{s_e \colon e\in E^1 \}$ is a Cuntz-Krieger $E$-family 
\item the map $u\colon K\to \OO_{(K,E)}\colon k\mapsto u_k$ is a unitary representation of $K$
\item $u_ks_e = s_{\sigma^1_k(e)}u_{\phi(k,e)}$, for every $k\in K$, and $e\in E^1$, and
\item $u_kp_v = p_{\sigma^0_k(v)}u_k$, for every $k\in K$, and $v\in E^0$
\end{itemize}

\begin{prop}\label{EP trans group}
Suppose $(K,E,\sigma,\phi)$ is an Exel-Pardo self-similar group on a graph and let $K \times E^0:=\{(k,v)\}$ be the transformation groupoid with $d(k,v)=v$, $c(k,v)=\sigma_k^0(v)$, and unit space $E^0$, and consider the maps
\begin{align}
\label{EP action map}
(k,v) \cdot \mu &:= \sigma_k^*(\mu) \quad \text{ for $\mu \in vE^*$ and} \\
\label{EP restriction map}
(k,v)|_{\mu} &:= (\phi^*(k,\mu),s(\mu)) \quad \text{ for $\mu \in vE^*$}.
\end{align}
Then \eqref{EP action map} defines a groupoid action of $K \times E^0$ on $T_E$ by partial isomorphisms. If the groupoid action is faithful, then $(K \times E^0,E)$ is a self-similar groupoid action.

Moreover, if $E$ has no sources then the algebra $\OO(K \times E^0,E)$ from~Proposition~\ref{Cuntz_repn} 
 is isomorphic to the algebra $\OO_{K,E}$ defined in~\cite{EP2}.
\end{prop}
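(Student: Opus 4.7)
My plan is to check the three assertions in order, relying on Proposition~\ref{EP extension to paths} (which we may assume).

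\textbf{Step 1: Groupoid action by partial isomorphisms.} First I would fix $(k,v)\in K\times E^0$ and verify that $\sigma_k^*$ restricts to a partial isomorphism $g_{(k,v)}\colon vE^*\to \sigma_k^0(v)E^*$. Level preservation comes from Proposition~\ref{EP extension to paths}\eqref{EP contained in n}, the range-equivariance $r\circ\sigma_k^*=\sigma_k^0\circ r$ (part~\eqref{EP range}) shows that $\sigma_k^*$ maps $vE^*$ into $\sigma_k^0(v)E^*$ and that $\sigma_{k^{-1}}^*$ sends $\sigma_k^0(v)E^*$ into $vE^*$, so $g_{(k,v)}$ is a bijection, and the edge condition in Definition~\ref{defn: partial iso} follows from the factorisation $\sigma_k^*(\mu e)=\sigma_k^*(\mu)\sigma_{\phi^*(k,\mu)}^1(e)$ coming from part~\eqref{EP SSCondition}. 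Thus $d(g_{(k,v)})=v$ and $c(g_{(k,v)})=\sigma_k^0(v)$, matching the domain and codomain of $(k,v)$ in $K\times E^0$. To see that $(k,v)\mapsto g_{(k,v)}$ is a groupoid homomorphism, I would take composable pairs $(k,\sigma_l^0(w))$ and $(l,w)$ and use the path-level identity $\sigma_{kl}^*=\sigma_k^*\circ\sigma_l^*$, which is inherited from \eqref{EP_action}; unit preservation ($g_{(e_K,v)}=\id_v$) is immediate.

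\textbf{Step 2: Self-similarity.} Assuming faithfulness, for $g=(k,v)$ and $e\in d(g)E^1=vE^1$, I would take $h:=(\phi(k,e),s(e))\in K\times E^0$. Part~\eqref{EP SSCondition} of Proposition~\ref{EP extension to paths} immediately gives
\[
g\cdot(e\mu)=\sigma_k^*(e\mu)=\sigma_k^1(e)\,\sigma_{\phi(k,e)}^*(\mu)=(g\cdot e)(h\cdot\mu)\qquad(\mu\in s(e)E^*),
\]
which is exactly \eqref{selfsimilar groupoid defn}. I would then verify that $h$ is the correct morphism of $K\times E^0$: its domain is $s(e)$ (visible) and its codomain is $\sigma_{\phi(k,e)}^0(s(e))$, which by the standing assumption \eqref{EP_SA} equals $\sigma_k^0(s(e))=s(\sigma_k^1(e))=s(g\cdot e)$, as required by Lemma~\ref{source and range observations}\eqref{source and range in SSG}. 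So $g|_e=h$, and $(K\times E^0,E)$ is self-similar.

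\textbf{Step 3: The Cuntz--Pimsner algebras agree.} Here I would use the universal properties on each side. Given the generating family $(u_k,p_v,s_e)$ of $\OO_{K,E}$, I would build elements $U_{(k,v)}:=u_kp_v=p_{\sigma_k^0(v)}u_k$ inside $\OO_{K,E}$ and check that $(U,p,s)$ is a unitary representation of the groupoid $K\times E^0$ together with a Cuntz--Krieger $E$-family satisfying the relations in Proposition~\ref{Toeplitz_repn} and the Cuntz--Pimsner gap relations. The key identities to verify are $U_{(k,v)}U_{(l,w)}=\delta_{v,\sigma_l^0(w)}U_{(kl,w)}$ (using the unitarity of $u_k$ together with \eqref{EP_action}) and $U_{(k,v)}s_e=s_{(k,v)\cdot e}U_{(k,v)|_e}$, which reduces via $u_ks_e=s_{\sigma_k^1(e)}u_{\phi(k,e)}$ and \eqref{EP_SA} to the self-similarity relation in step~2. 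The universal property of $\OO(K\times E^0,E)$ then yields a homomorphism $\Phi\colon\OO(K\times E^0,E)\to\OO_{K,E}$. For the inverse, set $\tilde u_k:=\sum_{v\in E^0}u_{(k,v)}$ inside $\OO(K\times E^0,E)$; one checks that $\tilde u_k$ is unitary (using $\sum_vp_v=1$ and the multiplication rule in $K\times E^0$) and that $(\tilde u, p, s)$ satisfies the Exel--Pardo relations, so the universal property of $\OO_{K,E}$ produces $\Psi\colon\OO_{K,E}\to\OO(K\times E^0,E)$. Finally, comparing generators shows $\Phi\circ\Psi$ and $\Psi\circ\Phi$ fix the generators and are therefore the identity.

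\textbf{Anticipated difficulty.} Steps 1 and 2 are largely bookkeeping once Proposition~\ref{EP extension to paths} is in hand; the only delicate point is invoking \eqref{EP_SA} to align the codomain of $(k,v)|_e$ with $s(g\cdot e)$, since without this the element $(\phi(k,e),s(e))$ would not have the correct source/range matching demanded by Lemma~\ref{source and range observations}. The main work is Step~3: one must check all four relations of Proposition~\ref{Toeplitz_repn} for the image family in $\OO_{K,E}$ and all Exel--Pardo relations for the image family in $\OO(K\times E^0,E)$, and verify that the resulting maps are mutually inverse by tracking what happens to each type of generator through both compositions.
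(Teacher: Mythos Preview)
Your proposal is correct and follows essentially the same route as the paper. The only notable difference is in Step~1: the paper establishes bijectivity of $(k,v)\colon vE^n\to\sigma_k^0(v)E^n$ by an induction on $n$ (with a separate surjectivity argument checking that preimages are composable paths), whereas you obtain bijectivity immediately from the fact that $\sigma^*$ is an action, so $\sigma_{k^{-1}}^*$ supplies the inverse. Your route is cleaner and is already licensed by Proposition~\ref{EP extension to paths}. For Step~3 the paper simply states the two candidate embeddings $u_{(k,v)}:=u_kp_v$ and $u_k:=\sum_v u_{(k,v)}$ and leaves the verification to the reader, so your outline is in fact more detailed than the paper's own proof.
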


\begin{proof}
We begin by showing that $(k,v):d(k,v)E^* \to c(k,v)E^*$ defines an action by partial isomorphism, as in Definition \ref{defn: partial iso}. Proposition \ref{EP extension to paths}\eqref{EP contained in n} shows that if $\mu \in d(k,v)E^n$, then $(k,v)\cdot \mu \in c(k,v)E^n$.
We now show that $(k,v):d(k,v)E^n \to c(k,v)E^n$ is a bijection by induction on $n$. The result holds for $n=1$ since $\sigma^1_k:E^1 \to E^1$ is a bijection and $\sigma_k^*$ extends $\sigma_k^1$. So assume $(k,v):d(k,v)E^n \to c(k,v)E^n$ is a bijection for all $(k,v) \in K \times E^0$.
For injectivity suppose that 
$(k,v)\cdot(e\mu)= (k,v)\cdot(f\nu)$ for $e,f \in d(k,v)E^1$, $\mu \in s(e)E^n$ and $\nu \in s(f)E^n$. 
Using Definition \ref{EP extension to paths}\eqref{EP SSCondition} we compute
\begin{align*}
(k,v)\cdot(e\mu)= (k,v)\cdot(f\nu)&\implies \sigma_k^*(e\mu)=\sigma_k^*(f\nu) \\
& \implies \sigma_k^*(e)\sigma^*_{\phi^*(k,e)}(\mu)=\sigma_k^*(f)\sigma^*_{\phi^*(k,f)}(\nu) \\
& \implies \sigma_k^*(e)= \sigma_k^*(f) \text{ and } \sigma^*_{\phi^*(k,e)}(\mu) = \sigma^*_{\phi^*(k,f)}(\nu) \\
& \implies (k,v)\cdot e= (k,v)\cdot f \text{ and } (k,v)|_{e} \cdot \mu = (k,v)|_{f} \cdot \nu \\
& \implies e= f \text{ and } \mu = \nu,
\end{align*}
where the final implication holds by the inductive hypothesis. Hence $e\mu=f\nu$, thereby proving injectivity. 

For surjectivity, suppose $f \in c(k,v)E^{1}$ and $\nu \in s(f)E^n$. By the induction hypothesis there exists a unique 
$e \in d(k,v)E^{1}$ satisfying 
$(k,v) \cdot e=f$ and a unique $\mu \in d((k,v)|_e)E^{n}$ 
satisfying $(k,v)|_e\cdot \mu=\nu$. To conclude the proof of surjectivity we need to show that $s(e)=r(\mu)$ so that $e\mu$ is a path in $E^{n+1}$. For this, we compute
\begin{align*}
(k,v)|_e \cdot s(e) &= (\phi^*(k,e),s(e))\cdot s(e) \quad \text{ by } \eqref{EP restriction map} \\
&=\sigma^*_{\phi^*(k,e)}(s(e)) \quad \text{ by } \eqref{EP action map} \\
&= \sigma_k^*(s(e)) \quad \text{ by Proposition \ref{EP extension to paths}\eqref{EP rest fixed}} \\
&= s(\sigma_k^*(e)) \quad \text{ by Proposition \ref{EP extension to paths}\eqref{EP source}} \\
&= s(f) =r(\nu)=r(\sigma^*_k(\mu)) \quad \text{ by hypothesis}\\
&= \sigma_k^*(r(\mu)) \quad \text{ by Proposition \ref{EP extension to paths}\eqref{EP range}} \\
&=\sigma^*_{\phi^*(k,e)}(r(\mu)) \quad \text{ by Proposition \ref{EP extension to paths}\eqref{EP rest fixed}} \\
&= (\phi^*(k,e),s(e))\cdot r(\mu) \quad \text{ by } \eqref{EP action map} \\
&= (k,v)|_e \cdot r(\mu) \quad \text{ by } \eqref{EP restriction map}.
\end{align*}
Applying $(k,v)|_e^{-1}$ to both sides of $(k,v)|_e \cdot s(e)=(k,v)|_e \cdot r(\mu)$ implies that $s(e)=r(\mu)$ as desired. Thus $(k,v):s(k,v)E^* \to r(k,v)E^*$ is a bijection satisfying the first statement in \eqref{defnpi}. The second statement in \eqref{defnpi} follows straight from Proposition \ref{EP extension to paths}\eqref{EP SSCondition}. Thus $(k,v):d(k,v)E^* \to c(k,v)E^*$ is a partial isomorphism. It follows from equation~(\ref{EP_cocycle}) that $((k,v),\mu)\to(k,v)\cdot\mu$ is an action of the groupoid~$K\times E$.

Now suppose the action by partial isomorphism is faithful. To show $(K \times E^0,E)$ is a self-similar groupoid action we prove that \eqref{selfsimilar groupoid defn} of Definition \ref{faithful self-similar groupoid action} holds. Suppose $(k,v) \in K \times E^0$ and $e \in d(k,v)E^1$. Then for $h:=(k,v)|_{e}$ and $\mu \in s(e)E^*$ we have
\begin{align*}
(k,v)\cdot e\mu &=\sigma_k^*(e\mu) \quad \text{ by \eqref{EP action map}} \\
&=\sigma_k^*(e)\sigma_{\phi^*(k,e)}^*(\mu) \quad \text{ by Proposition \ref{EP extension to paths}\eqref{EP SSCondition}} \\
&= \big((k,v)\cdot e\big)\big((\phi^*(k,e),s(e))\cdot \mu\big)  \quad \text{ by \eqref{EP action map}} \\
&= \big((k,v)\cdot e\big)\big((k,v)|_e \cdot \mu\big)  \quad \text{ by \eqref{EP restriction map}} \\
&= \big((k,v)\cdot e\big)\big(h \cdot \mu\big),
\end{align*}
as desired.

For the final assertion note that there is a direct correspondence between the projections and partial isometries in the algebras. We claim that we can embed 
$\OO(K \times E^0,E)$ in $\OO_{K,E}$ by defining $u_{(k,v)}:=u_kp_v$  
and $\OO_{K,E}$ in $\OO(K \times E^0,E)$ by defining $u_k:=\sum_{v\in E^0} u_{(k,v)}$. We leave the details as an exercise for the reader.
\end{proof}

\begin{remark}\label{sources}
Suppose that $E$ is a directed graph and $G$ is a groupoid with unit space $E^0$ acting self-similarly on $T_E$. By Lemma~\ref{source and range observations}\eqref{equivariance}, for $g\in G$ we have
\begin{equation}\label{req1}
s(g\cdot e)=g|_e\cdot s(e)\quad\text{for all $e\in d(g)E^1$.}
\end{equation}
Both Exel--Pardo \cite{EP2} and B\'edos--Kaliszewski--Quigg \cite{BKQ} consider self-similar automorphisms of $T_E$ built from graph automorphisms of $E$. Such automorphisms satisfy
\begin{equation}\label{req2}
s(g\cdot e)=g\cdot s(e)\quad\text{for all $e\in E^1$.}
\end{equation}
Combining the requirements \eqref{req1} and \eqref{req2} leads to the standing assumptions
\begin{align*}
g|_e\cdot s(e)&=g\cdot s(e)\quad\text{for all $e\in E^1$, imposed in \cite{BKQ}, and}\\
g|_\mu\cdot v&=g\cdot v\quad\text{for all $\mu\in E^*$ and $v\in E^1$, imposed in \cite{EP2}.}
\end{align*} 
\end{remark}

\begin{remark}
There are self-similar groupoid actions $(G,E)$ which do not come from self-similar group actions as in \cite{EP2} or \cite{BKQ}. To see this, we note that every every action arising via Proposition~\ref{EP extension to paths} has the properties that $r(g\cdot \mu)=g\cdot r(\mu)$ and $s(g\cdot \mu)=g\cdot s(\mu)$ (see Remark~\ref{sources}). However, for the self-similar groupoid action $(G_A,E)$ of Example~\ref{s,r clarifying example}, we have
\[
f_a(1)=4\quad\text{and}\quad s(f_a(1))=s(4)=v,\quad\text{but}\quad f_a(s(1))=f_a(v)=w.
\]
\end{remark}

\end{document}